\newtheorem{theorem}{Theorem}[section]
\newtheorem{lemma}[theorem]{Lemma}
\newtheorem{prop}[theorem]{Proposition}
\theoremstyle{definition}
\newtheorem{definition}[theorem]{Definition}
\newtheorem{example}[theorem]{Example}
\theoremstyle{remark}
\newtheorem{remark}[theorem]{Remark}
\numberwithin{equation}{section}
\DeclareMathOperator{\Hom}{\mathrm{Hom}}
\DeclareMathOperator{\rank}{\mathrm{rank}}
\DeclareMathOperator{\Vrt}{\mathrm{Vert}}
\DeclareMathOperator{\Conv}{\mathrm{Conv}}
\DeclareMathOperator{\sgn}{\mathrm{sgn}}
\let\Re\relax
\let\Im\relax
\DeclareMathOperator{\Re}{\mathrm{Re}}
\DeclareMathOperator{\Im}{\mathrm{Im}}
\newcommand{\sbullet}{%
  \hbox{\fontfamily{lmr}\fontsize{.4\dimexpr(\f@size pt)}{0}\selectfont\textbullet}}
\DeclareRobustCommand{\const}{\accentset{\sbullet}}
\markboth{\today}{\today}
\begin{document}

\title{Lattice sums of hyperplane arrangements}
\author{Yasushi Komori, Kohji Matsumoto and Hirofumi Tsumura}

\subjclass[2010]{Primary 11M32; Secondary 11M35, 11M41, 32S22, 52B11}
\keywords{lattice sum, hyperplane arrangement, zeta-function of root system,
convex polytope, affine root system, special value, generating
function}

\maketitle

\begin{abstract}
We introduce certain lattice sums associated with hyperplane arrangements, which are (multiple)
sums running over integers, and can be regarded as 
generalizations of certain linear combinations of zeta-functions of root systems.
We also introduce generating functions of special values of those lattice sums, and study
their properties by virtue of the theory of convex polytopes.
Consequently we evaluate special values of those lattice sums, especially certain special values of
zeta-functions of root systems and their affine analogues.
In some special cases it is possible to treat sums running over positive integers, which may be
regarded as zeta-functions associated with hyperplane arrangements.
\end{abstract}

\section{Introduction}\label{sec-1}

The notion of Witten zeta-functions associated with semisimple Lie algebras was introduced by
Zagier \cite{Zag94}, inspired by the work of Witten \cite{Wit91} in quantum gauge theory.
Recently the authors have developed the theory of zeta-functions of root systems
(e.g.\ \cite{KMKyushu,KM2,KM5,KM3}),  which are 
multi-variable generalizations of Witten
zeta-functions.    In particular, the ``Weyl group symmetric'' linear combinations of
zeta-functions of root systems $S(\mathbf{s}, \mathbf{y};\Delta)$ (where $\mathbf{s}$ is a
complex multi-variable, $\mathbf{y}$ is a certain vector and $\Delta$ is a finite reduced root 
system) and the generating functions of special values of those linear combinations
were introduced and studied in \cite{KMKyushu,KM5,KM3}.

In the present paper, we will introduce certain lattice sums
of hyperplane arrangements, which are generalizations of the above linear combinations of
zeta-functions of root systems.    We will also introduce the generating functions of special values
of those lattice sums.
It is to be stressed that those generating functions can describe not only values but also functional relations among zeta-functions of root systems.  Furthermore if they are combined with Poincar\'e polynomials of Weyl groups, we obtain explicit formulas for special odd values of zeta-functions of
root systems.  These results will be treated in the forthcoming paper \cite{KMTforth}.

Another application is to calculate special values of affine analogue of zeta-functions of
root systems.   Although in the cases of affine root systems it is natural to work with the character formulas instead of the dimension formulas, a straightforward generalization is also interesting. We will present some examples in Section \ref{sec-3}.

In the present paper, our consideration is not restricted to the case in the domain of
absolute convergence; we will study the values of lattice sums outside the domain of absolute
convergence.    Here we explain this point by simple examples.

Let $\mathbb{N}$ be the set of positive integers, $\mathbb{N}_0=\mathbb{N}\cup\{0\}$, 
$\mathbb{Z}$ the ring of rational integers, $\mathbb{R}$ the field of real numbers, and
$\mathbb{C}$ the field of complex numbers.
For any set $S$, the symbol $\sharp S$ denotes the cardinality of $S$.

Let $k\in\mathbb{N}$, and let $y\in\mathbb{R}$
with $y\notin\mathbb{Z}$ if $k=1$.
It is well-known (cf.\ \cite[Theorem 12.19]{Apos})
that %
\begin{equation}
\label{eq:1d}
-\frac{(2\pi\sqrt{-1})^k}{k!}B_k(\{y\})=\lim_{N\to\infty}\sum_{\substack{|m|\leq N\\m\neq 0}}\frac{e^{2\pi\sqrt{-1}my}}{m^k},
\end{equation}
where $\{y\}=y-[y]$ is the fractional part of $y$ and $B_k(\cdot)$ is the $k$-th Bernoulli polynomial
defined by
\begin{equation}
  \frac{t e^{t\{y\}}}{e^{t}-1}=\sum_{k=0}^\infty B_k(\{y\})\frac{t^k}{k!}.
\end{equation}
In the case $k=0$, then \eqref{eq:1d} does not hold straightforward.
However this formula still holds in some sense via the following regularization.
We see  that the right-hand side of \eqref{eq:1d} is analytically continued to the whole space $\mathbb{C}$ in the variable $k$ and then
it is evaluated as $-1$ at $k=0$. This effect is (formally) realized in the series by replacing the condition $m\neq 0$ by $m=0$ and 
$\sum_{m\neq 0}$ by $-\sum_{m=0}$ with $0^0=1$.
Hence the sum consists of only one term. As a result,
we may understand the case $k=0$ as
\begin{equation}
-\frac{(2\pi\sqrt{-1})^k}{k!}B_k(\{y\})=
-\lim_{N\to\infty}\sum_{\substack{|m|\leq N\\m=0}}e^{2\pi\sqrt{-1}my}
=-e^{2\pi\sqrt{-1}my}|_{m=0}=-1,
\end{equation}
where $B_0(\{y\})=1$.

This interpretation works well in the multi-dimensional cases.
For example, let $\alpha,\beta,\gamma\in\mathbb{C}$ and $k_1,k_2,k_3\in\mathbb{N}_0$,
and consider the sum
\begin{equation}
\label{eq:introS1}
  S((k_1,k_2,k_3),(y_1,y_2))=\lim_{N\to\infty}
  \sum_{\substack{m,n\in\mathbb{Z}\\m+\alpha,n+\beta,m+n+\gamma\neq 0\\|m|,|n|\leq N}}
\frac{e^{2\pi\sqrt{-1}(my_1+ny_2)}}{(m+\alpha)^{k_1}(n+\beta)^{k_2}(m+n+\gamma)^{k_3}}.
\end{equation}
This is convergent if $k_1,k_2,k_3\geq 1$.
If some of $k_i$'s are $0$, then we modify the series. In the case when only $k_1=0$, we replace the condition $m+\alpha\neq 0$ by $m+\alpha=0$ in the sum with the minus sign and $0^0=1$, that is,
\begin{equation}
\label{eq:introS2}
  S((0,k_2,k_3),(y_1,y_2))
  =-\lim_{N\to\infty}
  \sum_{\substack{m,n\in\mathbb{Z}\\|m|,|n|\leq N\\n+\beta,m+n+\gamma\neq 0\\m+\alpha=0}}
  \frac{e^{2\pi\sqrt{-1}(my_1+ny_2)}}{(n+\beta)^{k_2}(m+n+\gamma)^{k_3}}.
\end{equation}
By the restriction $m+\alpha=0$, this sum is $0$ if $\alpha\notin\mathbb{Z}$. If $\alpha\in\mathbb{Z}$, then the sum reduces to the one-dimensional sum
\begin{equation}
  S((0,k_2,k_3),(y_1,y_2))
  =-\lim_{N\to\infty}
  \sum_{\substack{n\in\mathbb{Z}\\|n|\leq N\\n+\beta,n+\gamma-\alpha\neq 0}}
  \frac{e^{2\pi\sqrt{-1}(-\alpha y_1+ny_2)}}{(n+\beta)^{k_2}(n+\gamma-\alpha)^{k_3}}.
\end{equation}
In the other cases, the sum is similarly modified.
Then the special values $S((k_1,k_2,k_3),(y_1,y_2))$ 
for all $k_1,k_2,k_3\in\mathbb{N}_0$
are
explicitly given by coefficients of a generating function, which will be given in Example \ref{ex:intro}.

In the above arguments the sums are taken over all integers.
However in some special cases, it is possible to treat sums running over only positive integers
(Examples \ref{Exam-A1}, \ref{Exam-A2}), which may be regarded as zeta-functions associated with
hyperplane arrangements.

In the next section we will introduce more general lattice sums, and their generating functions.


\section{Notations and statement of main results}\label{sec-2}
We fix a positive integer $r$.
Let $V=\mathbb{R}^r$ be a real vector space equipped with the standard inner product $\langle\cdot,\cdot\rangle$.
We regard $f=(\vec{f},\const{f})\in V\times\mathbb{C}$ with
$\vec{f}\in V$ and $\const{f}\in\mathbb{C}$ as an affine
linear functional on $V$ by
$f(\mathbf{v})=\langle \vec{f},\mathbf{v}\rangle+\const{f}$ for
$\mathbf{v}\in V$.  

We use the following notation:
For $X\subset V$, put $\langle X\rangle=\sum_{\mathbf{v}\in X}\mathbb{Z}\mathbf{v}$.
For $Y\subset V\times\mathbb{C}$,
put $\vec{Y}=\{\vec{f}~|~f=(\vec{f},\const{f})\in Y\}$.
Let $\Lambda\subset(\mathbb{Z}^{r}\setminus\{\vec{0}\})\times\mathbb{C}$ 
with $\sharp \Lambda<\infty$ such that $\rank\langle\vec{\Lambda}\rangle=r$. 
Put
$\widetilde{\Lambda}=\{f\in\Lambda~|~\rank\langle\vec{\Lambda}\setminus\{\vec{f}\}\rangle\neq r\}$.
For each $f\in\Lambda$ we associate a number $k_f\in\mathbb{N}_0$, and put
$\mathbf{k}=(k_f)_{f\in\Lambda}\in\mathbb{N}_{0}^{\sharp \Lambda}$.
For $k\in\mathbb{N}_0$, define
\begin{align*}
  \Lambda_k&=\Lambda_k(\mathbf{k})=\{f\in\Lambda~|~k_f=k\},\\
  \Lambda_+&=\Lambda_+(\mathbf{k})=\{f\in\Lambda~|~k_f>0\}.
\end{align*}
Obviously
\begin{align*}
\Lambda_+=\bigcup_{k\geq 1}\Lambda_k\quad{\rm and}\quad\Lambda=\Lambda_+\cup\Lambda_0.
\end{align*}
For $H\subset\Lambda$ such that $\rank\langle\vec{H}\rangle=r-1$,
let
$\mathfrak{H}_{H}=\sum_{g\in H}\mathbb{R}\,\vec{g}$ 
be the hyperplane
 passing through $\vec{H}\cup\{\vec{0}\}$. 

The following is the main object in the present paper, a lattice sum
over the hyperplane arrangement given by linear functionals belonging
to $\Lambda$.

\begin{definition}
\label{def:S}
For $\mathbf{k}=(k_f)_{f\in\Lambda}\in\mathbb{N}_{0}^{\sharp \Lambda}$ and
$\mathbf{y}\in V\setminus\bigcup_{f\in \widetilde{\Lambda}\cap\Lambda_1}(\mathfrak{H}_{\Lambda\setminus\{f\}}+\mathbb{Z}^r)$,
we define
\begin{equation}
\label{eq:S}                                                                            
  S(\mathbf{k},\mathbf{y};\Lambda)=                                                     
  \lim_{N\to\infty}                                                                     
  Z(N;\mathbf{k},\mathbf{y};\Lambda),
\end{equation}
where
\begin{gather}
  Z(N;\mathbf{k},\mathbf{y};\Lambda)=
  (-1)^{\sharp\Lambda_0}
  \sum_{\substack{\mathbf{v}=(v_1,\ldots,v_r)\in\mathbb{Z}^{r}\\
      |v_j|\leq N\quad(1\leq j\leq r)\\
      f(\mathbf{v})\neq 0\quad(f\in\Lambda_+)\\
      f(\mathbf{v})=0\quad(f\in\Lambda_0)}}
  e^{2\pi\sqrt{-1}\langle \mathbf{y},\mathbf{v}\rangle}
  \prod_{f\in\Lambda_+}
  \frac{1}{f(\mathbf{v})^{k_f}}
\end{gather}
for $N>0$.
\end{definition}

This $S(\mathbf{k},\mathbf{y};\Lambda)$ is a generalization of the notion of "Weyl group symmetric"
linear combinations of zeta-functions of root systems $S(\mathbf{s},\mathbf{y};\Delta)$ mentioned in
the Introduction (in the case $\mathbf{s}=\mathbf{k}$); cf.\ \cite[(3.3)]{KM5}, \cite[(110)]{KM3}.
The first main result in the present paper is as follows.

\begin{theorem}
\label{thm:main0}
The series $S(\mathbf{k},\mathbf{y};\Lambda)$ 
converges and
is continuous in $\mathbf{y}$
on $V\setminus\bigcup_{f\in \widetilde{\Lambda}\cap\Lambda_1}(\mathfrak{H}_{\Lambda\setminus\{f\}}+\mathbb{Z}^r)$.
\end{theorem}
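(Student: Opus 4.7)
The plan is to reduce to the case $\Lambda_0=\emptyset$ and then establish convergence by iterated Abel summation on each chamber of the hyperplane arrangement. The constraints $f(\mathbf{v})=0$ for $f\in\Lambda_0$ cut out an affine sublattice of $\mathbb{Z}^r$; parametrizing this sublattice by an integer vector in a lower-rank lattice reduces $S(\mathbf{k},\mathbf{y};\Lambda)$ to a sum of the same form with $\Lambda_0=\emptyset$, with the remaining functionals pulled back. I may therefore assume throughout that $\Lambda_0=\emptyset$, so that $\vec{\Lambda}$ itself has full rank $r$.

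Next I would decompose $\{|v_j|\leq N\}\cap\mathbb{Z}^{r}$ into the chambers of the arrangement $\{f(\mathbf{v})=0\}_{f\in\Lambda}$. On each chamber every $f(\mathbf{v})^{-k_f}$ has constant sign and is monotone in the directions transverse to the vanishing hyperplane of $f$. If all $k_f\geq 2$ then absolute convergence is immediate. When some $k_f=1$, I would invoke the elementary bound
\begin{equation*}
\Bigl|\sum_{v=a}^{b}e^{2\pi\sqrt{-1}\theta v}\Bigr|\leq\frac{1}{|\sin(\pi\theta)|}\qquad(\theta\in\mathbb{R}\setminus\mathbb{Z})
\end{equation*}
and perform Abel summation in a direction in which the exponential oscillates. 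For $f\in\widetilde{\Lambda}\cap\Lambda_1$ the rational hyperplane $\mathfrak{H}_{\Lambda\setminus\{f\}}$ has a primitive integer normal $\mathbf{n}_f$, and the hypothesis $\mathbf{y}\notin\mathfrak{H}_{\Lambda\setminus\{f\}}+\mathbb{Z}^r$ is exactly $\langle\mathbf{y},\mathbf{n}_f\rangle\notin\mathbb{Z}$; Abel summation along $\mathbf{n}_f$ therefore converts the factor $f(\mathbf{v})^{-1}$ into a telescoping difference of order $|f(\mathbf{v})|^{-2}$, multiplied by a uniformly bounded partial exponential sum. For $f\in\Lambda_1\setminus\widetilde{\Lambda}$ the rank hypothesis permits summation in a direction where the oscillation comes from the remaining degrees of freedom, so no extra condition on $\mathbf{y}$ is needed. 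Iterating these Abel summations reduces the original (conditionally convergent) sum to an absolutely convergent one with bounds locally uniform in $\mathbf{y}$, which yields both convergence and continuity.

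The main obstacle is the coherent organization of the multidimensional Abel summation, since the factors $f(\mathbf{v})^{-k_f}$ do not decouple across coordinate axes. The summation must be carried out inductively: Abel-sum one $f\in\widetilde{\Lambda}\cap\Lambda_1$ at a time along its normal direction, pick up boundary contributions that live on lower-dimensional subarrangements, and handle those boundary terms inductively in the rank. I expect the cleanest implementation to require a careful choice of ordering of the functionals in $\widetilde{\Lambda}\cap\Lambda_1$ together with a uniform control of the tail estimates across all chambers; verifying that this procedure terminates in an absolutely convergent sum, uniformly in $\mathbf{y}$ on compact subsets of the allowed region, is the technical crux.
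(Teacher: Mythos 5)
Your plan is a genuinely different route from the paper's (which changes variables via $\mathbf{u}=A\mathbf{v}$ for a basis $B_0\subset\Lambda$, decouples the coordinates with the character sum over $\mathbb{Z}^r/\langle\vec{B}_0\rangle$ of Lemma \ref{lm:characteristic}, absorbs the functionals in $\Lambda\setminus B_0$ into integrals over $[0,1]^{\sharp L_0}$ via Lemma \ref{lm:intC}, and concludes by dominated convergence), and the part of it devoted to $f\in\widetilde{\Lambda}\cap\Lambda_1$ is sound: for such $f$ every other $\vec{g}$ lies in $\mathfrak{H}_{\Lambda\setminus\{f\}}$, so summation by parts along the primitive integer normal varies only the factor $f(\mathbf{v})^{-1}$, and the hypothesis $\mathbf{y}\notin\mathfrak{H}_{\Lambda\setminus\{f\}}+\mathbb{Z}^r$ is exactly what makes the geometric partial sums bounded. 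This correctly locates the source of the excluded set.

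The gap is the case $f\in\Lambda_1\setminus\widetilde{\Lambda}$, which you dispose of by saying that ``the oscillation comes from the remaining degrees of freedom, so no extra condition on $\mathbf{y}$ is needed.'' That mechanism does not exist: when $\widetilde{\Lambda}\cap\Lambda_1=\emptyset$ the theorem asserts convergence for \emph{all} $\mathbf{y}$, in particular for $\mathbf{y}\in\mathbb{Z}^r$, where every exponential partial sum is just a lattice-point count and there is no oscillation in any direction; moreover, summation by parts along a direction $\mathbf{d}$ with $\langle\vec{f},\mathbf{d}\rangle\neq 0$ produces partial sums of $e^{2\pi\sqrt{-1}m\langle\mathbf{y},\mathbf{d}\rangle}\prod_{g\neq f}g(\mathbf{v}+m\mathbf{d})^{-k_g}$, and the non-exponential factors are monotone, not oscillatory, so boundedness of these partial sums would again require a condition on $\mathbf{y}$ that the theorem does not impose. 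What actually rescues these factors is the combined polynomial decay of the whole product together with careful control of the truncation region; in the paper this is the content of Lemma \ref{lm:convk1} (the quantitative tail bound \eqref{est:C1}), Lemmas \ref{lm:UWU}--\ref{lm:ZZdiff} (comparing the cube $W_N$ with the adapted parallelotope $U_{dN}(B_0)$, with an explicit $N^{-1/(\mu+1)}(\log N)^r$ loss), and Lemma \ref{lm:integrable} (integrability of products of the singular factors $\{L_k(\mathbf{x})\}^{-1/(\mu+1)}$, which is also what gives continuity at allowed $\mathbf{y}$ where some $\langle\mathbf{y}+\mathbf{w},\vec{f}^{B}\rangle$ is an integer and your pointwise bounds $1/|\sin\pi\theta|$ blow up). Your proposal contains no substitute for any of this, and you yourself defer the organization of the iterated summation, its boundary terms on the chamber and box boundaries, and the uniform tail estimates as ``the technical crux.'' That crux is the theorem in the hard cases, so as it stands this is a plan with a wrongly motivated key step rather than a proof.
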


In order to define the generating function of $S(\mathbf{k},\mathbf{y};\Lambda)$, we need some more
notations.
Let $\mathscr{B}=\mathscr{B}(\Lambda)$ be the set of all subsets
$B=\{f_1,\ldots,f_r\}\subset \Lambda$ 
such that $\vec{B}$ forms a basis of $V$.
For $B\in\mathscr{B}$, 
let $\vec{B}^{*}=\{\vec{f}_1^B,\ldots,\vec{f}_r^B\}$ be
the dual basis of $\vec{B}=\{\vec{f}_1,\ldots,\vec{f}_r\}$ in $V$. It should be noted that for each $B\in\mathscr{B}$, we have
\begin{equation}
\label{eq:LsB}
\widetilde{\Lambda}\subset B,
\end{equation}
because all elements of $\widetilde{\Lambda}$ are indispensable for
constructing a basis.

Next we define a multi-dimensional generalization of fractional part $\{\cdot\}$ for real numbers,
which was first introduced in \cite[Section 4]{KM5}.
Let $\mathscr{R}=\mathscr{R}(\Lambda)$ be the set of all subsets
$R=\{g_1,\ldots,g_{r-1}\}\subset \Lambda$ 
such that $\vec{R}=\{\vec{g}_1,\ldots,\vec{g}_{r-1}\}$ is linearly independent set.
We need to fix a vector 
\begin{equation}
  \phi\in V\setminus\bigcup_{R\in\mathscr{R}}\mathfrak{H}_R
\end{equation}
so that $\langle\phi,\vec{f}^B\rangle\neq 0$ for all
$B\in\mathscr{B}$ and $f\in B$
(because if $\langle\phi,\vec{f}^B\rangle=0$ for some 
$B\in\mathscr{B}$ and $f\in B$,  then
$\phi\in\mathfrak{H}_R$ with
$R=B\setminus\{f\}\in\mathscr{R}$).

For
$\mathbf{y}\in V$, $B\in\mathscr{B}$ and
$f\in B$, we define the multi-dimensional fractional part by
\begin{equation}
  \{\mathbf{y}\}_{B,f}=
  \begin{cases}
    \{\langle\mathbf{y},\vec{f}^B\rangle\}\qquad&(\langle\phi,\vec{f}^B\rangle>0),\\
    1-\{-\langle\mathbf{y},\vec{f}^B\rangle\}\qquad&(\langle\phi,\vec{f}^B\rangle<0).
  \end{cases}
\end{equation}
It should be noted that
\begin{equation}
\label{2-6}
  \{a\}=1-\{-a\}
\end{equation}
for $a\in\mathbb{R}\setminus\mathbb{Z}$.

Now we define
the generating function of 
$S(\mathbf{k},\mathbf{y};\Lambda)$ 
and state its properties.

\begin{definition}
\label{def:gen}
  For $\mathbf{y}\in V$ and $\mathbf{t}=(t_f)_{f\in\Lambda}\in\mathbb{C}^{\sharp\Lambda}$,
  we define
  \begin{equation}
    \label{eq:exp_F}
    \begin{split}
      F(\mathbf{t},\mathbf{y};\Lambda)&=
      \sum_{B\in\mathscr{B}(\Lambda)}
      \Bigl(
      \prod_{g\in \Lambda\setminus B}
      \frac{t_g}
      {t_g-2\pi\sqrt{-1}\const{g}-\sum_{f\in B}(t_f-2\pi\sqrt{-1}\const{f})\langle \vec{g},\vec{f}^B\rangle}
      \Bigr)
      \\
      &\qquad\times
      \frac{1}{\sharp(\mathbb{Z}^r/\langle \vec{B}\rangle)}
      \sum_{\mathbf{w}\in \mathbb{Z}^r/\langle \vec{B}\rangle}
      \Bigl(
      \prod_{f\in B}\frac{t_f\exp
        ((t_f-2\pi\sqrt{-1}\const{f})\{\mathbf{y}+\mathbf{w}\}_{B,f})}{\exp(t_f-2\pi\sqrt{-1}\const{f})-1}
      \Bigr).
    \end{split}
  \end{equation}
\end{definition}


\begin{theorem}
\label{thm:main1}
{\rm (i)}
The function
$F(\mathbf{t},\mathbf{y};\Lambda)$
has one-sided continuity in $\mathbf{y}\in V$ in the direction $\phi$,
that is
\begin{equation}
  \lim_{c\to 0+}F(\mathbf{t},\mathbf{y}+c\phi;\Lambda)
=F(\mathbf{t},\mathbf{y};\Lambda).
\end{equation}

{\rm (ii)}
$F(\mathbf{t},\mathbf{y};\Lambda)$ is continuous in $\mathbf{y}$ on
$V\setminus\bigcup_{f\in \widetilde{\Lambda}}(\mathfrak{H}_{\Lambda\setminus\{f\}}+\mathbb{Z}^r)$.
In particular 
if $\widetilde{\Lambda}$ is empty, then
$F(\mathbf{t},\mathbf{y};\Lambda)$ is continuous on the whole $V$
and
is independent of the choice of $\phi$.

{\rm (iii)}
$F(\mathbf{t},\mathbf{y};\Lambda)$ is holomorphic in the neighborhood of the origin in $\mathbf{t}$.
\end{theorem}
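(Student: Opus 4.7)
The function $F(\mathbf{t},\mathbf{y};\Lambda)$ is a finite sum over bases $B\in\mathscr{B}(\Lambda)$ and coset representatives $\mathbf{w}\in\mathbb{Z}^r/\langle\vec{B}\rangle$ whose $\mathbf{y}$-dependence enters only through the scalars $\{\mathbf{y}+\mathbf{w}\}_{B,f}$, and whose $\mathbf{t}$-dependence factors into rational pieces $t_g/L_{B,g}(\mathbf{t})$ for $g\in\Lambda\setminus B$, where
\[
L_{B,g}(\mathbf{t}):=t_g-2\pi\sqrt{-1}\const{g}-\sum_{f\in B}(t_f-2\pi\sqrt{-1}\const{f})\langle\vec{g},\vec{f}^B\rangle,
\]
and exponential pieces $t_f\exp((t_f-2\pi\sqrt{-1}\const{f})\{\mathbf{y}+\mathbf{w}\}_{B,f})/(\exp(t_f-2\pi\sqrt{-1}\const{f})-1)$ for $f\in B$. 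This decomposition drives all three parts.

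\emph{Part (i).} I would reduce to showing that each map $\mathbf{y}\mapsto\{\mathbf{y}+\mathbf{w}\}_{B,f}$ is right-continuous along direction $\phi$, since all other factors are independent of $\mathbf{y}$. Writing $\mathbf{y}'=\mathbf{y}+c\phi$, the quantity $\langle\mathbf{y}',\vec{f}^B\rangle$ moves monotonically as $c\to 0+$ according to $\sgn\langle\phi,\vec{f}^B\rangle$. The case split defining $\{\cdot\}_{B,f}$ is engineered precisely so that in either sign case the ordinary fractional part is applied to a quantity approaching its limit \emph{from above}; the standard right-continuity of $\{\cdot\}$, together with \eqref{2-6} for the negative case, then yields the claim.

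\emph{Part (iii).} Each exponential factor is holomorphic at $t_f=0$: $t_f/(\exp(t_f-2\pi\sqrt{-1}\const{f})-1)$ has at worst a removable singularity there, of value $1$ when $\const{f}\in\mathbb{Z}$ and $0$ otherwise. The subtler issue is that $L_{B,g}(\mathbf{0})=-2\pi\sqrt{-1}(\const{g}-\sum_{f\in B}\langle\vec{g},\vec{f}^B\rangle\const{f})$ can vanish, namely when $g=\sum_{f\in B}\langle\vec{g},\vec{f}^B\rangle f$ as affine functionals on $V$; in that case the individual summand has a genuine singularity at $\mathbf{t}=\mathbf{0}$, and holomorphy of $F$ must come from cancellations across $\mathscr{B}$. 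I would prove this by grouping bases sharing a common affine dependence among elements of $\Lambda$ and showing by direct computation that the corresponding polar parts sum to a germ regular at the origin, an identity having the flavor of a Brion-type residue cancellation reflecting the polytopal structure underlying \eqref{eq:exp_F}.

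\emph{Part (ii).} Combining (i) with its $-\phi$ analog yields two-sided continuity wherever no $\{\mathbf{y}+\mathbf{w}\}_{B,f}$ jumps, i.e., where $\langle\mathbf{y}+\mathbf{w},\vec{f}^B\rangle\notin\mathbb{Z}$ for every $(B,f,\mathbf{w})$. The jump locus of the $(B,f)$-piece is $\mathfrak{H}_{B\setminus\{f\}}+\mathbb{Z}^r$. If $f\notin\widetilde{\Lambda}$, then $\rank\langle\vec{\Lambda}\setminus\{\vec{f}\}\rangle=r$, so there is $g\in\Lambda\setminus B$ with $B':=(B\setminus\{f\})\cup\{g\}\in\mathscr{B}$; the vectors $\vec{g}^{B'}$ and $\vec{f}^B$ are then parallel (both normal to $\mathfrak{H}_{B\setminus\{f\}}=\mathfrak{H}_{B'\setminus\{g\}}$), and a basis-exchange calculation using \eqref{2-6} shows that the jumps of the $(B,f)$- and $(B',g)$-pieces across this hyperplane cancel. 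If $f\in\widetilde{\Lambda}$, no such $g$ exists by \eqref{eq:LsB}, and the resulting uncancelled contribution is precisely what the excluded locus records. When $\widetilde{\Lambda}=\emptyset$, continuity on all of $V$ forces $\phi$-independence, since two continuous candidates for $F$ arising from different admissible $\phi$ must agree on the dense complement of finitely many hyperplanes, hence everywhere. The main obstacle is the residue cancellation in (iii).
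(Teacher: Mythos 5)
Your part (i) is correct and is essentially the paper's own argument (Lemma \ref{lm:contfrac}): the $\mathbf{y}$-dependence of each summand sits entirely in the scalars $\{\mathbf{y}+\mathbf{w}\}_{B,f}$, and the two-branch definition makes the argument of the ordinary fractional part approach its limit from above in either sign case, so right-continuity of $\{\cdot\}$ (together with \eqref{2-6}) finishes it.

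For (ii) and (iii) you have correctly located the difficulties but not resolved them, and the unproved cancellation identities are the entire substance of the theorem. In (iii), when the denominator $t_g-2\pi\sqrt{-1}\const{g}-\sum_{f\in B}(t_f-2\pi\sqrt{-1}\const{f})\langle\vec{g},\vec{f}^B\rangle$ vanishes at $\mathbf{t}=\mathbf{0}$, the offending summand is a ratio of linear forms multiplied by exponential factors depending on $\{\mathbf{y}+\mathbf{w}\}_{B,f}$ and summed over cosets $\mathbf{w}\in\mathbb{Z}^r/\langle\vec{B}\rangle$ whose index sets differ from basis to basis; the required cancellation is therefore not a rational-function identity one can verify by a short ``direct computation,'' and you do not exhibit it. Likewise in (ii), the jump of the $(B,f)$-piece across $\mathfrak{H}_{R}+\mathbb{Z}^r$ (with $R=B\setminus\{f\}$) must be cancelled not by a single partner $(B',g)$ but by all pieces $(B'',f'')$ with $\mathfrak{H}_{B''\setminus\{f''\}}=\mathfrak{H}_R$ --- there may be several admissible exchanges $g$ and several bases through $R$ --- again with mismatched coset sums; this multi-term identity is asserted, not proved. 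The paper never proves either identity directly: it defines $\widetilde{F}(\mathbf{t},\mathbf{y};\Lambda)$ as the generating function of the integral expression in Proposition \ref{prop:main}, for which holomorphy near $\mathbf{t}=\mathbf{0}$ and continuity in $\mathbf{y}$ off $\bigcup_{f\in\widetilde{\Lambda}}(\mathfrak{H}_{\Lambda\setminus\{f\}}+\mathbb{Z}^r)$ are manifest (Cauchy estimates on $C(k,y;b)$, dominated convergence, Lemma \ref{lm:aex}), and then proves $F=\widetilde{F}$ for $\mathbf{y}\notin\mathfrak{H}_{\mathscr{R}}$ by decomposing the integral over the polytopes $\mathcal{P}(\mathbf{m};\mathbf{y})$, showing these are simple for such $\mathbf{y}$, and applying the Lawrence--Varchenko/Brion-type formula of Lemma \ref{lm:simple_exp}; one-sided continuity then extends the identity to all of $V$ and transfers the properties of $\widetilde{F}$ to $F$. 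The ``Brion-type residue cancellation'' you invoke is thus precisely the content of Sections \ref{sec-7}--\ref{sec-9}, and appealing to its flavor does not discharge it: either carry out those multi-term identities explicitly, or route the proof through the integral representation as the paper does.
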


Write the Taylor expansion of $F(\mathbf{t},\mathbf{y};\Lambda)$
around the origin in $\mathbf{t}$ as
\begin{equation}
\label{F_Taylor_exp}
  F(\mathbf{t},\mathbf{y};\Lambda)=\sum_{\mathbf{k}\in\mathbb{N}_0^{\sharp \Lambda}}C(\mathbf{k},\mathbf{y};\Lambda)\prod_{f\in\Lambda}\frac{t_f^{k_f}}{k_f!}.
\end{equation}

\begin{theorem}
\label{thm:main1b}
We have
\begin{equation}
\label{eq:SeqC}
  S(\mathbf{k},\mathbf{y};\Lambda)
  =
  \Bigl(
  \prod_{f\in\Lambda}
  -\frac{(2\pi\sqrt{-1})^{k_f}}{k_f!}
  \Bigr)
  C(\mathbf{k},\mathbf{y};\Lambda)
\end{equation}
for $\mathbf{k}=(k_f)_{f\in \Lambda}\in\mathbb{N}_0^{\sharp \Lambda}$
and
$
\mathbf{y}\in V\setminus\bigcup_{f\in \widetilde{\Lambda}\cap\Lambda_1}(\mathfrak{H}_{\Lambda\setminus\{f\}}+\mathbb{Z}^r)$.
\end{theorem}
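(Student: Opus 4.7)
The plan is to establish \eqref{eq:SeqC} by directly identifying the Taylor coefficients of $F(\mathbf{t},\mathbf{y};\Lambda)$ with the (normalized) lattice sum $S(\mathbf{k},\mathbf{y};\Lambda)$, using the one-dimensional Bernoulli-polynomial identity \eqref{eq:1d} as the fundamental building block. By Theorem \ref{thm:main1}(iii) the Taylor expansion \eqref{F_Taylor_exp} is legitimate, and by Theorem \ref{thm:main0} the lattice sum $S$ is well-defined on the stated domain, so the question reduces to matching coefficients.

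First I would treat the absolutely convergent regime ($\Lambda_0=\emptyset$, each $k_f$ sufficiently large), where $S$ may be freely rearranged. The key observation is that each summand of $F$ indexed by $B\in\mathscr{B}$ can be interpreted as the contribution to $S$ of the coset decomposition $\mathbb{Z}^r=\bigsqcup_{\mathbf{w}\in\mathbb{Z}^r/\langle\vec{B}\rangle}(\mathbf{w}+\langle\vec{B}\rangle)$. Writing $\mathbf{v}=\mathbf{w}+\sum_{f\in B}m_f\vec{f}$ with $m_f\in\mathbb{Z}$ and using the 1-D Fourier identity $\frac{t\exp(t\{y\})}{\exp(t)-1}=\sum_{m\in\mathbb{Z}}\frac{t}{t-2\pi\sqrt{-1}m}e^{2\pi\sqrt{-1}my}$ (which is the generating-function form of \eqref{eq:1d}), iteration along the basis directions produces the Bernoulli-type factors $\frac{t_f\exp((t_f-2\pi\sqrt{-1}\const{f})\{\mathbf{y}+\mathbf{w}\}_{B,f})}{\exp(t_f-2\pi\sqrt{-1}\const{f})-1}$ of Definition \ref{def:gen}; meanwhile the non-basis factors $1/g(\mathbf{v})^{k_g}$ for $g\in\Lambda\setminus B$, after expansion as a geometric series in $\mathbf{t}$, yield the rational factors $\frac{t_g}{t_g-2\pi\sqrt{-1}\const{g}-\sum_{f\in B}(t_f-2\pi\sqrt{-1}\const{f})\langle\vec{g},\vec{f}^B\rangle}$.

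The hardest step is the combinatorial reconciliation: since $S$ is intrinsically defined while $F$ is a sum over bases, one needs a partial-fraction identity — the hyperplane-arrangement analogue of Brion's formula, equivalently a simplicial-cone decomposition of polytopes (the convex-polytope technique referenced in the abstract) — to show that the multi-basis sum collapses to the desired single lattice sum, with the various rational factors weighting each $\mathbf{v}\in\mathbb{Z}^r$ with exactly the product $\prod_{f}f(\mathbf{v})^{-k_f}$ called for by $S$. Once this identity is in place, \eqref{eq:SeqC} follows in the absolutely convergent regime. Finally, the extension to general $\mathbf{k}\in\mathbb{N}_0^{\sharp\Lambda}$ (in particular $\Lambda_0\neq\emptyset$) proceeds by noting that the Taylor coefficient of $F$ at $k_f=0$ for $f\in\Lambda_0$ naturally extracts the $(-1)^{\sharp\Lambda_0}$-weighted sum over $\mathbf{v}$ satisfying $f(\mathbf{v})=0$ for $f\in\Lambda_0$, matching the regularization of Definition \ref{def:S}; the continuity properties in Theorems \ref{thm:main0} and \ref{thm:main1}(i),(ii) then control the boundary behavior of $\mathbf{y}$ near the excluded hyperplanes $\mathfrak{H}_{\Lambda\setminus\{f\}}+\mathbb{Z}^r$ for $f\in\widetilde{\Lambda}\cap\Lambda_1$, allowing the identity to extend off the generic set.
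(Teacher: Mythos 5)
Your overall strategy --- matching the Taylor coefficients of $F$ with $S$ by running the one-dimensional identity \eqref{eq:1d} along a basis $B$ and invoking a Brion-type vertex formula to reconcile the sum over bases --- points in the right direction, but it has a genuine gap in how it handles convergence. The theorem is asserted for all $\mathbf{k}\in\mathbb{N}_0^{\sharp\Lambda}$, and when $k_f=1$ for many $f$ the lattice sum is only conditionally convergent: Theorem \ref{thm:main0} tells you the symmetric-cutoff limit exists, but not what it equals, and none of the rearrangements you propose (coset decomposition, iterated summation along the basis directions, interchange with the geometric-series expansion of the non-basis factors) is justified for a conditionally convergent multiple series. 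Continuity in $\mathbf{y}$ cannot repair this, because the obstruction is not boundary behaviour in $\mathbf{y}$ but the dependence of the value on the mode of summation. The paper's Section \ref{sec-4} exists precisely to close this gap: Proposition \ref{prop:main} proves, for every $\mathbf{k}\in\mathbb{N}_0^{\sharp\Lambda}$, that the cutoff limit equals the integral formula \eqref{eq:S_int}, using the uniform partial-sum estimate \eqref{est:C1} of Lemma \ref{lm:convk1}, the integrability Lemma \ref{lm:integrable}, dominated convergence, and a separate comparison (Lemmas \ref{lm:UWU}, \ref{lm:ZZdiff}) between the cube cutoff $W_N$ and the parallelotope cutoff $U_N(B_0)$. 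Nothing in your proposal substitutes for this analytic input, and ``extend by continuity'' does not reach it.

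The second issue is that the step you yourself call the hardest --- the partial-fraction or Brion-type identity collapsing the multi-basis expression to a single lattice sum --- is named but not supplied, and it is not a short computation. In the paper it is mediated by an auxiliary generating function $\widetilde{F}$ defined via the integral representation \eqref{eq:TF}, rewritten in Proposition \ref{prop:gen_func} as a sum over $\mathbf{m}\in\mathbb{Z}^r$ of integrals of exponentials over polytopes $\mathcal{P}(\mathbf{m};\mathbf{y})$; one must then prove these polytopes are simple for $\mathbf{y}\notin\mathfrak{H}_{\mathscr{R}}$ (Lemma \ref{lm:simplicity}), apply the vertex formula of Lemma \ref{lm:simple_exp}, and carry out the determinant bookkeeping of Lemmas \ref{lm:det_U} and \ref{lm:det_UU} before \eqref{tilde_F=F} emerges; one-sided continuity along $\phi$ then removes the genericity restriction on $\mathbf{y}$. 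So your outline is compatible with the paper's architecture, but the two load-bearing steps --- the conditional-convergence analysis and the polytope identity --- are exactly the ones left unproved.
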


The above results are again generalizations of the results proved in \cite{KM5}, \cite{KM3}.
In fact, the form of $F(\mathbf{t},\mathbf{y};\Lambda)$ in Definition \ref{def:gen} is the
generalization of \cite[Theorem 4.1]{KM5}, Theorem \ref{thm:main1} is the generalization of
the facts mentioned in \cite[p.252]{KM3}, and Theorem \ref{thm:main1b} is the generalization of
\cite[(3.10)]{KM5}.

Before going into the proofs of the main theorems, in the next section we will give several
examples.    Then we will start the proofs of main theorems from Section \ref{sec-4}.
Sections \ref{sec-4} is 
devoted to the proof of Theorem \ref{thm:main0}.
Then from Section \ref{sec-6} to Section \ref{sec-9} we will describe the proof of 
Theorem \ref{thm:main1} and Theorem \ref{thm:main1b}.    In the final section we will mention that
there is some hierarchy among generating functions.

\section{Examples}\label{sec-3}

In this section we apply our theorems to some special cases, and to state explicit expressions of
$F(\mathbf{t},\mathbf{y};\Lambda)$, $C(\mathbf{k},\mathbf{y};\Lambda)$ and
$S(\mathbf{k},\mathbf{y};\Lambda)$ for those examples.

\begin{example}
\label{ex:intro} 
Let $V=\mathbb{R}^2$.
Let $\alpha,\beta,\gamma\in\mathbb{C}$,
\begin{align}
  \Lambda&=\{f_1=((1,0),\alpha),f_2=((0,1),\beta),f_3=((1,1),\gamma)\},\\
  \mathscr{B}&=\{\{f_1,f_2\},\{f_1,f_3\},\{f_2,f_3\}\},
\end{align}
which corresponds to the series in \eqref{eq:introS1}, \eqref{eq:introS2} and so on.
Then the generating function is given by
\begin{equation}
  \begin{split}
  &F((t_1,t_2,t_3),(y_1,y_2);\Lambda)=
  \\
&
\frac{t_3}{t_3-2\pi\sqrt{-1}\gamma-(t_1-2\pi\sqrt{-1}\alpha)-(t_2-2\pi\sqrt{-1}\beta)}
\frac{t_1e^{(t_1-2 \pi \sqrt{-1} \alpha)\{y_1\}}}{e^{(t_1-2 \pi \sqrt{-1} \alpha)}-1}
\frac{t_2e^{(t_2-2 \pi \sqrt{-1} \beta)\{y_2\}}}{e^{(t_2-2 \pi \sqrt{-1} \beta)}-1}
\\
&+
\frac{t_2}{t_2-2\pi\sqrt{-1}\beta+(t_1-2\pi\sqrt{-1}\alpha)-(t_3-2\pi\sqrt{-1}\gamma)}
\frac{t_1e^{(t_1-2 \pi \sqrt{-1} \alpha)\{y_1-y_2\}}}{e^{(t_1-2 \pi \sqrt{-1} \alpha)}-1}
\frac{t_3e^{(t_3-2 \pi \sqrt{-1} \gamma)\{y_2\}}}{e^{(t_3-2 \pi \sqrt{-1} \gamma)}-1}
\\
&+
\frac{t_1}{t_1-2\pi\sqrt{-1}\alpha+(t_2-2\pi\sqrt{-1}\beta)-(t_3-2\pi\sqrt{-1}\gamma)}
\frac{t_2e^{(t_2-2 \pi \sqrt{-1} \beta)(1-\{y_1-y_2\})}}{e^{(t_2-2 \pi \sqrt{-1} \beta)}-1}
\frac{t_3e^{(t_3-2 \pi \sqrt{-1} \gamma)\{y_1\}}}{e^{(t_3-2 \pi \sqrt{-1} \gamma)}-1}.
\end{split}
\end{equation}
In particular, if $\alpha,\beta,\gamma\notin\mathbb{Z}$ with $\alpha+\beta\neq\gamma$,
we have
\begin{equation}
  \begin{split}
    C((2,1,1),(y_1,y_2);\Lambda)=
&
-\frac{16\sqrt{-1}\pi ^3 \{y_1-y_2\} e^{2\sqrt{-1}\pi(\alpha -\alpha  \{y_1-y_2\}-\gamma  \{y_2\})}}{\left(-1+e^{2\sqrt{-1}\pi  \alpha
   }\right)^2 \left(-1+e^{-2\sqrt{-1}\pi  \gamma }\right) (\alpha +\beta -\gamma )}
\\
&
+\frac{16\sqrt{-1}\pi ^3
   \{y_1-y_2\} e^{2\sqrt{-1}\pi  (2\alpha-\alpha  \{y_1-y_2\}-\gamma  \{y_2\})}}{\left(-1+e^{2\sqrt{-1}\pi  \alpha
   }\right)^2 \left(-1+e^{-2\sqrt{-1}\pi  \gamma }\right) (\alpha +\beta -\gamma )}
\\
&
+\frac{16\sqrt{-1}\pi ^3 e^{2\sqrt{-1}\pi(\alpha -\alpha  \{y_1-y_2\}-\gamma \{y_2\})}}{\left(-1+e^{2\sqrt{-1}\pi  \alpha }\right)^2 \left(-1+e^{-2\sqrt{-1}\pi  \gamma }\right) (\alpha
   +\beta -\gamma )}
\\
&
-\frac{8 \pi ^2 e^{2\sqrt{-1}\pi(-\beta +\beta  \{y_1-y_2\}-\gamma  \{y_1\})}}{\left(-1+e^{-2\sqrt{-1}\pi  \beta }\right) \left(-1+e^{-2 \sqrt{-1}\pi  \gamma }\right) (\alpha +\beta -\gamma )^2}
\\
&
+\frac{16\sqrt{-1}\pi ^3 \{y_1\} e^{2\sqrt{-1}\pi(\alpha -\alpha  \{y_1\}-\beta  \{y_2\})}}{\left(-1+e^{2\sqrt{-1}\pi  \alpha }\right)^2 \left(-1+e^{-2\sqrt{-1}\pi  \beta }\right) (\alpha +\beta
   -\gamma )}
\\
&
-\frac{16\sqrt{-1}\pi ^3 \{y_1\} e^{2\sqrt{-1}\pi (2\alpha -\alpha  \{y_1\}-\beta  \{y_2\})}}{\left(-1+e^{2\sqrt{-1}\pi  \alpha }\right)^2
   \left(-1+e^{-2\sqrt{-1}\pi  \beta }\right) (\alpha +\beta -\gamma )}
\\
&
-\frac{16\sqrt{-1}\pi ^3 e^{2\sqrt{-1}\pi(\alpha -\alpha  \{y_1\}-\beta  \{y_2\})}}{\left(-1+e^{2
  \sqrt{-1}\pi  \alpha }\right)^2 \left(-1+e^{-2\sqrt{-1}\pi  \beta }\right) (\alpha +\beta -\gamma )}
\\
&
+\frac{8 \pi ^2 e^{-2\sqrt{-1}\pi(\alpha  \{y_1\}+\beta  \{y_2\})}}{\left(-1+e^{-2\sqrt{-1}\pi  \alpha }\right) \left(-1+e^{-2\sqrt{-1}\pi  \beta }\right) (\alpha +\beta
   -\gamma )^2}
\\
&
-\frac{8 \pi ^2 e^{-2\sqrt{-1}\pi(\alpha  \{y_1-y_2\}+\gamma \{y_2\})}}{\left(-1+e^{-2\sqrt{-1}\pi  \alpha }\right) \left(-1+e^{-2\sqrt{-1}\pi  \gamma
   }\right) (\alpha +\beta -\gamma )^2}
\end{split}
\end{equation}
and
\begin{equation}
  \begin{split}
    S((2,1,1),(y_1,y_2);\Lambda)&=\lim_{N\to\infty}
    \sum_{\substack{m,n\in\mathbb{Z}\\
|m|,|n|\leq N}}
    \frac{e^{2\pi\sqrt{-1}(my_1+ny_2)}}{(m+\alpha)^{2}(n+\beta)^{1}(m+n+\gamma)^{1}}\\
    &=\frac{-(2\pi\sqrt{-1})^2}{2!}\frac{-(2\pi\sqrt{-1})^1}{1!}\frac{-(2\pi\sqrt{-1})^1}{1!}    
    C((2,1,1),(y_1,y_2);\Lambda).
  \end{split}
\end{equation}
If $\alpha=0$ and $\beta,\gamma\notin\mathbb{Z}$ with $\beta\neq \gamma$, we have
\begin{equation}
  \begin{split}
    C((0,1,2),(y_1,y_2);\Lambda)=
&
\frac{\sqrt{-1} \{y_2\} e^{-2 \sqrt{-1} \pi  \gamma  \{y_2\}}}{2 \left(-1+e^{-2 \sqrt{-1} \pi  \gamma }\right) \pi  (\beta - \gamma )}
-\frac{\sqrt{-1} e^{-2 \sqrt{-1} \pi  \gamma (1+\{y_2\})}}{2 \left(-1+e^{-2 \sqrt{-1} \pi  \gamma }\right)^2 \pi  (\beta -\gamma )}
\\
&
+\frac{e^{-2 \sqrt{-1} \pi  \beta  \{y_2\}}}{4 \left(-1+e^{-2 \sqrt{-1} \pi  \beta }\right) \pi^2  (\beta -\gamma )^2}
-\frac{e^{-2 \sqrt{-1} \pi  \gamma  \{y_2\}}}{4 \left(-1+e^{-2 \sqrt{-1} \pi  \gamma }\right) \pi^2 (  \beta - \gamma )^2}
\end{split}
\end{equation}
and
\begin{equation}
  \begin{split}
    S((0,1,2),(y_1,y_2);\Lambda)&=
    -\lim_{N\to\infty}
  \sum_{\substack{m,n\in\mathbb{Z}\\|m|,|n|\leq N\\m=0}}
  \frac{e^{2\pi\sqrt{-1}(my_1+ny_2)}}{(n+\beta)^{1}(m+n+\gamma)^{2}}
\\
&
=
-\lim_{N\to\infty}
  \sum_{\substack{n\in\mathbb{Z}\\|n|\leq N}}
  \frac{e^{2\pi\sqrt{-1}ny_2}}{(n+\beta)^{1}(n+\gamma)^{2}}
\\
    &=\frac{-(2\pi\sqrt{-1})^0}{0!}\frac{-(2\pi\sqrt{-1})^1}{1!}\frac{-(2\pi\sqrt{-1})^2}{2!}    
    C((0,1,2),(y_1,y_2);\Lambda).
  \end{split}
\end{equation}
\end{example}

\begin{example}\label{Exam-A1}
Let $V=\mathbb{R}$.
Let 
\begin{align}
  \Lambda&=\Lambda_{\alpha}=\{f_{-1}=(-1,\alpha),f_0=(1,0),f_1=(1,\alpha)\},\\
  \mathscr{B}&=\{\{f_{-1}\},\{f_0\},\{f_1\}\},
\end{align}
where $\alpha\in\mathbb{C}\setminus\{0\}$,
which corresponds to the series
\begin{equation}
  S(\mathbf{k},y;\Lambda_{\alpha})=\lim_{N\to\infty}
\sum_{\substack{m\in\mathbb{Z}\setminus\{0,\pm\alpha\}\\|m|\leq N}}\frac{e^{2\pi\sqrt{-1}my}}{(-m+\alpha)^{k_{-1}}m^{k_0}(m+\alpha)^{k_1}},
\end{equation}
where $k_{-1},k_0,k_1\in\mathbb{N}$. Then the generating function is given by
\begin{equation}
  \begin{split}
  &F((t_{-1},t_0,t_1),y;\Lambda_{\alpha})=
  \\
  &\quad\frac{t_0}{t_0 + (t_{-1} - 2 \pi \sqrt{-1}\alpha)}\frac{t_1}{t_1 - 2 \pi \sqrt{-1}\alpha + (t_{-1} - 2 \pi \sqrt{-1}\alpha)}\frac{t_{-1}e^{(t_{-1} - 2 \pi \sqrt{-1}\alpha)(1-\{y\})}}{e^{t_{-1} - 2 \pi \sqrt{-1}\alpha} - 1} 
\\
&\quad+
 \frac{t_{-1}}{t_{-1} - 2 \pi \sqrt{-1}\alpha + t_0}\frac{t_1}{t_1 - 2 \pi \sqrt{-1}\alpha - t_0}\frac{t_0e^{t_0\{y\}}}{e^{t_0} - 1} 
\\
&\quad+
\frac{t_{-1}}{t_{-1} - 2 \pi \sqrt{-1}\alpha + (t_1 - 2 \pi \sqrt{-1}\alpha)}\frac{t_0}{t_0 - (t_1 - 2 \pi \sqrt{-1}\alpha)}
 \frac{t_1e^{(t_1 - 2 \pi\sqrt{-1}\alpha)\{y\}}}{e^{t_1 - 2 \pi\sqrt{-1}\alpha} - 1}.
 \end{split}
\end{equation}
Then for $\alpha\notin\mathbb{Z}$,
\begin{equation}
  \begin{split}
    C((2,2,2),y;\Lambda_{\alpha})&=
  -\frac{1}{4 \pi ^6 \alpha^6}
  +\frac{1}{24 \pi ^4 \alpha^4}
  -\frac{\{y\}}{4 \pi ^4 \alpha^4}
  +\frac{\{y\}^2}{4 \pi ^4 \alpha^4}
  -\frac{3 \sqrt{-1} e^{2 \pi\sqrt{-1}  \alpha \{y\}}}{16 \pi ^5 \alpha^5 \left(-1+e^{2 \pi\sqrt{-1}  \alpha}\right)^2}
  \\
  &
  -\frac{3 \sqrt{-1} e^{2 \pi\sqrt{-1}  \alpha(1-\{y\})}}{16 \pi ^5 \alpha^5\left(-1+e^{2 \pi\sqrt{-1}  \alpha}\right)^2}
  +\frac{3 \sqrt{-1} e^{2 \pi\sqrt{-1}  \alpha(2-\{y\})}}{16 \pi ^5 \alpha^5 \left(-1+e^{2 \pi\sqrt{-1}  \alpha}\right)^2}
  +\frac{3 \sqrt{-1} e^{2 \pi\sqrt{-1}  \alpha (\{y\}+1)}}{16 \pi ^5 \alpha^5 \left(-1+e^{2 \pi\sqrt{-1}  \alpha}\right)^2}
  \\
  &
  -\frac{\{y\} e^{2 \pi\sqrt{-1}  \alpha \{y\}}}{8 \pi ^4 \alpha^4 \left(-1+e^{2 \pi\sqrt{-1}  \alpha}\right)^2}
  +\frac{\{y\} e^{2 \pi\sqrt{-1}  \alpha(1-\{y\})}}{8 \pi ^4 \alpha^4 \left(-1+e^{2 \pi\sqrt{-1}  \alpha}\right)^2}
  -\frac{\{y\} e^{2 \pi\sqrt{-1}  \alpha(2-\{y\})}}{8 \pi ^4 \alpha^4 \left(-1+e^{2 \pi\sqrt{-1}  \alpha}\right)^2}
  \\
  &
  +\frac{\{y\} e^{2 \pi\sqrt{-1}  \alpha (\{y\}+1)}}{8 \pi ^4 \alpha^4 \left(-1+e^{2 \pi\sqrt{-1}  \alpha}\right)^2}
  -\frac{e^{2 \pi\sqrt{-1}  \alpha(1-\{y\})}}{8 \pi ^4 \alpha^4 \left(-1+e^{2 \pi\sqrt{-1}  \alpha}\right)^2}
  -\frac{e^{2 \pi\sqrt{-1}  \alpha (\{y\}+1)}}{8 \pi ^4 \alpha^4 \left(-1+e^{2 \pi\sqrt{-1}  \alpha}\right)^2}
\end{split}
\end{equation}
and for $\alpha\in\mathbb{Z}$,
\begin{equation}
  \begin{split}
    C((2,2,2),y;\Lambda_{\alpha})&=
-\frac{1}{4 \pi ^6 \alpha^6}
+\frac{1}{24 \pi ^4 \alpha^4}
-\frac{\{y\}}{4 \pi ^4 \alpha^4}
+\frac{\{y\}^2}{4 \pi ^4 \alpha^4}
-\frac{3 \sqrt{-1} \{y\} e^{-2 \pi\sqrt{-1}  \alpha \{y\}}}{16 \pi ^5 \alpha^5}
\\
&
+\frac{3 \sqrt{-1} \{y\} e^{2 \pi\sqrt{-1}  \alpha \{y\}}}{16 \pi ^5 \alpha^5}
+\frac{3 \sqrt{-1} e^{-2 \pi\sqrt{-1}  \alpha \{y\}}}{32 \pi ^5 \alpha^5}
-\frac{3 \sqrt{-1} e^{2 \pi\sqrt{-1}  \alpha \{y\}}}{32 \pi ^5 \alpha^5}
\\
&+\frac{\{y\}^2 e^{-2 \pi\sqrt{-1}  \alpha \{y\}}}{16 \pi ^4 \alpha^4}
+\frac{\{y\}^2 e^{2 \pi\sqrt{-1}  \alpha \{y\}}}{16 \pi ^4 \alpha^4}
-\frac{\{y\} e^{-2 \pi\sqrt{-1}  \alpha \{y\}}}{16 \pi ^4 \alpha^4}
-\frac{\{y\} e^{2 \pi\sqrt{-1}  \alpha \{y\}}}{16 \pi ^4 \alpha^4}
\\
&-\frac{23 e^{-2 \pi\sqrt{-1}  \alpha \{y\}}}{128 \pi ^6 \alpha^6}
-\frac{23 e^{2 \pi\sqrt{-1}  \alpha \{y\}}}{128 \pi ^6 \alpha^6}
+\frac{e^{-2 \pi\sqrt{-1}  \alpha \{y\}}}{96 \pi ^4 \alpha^4}
+\frac{e^{2 \pi\sqrt{-1}  \alpha \{y\}}}{96 \pi ^4 \alpha^4}.
\end{split}
\end{equation}
For example, setting $y=0$ and $\alpha=1,2,3$, we obtain
\begin{align*}
S((2,2,2),0;\Lambda_1) & =\sum_{\substack{m\in\mathbb{Z}\setminus\{0,\pm 1\}}}\frac{1}{(-m+1)^{2}m^{2}(m+1)^{2}}=\frac{1}{2}\pi^2 - \frac{39}{8},\\
S((2,2,2),0;\Lambda_2) & =\sum_{\substack{m\in\mathbb{Z}\setminus\{0,\pm 2\}}}\frac{1}{(-m+2)^{2}m^{2}(m+2)^{2}}=\frac{1}{32}\pi^2 - \frac{39}{512},\\
S((2,2,2),0;\Lambda_3) & =\sum_{\substack{m\in\mathbb{Z}\setminus\{0,\pm 3\}}}\frac{1}{(-m+3)^{2}m^{2}(m+3)^{2}}=\frac{1}{162}\pi^2 - \frac{13}{1944}.
\end{align*}
Similarly, computing $C((2k,2k,2k),0;\Lambda_{\alpha})$, we can obtain
\begin{align*}
S((4,4,4),0;\Lambda_1)&=\frac{1}{40}\pi^4 + \frac{35}{16}\,\pi^2 - \frac{3075}{128},\\
S((6,6,6),0;\Lambda_2)&=\frac{11}{20643840}\pi^6 + \frac{21}{2097152}\pi^4 + \frac{3003}{16777216}\pi^2 - \frac{137067}{268435456},\\
S((8,8,8),0;\Lambda_3)&=\frac{43}{8678218953600}\pi^8 + \frac{367}{7810397058240}\pi^6 + \frac{581}{1983592903680}\pi^4\\
&\quad + \frac{46189}{21422803359744}\pi^2 - \frac{2864587}{1028294561267712}.
\end{align*}
Here we define the zeta-function associated with $\Lambda_{\alpha}$ by 
\begin{equation}\label{A1-zeta}
  \zeta((s_1,s_2,s_3);\Lambda_{\alpha})
=\sum_{\substack{m=1 \\ m\neq \pm \alpha}}^{\infty}\frac{1}{(-m+\alpha)^{s_1}m^{s_2}(m+\alpha)^{s_3}},
\end{equation}
which can be regarded as a Hurwitz-type analogue of the Riemann zeta-function, that is, with a shifting parameter $\alpha$. 
We can easily check that $S((2k,2k,2k),0;\Lambda_{\alpha})=2\zeta((2k,2k,2k);\Lambda_{\alpha})$ for $k\in \mathbb{N}$. Therefore we obtain from the above results that, for example, 
\begin{align*}
\zeta((2,2,2);\Lambda_1) & =\frac{1}{4}\pi^2 - \frac{39}{16},\\
\zeta((4,4,4);\Lambda_1) & =\frac{1}{80}\pi^4 + \frac{35}{32}\,\pi^2 - \frac{3075}{256},\\
\zeta((6,6,6);\Lambda_2) & =\frac{11}{41287680}\pi^6 + \frac{21}{4194304}\pi^4 + \frac{3003}{33554432}\pi^2 - \frac{137067}{536870912}.
\end{align*}
\end{example}

\begin{example}\label{Exam-A2}
Let $V=\mathbb{R}^2$, and $\alpha\in\mathbb{C}\setminus\{0\}$.
Let 
\begin{align}
  \Lambda&=\Lambda_{\alpha}=\{\{f_{1j}\}_j,\{ f_{2j} \}_j,\{f_{3j}\}_j\}\\
  &=\{\{(-1,0,\alpha),(1,0,0), (1,0,\alpha)\},\ 
  \{(0,-1,\alpha),(0,1,0),(0,1,\alpha)\},\ \{(-1,-1,\alpha),(1,1,0),(1,1,\alpha)\}\},\notag\\
  \mathscr{B}&=\{\{f_{1j},f_{2l}\}_{j,l},\{f_{1j},f_{3l}\}_{j,l},\{f_{2j},f_{3l}\}_{j,l}\}.
\end{align}
Set $\mathbf{y}=0$ and 
\begin{align*}
S\left( \{k_j \}_{1\leq j\leq 9},0;\Lambda_{\alpha} \right)&=\lim_{N\to\infty} \sum_{\substack{m,n\in\mathbb{Z}\setminus\{0,\pm\alpha\}\\ m+n\neq 0,\pm \alpha\\ |m|,|n|\leq N}}\frac{1}{(-m+\alpha )^{k_1}m^{k_2}(m+\alpha )^{k_3}}\\
& \quad \times \frac{1}{(-n+\alpha)^{k_4}n^{k_5}(n+\alpha )^{k_6}(-(m+n)+\alpha )^{k_7}(m+n)^{k_8}(m+n+\alpha )^{k_9}}.
\end{align*}
Then, computing $C(\{k_j\},0;\Lambda_{\alpha})$, we obtain, for example,
\begin{align*}
S((1,2,2,2,1,1,1,2,2),0;\Lambda_1)&=\frac{1}{1890}\pi^6 + \frac{701}{2160}\pi^4 - \frac{1841}{108}\pi^2 + \frac{2822557}{20736},\\
S((2,2,2,2,2,2,2,2,2),0;\Lambda_2)&=\frac{11}{15482880}\pi^6 + \frac{4901}{70778880}\pi^4 -\frac{ 26747}{28311552}\pi^2 + \frac{20643217}{10871635968},\\
S((1,1,1,2,2,2,1,1,1),0;\Lambda_3)&=\frac{2}{295245}\pi^4 - \frac{227}{6377292}\pi^2 + \frac{14183}{459165024}.
\end{align*}
Similarly to Example \ref{Exam-A1}, we define the zeta-function associated with $\Lambda$ by
\begin{align}
\zeta_2(\{s_j\}_{1\leq j\leq 9};\Lambda_{\alpha})& =\sum_{\substack{m,n=1 \\ m\neq \pm \alpha \\ n\neq \pm \alpha \\ m+n\neq \pm \alpha}}^\infty \frac{1}{(-m+\alpha )^{s_1}m^{s_2}(m+\alpha )^{s_3}} \label{A2-zeta}\\
& \quad \times \frac{1}{(-n+\alpha )^{s_4}n^{s_5}(n+\alpha )^{s_6}(-(m+n)+\alpha )^{s_7}(m+n)^{s_8}(m+n+\alpha )^{s_9}}, \notag
\end{align}
which can be regarded as a Hurwitz-type analogue of the zeta-function of the root system of type $A_2$ 
defined by 
\begin{equation}
\zeta_2((s_1,s_2,s_3);A_2)=\sum_{m,n=1}^\infty \frac{1}{m^{s_1}n^{s_2}(m+n)^{s_3}} \label{A2}
\end{equation}
(see \cite[Section 2]{KM2} \cite[Section 11.7, Example 2]{KM3}). Note that \eqref{A2} is also called the Tornheim double sum or the Mordell-Tornheim double zeta-function (see, for example, \cite{M02,To}). 
We already studied certain Hurwitz-type analogues of zeta-functions of root systems in 
\cite[Section 8]{KM5}. 
From the viewpoint of root systems, we can regard 
$S\left(\{2k\}_{1\leq j\leq 9},0;\Lambda_{\alpha} \right)$ is the sum of zeta values 
$\zeta_2(\{2k\}_{1\leq j\leq 9};\Lambda_{\alpha})$ under the action of the Weyl group of type 
$A_2$ $(\simeq S_3)$. This implies that
$$S\left(\{2k\}_{1\leq j\leq 9},0;\Lambda_{\alpha} \right)=6\zeta_2(\{2k\}_{1\leq j\leq 9};\Lambda_{\alpha})\quad (k\in \mathbb{N}). $$
Therefore, as an analogue of $\zeta_2((2,2,2);A_2)=\pi^6/2835$, we obtain from the above result that
\begin{align*}
\zeta_2((2,2,2,2,2,2,2,2,2);\Lambda_2)=\frac{11}{92897280}\pi^6 + \frac{4901}{424673280}\pi^4 -\frac{ 26747}{ 169869312 }\pi^2 + \frac{20643217}{65229815808}.
\end{align*}

\end{example}

\begin{remark}
We give another interpretation of
 the series \eqref{A1-zeta} and \eqref{A2-zeta}, that is,
we regard
each term of these series
 as a product of positive roots of affine root system $A_1^{(1)}$ and $A_2^{(1)}$ respectively 
(for the theory of affine root systems, see \cite{Kac}).
Since there are infinitely many positive roots in affine root systems,
the product consists of infinitely many factors.
 In order for the infinite product to make sense, we understand that infinitely many variables are set to be zero and hence the product is truncated.
\end{remark}

\section{Proof of Theorem \ref{thm:main0}}\label{sec-4}

Now we start the proofs of the main theorems.
First of all, in this section, we prove Theorem \ref{thm:main0}.
The main body of the argument is the proof of
an evaluation formula (Proposition \ref{prop:main}) for 
$S(\mathbf{k},\mathbf{y};\Lambda)$.

For $t,b\in\mathbb{C}$ and $y\in\mathbb{R}$
let
\begin{equation}\label{F_def_exp}
  F(t,y;b)=\frac{t e^{(t-2\pi \sqrt{-1}b)y}}{e^{t-2\pi \sqrt{-1}b}-1}=\sum_{k=0}^\infty C(k,y;b)\frac{t^k}{k!},
\end{equation}
where the right-hand side converges when $|t|$ is sufficiently small.

It is to be noted that $F(t,\{y\};b)$ (resp.~$C(k,\{y\};b)$) is just the special case $r=1$,
$\Lambda=\{(1,b)\}=B$ of $F(\mathbf{t},\mathbf{y};\Lambda)$ defined by \eqref{eq:exp_F}
(resp.~$C(\mathbf{k},\mathbf{y};\Lambda)$ defined by \eqref{F_Taylor_exp}).

\begin{prop}
\label{prop:main}
For $\mathbf{k}=(k_f)_{f\in\Lambda}\in\mathbb{N}_{0}^{\sharp \Lambda}$,
$\mathbf{y}\in V\setminus\bigcup_{f\in \widetilde{\Lambda}\cap\Lambda_1}(\mathfrak{H}_{\Lambda\setminus\{f\}}+\mathbb{Z}^r)$,
the series \eqref{eq:S} converges.
For a fixed decomposition $\Lambda=B_0\cup L_0$ with $B_0=\{f_1,\ldots,f_r\}\in\mathscr{B}$, we have
\begin{equation}
\label{eq:S_int}
  \begin{split}
    S(\mathbf{k},\mathbf{y};\Lambda)
    &=
    \frac{1}{\sharp(\mathbb{Z}^r/\langle\vec{B}_0\rangle)}
    \prod_{f\in \Lambda}
    \Bigl(
    -\frac{(2\pi\sqrt{-1})^{k_f}}{k_f!}   
    \Bigr)
    \sum_{\mathbf{w}\in\mathbb{Z}^r/\langle\vec{B}_0\rangle}  
    \int_0^1\dots\int_0^1
    \Bigl(\prod_{g\in L_0}    C(k_g,x_g;\const{g}) dx_g\Bigr)
    \\
    &\qquad\times
    \prod_{f\in B_0}
    \Bigl(
    C(k_f,\{\mathbf{y}+\mathbf{w}-\sum_{g\in L_0}x_g\vec{g}\}_{B_0,f};\const{f})
    \Bigr).
  \end{split}
\end{equation}
\end{prop}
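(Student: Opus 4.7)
The plan is to start from the right-hand side of \eqref{eq:S_int} and reduce it to the defining series of $S(\mathbf{k},\mathbf{y};\Lambda)$ by Fourier analysis adapted to the basis $B_0$. The key input is the one-dimensional Fourier coefficient formula for $C(k,\{y\};b)$ obtained by direct computation from the generating function \eqref{F_def_exp}: for every $n\in\mathbb{Z}$,
\begin{equation*}
\int_0^1 C(k,\{y\};b)\,e^{-2\pi\sqrt{-1}ny}\,dy=
\begin{cases}
-\dfrac{k!}{(2\pi\sqrt{-1})^k}\dfrac{1}{(n+b)^k} & (k\geq 1,\ n+b\neq 0),\\
0 & (k\geq 1,\ n+b=0),\\
1 & (k=0,\ n+b=0),\\
0 & (k=0,\ n+b\neq 0),
\end{cases}
\end{equation*}
which generalises \eqref{eq:1d} to shifted denominators. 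Read as a Fourier series, this expresses each factor $C(k_f,\{\mathbf{y}+\mathbf{w}-\sum_{g\in L_0}x_g\vec{g}\}_{B_0,f};\const{f})$ in the RHS of \eqref{eq:S_int} as a sum in the modes $e^{2\pi\sqrt{-1}n_f\langle\mathbf{y}+\mathbf{w}-\sum_g x_g\vec{g},\,\vec{f}^{B_0}\rangle}$; the sign dichotomy in the definition of $\{\cdot\}_{B_0,f}$ is immaterial because $e^{2\pi\sqrt{-1}n\{a\}_{B_0,f}}=e^{2\pi\sqrt{-1}n\langle\cdot,\vec{f}^{B_0}\rangle}$ for integer $n$.

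Combining the exponentials over $f\in B_0$ yields $e^{2\pi\sqrt{-1}\langle\mathbf{y}+\mathbf{w}-\sum_g x_g\vec{g},\,\mathbf{v}\rangle}$ with $\mathbf{v}:=\sum_{f\in B_0}n_f\vec{f}^{B_0}$, and $\mathbf{v}$ ranges over the dual lattice $\sum_f\mathbb{Z}\vec{f}^{B_0}$, which contains $\mathbb{Z}^r$ with index $d:=\sharp(\mathbb{Z}^r/\langle\vec{B}_0\rangle)$. Three simplifications then collapse the expression. First, the character sum $\sum_{\mathbf{w}\in\mathbb{Z}^r/\langle\vec{B}_0\rangle}e^{2\pi\sqrt{-1}\langle\mathbf{w},\mathbf{v}\rangle}$ on the finite group of order $d$ equals $d$ if $\mathbf{v}\in\mathbb{Z}^r$ and $0$ otherwise, cancelling the prefactor $1/d$ and restricting the summation to $\mathbf{v}\in\mathbb{Z}^r$. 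Second, each integral $\int_0^1 C(k_g,x_g;\const{g})\,e^{-2\pi\sqrt{-1}\langle\vec{g},\mathbf{v}\rangle x_g}\,dx_g$ is, by the formula above applied with $n=\langle\vec{g},\mathbf{v}\rangle\in\mathbb{Z}$, equal to $-k_g!/(2\pi\sqrt{-1})^{k_g}\cdot g(\mathbf{v})^{-k_g}$ when $k_g\geq 1,\ g(\mathbf{v})\neq 0$, and to the indicator of $g(\mathbf{v})=0$ when $k_g=0$. Third, using $n_f+\const{f}=f(\mathbf{v})$ for $f\in B_0$, the Fourier coefficients of the $B_0$-factors assemble into analogous expressions. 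The prefactor $\prod_{f\in\Lambda}[-(2\pi\sqrt{-1})^{k_f}/k_f!]$ cancels exactly the factors $-k_f!/(2\pi\sqrt{-1})^{k_f}$ produced by the Fourier coefficients, while the $k_f=0$ terms contribute $-(2\pi\sqrt{-1})^0/0!=-1$ each and combine into the sign $(-1)^{\sharp\Lambda_0}$; the resulting expression coincides termwise with the summand of $S(\mathbf{k},\mathbf{y};\Lambda)$.

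The main obstacle is justifying the above interchanges together with the convergence of the series. When every $k_f\geq 2$ the entire manipulation converges absolutely and there is no issue; the delicate case is $k_f=1$ with $f\in\widetilde{\Lambda}$, for which the Fourier series of $C(1,\{y\};b)$ converges only conditionally. The exclusion $\mathbf{y}\notin\bigcup_{f\in\widetilde{\Lambda}\cap\Lambda_1}(\mathfrak{H}_{\Lambda\setminus\{f\}}+\mathbb{Z}^r)$ is designed precisely to guarantee pointwise convergence of the relevant partial Fourier sums at each translate $\mathbf{y}+\mathbf{w}$. My strategy is to match the truncation $|v_j|\leq N$ in $Z(N;\mathbf{k},\mathbf{y};\Lambda)$ with a suitable truncation of each inner Fourier sum: since $\vec{B}_0$ is a basis of $V$, the coordinates $n_f=\langle\vec{f},\mathbf{v}\rangle$ differ from $(v_j)$ by a fixed invertible linear transformation, so a cube in one set of coordinates is a parallelepiped of bounded distortion in the other, and classical Dirichlet-type estimates on partial Fourier sums of Bernoulli polynomials control the limit at admissible points. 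A robust alternative is induction on $\sharp L_0$: the base case $L_0=\emptyset$ (so $\Lambda=B_0=\widetilde{\Lambda}$) is the direct multi-dimensional Fourier expansion of $\prod_f C(k_f,\{\cdot\}_{B_0,f};\const{f})$, and each inductive step absorbs one $g\in L_0$ via its Fourier series, introducing only finitely many new conditional convergence issues per step.
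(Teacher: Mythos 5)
Your algebraic skeleton is the same as the paper's: the Fourier-coefficient formula you state is exactly Lemma \ref{lm:intC}, the character-sum orthogonality is Lemma \ref{lm:characteristic} applied with $P=\langle\vec{B}_0^*\rangle$, $Q=\mathbb{Z}^r$, and assembling the $B_0$-factors via the coordinates $u_f=\langle\vec{f},\mathbf{v}\rangle$ is how the paper's proof proceeds; running the computation from the integral back to the series is only a cosmetic reversal. The difficulty of this proposition, however, lies entirely in the convergence analysis, and there your proposal has a genuine gap. You locate the delicate case at ``$k_f=1$ with $f\in\widetilde{\Lambda}$'', but that is in fact the harmless case: for $f\in\widetilde{\Lambda}$ one has $\langle\vec{g},\vec{f}^{B_0}\rangle=0$ for every $g\in L_0$, so the argument $\langle\mathbf{y}+\mathbf{w}-\sum_{g}x_g\vec{g},\vec{f}^{B_0}\rangle$ is constant in $(x_g)$ and, by the hypothesis on $\mathbf{y}$, lies outside $\mathbb{Z}$. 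The truly delicate case is $k_f=1$ for $f\in B_0\setminus\widetilde{\Lambda}$: there the argument sweeps across $\mathbb{Z}$ as $(x_g)$ varies over the cube, the hypothesis on $\mathbf{y}$ imposes no restriction, and the truncated conditionally convergent sums blow up near that locus. To interchange $\lim_{N\to\infty}$ with the integrals you need (i) a bound for the truncated $k=1$ sums, uniform in $N$ and explicit in $y$, of the shape $K\bigl((1-\{y\})^{-1/(\mu+1)}+\{y\}^{-1/(\mu+1)}\bigr)$ (Lemma \ref{lm:convk1}); (ii) integrability over the cube of the \emph{product} of one such singular factor for each $f\in\Lambda_1\cap B_0$, which forces $\mu$ to be taken at least as large as the number of factors (Lemma \ref{lm:integrable}, used with $\mu=r$); and (iii) a measure-zero statement for the singular locus so that the limit exists almost everywhere in $(x_g)$ (Lemmas \ref{lm:locfin2} and \ref{lm:aex}). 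None of these appears in your sketch, and ``pointwise convergence at admissible points'' does not substitute for them, since admissibility of $\mathbf{y}$ does not keep the integration variable off the singular set.

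The second gap is the truncation matching. Bounded distortion between the cube $|v_j|\le N$ and the parallelepiped $|\Re f(\mathbf{v})|\le N$ does not by itself show that the two truncated sums have the same limit; one must prove that the contribution of the lattice points in $W_N\setminus U_{dN}(B_0)$ tends to $0$, which requires the partial-sum estimate \eqref{est:C1} over arbitrary windows $N\le n+\Re b\le M$ (not just symmetric ones) together with a lattice-point count yielding the factor $N^{-1/(\mu+1)}(\log N)^{r}$, as in Lemmas \ref{lm:UWU} and \ref{lm:ZZdiff}. Your proposed induction on $\sharp L_0$ does not circumvent either issue: the base case $\Lambda=B_0$ already needs the truncation comparison, and each inductive step reintroduces the same limit--integral interchange.
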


This is a generalization of \cite[Theorem 6]{KM3} (for integral values of $\mathbf{k}$).
Only the case in the domain of absolute convergence was considered in \cite[Theorem 6]{KM3},
so there was no problem of convergence.
In our present situation,
if $k_f\geq2$ for all $f\in B$ with some fixed $B\in\mathscr{B}$, then the matter of convergence
is again obvious, so it is easy to prove our claims.
However if $k_f=1$ for sufficiently many $f\in \Lambda$, then 
there are subtle problems on convergence, and the proof becomes much more complicated.
It should be remarked that the key of the convergence of $S(\mathbf{k},\mathbf{y};\Lambda)$ is the condition $\rank\langle\vec{\Lambda}\rangle=r$.

Since it is difficult to treat \eqref{eq:S} directly,
in the following we consider a little modified sum
\begin{equation}
 \label{eq:S_1}
  S_1(\mathbf{k},\mathbf{y};\Lambda;B_0)=
  \lim_{N\to\infty}
  Z_1(N;\mathbf{k},\mathbf{y};\Lambda;B_0),
  \end{equation}
where
\begin{gather}
\label{Z_1_def}
  Z_1(N;\mathbf{k},\mathbf{y};\Lambda;B_0)=
  (-1)^{\sharp\Lambda_0}
  \sum_{\substack{\mathbf{v}\in\mathbb{Z}^{r}\\
      |\Re f(\mathbf{v})|\leq N\quad(f\in B_0)\\
      f(\mathbf{v})\neq 0\quad(f\in\Lambda_+)\\
      f(\mathbf{v})=0\quad(f\in\Lambda_0)}}
  e^{2\pi\sqrt{-1}\langle \mathbf{y},\mathbf{v}\rangle}
  \prod_{f\in\Lambda_+}
  \frac{1}{f(\mathbf{v})^{k_f}}.
\end{gather}
That is, the condition
$|v_j|\leq N$ for $1\leq j\leq r$ 
in the definition of $S(\mathbf{k},\mathbf{y};\Lambda)$
is replaced by
$|\Re f(\mathbf{v})|\leq N$ for $f\in B_0$.
At the last stage of the proof we will show that 
$S(\mathbf{k},\mathbf{y};\Lambda)=S_1(\mathbf{k},\mathbf{y};\Lambda;B_0)$.
In particular, we will find that $S_1(\mathbf{k},\mathbf{y};\Lambda;B_0)$ actually does not depend
on the choice of $B_0$.

The proof of Proposition \ref{prop:main} consists of three steps.
\bigskip

{\it The first step}.
We first consider the simplest case of \eqref{eq:S_1}, which corresponds to $r=1$ and $\Lambda=\{(1,b)\}=B$ with $\mathscr{B}=\{B\}$, in Lemmas \ref{lm:convkn0} and \ref{lm:convk1}. 
\begin{lemma}
\label{lm:convkn0}
For $b\in\mathbb{C}$, $y\in\mathbb{R}$ and $k\in\mathbb{N}$ with $k\geq 2$, 
we have
\begin{align}
  \label{eq:Ck}
  \lim_{N\to\infty}
  \Biggl(\sum_{\substack{n\in\mathbb{Z}\\|n+\Re b|\leq N\\n+b\neq 0}}\frac{e^{2\pi \sqrt{-1} ny}}{(n+b)^k}\Biggr)
  &=-\frac{(2\pi\sqrt{-1})^k}{k!}C(k,\{y\};b),
  \\
  \label{eq:C0}
  \lim_{N\to\infty}
  \Biggl(-\sum_{\substack{n\in\mathbb{Z}\\|n+\Re b|\leq N\\n+b=0}}e^{2\pi\sqrt{-1}ny}\Biggr)
  &=-C(0,\{y\};b).
\end{align}
(Actually the sum on the left-hand side of \eqref{eq:C0} consists of at
most one term.) 
The series above 
converge absolutely uniformly in $y$ and hence $C(k,\{y\};b)$ and $C(0,\{y\};b)$ are continuous in $y$.
\end{lemma}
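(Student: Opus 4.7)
The first claim for $k\geq 2$ begins with absolute convergence: since $|n+b|^k\sim|n|^k$ as $|n|\to\infty$, the series $\sum_{n+b\neq 0}|n+b|^{-k}$ converges, and because $|e^{2\pi\sqrt{-1}ny}|=1$ this convergence is uniform in $y\in\mathbb{R}$. The two cutoffs $|n+\Re b|\leq N$ and $|n|\leq N$ differ only on a bounded set of integers, so both symmetric partial sums approach the same unrestricted sum, and uniform convergence gives continuity in $y$.

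To identify the limit, the plan is to compute the Fourier expansion of the $1$-periodic function $y\mapsto C(k,\{y\};b)$ by working directly with the generating function $F$ of \eqref{F_def_exp}. A straightforward evaluation of
\begin{equation*}
\int_0^1 F(t,y;b)\,e^{-2\pi\sqrt{-1}ny}\,dy
\end{equation*}
reduces to integrating a single exponential on $[0,1]$; using $e^{-2\pi\sqrt{-1}n}=1$ collapses the result to $t/(t-2\pi\sqrt{-1}(n+b))$ when $n+b\neq 0$, and to the value $1$ in the exceptional case $n+b=0$ (which forces $b\in\mathbb{Z}$). Expanding this rational function as $-\sum_{j\geq 1}t^j(2\pi\sqrt{-1}(n+b))^{-j}$ and reading off the coefficient of $t^k/k!$ identifies the $n$-th Fourier coefficient of $C(k,\{y\};b)$ as $-k!/(2\pi\sqrt{-1}(n+b))^k$ for $k\geq 1$ and $n+b\neq 0$, and as $0$ (or $1$ at $k=0$) otherwise. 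For $k\geq 2$ these Fourier coefficients are absolutely summable, so Fourier inversion yields \eqref{eq:Ck} immediately.

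The case $k=0$ in \eqref{eq:C0} is then essentially bookkeeping. The constraint $n+b=0$ can be satisfied only if $b\in\mathbb{Z}$, in which case the unique term $n=-b$ contributes $e^{-2\pi\sqrt{-1}by}$; otherwise the sum is empty. On the other side, $C(0,\{y\};b)=F(0,\{y\};b)$ equals $0$ when $b\notin\mathbb{Z}$ (vanishing numerator, nonvanishing denominator) and equals $e^{-2\pi\sqrt{-1}by}$ when $b\in\mathbb{Z}$, where one uses $b\{y\}\equiv by\pmod{\mathbb{Z}}$ to replace $\{y\}$ by $y$ in the exponent. The only technical point worth flagging is the interchange of Taylor expansion in $t$ with Fourier inversion in $y$ used in the second paragraph; this is legitimate because $F(t,y;b)$ is holomorphic in $t$ in a fixed neighbourhood of the origin, uniformly in $y\in[0,1]$, so the Fourier coefficients commute with the Taylor expansion. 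No other step presents a serious obstacle.
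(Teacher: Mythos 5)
Your proof is correct, but it goes by a genuinely different route than the paper. The paper applies the residue theorem to $F(t,\{y\};b)\,t^{-k-1}$ over expanding rectangles $\gamma_{M,N+\epsilon}$: the residue at $t=0$ produces $C(k,\{y\};b)/k!$, the residues at the poles $t=2\pi\sqrt{-1}(n+b)$ of \eqref{F_def_exp} produce the terms of the series, and the vanishing of the contour integral gives \eqref{eq:Ck}. You instead compute the Fourier coefficients $\int_0^1 C(k,y;b)e^{-2\pi\sqrt{-1}ny}\,dy=-k!/(2\pi\sqrt{-1}(n+b))^k$ and invoke Fourier inversion; that coefficient computation is exactly the content of the paper's later Lemma \ref{lm:intC} (proved there by the same collapse to $t/(t-2\pi\sqrt{-1}(n+b))$), so your argument is a legitimate reorganization rather than new machinery. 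What the paper's contour approach buys is uniformity of method: the same rectangle integral, with the horizontal edges estimated via Lemma \ref{lm:Ei}, handles the delicate case $k=1$ in Lemma \ref{lm:convk1}, where your route breaks down because the Fourier coefficients are no longer absolutely summable. One small point you should make explicit: Fourier inversion yields \eqref{eq:Ck} at \emph{every} $y$, including $y\in\mathbb{Z}$, only after checking that the periodization $y\mapsto C(k,\{y\};b)$ is continuous, i.e.\ $C(k,0;b)=C(k,1;b)$; this follows from $F(t,1;b)-F(t,0;b)=t$, so it holds precisely for $k\neq 1$ and is available in your range $k\geq 2$, but it is the step that silently uses $k\geq 2$ a second time. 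Everything else (the absolute and uniform convergence, the comparison of the two cutoffs, and the bookkeeping for \eqref{eq:C0}) is fine and agrees with the paper.
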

\begin{proof}
Let $\gamma_{X,Y}$ be the counterclockwise rectangle contour with vertices at $\pm X\pm 2\pi\sqrt{-1}Y$.
Applying the Cauchy theorem to the integral 
\begin{equation}
  \lim_{N\to \infty}\lim_{M\to \infty}\int_{\gamma_{M,N+\epsilon}} F(t,\{y\};b) t^{-k-1}dt\qquad(k\geq 2)
\end{equation}
with a sufficiently small $\epsilon>0$,
we see that the sum of all the residues vanishes, namely,
\begin{equation}
  \frac{C(k,\{y\};b)}{k!}+\frac{1}{(2\pi\sqrt{-1})^k}
  \lim_{N\to\infty}
  \sum_{\substack{n\in\mathbb{Z}\\|n+\Re b|\leq N\\n+b\neq 0}}\frac{e^{2\pi \sqrt{-1} ny}}{(n+b)^k}=0,
\end{equation}
and hence \eqref{eq:Ck}.

The left-hand side of \eqref{eq:C0} consists of only one term $e^{-2\pi\sqrt{-1}by}$ if $b\in\mathbb{Z}$, and vanishes if $b\not\in\mathbb{Z}$, 
while
\begin{equation}
  C(0,\{y\};b)=F(0,\{y\};b)=
  \begin{cases}
    e^{-2\pi\sqrt{-1}by}&\qquad (b\in\mathbb{Z}),\\
    0&\qquad (b\notin\mathbb{Z})
  \end{cases}
\end{equation}
and hence \eqref{eq:C0}.

Both in \eqref{eq:Ck} and \eqref{eq:C0},
the absolute uniform convergence of 
the series in $y$ is clear.
\end{proof}

The case $k=1$ is more subtle.   We first prepare the following

\begin{lemma}
\label{lm:Ei}
For $\mu>0$ there exists $K>0$ such that
for $a,z>0$
  \begin{equation}
    \int_0^\infty \frac{e^{-xz}}{\sqrt{x^2+a^2}}dx\leq K (az)^{-\frac{1}{\mu+1}}.
  \end{equation}
\end{lemma}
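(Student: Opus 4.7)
The plan is to apply Hölder's inequality with carefully chosen conjugate exponents. The heuristic is that $\frac{1}{\sqrt{x^2+a^2}}$ has scale $a$ in the $x$-variable while $e^{-xz}$ has scale $1/z$, so the integral naturally depends on the product $az$; Hölder's inequality lets us separate these two scales cleanly and absorb any logarithmic slack into the free parameter $\mu$.

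Concretely, I would let $p=\mu+1$ and $q=(\mu+1)/\mu$, so that $1/p+1/q=1$ and (since $\mu>0$) both $p>1$ and $q>1$. Hölder's inequality then gives
\begin{equation*}
\int_0^\infty \frac{e^{-xz}}{\sqrt{x^2+a^2}}\,dx
\leq \Bigl(\int_0^\infty e^{-pxz}\,dx\Bigr)^{1/p}
\Bigl(\int_0^\infty \frac{dx}{(x^2+a^2)^{q/2}}\Bigr)^{1/q}.
\end{equation*}
The first factor is immediately $(pz)^{-1/p}=p^{-1/p}z^{-1/(\mu+1)}$. For the second factor I would substitute $x=at$ to obtain
\begin{equation*}
\int_0^\infty \frac{dx}{(x^2+a^2)^{q/2}}
= a^{1-q}\int_0^\infty \frac{dt}{(t^2+1)^{q/2}},
\end{equation*}
where the remaining integral is a finite constant $C(q)$ because $q>1$ ensures convergence at infinity (the integrand is also bounded near $0$). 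Thus the second factor equals $C(q)^{1/q}a^{(1-q)/q}=C(q)^{1/q}a^{-1/(\mu+1)}$, using $1/q-1=-1/(\mu+1)$.

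Combining the two bounds, the product is $K\cdot z^{-1/(\mu+1)}\cdot a^{-1/(\mu+1)}=K(az)^{-1/(\mu+1)}$ with $K=p^{-1/p}C(q)^{1/q}$, which is exactly the desired inequality. There is no real obstacle in this argument; the only nontrivial point is verifying that the range $\mu>0$ gives $q>1$, which is precisely what guarantees convergence of the auxiliary integral $\int_0^\infty (t^2+1)^{-q/2}dt$, so the threshold $\mu>0$ in the statement corresponds exactly to the integrability condition on $(1+t^2)^{-q/2}$ at infinity.
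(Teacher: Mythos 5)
Your proof is correct, and it takes a genuinely different route from the paper's. The paper first substitutes $x\mapsto x/z$ to rewrite the integral as $\int_0^\infty e^{-x}(x^2+(az)^2)^{-1/2}\,dx$, and then applies the weighted arithmetic--geometric mean inequality pointwise to the denominator, obtaining $\sqrt{x^2+(az)^2}\geq \sqrt{\mu+1}\,\mu^{-\mu/(2(\mu+1))}x^{\mu/(\mu+1)}(az)^{1/(\mu+1)}$; this pulls out the factor $(az)^{-1/(\mu+1)}$ and leaves the convergent integral $\int_0^\infty e^{-x}x^{-\mu/(\mu+1)}\,dx=\Gamma(1/(\mu+1))$. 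You instead keep the original variable and use H\"older's inequality with exponents $p=\mu+1$, $q=(\mu+1)/\mu$ to separate the exponential factor (whose natural scale is $1/z$) from the algebraic kernel (whose natural scale is $a$), so that the powers $z^{-1/(\mu+1)}$ and $a^{-1/(\mu+1)}$ emerge from the two norms independently; your exponent arithmetic ($(1-q)/q=-1/(\mu+1)$) and the convergence check ($q>1$ iff $\mu>0$) are both correct. The two arguments are morally close --- H\"older's inequality is itself a consequence of the weighted AM--GM (Young) inequality --- but yours dispenses with the initial rescaling and makes the role of the parameter $\mu$ as a H\"older exponent transparent, while the paper's is slightly more elementary in that it only uses a pointwise inequality plus the Gamma integral. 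Either yields an explicit admissible constant $K$.
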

\begin{proof}
Rewrite
  \begin{equation}
    \int_0^\infty \frac{e^{-xz}}{\sqrt{x^2+a^2}}dx
=
    \int_0^\infty \frac{e^{-x}}{\sqrt{x^2+(az)^2}}dx.
  \end{equation}  
By the inequality of weighted arithmetic and geometric means 
\begin{equation}
  \mu A+B
  \geq(\mu+1)(A^\mu B)^{\frac{1}{\mu+1}}\qquad(\mu,A,B>0),
\end{equation}
we have
\begin{equation}
  \sqrt{x^2+(az)^2}\geq
\sqrt{\mu+1}\mu^{-\frac{\mu}{2(\mu+1)}}x^{\frac{\mu}{\mu+1}}(az)^{\frac{1}{\mu+1}}
\end{equation}
and
  \begin{equation}
    \begin{split}
      \int_0^\infty \frac{e^{-x}}{\sqrt{x^2+(az)^2}}dx
      &\leq
      \frac{\mu^{\frac{\mu}{2(\mu+1)}}}{\sqrt{\mu+1}}(az)^{-\frac{1}{\mu+1}}\int_0^\infty e^{-x} x^{-\frac{\mu}{\mu+1}}dx
      \\
      &\leq
      \Bigl(\frac{\mu^{\frac{\mu}{2(\mu+1)}}}{\sqrt{\mu+1}}\Gamma\Bigl(\frac{1}{\mu+1}\Bigr)\Bigr)(az)^{-\frac{1}{\mu+1}}.
    \end{split}
  \end{equation}

\end{proof}

Let 
\begin{equation}
  \delta_{N<0<M}=
  \begin{cases}
    1\qquad&(N<0<M),\\
    0\qquad&\text{otherwise}.
  \end{cases}
\end{equation}

\begin{lemma}
\label{lm:convk1}
For $b\in\mathbb{C}$ and $y\in\mathbb{R}\setminus\mathbb{Z}$
we have
\begin{equation}
\label{eq:C1}
  \lim_{N\to\infty}
  \Biggl(\sum_{\substack{n\in\mathbb{Z}\\|n+\Re b|\leq N\\n+b\neq 0}}\frac{e^{2\pi \sqrt{-1} ny}}{n+b}\Biggr)
  =-2\pi\sqrt{-1}C(1,\{y\};b)
\end{equation}
and $C(1,\{y\};b)$ is continuous in $y$.
Moreover 
for any $\mu>0$
there exists $K>0$ such that for all $y\in\mathbb{R}\setminus\mathbb{Z}$ and
all $M,N\in\mathbb{R}$ with sufficiently large $|M|,|N|$ and $M\geq|N|$
\begin{equation}\label{est:C1}
  \Bigl| \sum_{\substack{n\in\mathbb{Z}\\N\leq n+\Re b\leq M\\n+b\neq 0}}\frac{e^{2\pi\sqrt{-1}ny}}{n+b}\Bigr|
\leq K |N|^{-\frac{1}{\mu+1}}((1-\{y\})^{-\frac{1}{\mu+1}}+\{y\}^{-\frac{1}{\mu+1}})+\delta_{N<0<M}K.
\end{equation}
\end{lemma}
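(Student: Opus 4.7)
The plan is to adapt the contour-integration argument of Lemma \ref{lm:convkn0} to the borderline case $k=1$, where the horizontal sides of the rectangle no longer contribute negligibly and must be evaluated explicitly. Fix $\epsilon>0$ small enough that no pole of $F(t,\{y\};b)t^{-2}$ lies on the boundary of the counterclockwise rectangle $\gamma(X,M,N)$ with vertices $\pm X+2\pi\sqrt{-1}(M+\epsilon)$ and $\pm X+2\pi\sqrt{-1}(N-\epsilon)$. The simple poles of the integrand are $t=0$, with residue $C(1,\{y\};b)$, enclosed exactly when $N<0<M$, and $t=2\pi\sqrt{-1}(n+b)$ for $n\in\mathbb{Z}$ with $n+b\neq 0$ and $N\leq n+\Re b\leq M$, with residue $e^{2\pi\sqrt{-1}ny}/(2\pi\sqrt{-1}(n+b))$. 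Cauchy's theorem then gives
\begin{equation*}
\sum_{\substack{n\in\mathbb{Z}\\ N\leq n+\Re b\leq M\\ n+b\neq 0}}\frac{e^{2\pi\sqrt{-1}ny}}{n+b}=\oint_{\gamma(X,M,N)}\frac{F(t,\{y\};b)}{t^2}\,dt-2\pi\sqrt{-1}\,\delta_{N<0<M}\,C(1,\{y\};b).
\end{equation*}

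I next send $X\to\infty$. Writing $s=t-2\pi\sqrt{-1}b$, on the right vertical side $|e^{s}-1|\geq\tfrac{1}{2}e^{\Re s}$ for $\Re s$ large, giving $|F(t,\{y\};b)t^{-2}|\leq Ce^{-X(1-\{y\})}/X$; symmetrically, on the left side the bound is $Ce^{-X\{y\}}/X$. Since $0<\{y\}<1$ for $y\in\mathbb{R}\setminus\mathbb{Z}$, the two vertical contributions vanish, leaving
\begin{equation*}
\sum_{\substack{n\in\mathbb{Z}\\ N\leq n+\Re b\leq M\\ n+b\neq 0}}\frac{e^{2\pi\sqrt{-1}ny}}{n+b}=I_{N-\epsilon}-I_{M+\epsilon}-2\pi\sqrt{-1}\,\delta_{N<0<M}\,C(1,\{y\};b),
\end{equation*}
where $I_\eta=\int_{-\infty}^{\infty}F(x+2\pi\sqrt{-1}\eta,\{y\};b)(x+2\pi\sqrt{-1}\eta)^{-2}\,dx$.

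The core step is a uniform bound on $I_\eta$. Splitting the $x$-axis into the two exponential regimes of $1/(e^s-1)$ and a bounded transitional interval, and using $|t|\geq\sqrt{x^2+4\pi^2\eta^2}$, the integrand is dominated (up to a constant depending only on $b$ and $\epsilon$) by $e^{-(1-\{y\})x}/\sqrt{x^2+4\pi^2\eta^2}$ for large $x\geq 0$ and by $e^{\{y\}x}/\sqrt{x^2+4\pi^2\eta^2}$ for large $x\leq 0$, while the bounded middle portion contributes $O(|\eta|^{-1})$. Applying Lemma \ref{lm:Ei} with $a=2\pi|\eta|$ and $z=1-\{y\}$, respectively $z=\{y\}$, yields
\begin{equation*}
|I_\eta|\leq K|\eta|^{-1/(\mu+1)}\bigl((1-\{y\})^{-1/(\mu+1)}+\{y\}^{-1/(\mu+1)}\bigr)
\end{equation*}
for a constant $K$ independent of $y$.

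Two consequences follow. Specialising $M=N$ and letting $N\to\infty$, the $I$-terms vanish for each fixed $y\notin\mathbb{Z}$ and \eqref{eq:C1} is proved; continuity of $C(1,\{y\};b)$ on $\mathbb{R}\setminus\mathbb{Z}$ is immediate from the explicit form of the $t$-linear coefficient of $F$, and also provides the uniform bound absorbing $\delta_{N<0<M}C(1,\{y\};b)$ into $\delta_{N<0<M}K$. For the tail estimate \eqref{est:C1}, the hypothesis $M\geq|N|$ allows both $|I_{M+\epsilon}|$ and $|I_{N-\epsilon}|$ to be bounded in terms of $|N|^{-1/(\mu+1)}$. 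The main obstacle is precisely this uniformity in $y$: near $y\in\mathbb{Z}$ one of $\{y\}$, $1-\{y\}$ degenerates and the corresponding exponential tail loses its decay, so the naive exponential bound is unavailable; Lemma \ref{lm:Ei} is designed to supply a polynomial substitute at the cost of the displayed $y$-factors, which is exactly what the statement accommodates.
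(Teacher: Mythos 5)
Your argument is essentially the paper's own proof: the identity relating the partial sum to the residue at $t=0$ and the two horizontal integrals at heights $2\pi(M+\epsilon)$ and $2\pi(N-\epsilon)$ is exactly \eqref{pr-3-4-a}, and the uniform bound on $I_\eta$ via the exponential majorant and Lemma \ref{lm:Ei} with $a=2\pi|\eta|$, $z=\{y\}$ or $1-\{y\}$ is precisely how the paper obtains \eqref{est:C1}. The only blemish is the phrase ``specialising $M=N$'' when deducing \eqref{eq:C1}, which should read $N=-M$ (a symmetric window $-L\leq n+\Re b\leq L$ with $L\to\infty$), as otherwise the window does not exhaust $\mathbb{Z}$.
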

\begin{proof}
From \eqref{F_def_exp} we can easily see that
\begin{equation}
  C(1,\{y\};b)=
  \begin{cases}
    \Bigl(\{y\}-\dfrac{1}{2}\Bigr)e^{-2\pi\sqrt{-1}b\{y\}}
    \qquad&(b\in\mathbb{Z}),
    \\[2mm]
    \dfrac{e^{-2\pi\sqrt{-1}b\{y\}}}{e^{-2\pi\sqrt{-1}b}-1}\qquad&(b\notin\mathbb{Z}),
  \end{cases}
\end{equation}
from which the continuity of $C(1,\{y\};b)$ follows.

Let $\gamma_{Y}$ be the horizontal path from $-\infty+2\pi\sqrt{-1}Y$ to
$\infty+2\pi\sqrt{-1}Y$. Then for all $y\in\mathbb{R}\setminus\mathbb{Z}$, 
we have
\begin{equation}\label{pr-3-4-a}
  \begin{split}
    &\frac{1}{2\pi\sqrt{-1}}\Bigl(-\int_{\gamma_{M+\epsilon}} F(t,\{y\};b) t^{-2}dt+
  \int_{\gamma_{N-\epsilon}} F(t,\{y\};b) t^{-2}dt\Bigr)-
\delta_{N<0<M}C(1,\{y\};b)
\\
&=\frac{1}{2\pi\sqrt{-1}}
  \sum_{\substack{n\in\mathbb{Z}\\ N\leq n+\Re b\leq M\\n+b\neq 0}}\frac{e^{2\pi\sqrt{-1}ny}}{n+b}.
\end{split}
\end{equation}
On the other hand, for $L\in\mathbb{Z}$ 
\begin{equation}\label{pr-3-4-b}
  \begin{split}
    \Bigl|\int_{\gamma_{L\pm\epsilon}} F(t,\{y\};b) t^{-2}dt\Bigr|
    &\leq
    \int_{-\infty}^\infty 
    \Bigl|\frac{e^{(x+2\pi\sqrt{-1}(L\pm\epsilon)-2\pi \sqrt{-1}b)\{y\}}}{e^{x+2\pi\sqrt{-1}(L\pm\epsilon)-2\pi \sqrt{-1}b}-1}
    \frac{1}{x+2\pi\sqrt{-1}(L\pm\epsilon)}
    \Bigr|dx
    \\
    &\leq
    \int_{-\infty}^\infty 
    \frac{e^{x\{y\}}e^{2\pi\Im b\{y\}}}{|e^{x-2\pi\sqrt{-1}(b\pm \epsilon)}-1|}
    \frac{dx}{|x+2\pi\sqrt{-1}(L\pm\epsilon)|}
    \\
    &=
    \int_{-\infty}^\infty 
    \frac{e^{x\{y\}}e^{2\pi|\Im b|}}{|e^{x-2\pi\sqrt{-1}(b\pm \epsilon)}-1|}
    \frac{dx}{\sqrt{x^2+4\pi^2(L\pm\epsilon)^2}}.
  \end{split}
\end{equation}
It is easy to see that
there exists $K'>0$ independent of $y$ such that
\begin{gather}\label{pr-3-4-c}
  \frac{e^{x\{y\}}e^{2\pi|\Im b|}}{|e^{x-2\pi\sqrt{-1}(b\pm \epsilon)}-1|}
  \leq g(x,y):=
\begin{cases}
  K' e^{x\{y\}} \qquad&(x<0),\\
  K' e^{x(\{y\}-1)} \qquad&(x\geq0).
\end{cases}
\end{gather}
Applying \eqref{pr-3-4-c} and Lemma \ref{lm:Ei} to \eqref{pr-3-4-b},
we have 
\begin{equation}
  \begin{split}
    \Bigl|\int_{\gamma_{L\pm\epsilon}} F(t,\{y\};b) t^{-2}dt\Bigr|
    &\leq 
    \int_{-\infty}^\infty \frac{g(x,y)}{\sqrt{x^2+4\pi^2(L\pm\epsilon)^2}}dx
    \\
    &\leq K'' 
|L|^{-\frac{1}{\mu+1}}((1-\{y\})^{-\frac{1}{\mu+1}}+\{y\}^{-\frac{1}{\mu+1}})
  \end{split}
\end{equation}
for some $K''>0$.
Therefore, choosing $N=-L$ and $M=L$ in \eqref{pr-3-4-a} and taking the
limit $L\to\infty$, we obtain \eqref{eq:C1}.   Moreover, 
since $C(1,\{y\};b)$ is bounded in $y$, we obtain \eqref{est:C1}.
\end{proof}

{\it The second step}.
Secondly we consider the higher rank case of \eqref{eq:S_1} under the special condition $\Lambda=B$ with $\mathscr{B}=\{B\}$ in Lemmas \ref{lm:calcS}, \ref{lm:bddS} and \ref{lm:ZZdiff}.
We first prepare the following algebraic lemma.   This statement is 
included in \cite[Chapitre 6, Section 1, 9]{Bour}, but here we supply a proof.

\begin{lemma}
\label{lm:characteristic}
Let $Q, P$ be free $\mathbb{Z}$-modules of rank $r$ with $Q\subset P$ so that $P/Q$ is a finite abelian group.
Then 
\begin{equation}\label{characteristic:f1}
\Hom(P/Q,\mathbb{Q}/\mathbb{Z})\simeq
\Hom(Q,\mathbb{Z})/\Hom(P,\mathbb{Z})
\end{equation}
and for $\Bar{\lambda}\in P/Q$, we have
\begin{equation}\label{characteristic:f2}
  \frac{1}{\sharp(P/Q)}\sum_{\Bar{f}\in\Hom(Q,\mathbb{Z})/\Hom(P,\mathbb{Z})} e^{2\pi\sqrt{-1}\Bar{f}(\Bar{\lambda})}=\delta_{\Bar{\lambda},0},
\end{equation}
where the right-hand side denotes Kronecker's delta.
\end{lemma}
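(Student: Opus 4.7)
The plan is to prove part (i) by constructing an explicit isomorphism using extension of scalars from $\mathbb{Z}$ to $\mathbb{Q}$, and then derive part (ii) as a consequence of part (i) combined with the standard orthogonality of characters on a finite abelian group.

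For part (i), I would proceed as follows. Since $P/Q$ is finite, tensoring with $\mathbb{Q}$ gives $Q\otimes\mathbb{Q}=P\otimes\mathbb{Q}$, so every $f\in\Hom(Q,\mathbb{Z})$ extends uniquely to a $\mathbb{Q}$-linear map $\tilde f\colon P\otimes\mathbb{Q}\to\mathbb{Q}$ defined on $p\in P$ by $\tilde f(p)=f(np)/n$ for any $n\in\mathbb{N}$ with $np\in Q$. Composing the restriction $\tilde f|_P$ with the projection $\mathbb{Q}\to\mathbb{Q}/\mathbb{Z}$ yields a homomorphism $P\to\mathbb{Q}/\mathbb{Z}$ which vanishes on $Q$ (since there $\tilde f$ takes integer values), hence descends to $\Phi(f)\in\Hom(P/Q,\mathbb{Q}/\mathbb{Z})$. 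This defines $\Phi\colon\Hom(Q,\mathbb{Z})\to\Hom(P/Q,\mathbb{Q}/\mathbb{Z})$. The kernel of $\Phi$ consists of those $f$ for which $\tilde f(P)\subset\mathbb{Z}$, i.e.\ $\tilde f|_P\in\Hom(P,\mathbb{Z})$; noting that the restriction map $\Hom(P,\mathbb{Z})\to\Hom(Q,\mathbb{Z})$ is injective (since $P\otimes\mathbb{Q}=Q\otimes\mathbb{Q}$), we can identify $\ker\Phi$ with $\Hom(P,\mathbb{Z})$. For surjectivity, given $\phi\in\Hom(P/Q,\mathbb{Q}/\mathbb{Z})$, view it as $\phi\colon P\to\mathbb{Q}/\mathbb{Z}$ vanishing on $Q$; freeness of $P$ allows us to lift to some $\tilde\phi\colon P\to\mathbb{Q}$ by choosing arbitrary $\mathbb{Q}$-representatives of $\phi$ on a $\mathbb{Z}$-basis of $P$. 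Since $\phi|_Q=0$, the restriction $f:=\tilde\phi|_Q$ takes values in $\mathbb{Z}$, so $f\in\Hom(Q,\mathbb{Z})$ and $\Phi(f)=\phi$. This establishes \eqref{characteristic:f1}.

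For part (ii), the identification in part (i) exhibits $\Hom(Q,\mathbb{Z})/\Hom(P,\mathbb{Z})$ as the Pontryagin dual $\widehat{P/Q}$ of the finite abelian group $P/Q$, with the pairing $\bar f(\bar\lambda)\in\mathbb{Q}/\mathbb{Z}$ well-defined modulo $\mathbb{Z}$ so that $e^{2\pi\sqrt{-1}\bar f(\bar\lambda)}$ is a character value. By the elementary character orthogonality relation on a finite abelian group $G$,
\begin{equation}
\frac{1}{\sharp G}\sum_{\chi\in\widehat{G}}\chi(g)=\delta_{g,0},
\end{equation}
applied to $G=P/Q$ and $g=\bar\lambda$, we obtain \eqref{characteristic:f2}.

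The main obstacle is the careful construction of the map $\Phi$ in part (i): one must check well-definedness of the $\mathbb{Q}$-extension, verify that the kernel is exactly $\Hom(P,\mathbb{Z})$, and ensure that the lifting argument for surjectivity makes genuine use of the freeness of $P$ and the triviality of $\phi$ on $Q$. Once the isomorphism is in place, part (ii) is essentially automatic from the finite abelian group character theory, since $\sharp\widehat{P/Q}=\sharp(P/Q)$ and the characters separate points.
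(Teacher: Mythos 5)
Your proof is correct, and its overall architecture is the same as the paper's: both establish the short exact sequence
\begin{equation*}
0\to\Hom(P,\mathbb{Z})\to\Hom(Q,\mathbb{Z})\xrightarrow{\ \Phi\ }\Hom(P/Q,\mathbb{Q}/\mathbb{Z})\to 0
\end{equation*}
and then deduce \eqref{characteristic:f2} from the orthogonality relations for characters of the finite abelian group $P/Q$. The one genuine difference is how the middle map is built. The paper invokes the elementary divisor theorem to choose aligned bases $\{\lambda_i\}$ of $P$ and $\{\lambda_i'=k_i\lambda_i\}$ of $Q$, and defines $\varphi(f)(\bar\lambda_i)=p(f(\lambda_i')/k_i)$; you instead extend $f$ $\mathbb{Q}$-linearly to $P\otimes\mathbb{Q}=Q\otimes\mathbb{Q}$ via $\tilde f(p)=f(np)/n$ and reduce mod $\mathbb{Z}$. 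Your construction is coordinate-free and makes the well-definedness and the identification of the kernel transparent without appealing to the structure theorem for modules over a PID (though you still use a basis of $P$, i.e.\ its freeness, in the surjectivity step, which is unavoidable in some form). The two maps coincide, since $p(f(k_i\lambda_i)/k_i)=\tilde f(\lambda_i)\bmod\mathbb{Z}$, so the payoff is purely expository: your version isolates exactly which inputs are needed where, at the cost of having to check well-definedness of the $\mathbb{Q}$-extension by hand, which you do correctly.
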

\begin{proof}
First we note that an element of $\Hom(P,\mathbb{Z})$ can be naturally
regarded as an element of $\Hom(Q,\mathbb{Z})$.   Denote this injection 
by $\iota$.
Next, let $f\in\Hom(Q,\mathbb{Z})$.  
It is well-known that
there exist a basis $\{\lambda_i\}_{i=1}^r$ of $P$
and a basis $\{\lambda_i'\}_{i=1}^r$ of $Q$ such that $\lambda_i'=k_i\lambda_i$ with $k_i\in\mathbb{N}$ and hence
\begin{equation}
  P/Q=\langle\Bar{\lambda}_1\rangle\oplus\cdots\oplus\langle\Bar{\lambda}_r\rangle,
\end{equation}
where each $\langle\Bar{\lambda}_i\rangle$ is a cyclic group of order $k_i$ with $\Bar{\lambda}_i\in P/Q$.
Define
$\varphi(f)$ by the linear extension of
\begin{equation}
  \varphi(f)(\Bar{\lambda}_i)=p(f(\lambda_i')/k_i),
\end{equation}
where $p$ denotes the natural projection $\mathbb{Q}\to\mathbb{Q}/\mathbb{Z}$.
Then $\varphi(f)\in\Hom(P/Q,\mathbb{Q}/\mathbb{Z})$ is well-defined.

We show that the sequence
\begin{equation}\label{exact}
  0\to
  \Hom(P,\mathbb{Z}) \xrightarrow{\iota}
  \Hom(Q,\mathbb{Z}) \xrightarrow{\varphi}
  \Hom(P/Q,\mathbb{Q}/\mathbb{Z}) \to
  0
\end{equation}
is exact.
First show the surjectivity of $\varphi$.
Let $g\in\Hom(P/Q,\mathbb{Q}/\mathbb{Z})$. Then
choose a representative $a_i\in\mathbb{Q}$ of $g(\Bar{\lambda}_i)$.
Since $k_i g(\Bar{\lambda}_i)=g(\Bar{\lambda_i'})=g(0)=0$, we have 
$k_ia_i\in\mathbb{Z}$.
Define $f\in\Hom(Q,\mathbb{Z})$ by the linear extension of 
$f(\lambda_i')=k_ia_i$.
Then $\varphi(f)(\Bar{\lambda_i})=p(a_i)=g(\Bar{\lambda_i})$, so
$g=\varphi(f)$.   Therefore $\varphi$ is surjective.
Next, let $f\in\ker\varphi$. Then 
$\varphi(f)(\Bar{\lambda_i})=p(f(\lambda_i')/k_i)=0$ and so 
$f(k_i\lambda_i)\in k_i\mathbb{Z}$, that is, 
$f(\lambda_i)\in\mathbb{Z}$.   This implies
$f\in\Hom(P,\mathbb{Z})$, and hence the exactness at
$\Hom(Q,\mathbb{Z})$ is proved.   The assertion 
\eqref{characteristic:f1} immediately follows from \eqref{exact},
and \eqref{characteristic:f2} follows from the orthogonality relations 
of group characters (cf.\ Apostol \cite[Theorem 6.13]{Apos}).
\end{proof}

\begin{lemma}
  \label{lm:setH}
  Let $B\in\mathscr{B}$ and $f\in B$. Then
  \begin{equation}
    \mathfrak{H}_{B\setminus\{f\}}+\mathbb{Z}^r=
\{\mathbf{y}\in V~|~\langle\mathbf{y}+\mathbf{w},\vec{f}^B\rangle\in\mathbb{Z}\text{ for some }\mathbf{w}\in\mathbb{Z}^r\}.
  \end{equation}
\end{lemma}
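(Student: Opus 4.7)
The plan is to interpret $\mathfrak{H}_{B\setminus\{f\}}$ via duality and then do elementary linear algebra. Writing $B=\{f_1,\dots,f_r\}$ and $f=f_i$, the defining relations $\langle\vec{f}_i^B,\vec{f}_j\rangle=\delta_{ij}$ of the dual basis show that $\mathfrak{H}_{B\setminus\{f\}}=\sum_{j\ne i}\mathbb{R}\vec{f}_j$ is exactly the kernel of the linear functional $\mathbf{v}\mapsto\langle\mathbf{v},\vec{f}^B\rangle$. Consequently $\mathbf{y}\in\mathfrak{H}_{B\setminus\{f\}}+\mathbb{Z}^r$ if and only if there is some $\mathbf{n}\in\mathbb{Z}^r$ with $\langle\mathbf{y}-\mathbf{n},\vec{f}^B\rangle=0$, i.e.\ $\langle\mathbf{y},\vec{f}^B\rangle=\langle\mathbf{n},\vec{f}^B\rangle$.

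Starting from this reformulation I would verify the two implications. For the inclusion $(\subseteq)$, if such $\mathbf{n}$ exists, set $\mathbf{w}=-\mathbf{n}\in\mathbb{Z}^r$, so that $\langle\mathbf{y}+\mathbf{w},\vec{f}^B\rangle=0\in\mathbb{Z}$. For the reverse inclusion $(\supseteq)$, suppose $\mathbf{w}\in\mathbb{Z}^r$ satisfies $\langle\mathbf{y}+\mathbf{w},\vec{f}^B\rangle=k\in\mathbb{Z}$. The crucial point is that $\vec{f}=\vec{f}_i$ itself lies in $\mathbb{Z}^r$ (by the assumption $\Lambda\subset(\mathbb{Z}^r\setminus\{\vec{0}\})\times\mathbb{C}$) and pairs to $1$ with $\vec{f}^B$; hence the vector $\mathbf{n}:=k\vec{f}-\mathbf{w}$ belongs to $\mathbb{Z}^r$ and satisfies
\begin{equation*}
\langle\mathbf{n},\vec{f}^B\rangle=k\langle\vec{f},\vec{f}^B\rangle-\langle\mathbf{w},\vec{f}^B\rangle=k-\langle\mathbf{w},\vec{f}^B\rangle=\langle\mathbf{y},\vec{f}^B\rangle,
\end{equation*}
which is precisely what we needed. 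Thus both inclusions hold and the lemma follows.

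There is essentially no obstacle here; the only substantive point is the use of $\vec{f}\in\mathbb{Z}^r$ together with $\langle\vec{f},\vec{f}^B\rangle=1$ to realize any integer value as $\langle\mathbf{n},\vec{f}^B\rangle$ for some $\mathbf{n}\in\mathbb{Z}^r$, which is what lets us convert the congruence condition $\langle\mathbf{y}+\mathbf{w},\vec{f}^B\rangle\in\mathbb{Z}$ into the exact equality condition $\langle\mathbf{y}-\mathbf{n},\vec{f}^B\rangle=0$.
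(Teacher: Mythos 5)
Your proof is correct and follows essentially the same route as the paper: the forward inclusion takes $\mathbf{w}=-\mathbf{w}_0$, and the reverse inclusion uses $\vec{f}\in\mathbb{Z}^r$ with $\langle\vec{f},\vec{f}^B\rangle=1$ to absorb the integer value into the lattice (the paper phrases this as $\mathbf{y}+\mathbf{w}\in\mathfrak{H}_{B\setminus\{f\}}+\mathbb{Z}\vec{f}\subset\mathfrak{H}_{B\setminus\{f\}}+\mathbb{Z}^r$). Your explicit identification of $\mathfrak{H}_{B\setminus\{f\}}$ with the kernel of $\langle\cdot,\vec{f}^B\rangle$ is the same orthogonality fact the paper invokes.
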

\begin{proof}
  We denote the left-hand side by $P$ and the right-hand side by $Q$ respectively. 
If $\mathbf{y}\in P$, then $\mathbf{y}=\mathbf{y}_0+\mathbf{w}_0$ with $\langle\mathbf{y}_0,\vec{f}^B\rangle=0$  and $\mathbf{w}_0\in\mathbb{Z}^r$. 
By setting $\mathbf{w}=-\mathbf{w}_0$ we have
\begin{equation}
  \langle\mathbf{y}+\mathbf{w},\vec{f}^B\rangle=
  \langle\mathbf{y}_0,\vec{f}^B\rangle=0\in\mathbb{Z},
\end{equation}
and $\mathbf{y}\in Q$.
Conversely, if $\mathbf{y}\in Q$, then
\begin{equation}
  \mathbf{y}+\mathbf{w}\in\mathfrak{H}_{B\setminus\{f\}}+\mathbb{Z}\vec{f}\subset
  \mathfrak{H}_{B\setminus\{f\}}+\mathbb{Z}^r
\end{equation}
because $\mathfrak{H}_{B\setminus\{f\}}$ is orthogonal to $\vec{f}^B$.
Hence we have $\mathbf{y}\in P$.
\end{proof}

\begin{lemma}
\label{lm:calcS}
Assume
 $\Lambda=B$ with $\mathscr{B}=\{B\}$.
For $\mathbf{k}=(k_f)_{f\in\Lambda}\in\mathbb{N}_{0}^{r}$ and
$\mathbf{y}\in V\setminus\bigcup_{f\in\Lambda_1}(\mathfrak{H}_{\Lambda\setminus\{f\}}+\mathbb{Z}^r)$, 
the limit \eqref{eq:S_1} (with $B_0=B$) converges, and
we have
  \begin{equation}
  \label{lem4_7formula}
    \begin{split}
      &
     S_1(\mathbf{k},\mathbf{y};\Lambda;B)
=
      \frac{1}{\sharp(\mathbb{Z}^r/\langle\vec{B}\rangle)}
      \sum_{\mathbf{w}\in\mathbb{Z}^r/\langle\vec{B}\rangle}  
      \prod_{f\in\Lambda}
      \Bigl(
      -\frac{(2\pi\sqrt{-1})^{k_f}}{k_f!}
      C(k_f,\{\mathbf{y}+\mathbf{w}\}_{B,f};\const{f})
      \Bigr).
    \end{split}
  \end{equation}
\end{lemma}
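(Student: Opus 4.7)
The approach is to reduce the sum to a finite sum (over cosets of $\mathbb{Z}^r/\langle\vec{B}\rangle$) of products of one-dimensional truncated sums, each handled by Lemma~\ref{lm:convkn0} or Lemma~\ref{lm:convk1}. First, I would change variables from $\mathbf{v}\in\mathbb{Z}^r$ to $(m_f)_{f\in B}:=(\langle\vec{f},\mathbf{v}\rangle)_{f\in B}\in\mathbb{Z}^{\sharp B}$. Since $\vec{B}^*$ is dual to $\vec{B}$, one has $\mathbf{v}=\sum_{f\in B}m_f\vec{f}^B$, so $\langle\mathbf{y},\mathbf{v}\rangle=\sum_f m_f\langle\mathbf{y},\vec{f}^B\rangle$ and $f(\mathbf{v})=m_f+\const{f}$ for $f\in B=\Lambda$. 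The map $\mathbf{v}\mapsto(m_f)$ is a bijection onto the sublattice $L:=\{(\langle\vec{f},\mathbf{u}\rangle)_f:\mathbf{u}\in\mathbb{Z}^r\}\subset\mathbb{Z}^r$ of index $\sharp(\mathbb{Z}^r/\langle\vec{B}\rangle)$. Every constraint defining $Z_1$---the truncation $|\Re f(\mathbf{v})|\le N$, the conditions from $\Lambda_\pm$, and the exponential phase---decouples across the coordinates $m_f$, so the only residual coupling is the lattice constraint $(m_f)\in L$.

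Next, I would remove that coupling by inserting the character identity
\begin{equation*}
\mathbf{1}_{(m_f)\in L}=\frac{1}{\sharp(\mathbb{Z}^r/\langle\vec{B}\rangle)}\sum_{\mathbf{w}\in\mathbb{Z}^r/\langle\vec{B}\rangle}\exp\Bigl(2\pi\sqrt{-1}\sum_{f\in B}m_f\langle\mathbf{w},\vec{f}^B\rangle\Bigr),
\end{equation*}
valid for all $(m_f)\in\mathbb{Z}^r$. This is an instance of Lemma~\ref{lm:characteristic} applied to $P=\mathbb{Z}^r\supset Q=L$: the map $\mathbf{w}\mapsto[(m_f)\mapsto e^{2\pi\sqrt{-1}\sum m_f\langle\mathbf{w},\vec{f}^B\rangle}]$ has kernel $\langle\vec{B}\rangle$ (because $\langle\vec{f}_i,\vec{f}_j^B\rangle=\delta_{ij}\in\mathbb{Z}$) and descends to a group isomorphism $\mathbb{Z}^r/\langle\vec{B}\rangle\to\widehat{\mathbb{Z}^r/L}$, matching orders since $\sharp(\mathbb{Z}^r/L)=|\det(\vec{f}_1,\dots,\vec{f}_r)|=\sharp(\mathbb{Z}^r/\langle\vec{B}\rangle)$. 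After substituting and combining the two exponentials into $\prod_f e^{2\pi\sqrt{-1}m_f\langle\mathbf{y}+\mathbf{w},\vec{f}^B\rangle}$, the truncated sum factorizes as
\begin{equation*}
Z_1(N;\mathbf{k},\mathbf{y};\Lambda;B)=\frac{(-1)^{\sharp\Lambda_0}}{\sharp(\mathbb{Z}^r/\langle\vec{B}\rangle)}\sum_{\mathbf{w}\in\mathbb{Z}^r/\langle\vec{B}\rangle}\prod_{f\in B}Z^{(f)}_N(\mathbf{w}),
\end{equation*}
where each $Z^{(f)}_N(\mathbf{w})$ is a one-dimensional truncated sum in $m_f\in\mathbb{Z}$ of the shape treated in Lemmas~\ref{lm:convkn0}--\ref{lm:convk1}.

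Finally, I would pass to the limit $N\to\infty$ inside the finite $\mathbf{w}$-sum and $f$-product. Lemma~\ref{lm:convkn0} handles every factor with $k_f=0$ or $k_f\ge2$ unconditionally; Lemma~\ref{lm:convk1} handles $k_f=1$ provided $\langle\mathbf{y}+\mathbf{w},\vec{f}^B\rangle\notin\mathbb{Z}$ for all $\mathbf{w}\in\mathbb{Z}^r$, which by Lemma~\ref{lm:setH} is exactly the hypothesis $\mathbf{y}\notin\bigcup_{f\in\Lambda_1}(\mathfrak{H}_{\Lambda\setminus\{f\}}+\mathbb{Z}^r)$. Collecting $(-1)^{\sharp\Lambda_0}$ with the implicit $(-1)^{\sharp\Lambda_+}$ coming from the factors $-\frac{(2\pi\sqrt{-1})^{k_f}}{k_f!}$ yields $(-1)^{\sharp B}$, i.e.\ one minus sign per factor, matching the right-hand side. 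One cosmetic point remains: the one-dimensional lemmas give the standard fractional part $\{\langle\mathbf{y}+\mathbf{w},\vec{f}^B\rangle\}$, whereas the claim uses $\{\mathbf{y}+\mathbf{w}\}_{B,f}$. These agree off $\mathbb{Z}$ by \eqref{2-6}, which handles $f\in\Lambda_1$ automatically, and for $k_f\ne1$ the identity $F(t,y+1;\const{f})-F(t,y;\const{f})=t\,e^{(t-2\pi\sqrt{-1}\const{f})y}$, immediate from \eqref{F_def_exp}, forces $C(k_f,0;\const{f})=C(k_f,1;\const{f})$, so both conventions produce equal values even at integers. The chief delicacy is the conditional convergence in the $k_f=1$ case; since however $\mathbf{w}$ ranges over a finite set and there are finitely many factors, pointwise convergence from Lemma~\ref{lm:convk1} is enough here, while the sharper bound \eqref{est:C1} will become indispensable only later when comparing $S_1$ with $S$.
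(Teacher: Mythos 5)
Your proposal is correct and follows essentially the same route as the paper: change variables via $u_f=\langle\vec f,\mathbf v\rangle$, decouple the lattice constraint with the orthogonality relation from Lemma \ref{lm:characteristic}, factor into one-dimensional sums handled by Lemmas \ref{lm:convkn0} and \ref{lm:convk1} (with Lemma \ref{lm:setH} supplying the non-integrality needed when $k_f=1$), and reconcile $\{\langle\mathbf y+\mathbf w,\vec f^B\rangle\}$ with $\{\mathbf y+\mathbf w\}_{B,f}$ at the end. The only cosmetic differences are that you apply the character identity to $L=A\mathbb{Z}^r\subset\mathbb{Z}^r$ rather than to $\mathbb{Z}^r\subset\langle\vec B^{\,*}\rangle$ as the paper does, and you verify $C(k,0;b)=C(k,1;b)$ for $k\neq1$ via the generating-function identity rather than quoting it directly; both are equivalent.
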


\begin{proof}
  Let $A={}^t(\vec{f})_{f\in B}$ be a regular matrix, where $\vec{f}$ are regarded as column vectors.
  Then $A^{-1}=(\vec{f}^B)_{f\in B}$.
For $\mathbf{v}\in\mathbb{Z}^r$, 
write $\mathbf{u}=(u_f)_{f\in B}=A\mathbf{v}$ so that 
$u_f=\langle\vec{f},\mathbf{v}\rangle$ and
$\mathbf{v}=A^{-1}\mathbf{u}=\sum_{f\in B} \vec{f}^Bu_f$.
  We have
  \begin{equation}
    \label{eq:Cauchy3}
    \begin{split}
    Z_1(N;\mathbf{k},\mathbf{y};\Lambda;B)
      &
=
      (-1)^{\sharp\Lambda_0}
      \sum_{\substack{\mathbf{v}\in\mathbb{Z}^{r}\\
      |\Re f(\mathbf{v})|\leq N\quad(f\in B)\\
      f(\mathbf{v})\neq 0\quad(f\in\Lambda_+)\\
      f(\mathbf{v})=0\quad(f\in\Lambda_0)}}
      e^{2\pi\sqrt{-1}\langle \mathbf{y},\mathbf{v}\rangle}
      \prod_{f\in\Lambda_+}
      \frac{1}{(\langle \vec{f},\mathbf{v}\rangle+\const{f})^{k_f}}
      \\
      &=
      (-1)^{\sharp\Lambda_0}
      \sum_{\substack{u_f\in\mathbb{Z} \\ 
          |u_f+\Re \const{f}|\leq N \quad (f\in B) \\
          u_f+\const{f}\neq 0 \quad (f\in\Lambda_+) \\
          u_f+\const{f}=0 \quad (f\in\Lambda_0)}}
      \iota(\mathbf{u})
      e^{2\pi\sqrt{-1}\langle \mathbf{y},A^{-1}\mathbf{u}\rangle}
      \prod_{f\in\Lambda_+}
      \frac{1}{(u_f+\const{f})^{k_f}},
    \end{split}
  \end{equation}
  where
  \begin{equation}
    \iota(\mathbf{u})=
    \begin{cases}
      1\qquad&
      (A^{-1}\mathbf{u}\in\mathbb{Z}^r),\\  
      0\qquad&
      (A^{-1}\mathbf{u}\notin\mathbb{Z}^r).\\  
    \end{cases}
  \end{equation}
We prove
  \begin{equation}\label{delta_delta}
    \iota(\mathbf{u})
    =
    \frac{1}{\sharp(\mathbb{Z}^r/\langle\vec{B}\rangle)}
    \sum_{\mathbf{w}\in\mathbb{Z}^r/\langle\vec{B}\rangle}  
    e^{2\pi\sqrt{-1}\langle\mathbf{w},A^{-1}\mathbf{u}\rangle}.
  \end{equation}
In fact, using Lemma \ref{lm:characteristic}
with $Q=\mathbb{Z}^r$, $P=\langle\vec{B}^*\rangle$ and noting
$\Hom(\mathbb{Z}^r,\mathbb{Z})\simeq\mathbb{Z}^r$,
$\Hom(\langle\vec{B}^*\rangle,\mathbb{Z})\simeq
\langle\vec{B}\rangle$ and
$\sharp(\langle\vec{B}^*\rangle/\mathbb{Z}^r)=\sharp
(\mathbb{Z}^r/\langle\vec{B}\rangle)$, we have
  \begin{equation*}
    \frac{1}{\sharp(\mathbb{Z}^r/\langle\vec{B}\rangle)}
    \sum_{\mathbf{w}\in\mathbb{Z}^r/\langle\vec{B}\rangle}  
    e^{2\pi\sqrt{-1}\langle\mathbf{w},\Bar{\lambda}\rangle}
  =\delta_{\Bar{\lambda},0}
  \end{equation*}
for $\Bar{\lambda}\in\langle\vec{B}^*\rangle/\mathbb{Z}^r$.
Since $A^{-1}\mathbf{u}=\sum_{f\in B} \vec{f}^Bu_f\in\langle\vec{B}^*\rangle$,
choosing $\Bar{\lambda}=\overline{A^{-1}\mathbf{u}}\in
\langle\vec{B}^*\rangle/\mathbb{Z}^r$ in the above, we obtain
\eqref{delta_delta}.

Using \eqref{delta_delta} we find that \eqref{eq:Cauchy3} is equal to
  \begin{equation}\label{4-32}
    \begin{split}
      &
      \frac{1}{\sharp(\mathbb{Z}^r/\langle\vec{B}\rangle)}
      (-1)^{\sharp\Lambda_0}
      \sum_{\mathbf{w}\in\mathbb{Z}^r/\langle\vec{B}\rangle}  
      \sum_{\substack{u_f\in\mathbb{Z} \\ 
          |u_f+\Re \const{f}|\leq N \quad (f\in B) \\
          u_f+\const{f}\neq 0 \quad (f\in\Lambda_+) \\
          u_f+\const{f}=0 \quad (f\in\Lambda_0)}}
        e^{2\pi\sqrt{-1}\langle \mathbf{y}+\mathbf{w},A^{-1}\mathbf{u}\rangle}
            \Bigl(
      \prod_{f\in\Lambda_+}
      \frac{1}{(u_f+\const{f})^{k_f}}
      \Bigr)
      \\
      &=
      \frac{1}{\sharp(\mathbb{Z}^r/\langle\vec{B}\rangle)}
      \sum_{\mathbf{w}\in\mathbb{Z}^r/\langle\vec{B}\rangle}  
      \prod_{f\in\Lambda_0}\Bigl(-
      \sum_{\substack{u_f\in\mathbb{Z} \\ 
          |u_f+\Re \const{f}|\leq N\\
          u_f+\const{f}=0}}
      e^{2\pi\sqrt{-1}\langle\mathbf{y}+\mathbf{w},\vec{f}^B\rangle u_f}\Bigr)
      \\
      &\qquad\times
      \prod_{f\in\Lambda_+}\Bigl(
      \sum_{\substack{u_f\in\mathbb{Z} \\ 
          |u_f+\Re \const{f}|\leq N\\
          u_f+\const{f}\neq 0}}
      \frac{e^{2\pi\sqrt{-1}\langle\mathbf{y}+\mathbf{w},\vec{f}^B\rangle u_f}}{(u_f+\const{f})^{k_f}}
      \Bigr),
    \end{split}
  \end{equation}
where we have used $A^{-1}\mathbf{u}=\sum_{f\in B} \vec{f}^Bu_f$.
(Note that at present $\Lambda=\Lambda_0\cup\Lambda_+=B$.)

By Lemma \ref{lm:setH} we see that the assumption $\mathbf{y}\in V\setminus\bigcup_{f\in\Lambda_1}(\mathfrak{H}_{\Lambda\setminus\{f\}}+\mathbb{Z}^r)$ implies
that $\langle\mathbf{y}+\mathbf{w},\vec{f}^B\rangle\notin\mathbb{Z}$ for all $f\in \Lambda_1$ and $\mathbf{w}\in\mathbb{Z}^r$. 
Therefore the condition of Lemma \ref{lm:convk1} is satisfied
(for $y=\langle\mathbf{y}+\mathbf{w},\vec{f}^B\rangle$).
Therefore letting $N\to\infty$ on the right-hand side of \eqref{4-32}
and applying Lemmas \ref{lm:convkn0} and \ref{lm:convk1},
we obtain 
   \begin{equation}
     S_1(\mathbf{k},\mathbf{y};\Lambda;B)
=
      \frac{1}{\sharp(\mathbb{Z}^r/\langle\vec{B}\rangle)}
      \sum_{\mathbf{w}\in\mathbb{Z}^r/\langle\vec{B}\rangle}  
      \prod_{f\in\Lambda}
      \Bigl(
      -\frac{(2\pi\sqrt{-1})^{k_f}}{k_f!}
      C(k_f,\{\langle\mathbf{y}+\mathbf{w},\vec{f}^B\rangle\};\const{f})
      \Bigr).
  \end{equation}

Lastly we note that the factor 
$\{\langle\mathbf{y}+\mathbf{w},\vec{f}^B\rangle\}$
on the right-hand side of the above can be replaced by 
$\{\mathbf{y}+\mathbf{w}\}_{B,f}$.    This is because 
$C(k,\{y\};b)=C(k,1-\{-y\};b)$ for $k\geq2$ or $k=0$,
while $y\notin\mathbb{Z}$ when $k=1$ (cf.\ \eqref{2-6}).
This completes the proof of the lemma.
\end{proof}

Lemma \ref{lm:calcS} gives the right-hand side of \eqref{eq:S_int} in the special case $\Lambda=B$
and $\mathscr{B}=\{B\}$, but for $S_1(\mathbf{k},\mathbf{y};\Lambda;B)$ instead of
$S(\mathbf{k},\mathbf{y};\Lambda)$.    In order to use Lemma \ref{lm:calcS} in the proof of the
general case, we need the following inequality.

\begin{lemma}
\label{lm:bddS}
Assume $\Lambda=B$ with $\mathscr{B}=\{B\}$.
For $\mu>0$ and
$\mathbf{k}=(k_f)_{f\in\Lambda}\in\mathbb{N}_{0}^{r}$
there exists $K>0$ such that
for all 
$\mathbf{y}\in V\setminus\bigcup_{f\in\Lambda_1}(\mathfrak{H}_{\Lambda\setminus\{f\}}+\mathbb{Z}^r)$ and all sufficiently large $N>0$,
  \begin{equation}
    \begin{split}
      |Z_1(N;\mathbf{k},\mathbf{y};\Lambda;B)|
      \leq K \sum_{\mathbf{w}\in\mathbb{Z}^r/\langle\vec{B}\rangle}  
      \prod_{f\in\Lambda_1}(1+(1-\{\langle\mathbf{y}+\mathbf{w},\vec{f}^B\rangle\})^{-\frac{1}{\mu+1}}+\{\langle\mathbf{y}+\mathbf{w},\vec{f}^B\rangle\}^{-\frac{1}{\mu+1}}).
    \end{split}
  \end{equation}
\end{lemma}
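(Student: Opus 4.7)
My plan is to recycle the factorisation already carried out in the proof of Lemma \ref{lm:calcS} and to estimate each one-dimensional factor separately. Precisely, I would start from the identity \eqref{4-32}, which expresses
\begin{equation*}
Z_1(N;\mathbf{k},\mathbf{y};\Lambda;B)=
\frac{1}{\sharp(\mathbb{Z}^r/\langle\vec{B}\rangle)}
\sum_{\mathbf{w}\in\mathbb{Z}^r/\langle\vec{B}\rangle}
\prod_{f\in\Lambda_0}A_f(\mathbf{w})\prod_{f\in\Lambda_+}B_f(N,\mathbf{w}),
\end{equation*}
where $A_f(\mathbf{w})$ is the (at most one-term) sum over $u_f$ with $u_f+\const{f}=0$ and $B_f(N,\mathbf{w})$ is the one-dimensional sum over $|u_f+\Re\const{f}|\leq N$ of $e^{2\pi\sqrt{-1}\langle\mathbf{y}+\mathbf{w},\vec{f}^B\rangle u_f}/(u_f+\const{f})^{k_f}$. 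Taking absolute values through the product and summing in $\mathbf{w}$ reduces the lemma to estimating each factor.

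Next I would bound the factors one class at a time. For $f\in\Lambda_0$ the sum $A_f(\mathbf{w})$ either vanishes (if $\const{f}\notin\mathbb{Z}$) or consists of the single term $-e^{-2\pi\sqrt{-1}\langle\mathbf{y}+\mathbf{w},\vec{f}^B\rangle\const{f}}$; in either case $|A_f(\mathbf{w})|$ is bounded by a constant depending only on $\const{f}$. For $f\in\Lambda_+$ with $k_f\geq 2$, the sum $B_f(N,\mathbf{w})$ converges absolutely uniformly in $\mathbf{y}$ by Lemma \ref{lm:convkn0}, so $|B_f(N,\mathbf{w})|$ is bounded by a constant independent of $N$ and $\mathbf{y}$. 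For $f\in\Lambda_1$, I invoke the key estimate \eqref{est:C1} of Lemma \ref{lm:convk1} with $M=N$ and lower endpoint $-N$; the hypothesis $\mathbf{y}\in V\setminus\bigcup_{f\in\Lambda_1}(\mathfrak{H}_{\Lambda\setminus\{f\}}+\mathbb{Z}^r)$ combined with Lemma \ref{lm:setH} guarantees $\langle\mathbf{y}+\mathbf{w},\vec{f}^B\rangle\notin\mathbb{Z}$, so \eqref{est:C1} applies and gives
\begin{equation*}
|B_f(N,\mathbf{w})|\leq K\bigl(1+(1-\{\langle\mathbf{y}+\mathbf{w},\vec{f}^B\rangle\})^{-\frac{1}{\mu+1}}+\{\langle\mathbf{y}+\mathbf{w},\vec{f}^B\rangle\}^{-\frac{1}{\mu+1}}\bigr)
\end{equation*}
uniformly for sufficiently large $N$, because the decaying term $|N|^{-1/(\mu+1)}$ only helps and the $\delta_{N<0<M}$ contribution is absorbed into the constant $1$.

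Multiplying these three classes of bounds across $f\in\Lambda=B$ and summing over the finitely many cosets $\mathbf{w}\in\mathbb{Z}^r/\langle\vec{B}\rangle$ yields exactly the claimed inequality, with the constant $K$ gathering all the factor-wise constants (including the constants from $\Lambda_0$ and $\Lambda_+\setminus\Lambda_1$) and the product of factors $1+(1-\{\cdot\})^{-1/(\mu+1)}+\{\cdot\}^{-1/(\mu+1)}$ surviving only over indices $f\in\Lambda_1$. I expect the only mildly delicate step to be the bookkeeping that turns the bound \eqref{est:C1}, which has the shape ``decaying $+$ constant'', into a uniform bound of the form $K\cdot(1+\text{singular part})$ that does not involve $N$; this is essentially trivial since for $N$ large the decaying term is dominated by a constant, but it is the point at which the uniformity in $N$ claimed by the lemma is secured.
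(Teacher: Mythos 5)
Your proposal is correct and follows essentially the same route as the paper: the paper's proof likewise starts from the coset-factorised expression \eqref{4-32} obtained in the proof of Lemma \ref{lm:calcS}, takes absolute values, observes that the $\Lambda_0$ factors consist of a single bounded term and the $k_f\geq 2$ factors are absolutely convergent hence bounded, and applies the estimate \eqref{est:C1} of Lemma \ref{lm:convk1} to the $\Lambda_1$ factors. Your bookkeeping remark about absorbing the decaying term and the $\delta_{N<0<M}$ contribution into the constant matches what the paper does implicitly.
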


\begin{proof}
From the proof of Lemma \ref{lm:calcS}, we have
  \begin{equation}\label{spsp}
    \begin{split}
      |Z_1(N;\mathbf{k},\mathbf{y};\Lambda;B)|
      &\leq
      \frac{1}{\sharp(\mathbb{Z}^r/\langle\vec{B}\rangle)}
      \sum_{\mathbf{w}\in\mathbb{Z}^r/\langle\vec{B}\rangle}  
      \prod_{f\in\Lambda_0}\Bigl|
      -\sum_{\substack{u_f\in\mathbb{Z} \\ 
          |u_f+\Re\const{f}|\leq N\\
          u_f+\const{f}=0}}
      e^{2\pi\sqrt{-1}\langle\mathbf{y}+\mathbf{w},\vec{f}^B\rangle u_f}
      \Bigr|
      \\
      &\qquad\times
      \prod_{f\in\Lambda_+}\Bigl|
      \sum_{\substack{u_f\in\mathbb{Z} \\ 
          |u_f+\Re\const{f}|\leq N\\
          u_f+\const{f}\neq 0}}
      \frac{e^{2\pi\sqrt{-1}\langle\mathbf{y}+\mathbf{w},\vec{f}^B\rangle u_f}}{(u_f+\const{f})^{k_f}}
      \Bigr|.
     \end{split}
   \end{equation}
On the right-hand side of \eqref{spsp}, each sum corresponding to
$f\in\Lambda_0$ consists of just one term, and each sum corresponding
to $k_f\geq 2$ is convergent absolutely as $N\to\infty$, so all of them
are bounded.   For $f\in\Lambda_1$, we apply Lemma \ref{lm:convk1} to
obtain that the right-hand side of \eqref{spsp} is
\begin{equation}
      \leq K \sum_{\mathbf{w}\in\mathbb{Z}^r/\langle\vec{B}\rangle}  
      \prod_{f\in\Lambda_1}(1+(1-\{\langle\mathbf{y}+\mathbf{w},\vec{f}^B\rangle\})^{-\frac{1}{\mu+1}}+\{\langle\mathbf{y}+\mathbf{w},\vec{f}^B\rangle\}^{-\frac{1}{\mu+1}}).
\end{equation}
\end{proof}

To evaluate the difference between $Z(N;\mathbf{k},\mathbf{y};\Lambda;B)$ and
$Z_1(N;\mathbf{k},\mathbf{y};\Lambda;B)$ in the final step of the proof, the following two
lemmas are necessary.
For $B\in\mathscr{B}$ and 
$R>0$, let
\begin{align}
  &U_R=U_R(B)=\{\mathbf{x}\in\mathbb{R}^{r}~|~|\Re f(\mathbf{x})|\leq R\quad(f\in B)\},\\
  &W_R=\{\mathbf{x}\in\mathbb{R}^{r}~|~|x_j|\leq R\quad(1\leq j\leq r)\}.
\end{align}
  
\begin{lemma}
\label{lm:UWU}
There exist positive numbers $c,d$ with $c>d>0$ such that
  \begin{equation}
    U_{dR}(B)\subset W_{R}\subset U_{cR}(B)
  \end{equation}  
  for all sufficiently large 
$R>0$.  
\end{lemma}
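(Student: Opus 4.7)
The plan is to reduce the statement to the standard fact that all norms on $\mathbb{R}^r$ are equivalent, with a minor complication caused by the constant parts $\const{f}$ of the affine functionals $f \in B$.

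First I would introduce the matrix $A = {}^t(\vec{f})_{f \in B}$ (whose rows are the vectors $\vec{f}$), exactly as in the proof of Lemma \ref{lm:calcS}. Since $\vec{B}$ is a basis of $V$, $A$ is invertible, so the linear map $\mathbf{x} \mapsto A\mathbf{x}$ is a bijection of $\mathbb{R}^r$. In particular, there exist constants $\alpha, \beta > 0$ (depending only on $B$) such that
\begin{equation*}
  \alpha \|\mathbf{x}\|_\infty \leq \|A\mathbf{x}\|_\infty \leq \beta \|\mathbf{x}\|_\infty \qquad (\mathbf{x} \in \mathbb{R}^r),
\end{equation*}
where $\|\cdot\|_\infty$ denotes the sup-norm. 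Observe that for $\mathbf{x} \in \mathbb{R}^r$, the condition defining $U_R(B)$ is $|(A\mathbf{x})_i + \Re\const{f_i}| \leq R$ for each $i$, i.e.\ $\|A\mathbf{x} + \mathbf{c}\|_\infty \leq R$, where $\mathbf{c} = (\Re \const{f})_{f \in B}$ is a fixed vector. Set $C = \|\mathbf{c}\|_\infty$, which is finite and depends only on $\Lambda$.

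Next I would establish the two inclusions separately using the triangle inequality. For the right inclusion $W_R \subset U_{cR}(B)$, if $\|\mathbf{x}\|_\infty \leq R$ then
\begin{equation*}
  \|A\mathbf{x} + \mathbf{c}\|_\infty \leq \beta R + C \leq (\beta + 1) R
\end{equation*}
whenever $R \geq C$, so $c := \beta + 1$ works for all sufficiently large $R$. For the left inclusion $U_{dR}(B) \subset W_R$, if $\|A\mathbf{x} + \mathbf{c}\|_\infty \leq dR$ then
\begin{equation*}
  \alpha \|\mathbf{x}\|_\infty \leq \|A\mathbf{x}\|_\infty \leq dR + C \leq (d+1) R
\end{equation*}
for $R \geq C$, giving $\|\mathbf{x}\|_\infty \leq (d+1)R/\alpha$. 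Choosing $d > 0$ so that $(d+1)/\alpha \leq 1$, e.g.\ $d = \alpha/2$ (and then shrinking further if needed so that $d < c$), yields the desired inclusion.

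There is no real obstacle here; the argument is essentially the equivalence of norms on $\mathbb{R}^r$ combined with the observation that a bounded additive shift can be absorbed by the sup-norm at the cost of a slightly larger multiplicative constant, valid once $R$ exceeds some threshold depending on the (fixed) constants $\Re\const{f}$. The only mild point to be careful about is ensuring the final choice satisfies $c > d > 0$, which is evident from $c \geq 1 > \alpha/2 = d$ (after possibly rescaling, since $\alpha, \beta$ depend on $B$ but not on $R$).
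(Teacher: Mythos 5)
Your overall strategy is sound and is essentially the same as the paper's: both arguments reduce the statement to the equivalence of norms on $\mathbb{R}^r$ for the invertible map $A$, absorbing the fixed shifts $\Re\const{f}$ once $R$ is large. (The paper happens to phrase the inclusion $U_{dR}(B)\subset W_R$ via the vertices of the parallelotope and the operator norm of $A^{-1}$, and the inclusion $W_R\subset U_{cR}(B)$ via an inscribed Euclidean ball and the distances to the bounding hyperplanes; your uniform sup-norm formulation covers the same content.)

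There is, however, an arithmetic error in your choice of $d$. You require $(d+1)/\alpha\leq 1$, i.e.\ $d\leq\alpha-1$; this has no positive solution when $\alpha\leq 1$, and your proposed $d=\alpha/2$ satisfies it only when $\alpha\geq 2$. Since $\alpha$ is just the lower norm-equivalence constant of $A$ in the sup-norm, it can be arbitrarily small, so as written the left inclusion is not established in general. Moreover your closing remark asserts $1>\alpha/2$, i.e.\ $\alpha<2$, which contradicts the condition you imposed a line earlier. The fix is to use the sharper absorption: for any fixed $d<\alpha$ one has $dR+C\leq\alpha R$ as soon as $R\geq C/(\alpha-d)$, whence $\alpha\|\mathbf{x}\|_\infty\leq\|A\mathbf{x}\|_\infty\leq dR+C\leq\alpha R$ gives $\|\mathbf{x}\|_\infty\leq R$ directly. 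Taking $d=\alpha/2$ (so the threshold is $R\geq 2C/\alpha$) and $c=\beta+1$ then yields $c>d>0$ because $d<\alpha\leq\beta<\beta+1=c$. With this correction the proof is complete.
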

\begin{proof}
We show the first inclusion. 
Each vertex $\mathbf{x}$ of the parallelotope $U_R(B)$ satisfies
one of the following equations
\begin{equation}
  A\mathbf{x}+(\Re \const{f})_{f\in B}
=(R_f)_{f\in B} \qquad (R_f\in\{R,-R\}),
\end{equation}
where we regard $(\Re \const{f})_{f\in B}$,
$(R_f)_{f\in B}$ and $\vec{f}$ as column vectors respectively
and
$A={}^t(\vec{f})_{f\in B}$.
Hence we see that the Euclid norm $\|\mathbf{x}\|$ of each vertex $\mathbf{x}=A^{-1}(R_f-\Re\const{f})_{f\in B}$ satisfies
\begin{equation}
  \begin{split}
    \|\mathbf{x}\|
    &=\|A^{-1}\|\|(R_f-\Re\const{f})_{f\in B}\|
    \leq \|A^{-1}\|\max_{f\in B}\sqrt{r}|R\pm\Re\const{f}|
\\
    &\leq R\sqrt{r}\|A^{-1}\|\max_{f\in B}\Bigl|1\pm\frac{\Re\const{f}}{R}\Bigr|
    \leq 2\sqrt{r}\|A^{-1}\| R
\end{split} 
\end{equation}
for all sufficiently large $R>0$,
where $\|A\|$ denotes the matrix norm of $A$.
Thus choosing
\begin{equation}
  d=(2\sqrt{r}\|A^{-1}\|)^{-1}
\end{equation}
we find that the vertex $\mathbf{x}$ of $U_{dR}(B)$ satisfies 
$\|\mathbf{x}\|\leq R$, so $U_{dR}(B)\subset W_R$.

We show the second inclusion.   Denote by $K(R)$ the ball of radius $R$
whose center is the origin.
The distance between the origin and the hyperplane $\{\mathbf{x}\in V~|~\langle\vec{f},\mathbf{x}\rangle+\Re\const{f}=\pm R\}$ 
is given by $\frac{|R\pm\Re\const{f}|}{\|\vec{f}\|}$.
Since
  \begin{equation}
 R\min_{f\in B} \frac{1}{2\|\vec{f}\|}
 \leq
  \min_{f\in B} \frac{|R\pm\Re\const{f}|}{\|\vec{f}\|}
  \end{equation}
holds for all sufficiently large $R>0$, we find that
$K(R\min(2\|\vec{f}\|)^{-1})\subset U_R(B)$.
Therefore, choosing
\begin{equation}
  c^{-1}=\min_{f\in B} \frac{1}{2\sqrt{r}\|\vec{f}\|},
\end{equation}
we obtain
\begin{equation}
W_R\subset K(\sqrt{r}R)=K(cR\min(2\|\vec{f}\|)^{-1})
\subset U_{cR}(B).
\end{equation}
\end{proof}

\begin{lemma}
\label{lm:ZZdiff}
Assume $\Lambda=B=\{f_1,\ldots,f_r\}$ with $\mathscr{B}=\{B\}$. 
Let $c,d$ be as in Lemma \ref{lm:UWU}.
For $\mu>0$ and $\mathbf{k}=(k_f)_{f\in\Lambda}\in\mathbb{N}_{0}^{r}$
there exists $K>0$ such that for all
$\mathbf{y}\in V\setminus\bigcup_{f\in\Lambda_1}(\mathfrak{H}_{\Lambda\setminus\{f\}}+\mathbb{Z}^r)$
and all sufficiently large $N\in\mathbb{N}$,
\begin{equation}
\label{eq:limGZ}
\begin{split}
  &\Bigl|
  \sum_{\substack{\mathbf{v}\in\mathbb{Z}^{r}\cap (W_N\setminus U_{dN}(B))\\
      f(\mathbf{v})\neq 0\quad(f\in\Lambda_+)\\
      f(\mathbf{v})=0\quad(f\in\Lambda_0)}}
  e^{2\pi\sqrt{-1}\langle \mathbf{y},\mathbf{v}\rangle}
  \prod_{f\in\Lambda_+}
  \frac{1}{f(\mathbf{v})^{k_f}}
  \Bigr|
  \\
& \qquad \leq
  K N^{-\frac{1}{\mu+1}}(\log N)^{r}\Bigl(1+
  \sum_{\mathbf{w}\in\mathbb{Z}^r/\langle\vec{B}\rangle}  
  \sum_{f\in\Lambda_1}
  \Bigl((1-\{\langle\mathbf{y}+\mathbf{w},\vec{f}^B\rangle\})^{-\frac{1}{\mu+1}}+\{
  \langle\mathbf{y}+\mathbf{w},\vec{f}^B\rangle\}^{-\frac{1}{\mu+1}}\Bigr)\Bigr).
\end{split}
\end{equation}
\end{lemma}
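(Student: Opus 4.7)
The plan is to reduce the multi-dimensional shell sum to a collection of $1$-dimensional lattice sums controlled by Lemmas~\ref{lm:convkn0} and~\ref{lm:convk1}. The key geometric input is the sandwich $W_N\setminus U_{dN}(B)\subset U_{cN}(B)\setminus U_{dN}(B)$ from Lemma~\ref{lm:UWU}: although $W_N$ is not a box in the coordinates $u_f=\langle\vec{f},\mathbf{v}\rangle$ (for $f\in B$), the enveloping parallelotope $U_R(B)$ is exactly the box $\prod_{f\in B}\{|u_f+\Re\const{f}|\leq R\}$ in those coordinates, so the ambient shell decomposes as a difference of product boxes.

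After the change of variable $\mathbf{u}=A\mathbf{v}$ with $A={}^t(\vec{f})_{f\in B}$ and the orthogonality identity \eqref{delta_delta} from the proof of Lemma~\ref{lm:calcS}, the target sum becomes $\frac{1}{\sharp(\mathbb{Z}^r/\langle\vec{B}\rangle)}\sum_{\mathbf{w}}$ of a product over $f\in B$ of $1$-dimensional sums in $u_f$ of $e^{2\pi\sqrt{-1}\langle\mathbf{y}+\mathbf{w},\vec{f}^B\rangle u_f}/(u_f+\const{f})^{k_f}$. I then split the ambient $u$-box shell via the telescoping identity
\begin{equation*}
\prod_{f\in B}\chi_{|u_f+\Re\const{f}|\leq cN}-\prod_{f\in B}\chi_{|u_f+\Re\const{f}|\leq dN}=\sum_{f_0\in B}\Bigl(\prod_{f\prec f_0}\chi_{\leq cN}\Bigr)\chi_{dN<|u_{f_0}+\Re\const{f_0}|\leq cN}\Bigl(\prod_{f\succ f_0}\chi_{\leq dN}\Bigr),
\end{equation*}
producing $r$ terms, each singling out a ``tail'' coordinate $f_0$. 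For the tail factor, when $k_{f_0}=1$ the inequality \eqref{est:C1} of Lemma~\ref{lm:convk1} gives the decay $N^{-1/(\mu+1)}$ with the required fractional-part weight $(1-\{\langle\mathbf{y}+\mathbf{w},\vec{f_0}^B\rangle\})^{-1/(\mu+1)}+\{\langle\mathbf{y}+\mathbf{w},\vec{f_0}^B\rangle\}^{-1/(\mu+1)}$; when $k_{f_0}\geq 2$ absolute convergence yields $O(N^{-k_{f_0}+1})$; and when $k_{f_0}=0$ the tail is empty because the single allowed point $u_{f_0}+\const{f_0}=0$ lies in the inner region. The remaining $r-1$ bulk factors are bounded trivially by absolute values, contributing at most $O(\log N)$ each (the worst case $k_f=1$) and producing the $(\log N)^{r-1}\subset(\log N)^r$ factor. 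The over-counted region $U_{cN}(B)\setminus W_N$ is treated by a parallel decomposition into the slabs $\{\pm v_j>N\}\cap U_{cN}(B)$, expressing $v_j=\sum_{f\in B}\langle\mathbf{e}_j,\vec{f}^B\rangle u_f$ and applying Abel summation in the $u_f$-direction with the largest coefficient $|\langle\mathbf{e}_j,\vec{f}^B\rangle|$ to again reduce to a $u$-aligned tail to which \eqref{est:C1} applies.

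The main obstacle is precisely this passage from $W_N$ to the $u$-box shell: one cannot simply replace $W_N$ by $U_{cN}(B)$ in the complex sum without losing the $\mathbf{y}$-cancellation responsible for the $N^{-1/(\mu+1)}$ decay and for the fractional-part weights appearing in the final bound. Preserving this cancellation while handling the non-box region $U_{cN}(B)\setminus W_N$ is the most delicate point; the Abel summation step must be chosen carefully so as to produce a tail of the exact form treated by \eqref{est:C1} rather than a less favorable symmetric sum, and the bookkeeping between the telescoping index $f_0$ and the slab index $j$ must be organized so that a single tail factor (rather than several) supplies the $N^{-1/(\mu+1)}$ decay in every term.
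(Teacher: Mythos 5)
Your reduction to one-dimensional $u$-sums via $\mathbf{u}=A\mathbf{v}$ and \eqref{delta_delta}, and your telescoping treatment of the box shell $U_{cN}(B)\setminus U_{dN}(B)$, are sound and essentially reproduce the core of the paper's argument (which decomposes by the first $f_j\in\Lambda_+$ with $|\Re f_j(\mathbf{v})|>dN$, controls that tail factor by \eqref{est:C1}, and bounds the remaining factors by $(\log N)^{r}$). The genuine gap is in your treatment of the over-counted region $U_{cN}(B)\setminus W_N$. Restricted to the line where $(u_f)_{f\neq f_0}$ is fixed, the slab constraint $v_j>N$ becomes $\langle\mathbf{e}_j,\vec{f}_0^{B}\rangle u_{f_0}>N-\sum_{f\neq f_0}\langle\mathbf{e}_j,\vec{f}^{B}\rangle u_f$, and the right-hand side can lie anywhere in $[-cN,cN]$ depending on the fixed coordinates; choosing $f_0$ to maximize $|\langle\mathbf{e}_j,\vec{f}_0^{B}\rangle|$ does not prevent this. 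Whenever the resulting interval for $u_{f_0}$ contains the central region, \eqref{est:C1} yields only $O(1)$ (the $\delta_{N<0<M}K$ term), and the product of the remaining factors then gives $(\log N)^{r-1}$ with no $N^{-\frac{1}{\mu+1}}$ decay, so the claimed bound is not obtained on that part of the slab. The appeal to ``Abel summation'' is not specified in a way that repairs this.

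The fix --- and the reason the paper never meets this difficulty --- is to keep the outer boundary at $\partial W_N$ instead of flattening it to $cN$. Decomposing $W_N\setminus U_{dN}(B)$ by the first $f_j$ with $|\Re f_j(\mathbf{v})|>dN$ (such a $j$ exists precisely because the point lies outside $U_{dN}(B)$), one finds that for fixed $(u_i)_{i\neq j}$ the tail coordinate $u_j$ runs over at most two intervals $(dN-\Re\const{f_j},H_j^{+})$ and $(-H_j^{-},-dN-\Re\const{f_j})$ whose \emph{near} endpoints are at distance $dN$ from the pole; the far endpoints $H_j^{\pm}$, determined by $\partial W_N$, are harmless because the bound \eqref{est:C1} depends only on the near endpoint. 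If you insist on your decomposition, the same observation rescues $U_{cN}(B)\setminus W_N$: since $U_{dN}(B)\subset W_N$, every point of that region also has some $|u_{f_0}+\Re\const{f_0}|>dN$, and after decomposing by the first such $f_0$ the extra half-space constraints $\pm v_j>N$ (and $|v_i|\leq N$ for $i<j$) merely cut the two tail intervals down to sub-intervals that still avoid the $dN$-neighbourhood of the pole, so \eqref{est:C1} again supplies the $N^{-\frac{1}{\mu+1}}$ factor. In short, the tail direction must be chosen as the coordinate that is actually large, not the coordinate with the largest coefficient in $v_j$.
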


\begin{proof}
For brevity, we put
\begin{equation}
  G(\mathbf{y},\mathbf{v})=
  e^{2\pi\sqrt{-1}\langle \mathbf{y},\mathbf{v}\rangle}
  \prod_{f\in\Lambda_+}
  \frac{1}{(\langle \vec{f},\mathbf{v}\rangle+\const{f})^{k_f}}.
\end{equation}
We rearrange $\{f_1,\ldots,f_l\}=\Lambda_+$ and $\{f_{l+1},\ldots,f_r\}=\Lambda_0$ and
decompose 
\begin{equation}
  \Biggl\{\mathbf{v}\in\mathbb{Z}^{r}\cap(W_N\setminus U_{dN}(B))~\Biggm|~
  \begin{aligned}
    &f(\mathbf{v})\neq 0\quad(f\in\Lambda_+),\\
    &f(\mathbf{v})=0 \quad(f\in\Lambda_0)
  \end{aligned}
  \Biggr\}=\bigcup_{j=1}^l X_j(N)
\end{equation}
with
\begin{equation}
  X_j(N)=\left\{\mathbf{v}\in\mathbb{Z}^r~\middle|~
  \begin{aligned}
    &|\Re f_1(\mathbf{v})|,\ldots,|\Re f_{j-1}(\mathbf{v})|\leq dN,|\Re f_{j}(\mathbf{v})|>dN,\\
    &f_i(\mathbf{v})\neq 0\quad(1\leq i\leq l),\;
    f_i(\mathbf{v})=0\quad(l+1\leq i\leq r),\\
    & |v_k|\leq N \qquad(1\leq k\leq r)
  \end{aligned}
  \right\}
\end{equation}
for $1\leq j\leq l$.
Let
$A={}^t(\vec{f})_{f\in B}$ with $\vec{f}$ regarded as column vectors.
We rewrite the series in terms of $\mathbf{u}=A\mathbf{v}$. Let
\begin{equation}
  Y_j(N)=\left\{\mathbf{u}\in\mathbb{Z}^r~\middle|~
  \begin{aligned}
    &|u_1+\Re\const{f_1}|,\ldots,|u_{j-1}+\Re\const{f_{j-1}}|\leq dN,|u_j+\Re\const{f_j}|>dN,\\
    &u_i+\const{f_i}\neq 0\quad(1\leq i\leq l),\;
    u_i+\const{f_i}=0 \quad(l+1\leq i\leq r), \\
    &
  |\text{each entry of $A^{-1}\mathbf{u}$}|\leq N
  \end{aligned}
  \right\}
\end{equation}
for $1\leq j\leq l$. 
In $Y_j(N)$ for a fixed $(u_1,\ldots,u_{j-1},u_{j+1},\ldots,u_r)$ with sufficiently large $N\in\mathbb{N}$, 
we see that $u_j$ runs over all integers such that 
$dN-\Re\const{f_j}<u_j<H_j^+$ 
and
$-H_j^-<u_j<-dN-\Re\const{f_j}$
for some $H_j^\pm=H_j^\pm(u_1,\ldots,u_{j-1},u_{j+1},\ldots,u_r)\geq 0$,
where $H_j^\pm$ are determined by the intersection point of the half line
 $\{(u_1,\ldots,u_{j-1},x,u_{j+1},\ldots,u_r)~|~\pm x>0\}$
and the boundaries 
$\partial W_N=\bigcup_{1\leq l\leq r}\{A\mathbf{v}~|~|v_k|\leq N \quad (k\neq l), v_l=\pm N\}$ of $W_N$.
If there is no intersection point, then we put $H^\pm_j=0$ accordingly.
See Figure \ref{fig:UWU} for these sets and parameters.
\begin{figure}[h]
  \centering
  \includegraphics{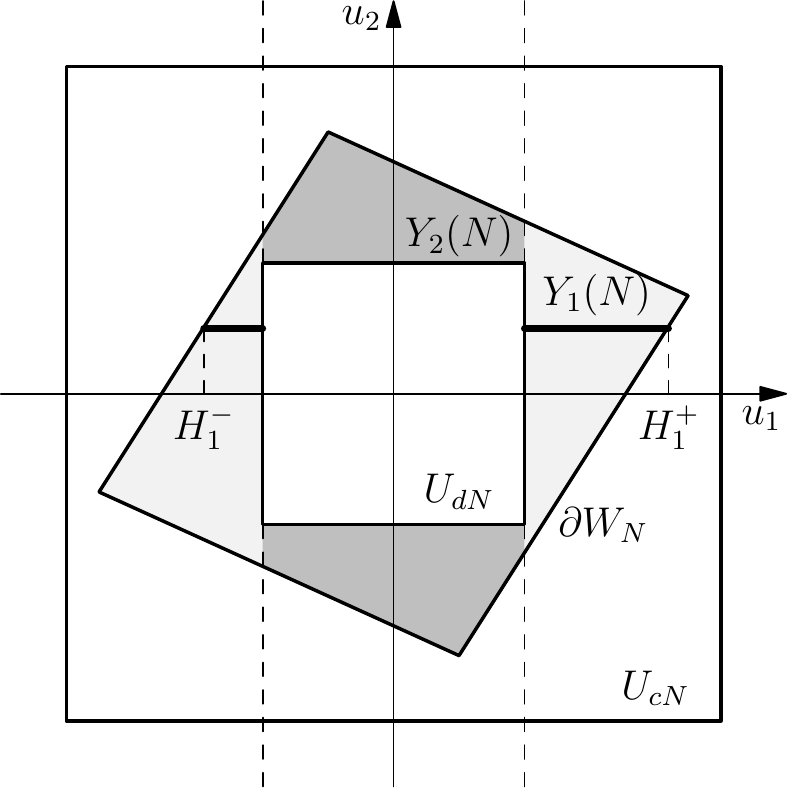}
  \caption{}
  \label{fig:UWU}
\end{figure}

From the proof of Lemma \ref{lm:calcS} 
we evaluate
\begin{equation}
\label{4-51}
  \begin{split}
    \Bigl|\sum_{\mathbf{v}\in X_j(N)}
    G(\mathbf{y},\mathbf{v})
    \Bigr|
    &\leq
    \frac{1}{\sharp(\mathbb{Z}^r/\langle\vec{B}\rangle)}
    \sum_{\mathbf{w}\in\mathbb{Z}^r/\langle\vec{B}\rangle}  
    \Bigl|\sum_{\mathbf{u}\in Y_j(N)}
    \Bigl(
    \prod_{i=1}^l
    \frac{e^{2\pi\sqrt{-1}\langle\mathbf{y}+\mathbf{w},\vec{f}_i^B\rangle u_i}}{(u_i+\const{f_i})^{k_i}}
    \Bigr)
    \Bigr|.
  \end{split}
\end{equation}
Further for each $j$ and $\mathbf{w}$, we have
\begin{equation}
  \begin{split}
&    \Bigl|\sum_{\mathbf{u}\in Y_j(N)}
    \Bigl(
    \prod_{i=1}^l
    \frac{e^{2\pi\sqrt{-1}\langle\mathbf{y}+\mathbf{w},\vec{f}_i^B\rangle u_i}}{(u_i+\const{f_i})^{k_i}}
    \Bigr)
    \Bigr|
    \\
    &\qquad =\Bigl|
    \sum_{\substack{u_i\in\mathbb{Z},\;
        u_i+\const{f_i}\neq 0\quad(1\leq i\leq l) \\ 
        |u_i+\Re\const{f_i}|\leq dN \quad (1\leq i\leq j-1)\\
        dN-\Re\const{f_j}<u_j<H_j^+\text{ or} \\
        -H_j^-<u_j<-dN-\Re\const{f_j}
      }}
    \Bigl(
    \prod_{i=1}^l
    \frac{e^{2\pi\sqrt{-1}\langle\mathbf{y}+\mathbf{w},\vec{f}_i^B\rangle u_i}}{(u_i+\const{f_i})^{k_i}}
    \Bigr)
    \Bigr|
    \\
    &\qquad =
    \Bigl|
    \sum_{\substack{u_i\in\mathbb{Z},\;u_i+\const{f_i}\neq 0
    \quad(1\leq i\leq l,i\neq j) \\ 
        |u_i+\Re\const{f_i}|\leq dN \quad (1\leq i\leq j-1)
      }}
    \Bigl(
    \prod_{\substack{i=1\\i\neq j}}^l
    \frac{e^{2\pi\sqrt{-1}\langle\mathbf{y}+\mathbf{w},\vec{f}_i^B\rangle u_i}}{(u_i+\const{f_i})^{k_i}}
    \Bigr)
    \sum_{\substack{
        dN-\Re\const{f_j}<u_j<H_j^+ \text{ or}\\
        -H_j^-<u_j<-dN-\Re\const{f_j}
      }}
    \frac{e^{2\pi\sqrt{-1}\langle\mathbf{y}+\mathbf{w},\vec{f}_j^B\rangle u_j}}{(u_j+\const{f_j})^{k_j}}
    \Bigr|
    \\
    &\qquad\leq
    \sum_{\substack{u_i\in\mathbb{Z},\;u_i+\const{f_i}\neq 0
    \quad(1\leq i\leq l,i\neq j) \\ 
        |u_i+\Re\const{f_i}|\leq dN \quad (1\leq i\leq j-1)\\
        |u_i+\Re\const{f_i}|\leq cN \quad (j+1\leq i\leq l)
      }}
    \Bigl(
    \prod_{\substack{i=1\\i\neq j}}^l
    \frac{1}{|u_i+\const{f_i}|^{k_i}}
    \Bigr)
    \Bigl|
    \sum_{\substack{
        dN-\Re\const{f_j}<u_j<H_j^+ \text{ or}\\
        -H_j^-<u_j<-dN-\Re\const{f_j}
      }}
    \frac{e^{2\pi\sqrt{-1}\langle\mathbf{y}+\mathbf{w},\vec{f}_j^B\rangle u_j}}{(u_j+\const{f_j})^{k_j}}
    \Bigr|,
  \end{split}
\end{equation}
where in the last member, we added the extra conditions
$|u_i+\Re\const{f_i}|\leq cN$ for $j+1\leq i\leq l$, which comes from Lemma \ref{lm:UWU}. 
If $k_j=1$, then
by Lemma \ref{lm:convk1}, and if $k_j\geq2$, then directly we obtain
\begin{equation}
  \begin{split}
    &\Bigl|
    \sum_{\substack{
        dN-\Re\const{f_j}<u_j<H_j^+ \\
        -H_j^-<u_j<-dN-\Re\const{f_j}
      }}
    \frac{e^{2\pi\sqrt{-1}\langle\mathbf{y}+\mathbf{w},\vec{f}_j^B\rangle u_j}}{(u_j+\const{f_j})^{k_j}}
    \Bigr|
    \\
    &
    \leq
    \begin{cases}
      K N^{-\frac{1}{\mu+1}}((1-\{\langle\mathbf{y}+\mathbf{w},\vec{f}_j^B\rangle\})^{-\frac{1}{\mu+1}}+\{
      \langle\mathbf{y}+\mathbf{w},\vec{f}_j^B\rangle\}^{-\frac{1}{\mu+1}})
      \qquad&(k_j=1) \\
      KN^{-1}\leq KN^{-\frac{1}{\mu+1}}\qquad&(k_j\geq2) 
    \end{cases}
    \\
    &=:Q_j(N,\mathbf{y},\mathbf{w})
  \end{split}
\end{equation}
for some $K>0$.
Hence 
\begin{equation}
\label{4-54}
  \begin{split}
    \Bigl|\sum_{\mathbf{u}\in Y_j(N)}
    \Bigl(
    \prod_{i=1}^l
    \frac{e^{2\pi\sqrt{-1}\langle\mathbf{y}+\mathbf{w},\vec{f}_i^B\rangle u_i}}{(u_i+\const{f_i})^{k_i}}
    \Bigr)
    \Bigr|
    &\leq
    Q_j(N,\mathbf{y},\mathbf{w})
    \sum_{\substack{u_i\in\mathbb{Z},\;u_i+\const{f_i}\neq 0
    \quad(1\leq i\leq l,i\neq j) \\ 
        |u_i+\Re\const{f_i}|\leq dN \quad (1\leq i\leq j-1) \\
        |u_i+\Re\const{f_i}|\leq cN \quad (j+1\leq i\leq l) 
      }}
    \Bigl(
    \prod_{\substack{i=1\\i\neq j}}^l
    \frac{1}{|u_i+\const{f_i}|^{k_i}}
    \Bigr)
    \\
    &\leq K' Q_j(N,\mathbf{y},\mathbf{w}) (\log dN)^{j-1}(\log cN)^{l-j}
    \\
    &\leq K''
    Q_j(N,\mathbf{y},\mathbf{w}) (\log N)^{r}
  \end{split}
\end{equation}
for some $K',K''>0$.
Substituting \eqref{4-54} into \eqref{4-51}, we complete the proof.
\end{proof}

{\it The third step.}
Lastly we consider the general case.    First we prove several preparatory lemmas.

\begin{lemma}
\label{lm:locfin}
Fix a decomposition $\Lambda=B_0\cup L_0$ with $B_0\in\mathscr{B}$. 
Let $\mathbf{y}\in V$ and $f\in B_0$.
If $f\notin \widetilde{\Lambda}$, then
there exists $g\in L_0$ such that
$\langle\vec{g},\vec{f}^{B_0}\rangle\neq 0$.
If $f\in \widetilde{\Lambda}$ and
$\mathbf{y}\notin\mathfrak{H}_{B_0\setminus\{f\}}+\mathbb{Z}^r$,
then there exists $c\in\mathbb{R}\setminus\mathbb{Z}$ such that
\begin{equation}
\langle\mathbf{y}-\sum_{g\in L_0}x_g\vec{g},\vec{f}^{B_0}\rangle
=c
\end{equation}
for all $\mathbf{x}=(x_g)_{g\in L_0}\in\mathbb{R}^{\sharp L_0}$.
\end{lemma}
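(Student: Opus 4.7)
Both claims rest on the same observation: since $\vec{f}^{B_0}$ is the dual-basis vector to $\vec{f}$ in $B_0$, it automatically satisfies $\langle\vec{f}',\vec{f}^{B_0}\rangle=0$ for every $f'\in B_0\setminus\{f\}$, so whether $\vec{f}^{B_0}$ also annihilates every element of $\vec{L}_0$ is exactly what determines whether $\vec{\Lambda}\setminus\{\vec{f}\}$ still spans $V$. This dichotomy matches precisely the definition of $\widetilde{\Lambda}$, and it is what splits the two cases of the lemma.

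For the first claim I would argue by contrapositive. Suppose $\langle\vec{g},\vec{f}^{B_0}\rangle=0$ for every $g\in L_0$. Combined with the dual-basis relations for $f'\in B_0\setminus\{f\}$, this forces $\vec{f}^{B_0}$ to annihilate all of $\vec{\Lambda}\setminus\{\vec{f}\}$, so $\langle\vec{\Lambda}\setminus\{\vec{f}\}\rangle$ is contained in the hyperplane $(\vec{f}^{B_0})^{\perp}$ and has rank at most $r-1$. This places $f$ in $\widetilde{\Lambda}$, contradicting the hypothesis.

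For the second claim, the assumption $f\in\widetilde{\Lambda}$ gives $\rank\langle\vec{\Lambda}\setminus\{\vec{f}\}\rangle=r-1$. The $r-1$ vectors $\{\vec{f}'\}_{f'\in B_0\setminus\{f\}}$ are linearly independent and sit inside $\vec{\Lambda}\setminus\{\vec{f}\}$, so they already span its $(r-1)$-dimensional $\mathbb{R}$-hull; this identifies $\mathfrak{H}_{\Lambda\setminus\{f\}}$ with $\mathfrak{H}_{B_0\setminus\{f\}}$. In particular every $\vec{g}$ with $g\in L_0$ lies in $\mathfrak{H}_{B_0\setminus\{f\}}$ and is therefore orthogonal to $\vec{f}^{B_0}$, so the pairing collapses to
\[
\langle\mathbf{y}-\sum_{g\in L_0}x_g\vec{g},\vec{f}^{B_0}\rangle=\langle\mathbf{y},\vec{f}^{B_0}\rangle=:c,
\]
which is visibly independent of $\mathbf{x}$. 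To see $c\notin\mathbb{Z}$ I would invoke Lemma \ref{lm:setH} with $B=B_0$ and $\mathbf{w}=0$: the hypothesis $\mathbf{y}\notin\mathfrak{H}_{B_0\setminus\{f\}}+\mathbb{Z}^r$ directly excludes $\langle\mathbf{y},\vec{f}^{B_0}\rangle\in\mathbb{Z}$.

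There is no serious obstacle here; the proof is bookkeeping once one notices that $B_0$ and $L_0$ interact with $\vec{f}^{B_0}$ in complementary ways. The only step that uses the full strength of the hypotheses is the identification $\mathfrak{H}_{\Lambda\setminus\{f\}}=\mathfrak{H}_{B_0\setminus\{f\}}$ in the second claim, which needs the sharp rank equality $r-1$ and is what makes Lemma \ref{lm:setH} applicable at the final step.
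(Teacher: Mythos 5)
Your proof is correct and follows essentially the same route as the paper: the paper likewise reduces the first claim to the definition of $\widetilde{\Lambda}$, observes that $f\in\widetilde{\Lambda}$ forces $\langle\vec{g},\vec{f}^{B_0}\rangle=0$ for all $g\in L_0$ so the pairing is constant in $\mathbf{x}$, and then invokes Lemma \ref{lm:setH} to get $\langle\mathbf{y},\vec{f}^{B_0}\rangle\notin\mathbb{Z}$. The only difference is that you spell out the rank-counting argument (that $\rank\langle\vec{\Lambda}\setminus\{\vec{f}\}\rangle=r-1$ identifies the span with $\mathfrak{H}_{B_0\setminus\{f\}}$) which the paper leaves implicit.
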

\begin{proof}
The first assertion directly follows from the definition.
Assume that
$f\in\widetilde{\Lambda}$. Then $\langle\vec{g},\vec{f}^{B_0}\rangle=0$
for all $g\in L_0$ and we have
\begin{equation}
\langle\mathbf{y}-\sum_{g\in L_0}x_g\vec{g},\vec{f}^{B_0}\rangle
=
  \langle\mathbf{y},\vec{f}^{B_0}\rangle, 
\end{equation}
which is a constant function in $\mathbf{x}$.
By Lemma \ref{lm:setH},
we find
$\langle\mathbf{y},\vec{f}^{B_0}\rangle\notin\mathbb{Z}$.
This implies
the second assertion.
\end{proof}

For $\mathbf{y}\in V$, a decomposition $\Lambda=B_0\cup L_0$ with $B_0\in\mathscr{B}$ and $f\in B_0$,
let 
\begin{equation}
  H(f,\mathbf{y})=
\Bigl\{\mathbf{x}=(x_g)_{g\in L_0}\in\mathbb{R}^{\sharp L_0}~\Bigm|~\langle\mathbf{y}-\sum_{g\in L_0}x_g\vec{g},\vec{f}^{B_0}\rangle\in\mathbb{Z}\Bigr\}.
\end{equation}

\begin{lemma}
\label{lm:locfin2}
Let $\mathbf{y}\in V$, $f\in B_0$, and assume that 
$\mathbf{y}\notin\mathfrak{H}_{B_0\setminus\{f\}}+\mathbb{Z}^r$ 
if $f\in \widetilde{\Lambda}$. 
Then the set $H(f,\mathbf{y})$ is empty, or a collection of equally spaced parallel hyperplanes.
\end{lemma}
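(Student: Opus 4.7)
The plan is to recognize that the defining condition for $H(f,\mathbf{y})$ is a single affine equation in $\mathbf{x}$, and then analyze its coefficient vector via the dichotomy $f\in\widetilde{\Lambda}$ versus $f\notin\widetilde{\Lambda}$ using Lemma \ref{lm:locfin}.

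First I would write
\begin{equation*}
\psi(\mathbf{x})=\langle\mathbf{y}-\sum_{g\in L_0}x_g\vec{g},\vec{f}^{B_0}\rangle
=\langle\mathbf{y},\vec{f}^{B_0}\rangle-\sum_{g\in L_0}\langle\vec{g},\vec{f}^{B_0}\rangle\,x_g,
\end{equation*}
an affine function in $\mathbf{x}=(x_g)_{g\in L_0}$, so that $H(f,\mathbf{y})=\psi^{-1}(\mathbb{Z})$.

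Next I would split into two cases. If $f\in\widetilde{\Lambda}$, the proof of Lemma \ref{lm:locfin} shows that $\langle\vec{g},\vec{f}^{B_0}\rangle=0$ for all $g\in L_0$, so $\psi$ is the constant $\langle\mathbf{y},\vec{f}^{B_0}\rangle$; the standing hypothesis $\mathbf{y}\notin\mathfrak{H}_{B_0\setminus\{f\}}+\mathbb{Z}^r$ together with Lemma \ref{lm:setH} forces $\langle\mathbf{y},\vec{f}^{B_0}\rangle\notin\mathbb{Z}$, whence $H(f,\mathbf{y})=\emptyset$. If instead $f\notin\widetilde{\Lambda}$, the first part of Lemma \ref{lm:locfin} provides some $g_0\in L_0$ with $\langle\vec{g}_0,\vec{f}^{B_0}\rangle\neq 0$, so the linear part of $\psi$ is a nonzero covector on $\mathbb{R}^{\sharp L_0}$.

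In this second case, $\psi^{-1}(n)$ for each $n\in\mathbb{Z}$ is an affine hyperplane in $\mathbb{R}^{\sharp L_0}$, all sharing the common normal direction given by $(-\langle\vec{g},\vec{f}^{B_0}\rangle)_{g\in L_0}$, and the spacing between $\psi^{-1}(n)$ and $\psi^{-1}(n+1)$ equals $1/\|(\langle\vec{g},\vec{f}^{B_0}\rangle)_{g\in L_0}\|$, which is independent of $n$. Hence $H(f,\mathbf{y})=\bigcup_{n\in\mathbb{Z}}\psi^{-1}(n)$ is a family of equally spaced parallel hyperplanes.

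Since the argument is essentially elementary linear algebra once the case split is in place, there is no real obstacle; the only point that needs care is invoking Lemma \ref{lm:setH} correctly in the $f\in\widetilde{\Lambda}$ case to convert the hypothesis on $\mathbf{y}$ into the non-integrality of $\langle\mathbf{y},\vec{f}^{B_0}\rangle$.
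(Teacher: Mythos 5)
Your proposal is correct and follows essentially the same route as the paper: the same dichotomy on $f\in\widetilde{\Lambda}$ versus $f\notin\widetilde{\Lambda}$ via Lemma \ref{lm:locfin}, with Lemma \ref{lm:setH} handling the empty case and elementary linear algebra handling the other. The only cosmetic difference is that the paper exhibits the family as $(\ker\mathscr{U}+\mathbf{x}_0)+\mathbb{Z}\mathbf{a}$ for an explicit translation vector $\mathbf{a}$, whereas you describe the same family by its common normal and perpendicular spacing; both are valid.
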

\begin{proof}
Let $U=(\vec{g})_{g\in L_0}$ be an $r\times\sharp L_0$ matrix and
$\mathbf{x}=(x_g)_{g\in L_0}$ be a column vector.
Consider the equation 
\begin{equation}
\label{eq:Ux}
  \langle\mathbf{y}-\sum_{g\in L_0}x_g\vec{g},\vec{f}^{B_0}\rangle=n
\end{equation}
for $n\in\mathbb{Z}$,
which is equivalent to
\begin{equation}
\label{eq:Ux_bis}
  \langle U\mathbf{x},\vec{f}^{B_0}\rangle
=\langle \mathbf{y},\vec{f}^{B_0}\rangle-n.
\end{equation}

Assume that $f\notin\widetilde{\Lambda}$. Then
there exists $g\in L_0$ such that
$\langle\vec{g},\vec{f}^{B_0}\rangle\neq 0$.
We see that \eqref{eq:Ux_bis}
has a solution $\mathbf{x}=\mathbf{x}_0-n\mathbf{a}$ with
\begin{equation}
  (\mathbf{x}_0)_{h}=
  \begin{cases}
    \langle \mathbf{y},\vec{f}^{B_0}\rangle
    /\langle\vec{g},\vec{f}^{B_0}\rangle\qquad&(h=g)\\
    0\qquad&(h \neq g)\\
  \end{cases},
\qquad
  (\mathbf{a})_{h}=
  \begin{cases}
    1/\langle\vec{g},\vec{f}^{B_0}\rangle\qquad&(h=g)\\
    0\qquad&(h \neq g)\\
  \end{cases},
\end{equation}
and so the equation \eqref{eq:Ux_bis} is rewritten as
\begin{equation}
  \langle U(\mathbf{x}-\mathbf{x}_0+n\mathbf{a}),\vec{f}^{B_0}\rangle=0.
\end{equation}
The condition 
$\langle\vec{g},\vec{f}^{B_0}\rangle\neq 0$
also implies that $\dim\ker \mathscr{U}=\sharp L_0-1$ for the linear functional 
$\mathscr{U}(\mathbf{v})=\langle U\mathbf{v},\vec{f}^{B_0}\rangle$, and
\begin{equation}
H(f,\mathbf{y})
=(\ker\mathscr{U}+\mathbf{x}_0)+\mathbb{Z}\mathbf{a}
\end{equation}
is a collection of equally spaced parallel hyperplanes.

Assume that
$f\in\widetilde{\Lambda}$. Then by Lemma \ref{lm:locfin},
\begin{equation}
\langle\mathbf{y}-\sum_{g\in L_0}x_g\vec{g},\vec{f}^{B_0}\rangle
\in\mathbb{R}\setminus\mathbb{Z},
\end{equation}
and hence
$H(f,\mathbf{y})=\emptyset$.
\end{proof}

\begin{lemma}
  \label{lm:aex}
  Fix a decomposition $\Lambda=B_0\cup L_0$ with $B_0\in\mathscr{B}$. 
  Assume $D\subset B_0$.
  For a fixed $\mathbf{y}\in V\setminus\bigcup_{f\in\widetilde{\Lambda}\cap D}(\mathfrak{H}_{\Lambda\setminus\{f\}}+\mathbb{Z}^r)$, the measure of
  \begin{equation}
    \label{eq:setX}
    M(\mathbf{y})=
    \Bigl\{(x_g)_{g\in L_0}\in \mathbb{R}^{\sharp L_0}~\Bigm|~\mathbf{y}-\sum_{g\in L_0}x_g\vec{g}\in \bigcup_{f\in D}(\mathfrak{H}_{B_0\setminus\{f\}}+\mathbb{Z}^r)\Bigr\}
  \end{equation}
  is zero.
\end{lemma}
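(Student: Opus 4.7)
The plan is to decompose $M(\mathbf{y}) = \bigcup_{f \in D} M_f(\mathbf{y})$, where
\[
M_f(\mathbf{y}) = \Bigl\{(x_g)_{g \in L_0} \in \mathbb{R}^{\sharp L_0} \Bigm| \mathbf{y} - \sum_{g \in L_0} x_g \vec{g} \in \mathfrak{H}_{B_0 \setminus \{f\}} + \mathbb{Z}^r\Bigr\}.
\]
Since $D$ is finite, it suffices to show each $M_f(\mathbf{y})$ has Lebesgue measure zero. By Lemma \ref{lm:setH}, one has $M_f(\mathbf{y}) = \Phi_f^{-1}(S_f)$, where $\Phi_f : \mathbb{R}^{\sharp L_0} \to \mathbb{R}$ is the affine functional $\Phi_f(\mathbf{x}) = \langle \mathbf{y} - \sum_{g \in L_0} x_g \vec{g},\, \vec{f}^{B_0}\rangle$, and $S_f = \mathbb{Z} + \{\langle \mathbf{w}, \vec{f}^{B_0}\rangle : \mathbf{w} \in \mathbb{Z}^r\}$ is a countable subset of $\mathbb{R}$.

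The argument then splits along the dichotomy of Lemma \ref{lm:locfin}. If $f \notin \widetilde{\Lambda}$, some $g_0 \in L_0$ satisfies $\langle \vec{g}_0, \vec{f}^{B_0}\rangle \neq 0$, so $\Phi_f$ is a non-constant affine map on $\mathbb{R}^{\sharp L_0}$. Each fiber $\Phi_f^{-1}(s)$ is then an affine hyperplane of codimension one, so $M_f(\mathbf{y}) = \Phi_f^{-1}(S_f)$ is a countable union of hyperplanes and has measure zero. If instead $f \in \widetilde{\Lambda} \cap D$, then by the same Lemma \ref{lm:locfin} one has $\langle \vec{g}, \vec{f}^{B_0}\rangle = 0$ for every $g \in L_0$, so $\Phi_f \equiv \langle \mathbf{y}, \vec{f}^{B_0}\rangle$ is constant.

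In this second case $M_f(\mathbf{y})$ is therefore either empty or all of $\mathbb{R}^{\sharp L_0}$, and the exclusion hypothesis on $\mathbf{y}$ is what rules out the latter. Indeed, $f \in \widetilde{\Lambda}$ means $\Lambda \setminus \{f\}$ has rank $r-1$ and already lies inside $\mathfrak{H}_{B_0 \setminus \{f\}}$, so the two hyperplanes $\mathfrak{H}_{B_0 \setminus \{f\}}$ and $\mathfrak{H}_{\Lambda \setminus \{f\}}$ coincide. The hypothesis $\mathbf{y} \notin \mathfrak{H}_{\Lambda \setminus \{f\}} + \mathbb{Z}^r$ together with Lemma \ref{lm:setH} then forces $\langle \mathbf{y}, \vec{f}^{B_0}\rangle \notin S_f$, and hence $M_f(\mathbf{y}) = \emptyset$.

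The main obstacle is precisely the delicate matching in this second case: the measure-zero argument of Case 1 breaks down exactly when $\Phi_f$ degenerates to a constant, which happens iff $f \in \widetilde{\Lambda}$, and at that moment the exclusion hypothesis on $\mathbf{y}$ (indexed over exactly $\widetilde{\Lambda} \cap D$) provides the needed emptiness statement. The key algebraic point is the identification $\mathfrak{H}_{B_0 \setminus \{f\}} = \mathfrak{H}_{\Lambda \setminus \{f\}}$ in this degenerate regime, which is what makes the hypothesis and conclusion compatible.
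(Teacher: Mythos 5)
Your proof is correct and follows essentially the same route as the paper: the paper decomposes $M(\mathbf{y})=\bigcup_{f\in D}\bigcup_{\mathbf{w}\in\mathbb{Z}^r}H(f,\mathbf{y}+\mathbf{w})$ and invokes Lemmas \ref{lm:locfin} and \ref{lm:locfin2}, whose content (nonconstant affine functional for $f\notin\widetilde{\Lambda}$, giving a countable family of parallel hyperplanes; constant functional with nonintegral value for $f\in\widetilde{\Lambda}$, giving the empty set via \eqref{eq:LsB} and Lemma \ref{lm:setH}) is exactly your dichotomy. Your only deviation is packaging the union over $\mathbf{w}$ as the preimage of the countable set $S_f$, which is a cosmetic reformulation.
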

\begin{proof}
By Lemma \ref{lm:setH}, we have $M(\mathbf{y})=\bigcup_{f\in D}\bigcup_{\mathbf{w}\in\mathbb{Z}^r} H(f,\mathbf{y}+\mathbf{w})$. 
Further
by \eqref{eq:LsB} we have
\begin{equation}
  \bigcup_{f\in \widetilde{\Lambda}\cap D}(\mathfrak{H}_{\Lambda\setminus\{f\}}+\mathbb{Z}^r)
=
\bigcup_{f\in \widetilde{\Lambda}\cap D}(\mathfrak{H}_{B_0\setminus\{f\}}+\mathbb{Z}^r),
\end{equation}
so from the assumption
$\mathbf{y}\in V\setminus\bigcup_{f\in \widetilde{\Lambda}\cap D}(\mathfrak{H}_{\Lambda\setminus\{f\}}+\mathbb{Z}^r)$
we see that
$\mathbf{y}+\mathbf{w}\notin\bigcup_{f\in \widetilde{\Lambda}\cap D}(\mathfrak{H}_{B_0\setminus\{f\}}+\mathbb{Z}^r)$ for any $\mathbf{w}\in\mathbb{Z}^r$.
Therefore we can apply 
Lemma \ref{lm:locfin2} to find that
for each $f\in D$ and $\mathbf{w}\in\mathbb{Z}^r$
the measure of $H(f,\mathbf{y}+\mathbf{w})$ is zero.
\end{proof}

\begin{lemma}
  \label{lm:integrable}
Let $n\in\mathbb{N}$ and $P,Q\in\mathbb{N}_0$ with $P\geq Q$. Let $a_{ki}\in\mathbb{R}$ for $1\leq k\leq P$ and $0\leq i\leq n$ such that for each $k=1,\ldots,P$ there exists $i\geq 1$ such that $a_{ki}\neq 0$.
 If $\mu\geq P$, then
  \begin{equation}
    \int_0^1dx_1\cdots
    \int_0^1dx_n
    \Bigl(\prod_{k=1}^Q(1-\{a_{k0}+\sum_{i=1}^n a_{ki}x_i\})^{-\frac{1}{\mu+1}}\Bigr)
    \Bigl(\prod_{k=Q+1}^P\{a_{k0}+\sum_{i=1}^n a_{ki}x_i\}^{-\frac{1}{\mu+1}}\Bigr)<\infty.
  \end{equation}
\end{lemma}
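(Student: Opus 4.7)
The plan is to reduce the integrability of the product to the integrability of each individual factor by Hölder's inequality, and then to establish the single-factor case by a linear change of variables. If $P=0$ the integral equals $1$, so we may assume $P\geq 1$.

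First, since the integrand is a product of $P$ non-negative factors over the probability space $[0,1]^n$, Hölder's inequality with equal exponent $P$ on each factor yields
\begin{equation*}
I \leq \prod_{k=1}^P\Bigl(\int_{[0,1]^n} h_k(x)^P\, dx_1\cdots dx_n\Bigr)^{1/P},
\end{equation*}
where $h_k(x)$ denotes the $k$-th factor, namely $(1-\{L_k(x)\})^{-1/(\mu+1)}$ for $1\leq k\leq Q$ or $\{L_k(x)\}^{-1/(\mu+1)}$ for $Q+1\leq k\leq P$, with $L_k(x)=a_{k0}+\sum_{i=1}^n a_{ki}x_i$. Since $\mu\geq P$, the exponent satisfies $\alpha:=P/(\mu+1)\leq P/(P+1)<1$. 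Thus it suffices to prove that, for each non-constant linear functional $L(x)=a_0+\sum_{i=1}^n a_i x_i$ (with some $a_{i_0}\neq 0$, $i_0\geq 1$) and each $\alpha\in[0,1)$, the one-sided integrals $\int_{[0,1]^n}\{L(x)\}^{-\alpha}\,dx$ and $\int_{[0,1]^n}(1-\{L(x)\})^{-\alpha}\,dx$ are finite.

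Second, fix such an $L$ and pick $i_0$ with $a_{i_0}\neq 0$. By Fubini, integrate with respect to $x_{i_0}$ first, the other variables being held in $[0,1]^{n-1}$. Substitute $y=L(x)$, so $dy=a_{i_0}\,dx_{i_0}$ and $y$ varies over an interval $[c_1,c_2]$ of length $|a_{i_0}|$ whose endpoints depend on the frozen $x_i$'s but remain in a bounded subset of $\mathbb{R}$ uniformly for $x_i\in[0,1]$. The inner integral becomes
\begin{equation*}
\frac{1}{|a_{i_0}|}\int_{c_1}^{c_2}\{y\}^{-\alpha}\,dy,
\end{equation*}
which splits into finitely many pieces of the form $\int_0^1 t^{-\alpha}\,dt=1/(1-\alpha)<\infty$, plus possibly truncated pieces at the endpoints, so it is bounded above by a constant $K$ independent of the remaining $x_i$. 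Integrating over the other variables in $[0,1]^{n-1}$ gives the finite bound $K$. The case $(1-\{L(x)\})^{-\alpha}$ is handled identically, using the symmetry $1-\{y\}=\{-y\}$ for $y\notin\mathbb{Z}$.

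The main obstacle is nothing deep but rather the bookkeeping: one must check that the Hölder exponent is exactly tuned so that $\alpha<1$, which is where the hypothesis $\mu\geq P$ is used decisively, and must handle the boundary behaviour of $\{y\}^{-\alpha}$ near every integer lying in the bounded $y$-interval (which is harmless since only finitely many such integers occur).
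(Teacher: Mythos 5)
Your proof is correct, but it follows a genuinely different route from the paper's. You decouple the $P$ factors at the outset via the generalized H\"older inequality with equal exponents, which reduces everything to the one-factor integral $\int_{[0,1]^n}\{L(\mathbf{x})\}^{-P/(\mu+1)}\,d\mathbf{x}$; the hypothesis $\mu\geq P$ enters exactly to make the exponent $P/(\mu+1)<1$, and the single-factor case is then an elementary one-dimensional substitution in the direction of a variable on which $L$ genuinely depends. The paper instead keeps the factors coupled: it localizes by compactness to a small cube around each point $\mathbf{x}_0$, bounds each fractional-part factor by $|L_k(\mathbf{x})|^{-1/(\mu+1)}$ after shifting $\mathbf{x}_0$ to the origin, partitions the cube into the regions $U_k$ where $|L_k|$ is minimal, bounds the whole product on $U_k$ by $|L_k|^{-p/(\mu+1)}$, and then performs a single change of variables; the same numerical condition $p/(\mu+1)\leq P/(P+1)<1$ appears at the end. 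The two arguments use the hypothesis with identical strength, so nothing is lost either way; your version avoids the localization and the partition into the sets $U_k$, and is shorter and more self-contained, while the paper's version has the mild advantage of only ever invoking the count $p$ of factors that actually become singular near a given point. Two small points worth making explicit in a final write-up: the application of Tonelli to justify integrating in $x_{i_0}$ first (the integrand is non-negative and measurable, so this is immediate), and the observation that the set where $L(\mathbf{x})\in\mathbb{Z}$, on which $\{L(\mathbf{x})\}^{-\alpha}$ is undefined or infinite, has measure zero because $L$ is non-constant.
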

\begin{proof}
For $1\leq k\leq P$
put
\begin{equation}
  L_k(\mathbf{x})=\sum_{i=1}^n a_{ki}x_i.
\end{equation}
Since $[0,1]^n$ is compact,
by considering a neighborhood of each point $\mathbf{x}_0$ in $[0,1]^n$ and shifting the point $\mathbf{x}_0$ to the origin, we see that
it is sufficient to show that 
\begin{equation}
  I=\int_{[-\epsilon,\epsilon]^n}dx_1\cdots dx_n
  \Bigl(\prod_{k=1}^q(1-\{L_k(\mathbf{x})\})^{-\frac{1}{\mu+1}}\Bigr)
  \Bigl(\prod_{k=q+1}^p\{L_k(\mathbf{x})\}^{-\frac{1}{\mu+1}}\Bigr)
\end{equation}
is finite for a sufficiently small $\epsilon>0$,
where $0\leq q\leq Q$ and $q\leq p\leq P$.
This is estimated as
\begin{equation}
\label{eq:I_rhs}
    I\leq
    \int_{[-\epsilon,\epsilon]^n}dx_1\cdots dx_n
    \Bigl(\prod_{k=1}^p|L_k(\mathbf{x})|^{-\frac{1}{\mu+1}}\Bigr).
\end{equation}
For this integral, 
we decompose the region 
\begin{equation}
[-\epsilon,\epsilon]^n=\bigcup_{k=1}^p U_k,
\end{equation} 
where
\begin{gather}
U_k=\{\mathbf{x}\in[-\epsilon,\epsilon]^n~|~|L_k(\mathbf{x})|\leq |L_m(\mathbf{x})|\text{ for any }m\neq k\}.
\end{gather}
We show that the integral on each $U_k$ is finite.
Since on $U_k$
\begin{equation}
|L_m(\mathbf{x})|^{-\frac{1}{\mu+1}} \leq |L_k(\mathbf{x})|^{-\frac{1}{\mu+1}}
\end{equation}
for any $m\neq k$, we have
\begin{equation}
\Bigl(\prod_{m=1}^p|L_m(\mathbf{x})|^{-\frac{1}{\mu+1}}\Bigr)
\leq
|L_k(\mathbf{x})|^{-\frac{p}{\mu+1}}.
\end{equation}
Fix $i$ such that $a_{ki}\neq 0$.
Then
by changing variables as $y_i=L_k(\mathbf{x})$ and $y_j=x_j$ for $j\neq i$, we obtain
\begin{equation}
  \begin{split}
    \int_{U_k}dx_1\cdots dx_n
    \Bigl(\prod_{m=1}^p|L_m(\mathbf{x})|^{-\frac{1}{\mu+1}}\Bigr)
    &\leq
    \int_{U_k}dx_1\cdots dx_n
|L_k(\mathbf{x})|^{-\frac{p}{\mu+1}}
    \\
    &\leq
    |a_{ki}|
    \int_{-r}^r |y_i|^{-\frac{p}{\mu+1}}dy_i\int_{[-\epsilon,\epsilon]^{n-1}}\prod_{j\neq i}dy_j
  \end{split}
\end{equation}
for some $r>0$. 
By the assumption $\mu\geq P$, the right-hand side is finite
because
\begin{equation}
  -\frac{p}{\mu+1}>
  -\frac{P}{P+1}>-1.
\end{equation}
\end{proof}

\begin{lemma}
\label{lm:intC}
For $k\in \mathbb{N}_0$, $m\in\mathbb{Z}$ and $b\in\mathbb{C}$,
\begin{equation}
  -\frac{(2\pi\sqrt{-1})^k}{k!}
  \int_0^1 C(k,x;b)e^{-2\pi\sqrt{-1}mx}dx=
  \begin{cases}
    -1\qquad&(m+b=0,k=0),\\
    0\qquad&(m+b\neq 0, k=0),\\
    0\qquad&(m+b=0,k\neq 0),\\
    \dfrac{1}{(m+b)^k}\qquad&(m+b\neq 0, k\neq 0).
  \end{cases}
\end{equation}  
\end{lemma}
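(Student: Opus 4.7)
The plan is to package all four cases at once by working directly with the generating function $F(t,x;b)$ rather than by computing $C(k,x;b)$ explicitly. Multiplying the defining expansion \eqref{F_def_exp} by $e^{-2\pi\sqrt{-1}mx}$ and integrating termwise on $[0,1]$ yields
\begin{equation*}
\int_0^1 F(t,x;b)\, e^{-2\pi\sqrt{-1}mx}\,dx
= \sum_{k=0}^\infty \Bigl(\int_0^1 C(k,x;b)\,e^{-2\pi\sqrt{-1}mx}\,dx\Bigr)\frac{t^k}{k!},
\end{equation*}
so after evaluating the left-hand side in closed form one reads off the desired integrals as Taylor coefficients in $t$, then multiplies by the prefactor $-(2\pi\sqrt{-1})^k/k!$.

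The first step is the elementary $x$-integration. Substituting the explicit formula for $F$, the integrand becomes $te^{(t-2\pi\sqrt{-1}(b+m))x}/(e^{t-2\pi\sqrt{-1}b}-1)$, and since $m\in\mathbb{Z}$ we have $e^{-2\pi\sqrt{-1}m}=1$, which makes the exponential numerator after integration collapse back to $e^{t-2\pi\sqrt{-1}b}-1$. Thus, provided $t-2\pi\sqrt{-1}(b+m)\ne 0$,
\begin{equation*}
\int_0^1 F(t,x;b)\,e^{-2\pi\sqrt{-1}mx}\,dx = \frac{t}{t-2\pi\sqrt{-1}(b+m)}.
\end{equation*}
This is the key identity, and the rest is bookkeeping.

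Next, I would split on whether $m+b=0$. If $m+b\ne 0$, expand the closed-form answer as the geometric series
\begin{equation*}
\frac{t}{t-2\pi\sqrt{-1}(b+m)} = -\sum_{k=1}^\infty \frac{t^k}{(2\pi\sqrt{-1}(m+b))^k},
\end{equation*}
which has no constant term and gives $\int_0^1 C(k,x;b) e^{-2\pi\sqrt{-1}mx}dx = -k!/(2\pi\sqrt{-1}(m+b))^k$ for $k\ge 1$; multiplying by $-(2\pi\sqrt{-1})^k/k!$ produces the stated value $1/(m+b)^k$ (and $0$ for $k=0$). If $m+b=0$, then $b=-m\in\mathbb{Z}$, so $e^{-2\pi\sqrt{-1}b}=1$ and the integrand collapses directly to $te^{tx}/(e^t-1)$; the $x$-integral equals $1$, whose only nonzero Taylor coefficient is at $k=0$. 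Multiplying by $-(2\pi\sqrt{-1})^0/0!=-1$ for $k=0$ and $0$ for $k\ge 1$ gives the remaining two cases.

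There is no real obstacle here: the only point to watch is the simultaneous vanishing of the denominator $t-2\pi\sqrt{-1}(b+m)$ and the resonance condition $m+b=0$ (which forces $b\in\mathbb{Z}$ and hence also $e^{-2\pi\sqrt{-1}b}=1$); one must handle that case by the direct $x$-integration just described rather than by substituting into the closed form. Termwise integration is justified because the series \eqref{F_def_exp} converges uniformly on $[0,1]$ for small $|t|$, and the Taylor coefficients are uniquely determined.
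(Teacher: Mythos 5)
Your proof is correct and is essentially the paper's own argument: both reduce to the identity $\int_0^1 F(t,x;b)e^{-2\pi\sqrt{-1}mx}\,dx=\dfrac{t}{t-2\pi\sqrt{-1}(m+b)}$ (using $e^{-2\pi\sqrt{-1}m}=1$) and then read off Taylor coefficients of this rational function, splitting on whether $m+b=0$. The only cosmetic difference is that the paper extracts the $k$-th coefficient via a Cauchy contour integral $\frac{1}{2\pi\sqrt{-1}}\int_{|t|=\epsilon}(\cdots)t^{-k-1}\,dt$ (which also furnishes the uniform bound justifying the interchange of sum and integral that you assert), whereas you expand directly and invoke uniqueness of Taylor coefficients.
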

\begin{proof}
  By definition \eqref{F_def_exp}, for $0\leq x<1$, we have
\begin{equation}
\label{4-80}
\frac{1}{k!}C(k,x;b)e^{-2\pi\sqrt{-1}mx}
=
\frac{1}{2\pi\sqrt{-1}}
\int_{|t|=\epsilon}
  \frac{t e^{(t-2\pi \sqrt{-1}(m+b))x}}{e^{t-2\pi \sqrt{-1}b}-1}
t^{-k-1}dt
\end{equation}
for sufficiently small $\epsilon>0$.
By integrating the both sides in the region $0\leq x<1$, we obtain
\begin{equation}
    \frac{1}{k!}\int_0^1 C(k,x;b)e^{-2\pi\sqrt{-1}mx}dx 
    =
    \frac{1}{2\pi\sqrt{-1}}
    \int_{|t|=\epsilon}
    \dfrac{t}{t-2\pi \sqrt{-1}(m+b)}t^{-k-1}dt.
\end{equation}
Since
\begin{equation}
    \dfrac{t}{t-2\pi \sqrt{-1}(m+b)}
    =
    \begin{cases}
      1\qquad&(m+b=0),\\
      -\displaystyle\sum_{l=1}^\infty \frac{1}{(2\pi\sqrt{-1}(m+b))^l}t^l\qquad&(m+b\neq0),
    \end{cases}
\end{equation}
we obtain the assertion.
\end{proof}

\begin{proof}[Proof of Proposition \ref{prop:main}]
Applying Lemma \ref{lm:intC} with $m=\langle \vec{g},\mathbf{v}\rangle$ and $b=\const{g}$
(for $g\in L_0$)
to \eqref{Z_1_def}, for $N>0$ we have
\begin{equation}
\label{4-71}
  \begin{split}
    Z_1(N;\mathbf{k},\mathbf{y};\Lambda;B_0)&=
    (-1)^{\sharp \Lambda_0}
      \sum_{\substack{\mathbf{v}\in\mathbb{Z}^{r}\cap U_N(B_0)\\
      f(\mathbf{v})\neq 0\quad(f\in\Lambda_+\cap B_0)\\
      f(\mathbf{v})=0\quad(f\in\Lambda_0\cap B_0)}}
    e^{2\pi\sqrt{-1}\langle \mathbf{y},\mathbf{v}\rangle}
    \prod_{f\in\Lambda_+\cap B_0}
    \frac{1}{f(\mathbf{v})^{k_f}}
    \\
    &\qquad\times
    \prod_{g\in L_0}(-1)^{\sharp (\Lambda_0\cap L_0)}
    \Bigl(
    -\frac{(2\pi\sqrt{-1})^{k_g}}{k_g!}\int_0^1C(k_g,x_g;\const{g})e^{-2\pi\sqrt{-1}\langle \vec{g},\mathbf{v}\rangle x_g}dx_g
    \Bigr)
    \\
    &=
    \prod_{g\in L_0}
    \Bigl(
    -\frac{(2\pi\sqrt{-1})^{k_g}}{k_g!}
    \Bigr)
    \int_0^1\dots\int_0^1
    \Bigl(\prod_{g\in L_0}    C(k_g,x_g;\const{g})    dx_g\Bigr)
    \\
    &\qquad\times(-1)^{\sharp (\Lambda_0\cap B_0)}
      \sum_{\substack{\mathbf{v}\in\mathbb{Z}^{r}\cap U_N(B_0)\\
      f(\mathbf{v})\neq 0\quad(f\in\Lambda_+\cap B_0)\\
      f(\mathbf{v})=0\quad(f\in\Lambda_0\cap B_0)}}
    e^{2\pi\sqrt{-1}\langle \mathbf{y}-\sum_{g\in L_0}x_g\vec{g},\mathbf{v}\rangle}
    \Bigl(
    \prod_{f\in \Lambda_+\cap B_0}
    \frac{1}{f(\mathbf{v})^{k_f}}
    \Bigr)\\
  &=\prod_{g\in L_0}
    \Bigl(
    -\frac{(2\pi\sqrt{-1})^{k_g}}{k_g!}
    \Bigr)
    \int_0^1\dots\int_0^1
    \Bigl(\prod_{g\in L_0}    C(k_g,x_g;\const{g})    dx_g\Bigr)
    \\
 &\qquad\times Z_1(N;\mathbf{k}(B_0),\mathbf{y}-\sum_{g\in L_0}x_g\vec{g};B_0;B_0),
  \end{split}
\end{equation}
where $\mathbf{k}(B_0)=(k_f)_{f\in B_0}$.

We want to take the limit $N\to\infty$.
First we claim that it is possible to
exchange
the limit and the integrals.
By Lemma \ref{lm:bddS} with $\mu=\sharp B_0=r$, we have
\begin{equation}
  \begin{split}
    &\Bigl|
    \Bigl(\prod_{g\in L_0}C(k_g,x_g;\const{g})\Bigr)
      \sum_{\substack{\mathbf{v}\in\mathbb{Z}^{r}\cap U_N(B_0)\\
      f(\mathbf{v})\neq 0\quad(f\in\Lambda_+\cap B_0)\\
      f(\mathbf{v})=0\quad(f\in\Lambda_0\cap B_0)}}
    e^{2\pi\sqrt{-1}\langle \mathbf{y}-\sum_{g\in L_0}x_g\vec{g},\mathbf{v}\rangle}
    \Bigl(
    \prod_{f\in \Lambda_+\cap B_0}
    \frac{1}{f(\mathbf{v})^{k_f}}
    \Bigr)\Bigr|
    \\
    &\leq K \sum_{\mathbf{w}\in\mathbb{Z}^r/\langle\vec{B}_0\rangle}  
    \prod_{f\in\Lambda_1\cap B_0}(1+(1-\{\langle\mathbf{y}+\mathbf{w}-\sum_{g\in L_0}x_g\vec{g},\vec{f}^{B_0}\rangle\})^{-\frac{1}{r+1}}+\{\langle\mathbf{y}+\mathbf{w}-\sum_{g\in L_0}x_g\vec{g},\vec{f}^{B_0}\rangle\}^{-\frac{1}{r+1}})
    \\
    &=K \sum_{\mathbf{w}\in\mathbb{Z}^r/\langle\vec{B}_0\rangle}  
    \prod_{f\in\Lambda_1\cap B_0}X_f,
  \end{split}
\end{equation}
say.
When $f\in\widetilde{\Lambda}$, then under the condition
$
  \mathbf{y}\notin\bigcup_{f\in \widetilde{\Lambda}\cap \Lambda_1\cap B_0}(\mathfrak{H}_{B_0\setminus\{f\}}+\mathbb{Z}^r),
$
we see that $X_f$ is just a constant because of the second assertion of 
Lemma \ref{lm:locfin}. 
When $f\notin\widetilde{\Lambda}$, by the first assertion of Lemma \ref{lm:locfin} we see that $X_f$
fulfills the assumption of Lemma \ref{lm:integrable}, and hence by the lemma
it is integrable 
since $r\geq\sharp(\Lambda_1\cap B_0)$.
Therefore our claim follows form Lebesgue's dominated convergence theorem.
Note that 
$\widetilde{\Lambda}\cap \Lambda_1\cap B_0=\widetilde{\Lambda}\cap \Lambda_1$
because of \eqref{eq:LsB}.

Therefore from \eqref{4-71} we now obtain
\begin{equation}\label{4-85}
\begin{split}
  &\lim_{N\to\infty}Z_1(N;\mathbf{k},\mathbf{y};\Lambda;B_0)\\
  &=\prod_{g\in L_0}
    \Bigl(
    -\frac{(2\pi\sqrt{-1})^{k_g}}{k_g!}
    \Bigr)
    \int_0^1\dots\int_0^1
    \Bigl(\prod_{g\in L_0}    C(k_g,x_g;\const{g})    dx_g\Bigr)
    \\
 &\qquad\times \lim_{N\to\infty}Z_1(N;\mathbf{k}(B_0),\mathbf{y}-\sum_{g\in L_0}x_g\vec{g};B_0;B_0).
  \end{split}
\end{equation}
By Lemma \ref{lm:aex} with $D=\Lambda_1\cap B_0$ the measure of $M(\mathbf{y})$ is 0, and if
$(x_g)_{g\in L_0}\notin M(\mathbf{y})$, then by 
Lemma \ref{lm:calcS} with $B=B_0$ we see that 
$Z_1(N;\mathbf{k}(B_0),\mathbf{y}-\sum_{g\in L_0}x_g\vec{g};B_0;B_0)$ converges as $N\to\infty$.
That is, the integrand on the right-hand side of \eqref{4-85} converges almost everywhere, and
\eqref{lem4_7formula} of Lemma \ref{lm:calcS} implies
\begin{equation}
\label{eq:skylB}
  \begin{split}
    S_1(\mathbf{k},\mathbf{y};\Lambda;B_0)
    &=\lim_{N\to\infty}Z_1(N;\mathbf{k},\mathbf{y};\Lambda;B_0)\\
    &=
    \prod_{g\in L_0}
    \Bigl(
    -\frac{(2\pi\sqrt{-1})^{k_g}}{k_g!}   
    \Bigr)    \int_0^1\dots\int_0^1
    \Bigl(\prod_{g\in L_0}    C(k_g,x_g;\const{g}) dx_g\Bigr)
    \\
    &\qquad\times
    \frac{1}{|\mathbb{Z}^r/\langle\vec{B}_0\rangle|}
    \sum_{\mathbf{w}\in\mathbb{Z}^r/\langle\vec{B}_0\rangle}  
    \prod_{f\in B_0}
    \Bigl(
    -\frac{(2\pi\sqrt{-1})^{k_f}}{k_f!}
    C(k_f,\{\mathbf{y}+\mathbf{w}-\sum_{g\in L_0}x_g\vec{g}\}_{B_0,f};\const{f})
    \Bigr)
    \\
    &=
    \frac{1}{\sharp(\mathbb{Z}^r/\langle\vec{B}_0\rangle)}
    \prod_{f\in \Lambda}
    \Bigl(
    -\frac{(2\pi\sqrt{-1})^{k_f}}{k_f!}   
    \Bigr)
    \sum_{\mathbf{w}\in\mathbb{Z}^r/\langle\vec{B}_0\rangle}  
    \int_0^1\dots\int_0^1
    \Bigl(\prod_{g\in L_0}    C(k_g,x_g;\const{g}) dx_g\Bigr)
    \\
    &\qquad\times
    \prod_{f\in B_0}
    \Bigl(
    C(k_f,\{\mathbf{y}+\mathbf{w}-\sum_{g\in L_0}x_g\vec{g}\}_{B_0,f};\const{f})
    \Bigr).
  \end{split}
\end{equation}
The right-hand side of this equation coincides with that of \eqref{eq:S_int}.
Therefore, to complete the proof of the proposition, the only remaining task is to show that
$S_1(\mathbf{k},\mathbf{y};\Lambda;B_0)=S(\mathbf{k},\mathbf{y};\Lambda)$.


The sum $Z(N;\mathbf{k},\mathbf{y};\Lambda)$ has the expression which is almost the same as
\eqref{4-71}, only the condition $\mathbf{v}\in\mathbb{Z}^r\cap U_N(B_0)$ is replaced by
$\mathbf{v}\in\mathbb{Z}^r\cap W_N$.
Therefore, 
by using Lemmas \ref{lm:UWU} and \ref{lm:ZZdiff}, we see that
the difference $Z(N;\mathbf{k},\mathbf{y};\Lambda)-Z_1(dN;\mathbf{k},\mathbf{y};\Lambda;B_0)$ is evaluated as
\begin{equation}
\begin{split}
    &|Z(N;\mathbf{k},\mathbf{y};\Lambda)-Z_1(dN;\mathbf{k},\mathbf{y};\Lambda;B_0)|\\
    &=
    \Bigl|(-1)^{\sharp (\Lambda_0\cap B_0)}
    \prod_{g\in L_0}
    \Bigl(
    -\frac{(2\pi\sqrt{-1})^{k_g}}{k_g!}
    \Bigr)
    \int_0^1\dots\int_0^1
    \Bigl(\prod_{g\in L_0}    C(k_g,x_g;\const{g})    dx_g\Bigr)
    \\
    &\qquad\times
      \sum_{\substack{
\mathbf{v}\in\mathbb{Z}^{r}\cap (W_N\setminus U_{dN}(B_0))\\
      f(\mathbf{v})\neq 0\quad(f\in\Lambda_+\cap B_0)\\
      f(\mathbf{v})=0\quad(f\in\Lambda_0\cap B_0)}}
    e^{2\pi\sqrt{-1}\langle \mathbf{y}-\sum_{g\in L_0}x_g\vec{g},\mathbf{v}\rangle}
    \Bigl(
    \prod_{f\in \Lambda_+\cap B_0}
    \frac{1}{f(\mathbf{v})^{k_f}}
    \Bigr)\Bigr|
\\
  &\leq
  K
    \int_0^1\dots\int_0^1
\prod_{g\in L_0}  
dx_g
    \Bigl|
      \sum_{\substack{
\mathbf{v}\in\mathbb{Z}^{r}\cap (W_N\setminus U_{dN}(B_0))\\
      f(\mathbf{v})\neq 0\quad(f\in\Lambda_+\cap B_0)\\
      f(\mathbf{v})=0\quad(f\in\Lambda_0\cap B_0)}}
    e^{2\pi\sqrt{-1}\langle \mathbf{y}-\sum_{g\in L_0}x_g\vec{g},\mathbf{v}\rangle}
    \Bigl(
    \prod_{f\in \Lambda_+\cap B_0}
    \frac{1}{f(\mathbf{v})^{k_f}}
    \Bigr)\Bigr|
    \\
    &\leq K' N^{-\frac{1}{r+1}}(\log N)^{r} \int_0^1\dots\int_0^1
    \prod_{g\in L_0} dx_g
    \\
    &\qquad\Bigl(1+
  \sum_{\mathbf{w}\in\mathbb{Z}^r/\langle\vec{B}_0\rangle}  
  \sum_{f\in\Lambda_1\cap B_0}
  \Bigl((1-\{\langle\mathbf{y}+\mathbf{w}-\sum_{g\in L_0}x_g\vec{g},\vec{f}^{B_0}\rangle\})^{-\frac{1}{r+1}}+\{
  \langle\mathbf{y}+\mathbf{w}-\sum_{g\in L_0}x_g\vec{g},\vec{f}^{B_0}\rangle\}^{-\frac{1}{r+1}}\Bigr)\Bigr)
  \\
\end{split}
\end{equation}
for some $K,K'>0$.
Again by Lemmas \ref{lm:locfin} and \ref{lm:integrable}, we obtain
\begin{equation}
|Z(N;\mathbf{k},\mathbf{y};\Lambda)-Z_1(dN;\mathbf{k},\mathbf{y};\Lambda;B_0)|
\leq K''N^{-\frac{1}{r+1}}(\log N)^{r}
\end{equation}
for some $K''>0$.
Hence we have
\begin{equation}
S(\mathbf{k},\mathbf{y};\Lambda)
=\lim_{N\to\infty} Z(N;\mathbf{k},\mathbf{y};\Lambda)=
  \lim_{N\to\infty} Z_1(N;\mathbf{k},\mathbf{y};\Lambda;B_0)=S_1(\mathbf{k},\mathbf{y};\Lambda;B_0).
\end{equation}


\end{proof}


We have shown the convergence of $S(\mathbf{k},\mathbf{y};\Lambda)$ in Proposition \ref{prop:main}. Therefore 
to complete the proof of Theorem \ref{thm:main0},
we have only to show 
the continuity of
$S(\mathbf{k},\mathbf{y};\Lambda)$ in $\mathbf{y}$
on $V\setminus\bigcup_{f\in \widetilde{\Lambda}\cap\Lambda_1}(\mathfrak{H}_{\Lambda\setminus\{f\}}+\mathbb{Z}^r)$.

Let $\mathbf{y}_0\in V\setminus\bigcup_{f\in \widetilde{\Lambda}\cap\Lambda_1}(\mathfrak{H}_{\Lambda\setminus\{f\}}+\mathbb{Z}^r)$, and
let $G(\mathbf{y},(x_g))$ be the integrand of \eqref{eq:S_int}.
Since $G(\mathbf{y},(x_g))$ is bounded, we have
\begin{equation}
\label{eq:limG}
  \lim_{\mathbf{y}\to\mathbf{y}_0}\int G(\mathbf{y},(x_g)) 
  \prod_{g\in L_0} dx_g
  =
  \int \lim_{\mathbf{y}\to\mathbf{y}_0}G(\mathbf{y},(x_g)) 
  \prod_{g\in L_0} dx_g.
\end{equation}
Thus it is sufficient to show that
\begin{equation}
\label{eq:limHS}
  \lim_{\mathbf{y}\to\mathbf{y}_0}G(\mathbf{y},(x_g))=G(\mathbf{y}_0,(x_g))
\end{equation}
almost everywhere in $(x_g)$.
By Lemmas \ref{lm:convkn0} and \ref{lm:convk1}, we see that $C(k,\{y\};b)$ is continuous in $y$ on $\mathbb{R}$ if $k\neq 1$, and on $\mathbb{R}\setminus\mathbb{Z}$ if $k=1$.
Hence if $(x_g)$ satisfies
\begin{equation}
\label{eq:xinZ}
  \langle\mathbf{y}_0+\mathbf{w}-\sum_{g\in L_0}x_g\vec{g},\vec{f}^{B_0}\rangle\notin\mathbb{Z}
\end{equation}
for all $f\in \Lambda_1\cap B_0$, then
\eqref{eq:limHS} holds.
Therefore it is sufficient to show that $(x_g)$ satisfies \eqref{eq:xinZ} almost everywhere.
Since 
$\mathbf{y}_0+\mathbf{w}\in V\setminus\bigcup_{f\in \widetilde{\Lambda}\cap\Lambda_1}(\mathfrak{H}_{\Lambda\setminus\{f\}}+\mathbb{Z}^r)$, we see that
the measure of $M(\mathbf{y}_0+\mathbf{w})$ is zero
by Lemma \ref{lm:aex} with $D=\Lambda_1\cap B_0$.


The proof of Theorem \ref{thm:main0} is thus complete.

\section{The structure of the proof of Theorem \ref{thm:main1} and theorem \ref{thm:main1b}}\label{sec-6}

Now we start the proof of Theorem \ref{thm:main1} and Theorem \ref{thm:main1b}.   We first prove the assertion (i) of
Theorem \ref{thm:main1}.
Let
\begin{equation}
  \label{eq:def_H_R}
  \mathfrak{H}_{\mathscr{R}}:=
    \bigcup_{R\in\mathscr{R}}(\mathfrak{H}_{R}+\mathbb{Z}^r).
\end{equation}

\begin{lemma}
\label{lm:H}
  The set $\mathfrak{H}_{\mathscr{R}}$ is a locally finite collection of hyperplanes, that is, 
for any $\mathbf{y}\in V$ 
there exists a neighborhood $U$ of $\mathbf{y}$ such that $U$ intersects only finitely many hyperplanes.
\end{lemma}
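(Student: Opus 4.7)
The plan is to exploit the integrality of the generators. Since $\Lambda \subset (\mathbb{Z}^r \setminus \{\vec{0}\}) \times \mathbb{C}$ is finite, the collection $\mathscr{R}$ of linearly independent $(r-1)$-subsets of $\Lambda$ is finite, so it suffices to prove that for each fixed $R \in \mathscr{R}$ the family $\{\mathfrak{H}_R + \mathbf{w} : \mathbf{w} \in \mathbb{Z}^r\}$ of parallel hyperplanes is locally finite; a finite union of locally finite collections is locally finite.

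The key point is that $\mathfrak{H}_R = \sum_{g \in R}\mathbb{R}\vec{g}$ is spanned by elements of $\mathbb{Z}^r$ (because $\vec{g} \in \mathbb{Z}^r$ for every $g \in \Lambda$). Its orthogonal complement is therefore a rational line in $V$, so I can choose a primitive integer normal vector $\vec{n}_R \in \mathbb{Z}^r$ to $\mathfrak{H}_R$. Then for any $\mathbf{w} \in \mathbb{Z}^r$,
\begin{equation}
\mathfrak{H}_R + \mathbf{w} = \{\mathbf{v} \in V \mid \langle \mathbf{v},\vec{n}_R\rangle = \langle\mathbf{w},\vec{n}_R\rangle\},
\end{equation}
and the real number $\langle \mathbf{w},\vec{n}_R\rangle$ is an integer. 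Hence the translates $\mathfrak{H}_R + \mathbf{w}$ are parametrized (with possible repetitions) by integers, and two translates corresponding to different integers lie at Euclidean distance at least $\|\vec{n}_R\|^{-1} > 0$ from each other.

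Given $\mathbf{y} \in V$, take the open ball $U$ of radius $\tfrac{1}{2}\|\vec{n}_R\|^{-1}$ centered at $\mathbf{y}$; then $U$ meets at most one hyperplane from the family $\{\mathfrak{H}_R + \mathbf{w}\}_{\mathbf{w} \in \mathbb{Z}^r}$. Shrinking to the minimum of these radii over the finite set $\mathscr{R}$ produces a neighborhood of $\mathbf{y}$ meeting only finitely many hyperplanes of $\mathfrak{H}_{\mathscr{R}}$, proving local finiteness. There is no real obstacle here beyond observing the rationality of $\mathfrak{H}_R$; all subtlety would have appeared only if the generators were allowed to be irrational, in which case the translates could accumulate densely, but that cannot happen under our standing hypothesis $\vec{\Lambda} \subset \mathbb{Z}^r$.
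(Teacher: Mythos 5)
Your proof is correct and follows essentially the same route as the paper: both arguments rest on choosing an integer normal vector $\mathbf{n}_R$ to $\mathfrak{H}_R$ (possible because $\vec{R}\subset\mathbb{Z}^r$), observing that the $\mathbb{Z}^r$-translates of $\mathfrak{H}_R$ are indexed by the integers $\langle\mathbf{w},\mathbf{n}_R\rangle$ and hence form a uniformly spaced discrete family, and then taking the finite union over $\mathscr{R}$. Your version merely makes explicit the spacing bound $\|\vec{n}_R\|^{-1}$ that the paper leaves implicit.
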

\begin{proof}
Let $\mathbf{n}_{R}$ be a normal vector of $\mathfrak{H}_{R}$.
We may assume that $\mathbf{n}_R\in\mathbb{Z}^r$, because
$\vec{g}_1,\ldots,\vec{g}_{r-1}\in\mathbb{Z}^r$.
Then the hyperplane 
\begin{equation}
\mathfrak{H}_{R}+\mathbf{v}=
  \{\mathbf{y}+\mathbf{v}~|~\langle\mathbf{y},\mathbf{n}_R\rangle=0\}
\end{equation}
with $\mathbf{v}\in\mathbb{Z}^r$ can be rewritten as
\begin{equation}
  \{\mathbf{y}~|~\langle\mathbf{y},\mathbf{n}_R\rangle-
\langle\mathbf{v},\mathbf{n}_R\rangle=0\}=
  \{\mathbf{y}~|~\langle\mathbf{y}-m\mathbf{e}_R,\mathbf{n}_R\rangle=0\},
\end{equation}
where $m=\langle\mathbf{v},\mathbf{n}_R\rangle\in\mathbb{Z}$
and $\mathbf{e}_R=\mathbf{n}_R/\langle\mathbf{n}_R,\mathbf{n}_R\rangle$.
Therefore
\begin{equation}
    \mathfrak{H}_{R}+\mathbf{v}
    =
    \mathfrak{H}_{R}+m\mathbf{e}_R,
\end{equation}
and so
\begin{equation}
  \mathfrak{H}_{\mathscr{R}}\subset
    \bigcup_{R\in\mathscr{R}}(\mathfrak{H}_{R}+\mathbb{Z}\mathbf{e}_R).
\end{equation}
Hence the assertion follows from this expression and $\sharp\mathscr{R}<\infty$.
\end{proof}

\begin{lemma}
  \label{lm:contfrac}
\begin{equation}
 \lim_{c\to0+}\{\mathbf{y}+c\phi\}_{B,f}
=\{\mathbf{y}\}_{B,f}
\end{equation}
for $\mathbf{y}\in V$.
\end{lemma}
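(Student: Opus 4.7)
The plan is to reduce the claim to the one-variable identity about the standard fractional part, and then handle the two possible signs of $\langle\phi,\vec{f}^B\rangle$ separately. Let me set $a=\langle\mathbf{y},\vec{f}^B\rangle$ and $\epsilon=\langle\phi,\vec{f}^B\rangle$, noting that by the very choice of $\phi$ we have $\epsilon\neq 0$. Since inner product is linear,
\begin{equation}
\langle\mathbf{y}+c\phi,\vec{f}^B\rangle=a+c\epsilon,
\end{equation}
so everything reduces to studying $\{a+c\epsilon\}$ (or $1-\{-a-c\epsilon\}$) as $c\to 0+$.

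First I would dispose of the case $a\notin\mathbb{Z}$: since the function $t\mapsto\{t\}$ is continuous at every non-integer point, the result is immediate regardless of the sign of $\epsilon$, using the appropriate branch of the definition. The substance of the lemma lies in the case $a\in\mathbb{Z}$, where $\{\cdot\}$ has a jump discontinuity; this is where the careful one-sided definition of $\{\mathbf{y}\}_{B,f}$ and the sign convention become essential.

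For the case $a\in\mathbb{Z}$ with $\epsilon>0$, we have $\{\mathbf{y}\}_{B,f}=\{a\}=0$, and for $c>0$ small enough that $0<c\epsilon<1$, $\{a+c\epsilon\}=c\epsilon\to 0$, matching. For $a\in\mathbb{Z}$ with $\epsilon<0$, we have $\{\mathbf{y}\}_{B,f}=1-\{-a\}=1$, and for $c>0$ small enough that $0<c|\epsilon|<1$ the quantity $-a-c\epsilon=-a+c|\epsilon|$ is slightly above the integer $-a$, giving $\{-a-c\epsilon\}=c|\epsilon|\to 0$, so $1-\{-a-c\epsilon\}\to 1$, matching.

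There is no real obstacle here; the only point requiring attention is the sign bookkeeping to confirm that the two branches of the definition of $\{\mathbf{y}\}_{B,f}$ are precisely the ones that make the limit one-sided continuous in the direction $\phi$, not in the opposite direction. In effect, the sign rule in the definition was designed exactly so that approaching from the $+\phi$ side lands on the correct value of $\{\cdot\}_{B,f}$ even at integer points.
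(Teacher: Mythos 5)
Your proof is correct and follows essentially the same route as the paper's: reduce to the scalar quantity $\langle\mathbf{y}+c\phi,\vec f^B\rangle$, use continuity of $\{\cdot\}$ off the integers, and in the integer case check both sign branches of $\langle\phi,\vec f^B\rangle$ directly, obtaining the limits $0$ and $1$ respectively. The sign bookkeeping you carry out matches the paper's computation exactly.
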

\begin{proof}
By Lemma \ref{lm:H},
for any $\mathbf{y}\in V$, we see that
$\mathbf{y}+c\phi\not\in\mathfrak{H}_{\mathscr{R}}$
and so
$\langle\mathbf{y}+c\phi,\vec{f}^B\rangle\not\in\mathbb{Z}$
for all sufficiently small $c>0$.   Therefore,
if $\langle\mathbf{y},\vec{f}^B\rangle\not\in\mathbb{Z}$, then
\begin{equation}
 \lim_{c\to0+}\{\mathbf{y}+c\phi\}_{B,f}=
 \lim_{c\to0+}\{\langle\mathbf{y}+c\phi,\vec{f}^B\rangle\}
=\{\langle\mathbf{y},\vec{f}^B\rangle\}
=\{\mathbf{y}\}_{B,f}
\end{equation}
by \eqref{2-6}.
If $\langle\mathbf{y},\vec{f}^B\rangle\in\mathbb{Z}$, then
\begin{multline}
 \lim_{c\to0+}\{\mathbf{y}+c\phi\}_{B,f}\\
=
\begin{cases}
\displaystyle
  \lim_{c\to0+}\{\langle\mathbf{y}+c\phi,\vec{f}^B\rangle\}
=  \lim_{c\to0+}\{c\langle\phi,\vec{f}^B\rangle\}
=0=
\{\langle\mathbf{y},\vec{f}^B\rangle\}
\quad&(\langle\phi,\vec{f}^B\rangle>0),\\
\displaystyle
  \lim_{c\to0+}1-\{-\langle\mathbf{y}+c\phi,\vec{f}^B\rangle\}
=  \lim_{c\to0+}1-\{-c\langle\phi,\vec{f}^B\rangle\}
=1
=
  1-\{-\langle\mathbf{y},\vec{f}^B\rangle\}
\quad&(\langle\phi,\vec{f}^B\rangle<0).
\end{cases}
\end{multline}
Hence we have the assertion.
\end{proof}

By this lemma we immediately obtain
\begin{equation}
  \lim_{c\to0+}F
(\mathbf{t},\mathbf{y}+c\phi;\Lambda)
=F(\mathbf{t},\mathbf{y};\Lambda).
\end{equation}
This shows the assertion (i) of Theorem \ref{thm:main1}.

Next, observe that the right-hand side of \eqref{eq:S_int} can be
defined for any $\mathbf{y}\in V$ (though \eqref{eq:S_int} itself is
valid only under the assumption of Proposition \ref{prop:main}).
Therefore, we can define
$\widetilde{C}(\mathbf{k},\mathbf{y};\Lambda)$ 
for any $\mathbf{y}\in V$ as the
$\displaystyle\Bigl(\prod_{f\in\Lambda}-\frac{k_f!}{(2\pi\sqrt{-1})^{k_f}}\Bigr)$ multiple of
the right-hand side of
\eqref{eq:S_int},
and we introduce the generating function of 
$\widetilde{C}(\mathbf{k},\mathbf{y};\Lambda)$ of the form
\begin{equation}\label{tilde_F_def}
\widetilde{F}(\mathbf{t},\mathbf{y};\Lambda)=\sum_{\mathbf{k}\in \mathbb{N}_0^{\sharp\Lambda}}
\widetilde{C}(\mathbf{k},\mathbf{y};\Lambda)
\prod_{f\in\Lambda}\frac{t_f^{k_f}}{k_f!},
\end{equation}
where $\mathbf{t}=(t_f)_{f\in \Lambda}\in\mathbb{C}^{\sharp\Lambda}$.
A more explicit form of the generating function can be deduced by substituting
the formula of Proposition \ref{prop:main} into \eqref{tilde_F_def}.
In fact,
\begin{lemma}
For any $\mathbf{y}\in V$, 
the series on the right-hand side of \eqref{tilde_F_def} is absolutely and uniformly convergent in the neighborhood of the origin with respect to $\mathbf{t}\in\mathbb{C}^{\sharp\Lambda}$. Furthermore we have
\begin{equation}
\label{eq:TF}
  \begin{split}
    \widetilde{F}(\mathbf{t},\mathbf{y};\Lambda)
    &=
    \Bigl(
    \prod_{f\in \Lambda}\frac{t_f}{\exp(t_f-2\pi\sqrt{-1}\const{f})-1}
    \Bigr)
    \frac{1}{\sharp(\mathbb{Z}^r/\langle \vec{B}_0\rangle)}
    \sum_{\mathbf{w}\in \mathbb{Z}^r/\langle \vec{B}_0\rangle}
    \int_0^1\dots\int_0^1
    \prod_{g\in L_0}    dx_g
    \\
    &\qquad\times
    \exp\Bigl(\sum_{g\in L_0}(t_g-2\pi\sqrt{-1}\const{g}) x_{g}+
    \sum_{f\in B_0}
    (t_f-2\pi\sqrt{-1}\const{f}) 
     \{\mathbf{y}+\mathbf{w}-\sum_{g\in L_0}x_g\vec{g}\}_{B_0,f}
    \Bigr).
  \end{split}
\end{equation}
\end{lemma}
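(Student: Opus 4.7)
The plan is as follows: first, I would unpack the definition of $\widetilde{C}(\mathbf{k},\mathbf{y};\Lambda)$. Multiplying the right-hand side of \eqref{eq:S_int} by $\prod_f\bigl(-k_f!/(2\pi\sqrt{-1})^{k_f}\bigr)$ cancels the analogous prefactor appearing in \eqref{eq:S_int}, leaving
\[
  \widetilde{C}(\mathbf{k},\mathbf{y};\Lambda) = \frac{1}{\sharp(\mathbb{Z}^r/\langle\vec{B}_0\rangle)}\sum_{\mathbf{w}\in\mathbb{Z}^r/\langle\vec{B}_0\rangle}\int_0^1\!\cdots\!\int_0^1\prod_{g\in L_0}C(k_g,x_g;\const{g})\,dx_g\,\prod_{f\in B_0}C(k_f,\{\mathbf{y}+\mathbf{w}-\textstyle\sum_g x_g\vec{g}\}_{B_0,f};\const{f}).
\]
Substituting this into \eqref{tilde_F_def} and, provisionally, interchanging the Taylor summation with the integral and the finite sum over $\mathbf{w}$, the series over $\mathbf{k}\in\mathbb{N}_0^{\sharp\Lambda}$ factorizes across $f\in\Lambda$ and each factor is summed in closed form by \eqref{F_def_exp}, giving $\prod_{f\in\Lambda}F(t_f,y_f;\const{f})$, where $y_g=x_g$ for $g\in L_0$ and $y_f=\{\mathbf{y}+\mathbf{w}-\sum_g x_g\vec{g}\}_{B_0,f}$ for $f\in B_0$. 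Pulling the $y_f$-independent prefactors $t_f/(\exp(t_f-2\pi\sqrt{-1}\const{f})-1)$ out of the integral and combining the remaining exponential factors into a single exponential of a sum then yields exactly the claimed formula \eqref{eq:TF}.

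To justify both the interchange and the stated absolute and uniform convergence, I would prove a uniform Cauchy estimate for the coefficients $C(k,y;b)$. The function $F(t,y;b)=te^{(t-2\pi\sqrt{-1}b)y}/(e^{t-2\pi\sqrt{-1}b}-1)$ is meromorphic in $t$ with poles on the discrete set $2\pi\sqrt{-1}(b+\mathbb{Z})$, at positive distance $r_b$ from the origin. On the circle $|t|=r_b/2$ the denominator is bounded away from zero and the numerator is bounded above uniformly for $y\in[0,1]$, since $y$ ranges over a compact set. Cauchy's integral formula therefore yields a constant $M_b$ with $|C(k,y;b)|\le M_b\,k!\,(r_b/2)^{-k}$ for all $k\ge0$ and $y\in[0,1]$. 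Inserting these bounds into the expression for $\widetilde{C}$ above, and noting that the multidimensional fractional parts $\{\cdot\}_{B_0,f}$ and the dummy variables $x_g$ all lie in $[0,1]$, gives a constant $K=K(\mathbf{y},\Lambda)$ with $|\widetilde{C}(\mathbf{k},\mathbf{y};\Lambda)|\le K\prod_{f\in\Lambda}k_f!\,(r_f/2)^{-k_f}$. Consequently, for $\mathbf{t}$ in the polydisc $\{|t_f|<r_f/2:f\in\Lambda\}$,
\[
  \sum_{\mathbf{k}\in\mathbb{N}_0^{\sharp\Lambda}}|\widetilde{C}(\mathbf{k},\mathbf{y};\Lambda)|\prod_{f\in\Lambda}\frac{|t_f|^{k_f}}{k_f!}\le K\prod_{f\in\Lambda}\sum_{k_f\ge0}(2|t_f|/r_f)^{k_f}<\infty,
\]
which proves absolute convergence; uniform convergence on any compact subpolydisc follows from the same estimate.

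With absolute convergence in hand, Fubini's theorem legitimizes all the interchanges used in the first paragraph, and the closed-form computation via \eqref{F_def_exp} goes through unchanged. The only step requiring any real care is the uniform Cauchy bound for $C(k,y;b)$: once one has verified that the distance from the origin to the singular set of $F(\cdot,y;b)$ is independent of $y$ and that $F$ itself is bounded uniformly in $y\in[0,1]$ on a contour inside its domain of holomorphy, the remainder reduces to formal manipulation of absolutely convergent series together with elementary geometric summation.
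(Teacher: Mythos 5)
Your proposal is correct and follows essentially the same route as the paper: a uniform Cauchy-integral bound $|C(k,y;b)|\le M_b\,k!\,R^{-k}$ for $y\in[0,1]$ (the paper's constants $C_{\const{f}}$, $R_{\const{f}}$), geometric-series domination to get absolute and uniform convergence of \eqref{tilde_F_def} on a polydisc, and then interchange of the Taylor sum with the integral so that \eqref{F_def_exp} resums each factor into the exponential form \eqref{eq:TF}. No gaps.
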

\begin{proof}
The following proof is similar to that of \cite[Lemma 7]{KM3}.
By \eqref{4-80} we see that,
for $b\in\mathbb{C}$,
there exists a sufficiently small $R_b>0$ such that 
\begin{equation}
  \frac{C(k,y;b)}{k!}=
  =
  \frac{1}{2\pi\sqrt{-1}}\int_{|z|=R_b}
  \frac{z e^{(z-2\pi \sqrt{-1}b)y}}{e^{z-2\pi \sqrt{-1}b}-1}
\frac{dz}{z^{k+1}}
\end{equation}
holds for $y\in\mathbb{R}$. Thus
we have for $0\leq y\leq 1$ 
\begin{equation}
\Bigl\lvert\frac{C(k,y;b)}{k!}\Bigr\rvert
\leq\frac{1}{2\pi}\int_{|z|=R_b}\Bigl\lvert
\frac{z e^{(z-2\pi \sqrt{-1}b)y}}{e^{z-2\pi \sqrt{-1}b}-1}
\Bigr\rvert\frac{|dz|}{R_b^{k+1}}
\leq\frac{C_{b}}{R_b^k},
\end{equation}
where
\begin{equation}
  C_{b}=\max\Bigl\{\Bigl\lvert
\frac{z e^{(z-2\pi \sqrt{-1}b)y}}{e^{z-2\pi \sqrt{-1}b}-1}\Bigr\rvert~\Bigm|~|z|=R_b,~0\leq y\leq1\Bigr\}.
\end{equation}
Fix $r$ such that $0<r<\min_{f\in\Lambda} R_{\const{f}}$. Then for $|t_f|<r$ ($f\in\Lambda$),
\begin{equation}
  \begin{split}
  \Bigl\lvert 
  \widetilde{C}(\mathbf{k},\mathbf{y};\Lambda)
  \prod_{f\in\Lambda}\frac{t_f^{k_f}}{k_f!}
  \Bigr\rvert
  &\leq
  \frac{1}{\sharp(\mathbb{Z}^r/\langle\vec{B}_0\rangle)}
  \sum_{\mathbf{w}\in\mathbb{Z}^r/\langle\vec{B}_0\rangle}  
  \Bigl(\prod_{f\in\Lambda}|t_f|^{k_f}\Bigr)
  \int_0^1\dots\int_0^1
  \Bigl\lvert
  \Bigl(\prod_{g\in L_0} \frac{C(k_g,x_g;\const{g})}{k_g!} dx_g\Bigr)
  \\
  &\qquad\times
    \prod_{f\in B_0}
    \Bigl(
    \frac{C(k_f,\{\mathbf{y}+\mathbf{w}-\sum_{g\in L_0}x_g\vec{g}\}_{B_0,f};\const{f})}{k_f!}
    \Bigr)
    \Bigr\rvert
    \\
    &\leq
    \frac{1}{\sharp(\mathbb{Z}^r/\langle\vec{B}_0\rangle)}
    \sum_{\mathbf{w}\in\mathbb{Z}^r/\langle\vec{B}_0\rangle}  
    \Bigl(\prod_{f\in\Lambda}r^{k_f}\Bigr)
    \int_0^1\dots\int_0^1
    \prod_{f\in\Lambda}
    \frac{C_{\const{f}}}{R_{\const{f}}^{k_f}}
    \prod_{g\in L_0} dx_g
    \\
    &=
    \prod_{f\in\Lambda}
    C_{\const{f}}
    \Bigl(\frac{r}{R_{\const{f}}}\Bigr)^{k_f}.
\end{split}
\end{equation}
Since
\begin{equation}
  \sum_{\mathbf{k}\in\mathbb{N}_0^{\sharp\Lambda}}
  \prod_{f\in\Lambda}
  C_{\const{f}}
  \Bigl(\frac{r}{R_{\const{f}}}\Bigr)^{k_f}
  =
  \prod_{f\in\Lambda}
  \Bigl(\frac{C_{\const{f}}}{1-r/R_{\const{f}}}\Bigr)<\infty,
\end{equation}
we have the uniform and absolute convergence of $\widetilde{F}(\mathbf{t},\mathbf{y};\Lambda)$, which implies the holomorphy of
$\widetilde{F}(\mathbf{t},\mathbf{y};\Lambda)$
in the neighborhood of the origin with respect to $\mathbf{t}\in\mathbb{C}^{\sharp\Lambda}$.

Furthermore by exchanging the sum and the integral and using \eqref{F_def_exp} we obtain
\begin{equation}
  \begin{split}
    \widetilde{F}(\mathbf{t},\mathbf{y};\Lambda)
    &=
    \frac{1}{\sharp(\mathbb{Z}^r/\langle \vec{B}_0\rangle)}
    \sum_{\mathbf{w}\in \mathbb{Z}^r/\langle \vec{B}_0\rangle}
    \int_0^1\dots\int_0^1
    \Bigl(\prod_{g\in L_0}    dx_g\Bigr)
    \Bigl(
    \prod_{g\in L_0}\frac{t_g\exp
      ((t_g-2\pi\sqrt{-1}\const{g}) x_{g})}{\exp(t_g-2\pi\sqrt{-1}\const{g})-1}
    \Bigr)
    \\
    &\qquad
    \times
    \Bigl(
    \prod_{f\in B_0}\frac{t_f\exp
      ((t_f-2\pi\sqrt{-1}\const{f}) 
      \{\mathbf{y}+\mathbf{w}-\sum_{g\in L_0}x_g\vec{g}\}_{B_0,f}
      )}{\exp(t_f-2\pi\sqrt{-1}\const{f})-1}
    \Bigr),
  \end{split}
\end{equation}
which yields \eqref{eq:TF}.
\end{proof}

\begin{lemma}
  \label{lm:cont}  
    $\widetilde{F}(\mathbf{t},\mathbf{y};\Lambda)$ is continuous in $\mathbf{y}$ 
on
$V\setminus\bigcup_{f\in \widetilde{\Lambda}}(\mathfrak{H}_{\Lambda\setminus\{f\}}+\mathbb{Z}^r)$
and 
has one-sided continuity in $\mathbf{y}\in V$ in the direction $\phi$.
\end{lemma}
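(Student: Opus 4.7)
The plan is to apply Lebesgue's dominated convergence theorem to the integral representation \eqref{eq:TF}. The only $\mathbf{y}$-dependent factor in the integrand is
\[
\exp\Bigl(\sum_{f\in B_0}(t_f-2\pi\sqrt{-1}\const{f})\{\mathbf{y}+\mathbf{w}-\sum_{g\in L_0}x_g\vec{g}\}_{B_0,f}\Bigr),
\]
and since $\{\cdot\}_{B_0,f}\in[0,1]$, this exponential is bounded uniformly in $\mathbf{y}$ and $(x_g)\in[0,1]^{\sharp L_0}$. The prefactor $\prod_{f\in\Lambda}t_f/(\exp(t_f-2\pi\sqrt{-1}\const{f})-1)$ is independent of $\mathbf{y}$. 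Hence both continuity assertions reduce to pointwise (in fact almost-everywhere) continuity of the multi-dimensional fractional parts appearing in the integrand.

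For the one-sided continuity in the direction $\phi$, Lemma \ref{lm:contfrac} gives, for every $\mathbf{y}\in V$, every $\mathbf{w}\in\mathbb{Z}^r/\langle\vec{B}_0\rangle$, every $f\in B_0$, and every $(x_g)\in[0,1]^{\sharp L_0}$,
\[
\lim_{c\to 0+}\{\mathbf{y}+c\phi+\mathbf{w}-\sum_{g\in L_0}x_g\vec{g}\}_{B_0,f}=\{\mathbf{y}+\mathbf{w}-\sum_{g\in L_0}x_g\vec{g}\}_{B_0,f}.
\]
Dominated convergence then yields $\lim_{c\to 0+}\widetilde{F}(\mathbf{t},\mathbf{y}+c\phi;\Lambda)=\widetilde{F}(\mathbf{t},\mathbf{y};\Lambda)$ for every $\mathbf{y}\in V$.

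For the two-sided continuity, fix $\mathbf{y}_0\in V\setminus\bigcup_{f\in\widetilde{\Lambda}}(\mathfrak{H}_{\Lambda\setminus\{f\}}+\mathbb{Z}^r)$. The scalar fractional part $\{a\}$ is continuous in $a$ except when $a\in\mathbb{Z}$, and by the definition of $\{\cdot\}_{B_0,f}$ the same is true of $\{\mathbf{y}\}_{B_0,f}$ as a function of $\langle\mathbf{y},\vec{f}^{B_0}\rangle$. Applying Lemma \ref{lm:aex} with $D=B_0$, which is admissible because $\widetilde{\Lambda}\cap B_0=\widetilde{\Lambda}$ by \eqref{eq:LsB}, we see that the exceptional set
\[
M(\mathbf{y}_0+\mathbf{w})=\Bigl\{(x_g)\in\mathbb{R}^{\sharp L_0}~\Bigm|~\langle\mathbf{y}_0+\mathbf{w}-\sum_{g\in L_0}x_g\vec{g},\vec{f}^{B_0}\rangle\in\mathbb{Z}\text{ for some }f\in B_0\Bigr\}
\]
has Lebesgue measure zero for each $\mathbf{w}$ (Lemma \ref{lm:setH} identifies this set with the one in the statement of Lemma \ref{lm:aex}). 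For $(x_g)\notin\bigcup_{\mathbf{w}}M(\mathbf{y}_0+\mathbf{w})$, every $\{\mathbf{y}_0+\mathbf{w}-\sum_g x_g\vec{g}\}_{B_0,f}$ is continuous in $\mathbf{y}$ at $\mathbf{y}_0$, hence so is the integrand. Dominated convergence delivers the two-sided continuity on $V\setminus\bigcup_{f\in\widetilde{\Lambda}}(\mathfrak{H}_{\Lambda\setminus\{f\}}+\mathbb{Z}^r)$.

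The main obstacle is verifying that the hypothesis $\mathbf{y}_0\notin\bigcup_{f\in\widetilde{\Lambda}}(\mathfrak{H}_{\Lambda\setminus\{f\}}+\mathbb{Z}^r)$ is exactly what is needed to invoke Lemma \ref{lm:aex} with $D=B_0$; this is where the inclusion \eqref{eq:LsB}, which forces $\widetilde{\Lambda}\subset B_0$, plays a decisive role. Everything else is a routine application of dominated convergence against the manifestly bounded integrand.
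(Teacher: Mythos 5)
Your proof is correct and follows essentially the same route as the paper's: the paper likewise reduces both assertions to the boundedness of the integrand in \eqref{eq:TF} plus dominated convergence, invoking Lemma \ref{lm:aex} with $D=B_0$ (using $\widetilde{\Lambda}\subset B_0$ from \eqref{eq:LsB}) for the two-sided continuity and Lemma \ref{lm:contfrac} for the one-sided continuity along $\phi$. Your write-up merely spells out the details that the paper leaves to the reader by reference to the earlier continuity argument for $S(\mathbf{k},\mathbf{y};\Lambda)$.
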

\begin{proof}
  The proof is almost the same as that of the continuity of $S(\mathbf{k},\mathbf{y};\Lambda)$ in \eqref{eq:limG}.
  Let $G(\mathbf{y},(x_g))$ be the integrand of the last expression of \eqref{eq:TF}.
In this case, the continuity comes from
\eqref{eq:xinZ}
for all $f\in B_0$.
Hence the first assertion follows from
Lemma \ref{lm:aex} with $D=B_0$.
%
The second assertion immediately follows from Lemma \ref{lm:contfrac}.


\end{proof}

We have obtained the assertions, corresponding to (i), (ii) and (iii) of Theorem \ref{thm:main1}, for
$\widetilde{F}(\mathbf{t},\mathbf{y};\Lambda)$.
In the following sections, we will prove 
\begin{equation}
\label{tilde_F=F}
\widetilde{F}(\mathbf{t},\mathbf{y};\Lambda)=F(\mathbf{t},\mathbf{y};\Lambda)
\end{equation}
for $\mathbf{y}\in V\setminus\mathfrak{H}_{\mathscr{R}}$.
Then 
$\widetilde{F}(\mathbf{t},\mathbf{y};\Lambda)=F(\mathbf{t},\mathbf{y};\Lambda)$
on the whole $V$ by
the one-sided continuity of $F(\mathbf{t},\mathbf{y};\Lambda)$
and $\widetilde{F}(\mathbf{t},\mathbf{y};\Lambda)$ which we have already shown.
Thus
automatically the assertions (ii) and (iii) of Theorem \ref{thm:main1} will follow.

Also, comparing \eqref{F_Taylor_exp} with \eqref{tilde_F_def}, we find that
\begin{equation}\label{Ctilde=C}
\widetilde{C}(\mathbf{k},\mathbf{y};\Lambda)=C(\mathbf{k},\mathbf{y};\Lambda).
\end{equation}
By the definition of $\widetilde{C}(\mathbf{k},\mathbf{y};\Lambda)$ and Proposition \ref{prop:main},
we have
$$
\widetilde{C}(\mathbf{k},\mathbf{y};\Lambda)=
\Bigl(\prod_{f\in\Lambda}-\frac{k_f!}{(2\pi\sqrt{-1})^{k_f}}\Bigr)
S(\mathbf{k},\mathbf{y};\Lambda)
$$
for 
$\mathbf{y}\in V\setminus\bigcup_{f\in \widetilde{\Lambda}\cap\Lambda_1}(\mathfrak{H}_{\Lambda\setminus\{f\}}+\mathbb{Z}^r)$.
Combining this with \eqref{Ctilde=C}, we obtain the assertion of Theorem \ref{thm:main1b}.
Therefore the only remaining task is to show \eqref{tilde_F=F} for 
$\mathbf{y}\in V\setminus\mathfrak{H}_{\mathscr{R}}$.

\section{The generating function and convex polytopes}\label{sec-7}

The aim of the following three sections is to prove \eqref{tilde_F=F},
which will be shown in Section \ref{sec-9}.    
The present and the next sections are devoted to the preparations for
the proof of \eqref{tilde_F=F}, which are connected with the theory of
convex polytopes.

First, we summarize some 
definitions and
facts about convex polytopes (see \cite{KM5,KM3,Zie95}).
For a subset $X\subset\mathbb{R}^N$, we denote by $\Conv(X)$ the convex hull of $X$.
A subset $\mathcal{P}\subset\mathbb{R}^N$ is called a convex polytope 
if $\mathcal{P}=\Conv(X)$ for some finite subset $X\subset\mathbb{R}^N$.
Let $\mathcal{P}$ be a $d$-dimensional polytope.
Let $\mathcal{H}$ be a hyperplane in $\mathbb{R}^N$.
Then $\mathcal{H}$ divides $\mathbb{R}^N$ into two half-spaces.
If $\mathcal{P}$ is entirely contained in one of the two closed half-spaces
and $\mathcal{P}\cap\mathcal{H}\neq\emptyset$, then $\mathcal{H}$ is
called a supporting hyperplane of $\mathcal{P}$.
For a supporting hyperplane $\mathcal{H}$ and
a subset $\mathcal{F}=\mathcal{P}\cap\mathcal{H}\neq\emptyset$,
the subset $\mathcal{F}$
is called a face of the polytope $\mathcal{P}$
and $\mathcal{H}$ a supporting hyperplane associated with $\mathcal{F}$.
If the dimension of a face $\mathcal{F}$ is $j$, then we call it a $j$-face $\mathcal{F}$. 
A $0$-face is called a vertex,
a $1$-face an edge
and a $(d-1)$-face a facet.
For convenience, 
we regard $\mathcal{P}$ itself as its unique $d$-face.
Let $\Vrt(\mathcal{P})$ be the set of all vertices of $\mathcal{P}$. 
Then 
\begin{equation}
\label{eq:prop_F}
  \mathcal{F}=\Conv(\Vrt(\mathcal{P})\cap\mathcal{F}),
\end{equation}
for a face $\mathcal{F}$.
A $d$-dimensional simple polytope is a polytope whose vertices are adjacent to exactly $d$ edges. 

For $\mathbf{a}={}^t(a_1,\ldots,a_N),\mathbf{b}={}^t(b_1,\ldots,b_N)\in\mathbb{R}^N$
we define $\mathbf{a}\cdot \mathbf{b}=a_1b_1+\cdots+a_Nb_N$.
The definition of polytopes above is that of ``V-polytopes''.
We mainly deal with another representation of polytopes,
``H-polytopes'' instead, that is, a bounded subset of the form
\begin{equation}
  \mathcal{P}=\bigcap_{i\in I}\mathcal{H}_i^+\subset\mathbb{R}^N,
\end{equation}
where $\sharp{I}<\infty$ and $\mathcal{H}_i^+=\{\mathbf{x}\in\mathbb{R}^N~|~\mathbf{a}_i\cdot \mathbf{x}\geq h_i\}$
with $\mathbf{a}_i\in\mathbb{R}^N$ and $h_i\in\mathbb{R}$.
It is known (Weyl--Minkowski)
that H-polytopes are V-polytopes and vice versa.

We have an expression of $k$-faces in terms of hyperplanes
 $\mathcal{H}_i=\{\mathbf{x}\in\mathbb{R}^n~|~\mathbf{a}_i\cdot \mathbf{x}=h_i\}$.
\begin{prop}[{\cite[Proposition 2.7]{KM3}}]
\label{prop:k_face_1}
  Let $J\subset I$.
  Assume that
  $\mathcal{F}=\mathcal{P}\cap\bigcap_{j\in J}\mathcal{H}_j\neq\emptyset$.
  Then $\mathcal{F}$ is a face.
\end{prop}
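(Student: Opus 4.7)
The plan is to exhibit an explicit supporting hyperplane $\mathcal{H}$ for which $\mathcal{F} = \mathcal{P}\cap\mathcal{H}$, which by the definition recalled just above the statement immediately makes $\mathcal{F}$ a face. The natural construction is to form the ``aggregated'' inequality coming from $J$: set
\begin{equation*}
  \mathbf{a} = \sum_{j\in J}\mathbf{a}_j, \qquad h = \sum_{j\in J} h_j,
\end{equation*}
so that summing the defining inequalities of $\mathcal{P}$ over $j\in J$ gives $\mathbf{a}\cdot\mathbf{x}\geq h$ for every $\mathbf{x}\in\mathcal{P}$.

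Assume first that $\mathbf{a}\neq 0$, so that $\mathcal{H}=\{\mathbf{x}\in\mathbb{R}^N \mid \mathbf{a}\cdot\mathbf{x}=h\}$ is a genuine hyperplane. By the observation above $\mathcal{P}$ lies entirely in the closed half-space $\mathbf{a}\cdot\mathbf{x}\geq h$, so any nonempty intersection $\mathcal{P}\cap\mathcal{H}$ qualifies $\mathcal{H}$ as a supporting hyperplane. The key step is to identify $\mathcal{P}\cap\mathcal{H}$ with $\mathcal{F}$: for $\mathbf{x}\in\mathcal{P}$ the quantity $\mathbf{a}\cdot\mathbf{x}-h = \sum_{j\in J}(\mathbf{a}_j\cdot\mathbf{x}-h_j)$ is a sum of nonnegative terms, hence equals $0$ if and only if each $\mathbf{a}_j\cdot\mathbf{x}-h_j$ vanishes, i.e.\ $\mathbf{x}\in\bigcap_{j\in J}\mathcal{H}_j$. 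This gives $\mathcal{P}\cap\mathcal{H}=\mathcal{P}\cap\bigcap_{j\in J}\mathcal{H}_j = \mathcal{F}$, and the nonemptyness hypothesis then makes $\mathcal{H}$ a supporting hyperplane associated to $\mathcal{F}$.

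The one place that needs care is the degenerate possibility $\mathbf{a}=0$, where the candidate $\mathcal{H}$ fails to be a hyperplane. In that case, picking any $\mathbf{x}_0\in\mathcal{F}$ and summing the equalities $\mathbf{a}_j\cdot\mathbf{x}_0=h_j$ forces $h=\mathbf{a}\cdot\mathbf{x}_0=0$; then for every $\mathbf{x}\in\mathcal{P}$ the identity $0=\mathbf{a}\cdot\mathbf{x}-h=\sum_{j\in J}(\mathbf{a}_j\cdot\mathbf{x}-h_j)$ again forces $\mathbf{x}\in\bigcap_{j\in J}\mathcal{H}_j$, so $\mathcal{F}=\mathcal{P}$. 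This is declared a face of $\mathcal{P}$ by the convention stated in the text (``we regard $\mathcal{P}$ itself as its unique $d$-face''), so the conclusion still holds.

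I expect the main (minor) obstacle to be precisely this bookkeeping around the degenerate case $\mathbf{a}=0$: one must notice that nonemptyness of $\mathcal{F}$ itself supplies the compatibility condition $h=0$, which in turn collapses $\mathcal{P}$ onto $\mathcal{F}$. Once that is handled, the core of the proof is the elementary ``sum of nonnegative terms vanishes iff each vanishes'' argument, which is the reason the aggregated functional $\mathbf{a}$ detects precisely the intersection with all the $\mathcal{H}_j$ simultaneously.
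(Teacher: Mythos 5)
Your proof is correct: the aggregation trick $\mathbf{a}=\sum_{j\in J}\mathbf{a}_j$, $h=\sum_{j\in J}h_j$ together with the ``a sum of nonnegative terms vanishes iff each term vanishes'' observation is exactly the standard argument for this fact, and it is the one used in the cited source (the paper itself states the proposition without proof, quoting \cite[Proposition 2.7]{KM3}). Your handling of the degenerate case $\mathbf{a}=0$ (which also absorbs $J=\emptyset$), where nonemptiness of $\mathcal{F}$ forces $h=0$ and hence $\mathcal{F}=\mathcal{P}$, covered by the convention that $\mathcal{P}$ is its own $d$-face, is the right way to close the one genuine gap in the naive version of the argument.
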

\begin{prop}[{\cite[Proposition 2.8]{KM3}}]
\label{prop:k_face_2}
Let $\mathcal{H}$ be a supporting hyperplane and
$\mathcal{F}=\mathcal{P}\cap\mathcal{H}$ is a $k$-face.
Then there exists a set of indices $J\subset I$ such that
$\sharp J=(\dim\mathcal{P})-k$ and
$\mathcal{F}=\mathcal{P}\cap\bigcap_{j\in J}\mathcal{H}_j$.
\end{prop}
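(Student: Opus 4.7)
The plan is to single out the ``tight'' defining hyperplanes of $\mathcal{F}$, verify that they already cut out $\mathcal{F}$ from $\mathcal{P}$, and then thin them down to the right cardinality by a linear-algebra argument. Write $d=\dim\mathcal{P}$ and set $J_0=\{j\in I~|~\mathcal{F}\subset\mathcal{H}_j\}$, the indices of the inequalities that become equalities everywhere on $\mathcal{F}$.

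First I would show $\mathcal{F}=\mathcal{P}\cap\bigcap_{j\in J_0}\mathcal{H}_j$. The inclusion $\subset$ is immediate. For $\supset$, pick $\mathbf{x}_0$ in the relative interior of $\mathcal{F}$, so that $\mathbf{a}_i\cdot\mathbf{x}_0>h_i$ strictly for all $i\in I\setminus J_0$, and let $\mathbf{a}\cdot\mathbf{x}=h$ be the equation of the supporting hyperplane $\mathcal{H}$, normalized so that $\mathcal{P}\subset\{\mathbf{a}\cdot\mathbf{x}\geq h\}$. For any $\mathbf{y}\in\mathcal{P}\cap\bigcap_{j\in J_0}\mathcal{H}_j$ the extrapolated point $\mathbf{z}_\varepsilon=\mathbf{x}_0+\varepsilon(\mathbf{x}_0-\mathbf{y})$ satisfies $\mathbf{a}_j\cdot\mathbf{z}_\varepsilon=h_j$ for $j\in J_0$, while the strict inequalities at $\mathbf{x}_0$ persist under small perturbation, so $\mathbf{z}_\varepsilon\in\mathcal{P}$ for all sufficiently small $\varepsilon>0$. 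But then $h\leq\mathbf{a}\cdot\mathbf{z}_\varepsilon=h+\varepsilon(h-\mathbf{a}\cdot\mathbf{y})$ forces $\mathbf{a}\cdot\mathbf{y}\leq h$, and combined with $\mathbf{y}\in\mathcal{P}$ this gives $\mathbf{y}\in\mathcal{H}\cap\mathcal{P}=\mathcal{F}$.

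Next I would promote this to the affine-hull identity $\mathcal{A}_\mathcal{F}=\mathcal{A}_\mathcal{P}\cap\bigcap_{j\in J_0}\mathcal{H}_j$, where $\mathcal{A}_\mathcal{F}$ and $\mathcal{A}_\mathcal{P}$ denote the affine hulls, by the same extrapolation trick: any $\mathbf{v}$ in the direction space of the right-hand side satisfies $\mathbf{x}_0+t\mathbf{v}\in\mathcal{F}$ for small $|t|$, hence belongs to the direction space of $\mathcal{A}_\mathcal{F}$. Since $\dim\mathcal{A}_\mathcal{P}=d$ and $\dim\mathcal{A}_\mathcal{F}=k$, the restrictions of the functionals $\mathbf{a}_j$ ($j\in J_0$) to the direction space of $\mathcal{A}_\mathcal{P}$ span a $(d-k)$-dimensional space of linear forms. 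I would then choose $J\subset J_0$ with $\sharp J=d-k$ so that these $\mathbf{a}_j$ form a basis of that span. By construction $\mathcal{A}_\mathcal{P}\cap\bigcap_{j\in J}\mathcal{H}_j=\mathcal{A}_\mathcal{F}\subset\mathcal{H}$, whence
\[
\mathcal{P}\cap\bigcap_{j\in J}\mathcal{H}_j\;\subset\;\mathcal{A}_\mathcal{P}\cap\bigcap_{j\in J}\mathcal{H}_j\cap\mathcal{P}\;=\;\mathcal{A}_\mathcal{F}\cap\mathcal{P}\;\subset\;\mathcal{H}\cap\mathcal{P}=\mathcal{F},
\]
while the reverse inclusion is immediate from $J\subset J_0$ and the first step.

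The main obstacle is the relative-interior characterization invoked in the first step: one needs both that the relative interior of $\mathcal{F}$ consists of points strictly satisfying $\mathbf{a}_i\cdot\mathbf{x}>h_i$ for every non-tight index $i\in I\setminus J_0$, and that small extrapolations past such an interior point stay inside $\mathcal{P}$. These are classical convex-geometry facts, but verifying them cleanly (and matching indices carefully between $I$, the essential-facet indices, and $J_0$) is the technical heart of the argument. Once that is in hand, the dimension count passing from $J_0$ to the required $J$ of size $d-k$ is routine linear algebra.
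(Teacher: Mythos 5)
Your argument is correct: isolating the index set $J_0$ of constraints tight on all of $\mathcal{F}$, showing via a relative-interior/extrapolation argument that $\mathcal{F}=\mathcal{P}\cap\bigcap_{j\in J_0}\mathcal{H}_j$, and then extracting $J\subset J_0$ with $\sharp J=(\dim\mathcal{P})-k$ by choosing a basis of the span of the restricted functionals is the standard and complete route to this fact. Note that the paper itself gives no proof here --- it quotes the statement from \cite[Proposition 2.8]{KM3} --- so there is nothing internal to compare against; your proof stands on its own, with the two convex-geometry facts you flag (strictness of non-tight inequalities at relative-interior points, and stability of $\mathcal{P}$-membership under small extrapolation past such points) being exactly the standard ingredients one would supply.
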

\begin{lemma}[{\cite[Lemma 6.5]{KM5}}]
\label{lm:simple_exp}
Let $\mathcal{P}$ be a simple polytope and   
 $\{\mathbf{p}_0,\ldots,\mathbf{p}_K\}$ be the vertices of $\mathcal{P}$.
Let 
\begin{equation}
E_k=\{j~|~\Conv(\{\mathbf{p}_k,\mathbf{p}_j\})\text{ is an edge}\}.
\end{equation}
Then we have
\begin{equation}
    \int_{\mathcal{P}} e^{\mathbf{a}\cdot \mathbf{x}}d\mathbf{x}
=
\sum_{k=0}^K|\det(\mathbf{p}_k-\mathbf{p}_j)_{j\in E_k}|\frac{e^{\mathbf{a}\cdot\mathbf{p}_k}}{\prod_{j\in E_k}\mathbf{a}\cdot (\mathbf{p}_k-\mathbf{p}_j)}.
\end{equation}
\end{lemma}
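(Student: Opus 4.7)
The plan is to prove this by the Brion-style evaluation of the exponential integral over a simple polytope, in three steps: decompose into vertex tangent cones, evaluate each cone integral by an affine substitution, and reassemble via Brion's identity.

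First, since $\mathcal{P}$ is simple of dimension $d$, every vertex $\mathbf{p}_k$ is incident to exactly $d$ edges, so $\sharp E_k = d$ and the tangent cone
\[
C_k = \mathbf{p}_k + \sum_{j\in E_k} \mathbb{R}_{\geq 0}(\mathbf{p}_j - \mathbf{p}_k)
\]
is a simplicial cone whose $d$ edge generators $\mathbf{p}_j - \mathbf{p}_k$ ($j \in E_k$) are linearly independent.

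Second, I would evaluate $\int_{C_k} e^{\mathbf{a}\cdot \mathbf{x}}\, d\mathbf{x}$ by the affine change of variables $\mathbf{x} = \mathbf{p}_k + \sum_{j\in E_k} c_j (\mathbf{p}_j - \mathbf{p}_k)$ with $c_j \geq 0$. The Jacobian absolute value is $|\det(\mathbf{p}_j - \mathbf{p}_k)_{j\in E_k}|$, and the integral factors as a product of one-dimensional exponential integrals. In the open region $\{\mathbf{a} : \mathbf{a}\cdot(\mathbf{p}_j - \mathbf{p}_k) < 0 \text{ for all } j \in E_k\}$ each factor converges to $1/(\mathbf{a}\cdot(\mathbf{p}_k - \mathbf{p}_j))$, giving
\[
\int_{C_k} e^{\mathbf{a}\cdot \mathbf{x}}\, d\mathbf{x} = |\det(\mathbf{p}_k - \mathbf{p}_j)_{j\in E_k}| \, \frac{e^{\mathbf{a}\cdot \mathbf{p}_k}}{\prod_{j\in E_k} \mathbf{a}\cdot (\mathbf{p}_k - \mathbf{p}_j)},
\]
and this rational function is the meromorphic continuation of the cone integral to generic $\mathbf{a}$. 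It is already the $k$-th summand in the lemma.

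Third, I would invoke Brion's identity for simple polytopes: as meromorphic functions of $\mathbf{a}$,
\[
\int_{\mathcal{P}} e^{\mathbf{a}\cdot \mathbf{x}}\, d\mathbf{x} = \sum_{k=0}^{K} \int_{C_k} e^{\mathbf{a}\cdot \mathbf{x}}\, d\mathbf{x}.
\]
Substituting the cone evaluations above gives the claimed formula.

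The main obstacle is Brion's identity itself: the unbounded contributions of the various tangent cones must cancel in pairs along the recession directions of the polytope, leaving only the integral over the bounded region $\mathcal{P}$. One can either cite Brion's theorem directly, derive it from the Lawrence--Varchenko signed cone decomposition, or bypass it by triangulating $\mathcal{P}$ into $d$-simplices and applying the simplex analogue of the computation above together with inclusion--exclusion along shared faces; this last route is self-contained but considerably more tedious, so citing Brion is the cleanest path.
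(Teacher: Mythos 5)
Your proposal is mathematically sound, and the cone computation in your second step is exactly right: the affine substitution turns the tangent-cone integral into a product of one-dimensional exponential integrals, and its meromorphic continuation is precisely the $k$-th summand of the formula. Note, however, that the paper does not prove this lemma at all --- it is quoted verbatim from \cite[Lemma 6.5]{KM5}, so there is no in-paper argument to compare yours against. Your route (tangent-cone decomposition plus Brion's identity for the continuous generating function) is the standard modern derivation of this formula, and you correctly identify that the entire weight of the argument rests on Brion's theorem, i.e.\ on the cancellation of the unbounded cone contributions; citing Brion, or deriving it from the Lawrence--Varchenko signed decomposition, is a legitimate way to discharge that step, though of course it means your proof is only as self-contained as the reference you lean on --- exactly the situation the paper itself is in by citing \cite{KM5}. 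Two small points worth making explicit if you write this up: the formula implicitly assumes $\mathcal{P}$ is full-dimensional in its ambient space (so that $\sharp E_k=d$ equals the number of coordinates and the determinant is square), and the identity is to be read as an identity of meromorphic functions of $\mathbf{a}$, with the left-hand side entire and the apparent poles of the right-hand side cancelling --- which is how it is used later in the paper, where $\mathbf{a}=\mathbf{t}^*$ can a priori hit the hyperplanes $\mathbf{a}\cdot(\mathbf{p}_k-\mathbf{p}_j)=0$.
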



Now we present a fundamental proposition, which gives an expression of
$\widetilde{F}(\mathbf{t},\mathbf{y};\Lambda)$ involving integrals over
certain convex polytopes.    This proposition is a generalization of \cite[Theorem 7]{KM3}.

\begin{prop}
\label{prop:gen_func}
  \begin{equation}
  \begin{split}
    \widetilde{F}(\mathbf{t},\mathbf{y};\Lambda)
    &=
    \Bigl(
    \prod_{f\in \Lambda}\frac{t_f}{\exp(t_f-2\pi\sqrt{-1}\const{f})-1}
    \Bigr)
    \frac{1}{\sharp(\mathbb{Z}^r/\langle \vec{B}_0\rangle)}
    \\
    &\qquad\times
    \sum_{\mathbf{m}\in\mathbb{Z}^r}
    \exp\Bigl(
    \sum_{f\in B_0}
    (t_f-2\pi\sqrt{-1}\const{f}) 
    \langle\mathbf{y}+\mathbf{m},\vec{f}^{B_0}\rangle
    \Bigr)
    \\
    &\qquad\times
    \int_{\mathcal{P}(\mathbf{m};\mathbf{y})}
    \exp\Bigl(\sum_{g\in L_0}
    t_g^*x_g
    \Bigr)
    \prod_{g\in L_0}    dx_g,
  \end{split}
\end{equation}
where
\begin{equation}
\label{def_t_g^*}
  t_g^*=
  (t_g-2\pi\sqrt{-1}\const{g}) 
  -
  \sum_{f\in B_0}
  (t_f-2\pi\sqrt{-1}\const{f}) 
  \langle\vec{g},\vec{f}^{B_0}\rangle
\end{equation}
and
\begin{equation}
\label{eq:D_m_y}
  \mathcal{P}(\mathbf{m};\mathbf{y})=
  \left\{\mathbf{x}=(x_g)_{g\in L_0}~\middle|~
  \begin{aligned}
  &0\leq x_g\leq1
  \quad (g\in L_0)
\\
  &\langle \mathbf{y}+\mathbf{m}-\vec{f},\vec{f}^{B_0}\rangle\leq
  \sum_{g\in L_0}x_g\langle\vec{g},\vec{f}^{B_0}\rangle
  \leq \langle \mathbf{y}+\mathbf{m},\vec{f}^{B_0}\rangle
  \quad (f\in B_0)
\end{aligned}
\right\}
\end{equation}
is a convex polytope or an empty set.
\end{prop}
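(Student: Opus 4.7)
The plan is to convert the integral expression \eqref{eq:TF} into the claimed polytope formula by replacing each multi-dimensional fractional part $\{\mathbf{y}+\mathbf{w}-\sum_{g\in L_0}x_g\vec{g}\}_{B_0,f}$ by an explicit affine function of $\mathbf{x}$ at the cost of summing over the discarded integer parts, and then absorbing those integer shifts together with the quotient sum over $\mathbf{w} \in \mathbb{Z}^r/\langle\vec{B}_0\rangle$ into a single sum over $\mathbf{m} \in \mathbb{Z}^r$.

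Concretely, for each $\mathbf{w}$ and almost every $\mathbf{x} \in [0,1]^{\sharp L_0}$, the definition of $\{\cdot\}_{B_0,f}$ together with \eqref{2-6} supplies, for every $f \in B_0$, a unique integer $n_f = n_f(\mathbf{w},\mathbf{x})$ with
\begin{equation*}
\{\mathbf{y}+\mathbf{w}-\sum_{g\in L_0}x_g\vec{g}\}_{B_0,f}=\langle \mathbf{y}+\mathbf{w}-\sum_{g\in L_0}x_g\vec{g}, \vec{f}^{B_0}\rangle - n_f \in [0,1];
\end{equation*}
both sign cases $\langle\phi,\vec{f}^{B_0}\rangle\gtrless 0$ yield values in $[0,1]$, and the boundary set where $n_f$ jumps has measure zero. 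I would then partition $[0,1]^{\sharp L_0}$ into the regions where $\mathbf{n}=(n_f)_{f\in B_0}$ is constant. Using the duality relation $\langle\vec{f}',\vec{f}^{B_0}\rangle=\delta_{f',f}$ for $f,f'\in B_0$, I would set $\mathbf{m}=\mathbf{w}-\sum_{f'\in B_0}n_{f'}\vec{f}'\in\mathbb{Z}^r$, so that the fractional part rewrites as $\langle \mathbf{y}+\mathbf{m}-\sum_g x_g\vec{g},\vec{f}^{B_0}\rangle$; the constraint that this lies in $[0,1]$ is, via $\langle\vec{f},\vec{f}^{B_0}\rangle=1$, exactly the defining system of $\mathcal{P}(\mathbf{m};\mathbf{y})$ in \eqref{eq:D_m_y}.

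The key algebraic step is the observation that $(\mathbf{w},\mathbf{n})\mapsto\mathbf{m}$ is a bijection from $(\mathbb{Z}^r/\langle\vec{B}_0\rangle)\times\mathbb{Z}^{\sharp B_0}$ onto $\mathbb{Z}^r$, since $\vec{B}_0$ is a $\mathbb{Z}$-basis of $\langle\vec{B}_0\rangle$. This collapses the double sum $\sum_\mathbf{w}\sum_\mathbf{n}$ into $\sum_{\mathbf{m}\in\mathbb{Z}^r}$. Substituting the identity $\{\cdot\}_{B_0,f}=\langle\mathbf{y}+\mathbf{m},\vec{f}^{B_0}\rangle-\sum_g x_g\langle\vec{g},\vec{f}^{B_0}\rangle$ into the exponent of \eqref{eq:TF} and splitting it into a constant-in-$\mathbf{x}$ piece $\sum_{f\in B_0}(t_f-2\pi\sqrt{-1}\const{f})\langle\mathbf{y}+\mathbf{m},\vec{f}^{B_0}\rangle$ and a linear-in-$\mathbf{x}$ piece $\sum_{g\in L_0}t_g^*x_g$ with $t_g^*$ as in \eqref{def_t_g^*} then produces the stated formula.

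The main obstacle I expect is bookkeeping rather than conceptual difficulty: one must verify carefully that the measure-zero boundaries where the $n_f$ jump can be discarded, that the two sign regimes in the definition of $\{\cdot\}_{B_0,f}$ give the same polytope (the half-open endpoint conventions being invisible to the Lebesgue integral), and that the bijection above respects the partition. That $\mathcal{P}(\mathbf{m};\mathbf{y})$ is a convex polytope or empty is automatic because it is the intersection of the bounded cube $[0,1]^{\sharp L_0}$ with finitely many affine half-spaces; and since $\sum_g x_g\langle\vec{g},\vec{f}^{B_0}\rangle$ stays bounded on the cube, only finitely many $\mathbf{m}$ yield nonempty $\mathcal{P}(\mathbf{m};\mathbf{y})$, so no convergence issue arises in the $\mathbf{m}$-sum.
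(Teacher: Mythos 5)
Your proposal is correct and follows essentially the same route as the paper's proof: the paper likewise partitions the cube $[0,1]^{\sharp L_0}$ into the regions $\mathcal{Q}(\mathbf{w},\mathbf{m})$ on which the integer part of each $\langle\mathbf{y}+\mathbf{w}-\sum_g x_g\vec{g},\vec{f}^{B_0}\rangle$ is constant, rewrites the fractional parts as affine functions of $\mathbf{x}$ there, and then uses exactly your bijection $(\mathbf{w},\mathbf{n})\mapsto\mathbf{w}-\sum_f n_f\vec{f}$ to collapse the double sum into $\sum_{\mathbf{m}\in\mathbb{Z}^r}$. Your extra remarks on the measure-zero boundaries and the half-open versus closed conventions are sound and only make explicit what the paper leaves implicit.
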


\begin{proof}
We fix a representative of each $\mathbf{w}\in\mathbb{Z}^r/\langle \vec{B}_0\rangle$ in $\mathbb{Z}^r$. 
Let $\mathbf{m}=(m_f)_{f\in B_0}\in\mathbb{Z}^r$, and denote by
$\mathcal{Q}(\mathbf{w},\mathbf{m})$ the set of all
$\mathbf{x}=(x_g)_{g\in L_0}$ satisfying the conditions
$0\leq x_g\leq 1$ ($g\in L_0$) and
\begin{equation}\label{7-9}
  -m_f\leq\Bigl\langle\mathbf{y}+\mathbf{w}-\sum_{g\in L_0}x_g\vec{g},\vec{f}^{B_0}\Bigr\rangle<-m_f+1.
\end{equation}
This condition is equivalent to
\begin{equation}\label{7-10}
    \Bigl\langle\mathbf{y}+\mathbf{w}+\sum_{h\in B_0}m_h\vec{h}-\vec{f},\vec{f}^{B_0}\Bigr\rangle
<
\sum_{g\in L_0}x_g\langle\vec{g},\vec{f}^{B_0}\rangle
\leq
    \Bigl\langle\mathbf{y}+\mathbf{w}+\sum_{h\in B_0}m_h\vec{h},\vec{f}^{B_0}\Bigr\rangle,
  \end{equation}
because $\Bigl\langle\sum_{h\in B_0}m_h\vec{h},\vec{f}^{B_0}
\Bigr\rangle=m_f$.
Also we have
\begin{equation}\label{7-11}
  \begin{split}
    \Bigl\{\Bigl\langle\mathbf{y}+\mathbf{w}-\sum_{g\in L_0}x_g\vec{g},\vec{f}^{B_0}\Bigr\rangle\Bigr\}
    &=
    \Bigl\langle\mathbf{y}+\mathbf{w}-\sum_{g\in L_0}x_g\vec{g},\vec{f}^{B_0}\Bigr\rangle+m_f
    \\
    &=
    \Bigl\langle\mathbf{y}+\mathbf{w}+\sum_{h\in B_0}m_h\vec{h},\vec{f}^{B_0}\Bigr\rangle
-\sum_{g\in L_0}x_g\langle\vec{g},\vec{f}^{B_0}\rangle.
  \end{split}
\end{equation}
Denote the multiple integral on the right-hand side of \eqref{eq:TF}
by $I(\mathbf{w})$, and divide it as
$$
I(\mathbf{w})=\sum_{\mathbf{m}\in\mathbb{Z}^r}
\int_{\mathcal{Q}(\mathbf{w},\mathbf{m})}.
$$
Applying \eqref{7-11}, we obtain
\begin{equation}
  \begin{split}
  I(\mathbf{w})&=\sum_{\mathbf{m}\in\mathbb{Z}^r}
    \exp\Bigl(
    \sum_{f\in B_0}
    (t_f-2\pi\sqrt{-1}\const{f}) 
    \langle\mathbf{y}+\mathbf{w}+\sum_{h\in B_0}m_h\vec{h}, 
    \vec{f}^{B_0}\rangle
    \Bigr)\\
    &\times\int_{\mathcal{Q}(\mathbf{w},\mathbf{m})}
    \exp\Bigl(\sum_{g\in L_0}t_g^* x_g\Bigr)\prod_{g\in L_0}dx_g.
  \end{split}
\end{equation}  
Note 
that $\mathbf{w}+\sum_{h\in B_0}m_h\vec{h}$
runs over $\mathbb{Z}^r$, 
when
$\mathbf{w}\in\mathbb{Z}^r/\langle \vec{B}_0\rangle$
 and
$\mathbf{m}\in\mathbb{Z}^r$
run. 
Therefore, rewriting $\mathbf{w}+\sum_{h\in B_0}m_h\vec{h}$
as $\mathbf{m}$, we obtain the assertion of the proposition.
\end{proof}

\begin{remark}
\label{rem:fig1}
For readers' convenience, we give typical pictures of $\mathcal{P}(\mathbf{m};\mathbf{y})$ in the cases $\mathbf{y}\in \mathfrak{H}_{\mathscr{R}}$ and
$\mathbf{y}\notin \mathfrak{H}_{\mathscr{R}}$, which will be treated below in Lemmas
\ref{lm:vert_form},
\ref{lm:cond_inter_uniq},
\ref{lm:char_vert2} and
\ref{lm:simplicity}.

Let $V=\mathbb{R}^2$ ($r=2$).
Let $\mathbf{e}_1=
\begin{pmatrix}
  1\\0
\end{pmatrix},
\mathbf{e}_2=
\begin{pmatrix}
  0\\1
\end{pmatrix}$ and
\begin{align}
  \Lambda&=\{f_1=(\mathbf{e}_1,\alpha_1),f_2=(\mathbf{e}_2,\alpha_2),g=(a\mathbf{e}_1+b\mathbf{e}_2,\alpha_g),h=(c\mathbf{e}_1+d\mathbf{e}_2,\alpha_h)\}=B_0\cup L_0,\\
  B_0&=\{(\mathbf{e}_1,\alpha_1),(\mathbf{e}_2,\alpha_2)\},\\
  L_0&=\{(a\mathbf{e}_1+b\mathbf{e}_2,\alpha_g),(c\mathbf{e}_1+d\mathbf{e}_2,\alpha_h)\},\\
  \mathfrak{H}_{\mathscr{R}}&=(\mathbb{R}\mathbf{e}_1+\mathbb{Z}^2)\cup
(\mathbb{R}\mathbf{e}_2+\mathbb{Z}^2)\cup
(\mathbb{R}(a\mathbf{e}_1+b\mathbf{e}_2)+\mathbb{Z}^2)\cup
(\mathbb{R}(c\mathbf{e}_1+d\mathbf{e}_2)+\mathbb{Z}^2),
\end{align}
where $a,b,c,d$ are positive integers,
which corresponds to the series
\begin{equation}
  S(\mathbf{k},\mathbf{y};\Lambda)=\sum_{n_1,n_2}\frac{e^{2\pi\sqrt{-1}(n_1y_1+n_2y_2)}}{(n_1+\alpha_1)^{k_1}(n_2+\alpha_2)^{k_2}(an_1+bn_2+\alpha_g)^{k_g}(cn_1+dn_2+\alpha_h)^{k_h}},
\end{equation}
where 
$\alpha_1,\alpha_2,\alpha_g,\alpha_h\in\mathbb{C}$,
$k_1,k_2,k_g,k_h\geq2$ and
 $n_1,n_2$ run over all integers such that the denominator does not vanishes.
Then we have
\begin{equation}
  \mathcal{P}(\mathbf{m};\mathbf{y})=
  \left\{\mathbf{x}=(x_g,x_h)~\middle|~
  \begin{aligned}
  &0\leq x_g\leq1,\\
  &0\leq x_h\leq1,\\
  & y_1+m_1-1\leq ax_g+cx_h\leq y_1+m_1,\\
  & y_2+m_2-1\leq bx_g+dx_h\leq y_2+m_2
\end{aligned}
\right\}.
\end{equation}
In the case $a=b=c=1$, $d=2$,
the polytope $\mathcal{P}(\mathbf{m};\mathbf{y})$ is drawn as in Figure \ref{fig:pmy2} if
$\mathbf{y}\in \mathfrak{H}_{\mathscr{R}}$ and
in Figure \ref{fig:pmy1} if
$\mathbf{y}\notin \mathfrak{H}_{\mathscr{R}}$.
In the former case, 
there are more than $2$ ($=\sharp\Lambda-r$) hyperplanes at some vertices while
in the latter case,
there are only $2$ hyperplanes at each vertex, which ensures that $\mathcal{P}(\mathbf{m};\mathbf{y})$ is a simple polytope in higher dimensions.
  \begin{figure}[h]
    \begin{minipage}{0.45\linewidth}
      \includegraphics[scale=0.8]{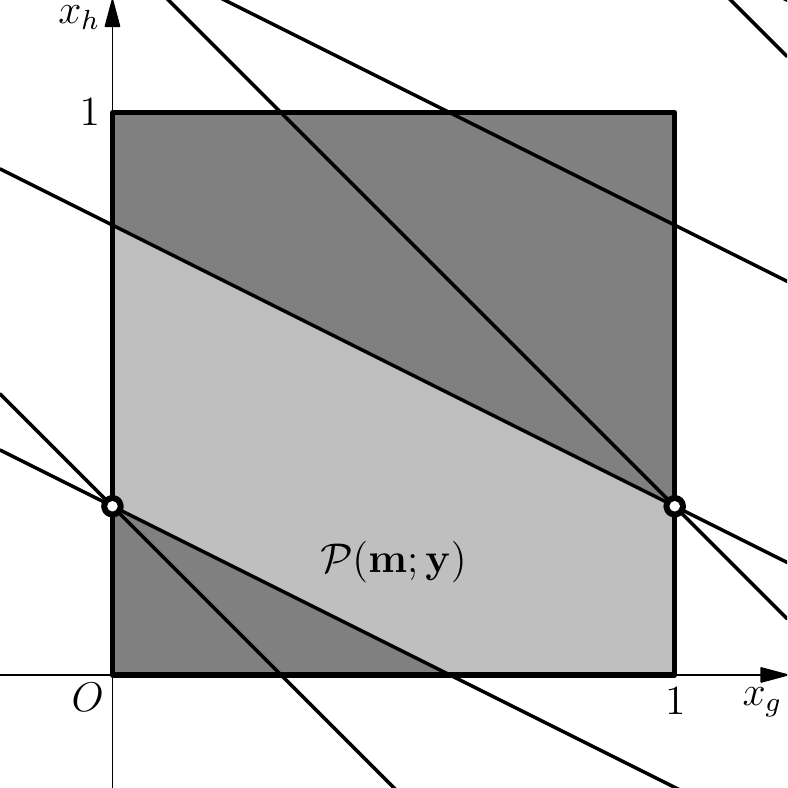}
      \caption{$\mathbf{y}\in \mathfrak{H}_{\mathscr{R}}$}
      \label{fig:pmy2}
    \end{minipage}
    \begin{minipage}{0.45\linewidth}
      \includegraphics[scale=0.8]{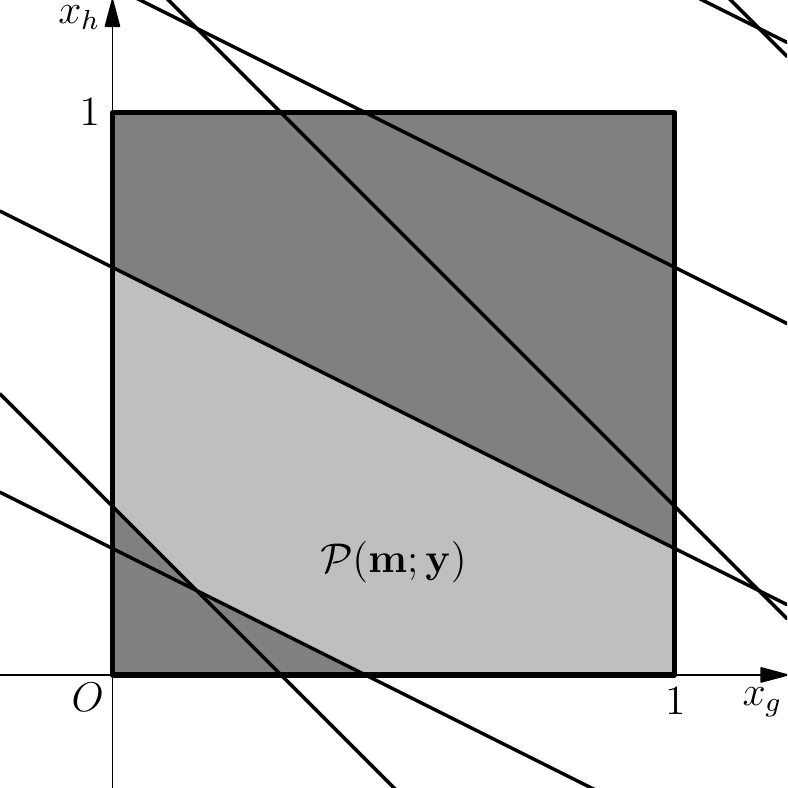}
      \caption{$\mathbf{y}\notin \mathfrak{H}_{\mathscr{R}}$}
      \label{fig:pmy1}
    \end{minipage}
  \end{figure}
\end{remark}

\section{Properties of the polytopes $\mathcal{P}(\mathbf{m};\mathbf{y})$}\label{sec-8}

The argument developed in this and the next sections is a generalization of that in
\cite[Section 6]{KM5}.

Let $\mathscr{A}=\{0,1\}^{\sharp\Lambda-r}$.
Let $\mathscr{B}'$ be the set of all subsets of $\Lambda$ which have
$r$ elements, and define $\mathscr{W}'=\mathscr{B}'\times\mathscr{A}$
and $\mathscr{W}=\mathscr{B}\times\mathscr{A}$. 
Obviously $\mathscr{W}\subset\mathscr{W}'$.
For an element $W=(B,A)\in\mathscr{W}'$,
we number $A=(a_g)_{g\in\Lambda\setminus B}$.
We fix a decomposition $\Lambda=B_0\cup L_0$ with $B_0\in\mathscr{B}$,
and for $f\in \Lambda$, $a\in\{0,1\}$, $\mathbf{m}\in\mathbb{Z}^{r}$ and 
$\mathbf{y}\in V$,
we define $\mathbf{u}(f,a)\in\mathbb{R}^{\sharp L_0}$ by
\begin{equation}
  \mathbf{u}(f,a)_g
  =
  \begin{cases}
    (-1)^{1-a}\langle\vec{g},\vec{f}^{B_0}\rangle\qquad &\text{if }f\in B_0,\\
    (-1)^a\delta_{g f}\qquad &\text{if }f\not\in B_0,
  \end{cases}
\end{equation}
where $g$ runs over $L_0$,
and define $v(f,a;\mathbf{m};\mathbf{y})\in\mathbb{R}$ by
\begin{equation}
  v(f,a;\mathbf{m};\mathbf{y})
  =
  \begin{cases}
    (-1)^{1-a}
\langle \mathbf{y}+\mathbf{m}-a\vec{f},\vec{f}^{B_0}\rangle
    \qquad &\text{if }f\in B_0,\\
    (-1)^aa=-a\qquad &\text{if }f\not\in B_0.
  \end{cases}
\end{equation}
Further we define the hyperplanes 
\begin{equation}
\label{8-3}
  \mathcal{H}(f,a;\mathbf{m};\mathbf{y})=
    \{\mathbf{x}=(x_g)_{g\in L_0}\in\mathbb{R}^{\sharp L_0}~|~\mathbf{u}(f,a)\cdot\mathbf{x}=v(f,a;\mathbf{m};\mathbf{y})\},
\end{equation}
and the half-spaces
\begin{equation}
  \mathcal{H}^+(f,a;\mathbf{m};\mathbf{y})=
    \{\mathbf{x}=(x_g)_{g\in L_0}\in\mathbb{R}^{\sharp L_0}~|~\mathbf{u}(f,a)\cdot\mathbf{x}\geq v(f,a;\mathbf{m};\mathbf{y})\},
\end{equation}
where for $\mathbf{w}=(w_g),\mathbf{x}=(x_g)\in\mathbb{C}^{\sharp L_0}$, we have set
\begin{equation}
  \mathbf{w}\cdot\mathbf{x}=\sum_{g\in L_0}w_g x_g.
\end{equation}
Then we have
\begin{equation}
\label{eq:P_ineq}
  \mathcal{P}(\mathbf{m};\mathbf{y})=
  \bigcap_{\substack{f\in \Lambda\\a\in\{0,1\}}}\mathcal{H}^+(f,a;\mathbf{m};\mathbf{y}).
\end{equation}
\begin{lemma}
\label{lm:vert_form}
All vertices of $\mathcal{P}(\mathbf{m};\mathbf{y})$ are of the form
\begin{equation}
\label{eq:vert_form}
  \bigcap_{f\in L'}\mathcal{H}(f,a_f;\mathbf{m};\mathbf{y}),
\end{equation}
where $L'\subset \Lambda$ with $\sharp L'=\sharp L_0=\sharp\Lambda-r$ 
and $(a_f)_{f \in L'}\in\mathscr{A}$.
\end{lemma}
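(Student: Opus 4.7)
The plan is to exhibit each vertex $\mathbf{v}$ of $\mathcal{P}(\mathbf{m};\mathbf{y})$ as the intersection of exactly $\sharp L_0$ of the defining hyperplanes $\mathcal{H}(f,a;\mathbf{m};\mathbf{y})$, arranged so that no two share the same $f\in\Lambda$. Then $L'$ is the set of those $\sharp L_0$ indices, with $a_f$ the corresponding choices.

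The first step is a short calculation from the definitions: for every $f\in\Lambda$ one has $\mathbf{u}(f,1)=-\mathbf{u}(f,0)$, while $v(f,1)=-v(f,0)-1$ (checking the cases $f\in B_0$ and $f\notin B_0$ separately; in the former case use $\langle\vec{f},\vec{f}^{B_0}\rangle=1$). Adding the two defining equations $\mathbf{u}(f,0)\cdot\mathbf{x}=v(f,0)$ and $\mathbf{u}(f,1)\cdot\mathbf{x}=v(f,1)$ would force $0=-1$, so $\mathcal{H}(f,0;\mathbf{m};\mathbf{y})\cap\mathcal{H}(f,1;\mathbf{m};\mathbf{y})=\emptyset$ in every case (including the degenerate case $\mathbf{u}(f,a)=0$). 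In particular, for each $f\in\Lambda$ at most one value of $a$ can satisfy $\mathbf{v}\in\mathcal{H}(f,a;\mathbf{m};\mathbf{y})$.

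Next I would apply the standard extreme-point characterization for polyhedra in $\mathbb{R}^{\sharp L_0}$: since $\mathbf{v}$ is a $0$-face of $\mathcal{P}(\mathbf{m};\mathbf{y})$, the set of nonzero normals $\{\mathbf{u}(f,a):(f,a)\text{ active at }\mathbf{v},\ \mathbf{u}(f,a)\neq 0\}$ must span $\mathbb{R}^{\sharp L_0}$ (otherwise one could move $\mathbf{v}$ along a direction in the common kernel while preserving all active constraints, contradicting extremality). By the first step, each $f\in\Lambda$ contributes at most one vector to this collection, so any basis drawn from it consists of $\sharp L_0$ vectors indexed by $\sharp L_0$ distinct elements of $\Lambda$. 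Taking $L'$ to be that set of indices and $a_f$ the label making the constraint active produces linearly independent normals $\mathbf{u}(f,a_f)$ for $f\in L'$, and hence $\bigcap_{f\in L'}\mathcal{H}(f,a_f;\mathbf{m};\mathbf{y})$ is a single point of $\mathbb{R}^{\sharp L_0}$ containing $\mathbf{v}$; namely $\{\mathbf{v}\}$.

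The main subtlety I anticipate concerns degenerate indices $f\in B_0\cap\widetilde{\Lambda}$, for which Lemma \ref{lm:locfin} forces $\vec{f}^{B_0}$ to be orthogonal to every $\vec{g}$ with $g\in L_0$, so that $\mathbf{u}(f,a)=0$. Such ``hyperplanes'' contribute no normal to the spanning argument and are simply excluded when forming $L'$. This works as long as the remaining non-degenerate active constraints at $\mathbf{v}$ already span $\mathbb{R}^{\sharp L_0}$, which is precisely the content of the extreme-point criterion applied to the non-trivial constraints defining $\mathcal{P}(\mathbf{m};\mathbf{y})$; the degenerate ones are either vacuous (if $v(f,a)=0$) and hence do not enter the effective description of $\mathcal{P}$, or contradictory (if $v(f,a)\neq 0$), in which case $\mathcal{P}(\mathbf{m};\mathbf{y})=\emptyset$ and the lemma is vacuous.
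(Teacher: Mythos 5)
Your argument is correct, and its logical skeleton is the same as the paper's: a vertex is cut out by $\sharp L_0$ of the defining hyperplanes, and since $\mathcal{H}(f,0;\mathbf{m};\mathbf{y})$ and $\mathcal{H}(f,1;\mathbf{m};\mathbf{y})$ are parallel and disjoint, those hyperplanes must be indexed by $\sharp L_0$ distinct elements of $\Lambda$. The difference is in how the first fact is justified: the paper simply invokes Proposition \ref{prop:k_face_2} (quoted from \cite[Proposition 2.8]{KM3}), whereas you re-derive it from the standard extreme-point criterion (the active constraint normals at a vertex must span $\mathbb{R}^{\sharp L_0}$, so one extracts a basis of $\sharp L_0$ of them and the corresponding hyperplanes meet in exactly $\{\mathbf{v}\}$). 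Your explicit computation $\mathbf{u}(f,1)=-\mathbf{u}(f,0)$, $v(f,1)=-v(f,0)-1$ is a correct verification of the parallelism/disjointness that the paper asserts without calculation, and your separate treatment of the degenerate indices $f\in B_0\cap\widetilde{\Lambda}$ with $\mathbf{u}(f,a)=0$ (which contribute no normal and are either vacuous or make $\mathcal{P}(\mathbf{m};\mathbf{y})$ empty) fills a small gap that the paper's citation of Proposition \ref{prop:k_face_2} passes over silently. What your route buys is self-containedness and robustness (it does not require $\mathcal{P}(\mathbf{m};\mathbf{y})$ to be full-dimensional); what the paper's route buys is brevity.
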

\begin{proof}
By Proposition \ref{prop:k_face_2},
we see that any vertex of $\mathcal{P}(\mathbf{m};\mathbf{y})$ is obtained as the intersection of 
$(\sharp L_0)$ hyperplanes.
Since for $f\in \Lambda$,
two hyperplanes $\mathcal{H}(f,a;\mathbf{m};\mathbf{y})$ ($a=0,1$)
are parallel and hence their intersection is empty,
we see that a vertex must be of the form \eqref{eq:vert_form}.
\end{proof}

Since $\sharp L'=\sharp\Lambda-r$, we have $B=\Lambda\setminus L'\in\mathscr{B}'$.
Therefore Lemma \ref{lm:vert_form} implies that any vertex of $\mathcal{P}(\mathbf{m};\mathbf{y})$
determines an element $(B,A)\in\mathscr{W}'$.    The next lemma is a kind of converse assertion.

\begin{lemma}
\label{lm:cond_inter_uniq}
Let $(B,A)\in\mathscr{W}'$ and $L'=\Lambda\setminus B$.
The set
\begin{equation}
\label{eq:def_p}
  \bigcap_{g\in L'}\mathcal{H}(g,a_g;\mathbf{m};\mathbf{y})
\end{equation}
consists of only one point, which we denote by $\mathbf{p}(\mathbf{m};\mathbf{y};W)$,
if and only if $W=(B,A)\in\mathscr{W}$.
\end{lemma}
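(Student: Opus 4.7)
The plan is to reduce Lemma \ref{lm:cond_inter_uniq} to a linear-algebra criterion on the normal vectors $\{\mathbf{u}(g,a_g)\}_{g\in L'}$. Because $\sharp L' = \sharp\Lambda - r = \sharp L_0$, the intersection in \eqref{eq:def_p} is defined by $\sharp L_0$ linear equations in $\mathbb{R}^{\sharp L_0}$, so it consists of a single point if and only if the coefficient matrix (whose rows are the $\mathbf{u}(g,a_g)$) is invertible. The signs $(-1)^{a_g}$ and $(-1)^{1-a_g}$ do not affect this rank condition, so the choice $A\in\mathscr{A}$ is immaterial and the lemma reduces to showing that $\{\mathbf{u}(g,a_g)\}_{g\in L'}$ is linearly independent if and only if $B\in\mathscr{B}$.

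First I would decompose $L' = (L'\cap L_0)\sqcup(L'\cap B_0)$, observing $L'\cap B_0 = B_0\setminus B$ and $L_0\setminus L' = L_0\cap B$, both of cardinality $r-\sharp(B_0\cap B)$. For $g\in L'\cap L_0$ the vector $\mathbf{u}(g,a_g)$ is proportional to the standard basis vector $\mathbf{e}_g\in\mathbb{R}^{\sharp L_0}$, yielding $\sharp(L'\cap L_0)$ linearly independent normals that span the coordinate subspace indexed by $L'\cap L_0$. Hence the full system of normals is linearly independent if and only if, after projecting the remaining normals $\mathbf{u}(g,a_g)$ for $g\in B_0\setminus B$ onto the complementary coordinates indexed by $L_0\cap B$, one obtains an invertible square matrix
\begin{equation*}
M = \bigl(\langle\vec{h},\vec{g}^{B_0}\rangle\bigr)_{h\in L_0\cap B,\ g\in B_0\setminus B}
\end{equation*}
of size $\sharp(L_0\cap B) = \sharp(B_0\setminus B)$.

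To interpret $M$, I would expand each $\vec{h}$ with $h\in L_0\cap B$ in the basis $\vec{B}_0$ as $\vec{h} = \sum_{g\in B_0}\langle\vec{h},\vec{g}^{B_0}\rangle\vec{g}$ and pass to the quotient $V/\mathrm{span}\{\vec{f}:f\in B_0\cap B\}$, whose natural basis consists of the classes of $\vec{g}$ for $g\in B_0\setminus B$. Then the $h$-th row of $M$ is precisely the coordinate vector of the image of $\vec{h}$ in this quotient. Consequently $M$ is invertible if and only if the images of $\{\vec{h}\}_{h\in L_0\cap B}$ are linearly independent in the quotient, which combined with the automatic linear independence of $\{\vec{f}\}_{f\in B_0\cap B}\subset\vec{B}_0$ is equivalent to linear independence of $\vec{B} = \{\vec{f}\}_{f\in B_0\cap B}\cup\{\vec{h}\}_{h\in L_0\cap B}$ in $V$, that is, to $B\in\mathscr{B}$.

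The argument itself is elementary linear algebra; the main obstacle is purely bookkeeping, namely keeping straight the three simultaneous partitions of $\Lambda$ (by $B_0/L_0$, by $B/L'$, and by their pairwise intersections) and verifying that the relevant cardinalities match so that the square matrix $M$ is well defined and the above quotient description of its rows is correct.
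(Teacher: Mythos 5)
Your proof is correct and takes essentially the same route as the paper: both reduce the intersection \eqref{eq:def_p} to a square linear system (after using the equations $x_f=a_f$ for $f\in L_0\setminus B$) whose unique solvability is equivalent to the invertibility of the matrix $(\langle\vec{g},\vec{f}^{B_0}\rangle)$ indexed by $g\in B\setminus B_0$ and $f\in B_0\setminus B$, and both observe that the sign factors $(-1)^{a_g}$ are irrelevant. The only difference is cosmetic: the paper identifies invertibility of this matrix with $B\in\mathscr{B}$ via the block-determinant identity \eqref{eq:detgf}, whereas you use an equivalent quotient-space argument.
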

\begin{proof}
Let $B=\{f_1,\ldots,f_r\}\in\mathscr{B}'$ and $a_f\in\{0,1\}$ for $f\in L'=\Lambda\setminus B$. 
Consider the intersection of $(\sharp\Lambda-r)$ hyperplanes \eqref{eq:def_p}.
Then this set consists of the solutions of the system of the $(\sharp L_0)$ linear equations
\begin{equation}
\label{eq:eq_vert}
  \begin{cases}
    \sum_{g\in L_0}x_g\langle\vec{g},\vec{f}^{B_0}\rangle
    =
\langle \mathbf{y}+\mathbf{m}-a_f\vec{f},\vec{f}^{B_0}\rangle
\qquad&\text{for }f\in B_0\setminus B,\\
    x_f=a_f\qquad&\text{for }f\in L_0\setminus B.
  \end{cases}
\end{equation}
The system of the linear equations \eqref{eq:eq_vert} has a unique solution
if and only if
\begin{equation}
\label{eq:det_nz}
\det
(\langle\vec{g},\vec{f}^{B_0}\rangle)_{f\in B_0\setminus B}^{g\in B\setminus B_0}
\neq0,
\end{equation}
and hence if and only if
$B\in\mathscr{B}$,
since
\begin{equation}
\label{eq:detgf}
  \begin{split}
\Biggl\lvert
\det
\begin{pmatrix}
(\langle \vec{g},\vec{f}^{B_0}\rangle)_{f\in B_0\setminus B}^{g\in B\setminus B_0}&
(\langle \vec{g},\vec{f}^{B_0}\rangle)_{f\in B_0\cap B}^{g\in B\setminus B_0}\\
(\langle \vec{g},\vec{f}^{B_0}\rangle)_{f\in B_0\setminus B}^{g\in B\cap B_0}&
(\langle \vec{g},\vec{f}^{B_0}\rangle)_{f\in B_0\cap B}^{g\in B\cap B_0}
\end{pmatrix}
\Biggr\rvert
&=
\Biggl\lvert
\det
\begin{pmatrix}
(\langle\vec{g},\vec{f}^{B_0}\rangle)_{f\in B_0\setminus B}^{g\in B\setminus B_0}
 & * \\
0 & E_{\sharp(B\cap B_0)}
\end{pmatrix}
\Biggr\rvert
\\
&=
\bigl\lvert
\det
(\langle\vec{g},\vec{f}^{B_0}\rangle)_{f\in B_0\setminus B}^{g\in B\setminus B_0}
\bigr\rvert,
\end{split}
\end{equation}
where $E_p$ is the $p\times p$ identity matrix.  
\end{proof}

\begin{lemma}
\label{lm:char_vert2}
Let $W=(B,A)\in\mathscr{W}$.
The point
$\mathbf{p}(\mathbf{m};\mathbf{y};W)$ is a vertex of $\mathcal{P}(\mathbf{m};\mathbf{y})$
if and only if
\begin{equation}
\label{eq:def_v}
\mathbf{p}(\mathbf{m};\mathbf{y};W)\in
\bigcap_{\substack{f\in B\\a\in\{0,1\}}}\mathcal{H}^+(f,a;\mathbf{m};\mathbf{y}).
\end{equation}
\end{lemma}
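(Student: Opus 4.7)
My plan is to reduce the question of whether $\mathbf{p}(\mathbf{m};\mathbf{y};W)$ belongs to $\mathcal{P}(\mathbf{m};\mathbf{y})$ to exactly the condition on the right-hand side of \eqref{eq:def_v}, by checking that the half-space constraints indexed by the complementary set $L':=\Lambda\setminus B$ are automatically satisfied. Once this reduction is in place, both implications will follow from the standard face description of an H-polytope together with Lemma \ref{lm:cond_inter_uniq} and Proposition \ref{prop:k_face_1}.

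First I would fix $g\in L'$ and verify $\mathbf{p}(\mathbf{m};\mathbf{y};W)\in\mathcal{H}^+(g,0;\mathbf{m};\mathbf{y})\cap\mathcal{H}^+(g,1;\mathbf{m};\mathbf{y})$ in two cases. If $g\in L_0\cap L'$, then the defining system \eqref{eq:eq_vert} forces $p_g=a_g\in\{0,1\}$, which trivially gives $p_g\geq 0$ and $p_g\leq 1$, i.e., both half-space conditions. If instead $g\in B_0\cap L'$, the defining system says that $\mathbf{p}$ saturates the hyperplane $\mathcal{H}(g,a_g;\mathbf{m};\mathbf{y})$, namely
\begin{equation}
\sum_{h\in L_0}p_h\langle\vec{h},\vec{g}^{B_0}\rangle=\langle\mathbf{y}+\mathbf{m}-a_g\vec{g},\vec{g}^{B_0}\rangle.
\end{equation}
Plugging this into the defining inequality for $\mathcal{H}^+(g,1-a_g;\mathbf{m};\mathbf{y})$ and using $\langle\vec{g},\vec{g}^{B_0}\rangle=1$ reduces the check to the one-line comparison $(-1)^{a_g}(-a_g)\geq (-1)^{a_g}(-(1-a_g))$, which holds for both $a_g=0$ and $a_g=1$.

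Combined with the description \eqref{eq:P_ineq}, this reduction shows that $\mathbf{p}(\mathbf{m};\mathbf{y};W)\in\mathcal{P}(\mathbf{m};\mathbf{y})$ is equivalent to $\mathbf{p}(\mathbf{m};\mathbf{y};W)\in\bigcap_{f\in B,\,a\in\{0,1\}}\mathcal{H}^+(f,a;\mathbf{m};\mathbf{y})$. The \emph{only if} direction of the lemma is then immediate, since any vertex lies in the polytope. For the \emph{if} direction, under the assumption we know $\mathbf{p}(\mathbf{m};\mathbf{y};W)\in\mathcal{P}(\mathbf{m};\mathbf{y})$ and also, by construction, $\mathbf{p}(\mathbf{m};\mathbf{y};W)\in\bigcap_{g\in L'}\mathcal{H}(g,a_g;\mathbf{m};\mathbf{y})$. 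Proposition \ref{prop:k_face_1} then identifies $\mathcal{P}(\mathbf{m};\mathbf{y})\cap\bigcap_{g\in L'}\mathcal{H}(g,a_g;\mathbf{m};\mathbf{y})$ as a face of $\mathcal{P}(\mathbf{m};\mathbf{y})$, and by Lemma \ref{lm:cond_inter_uniq} this intersection is the single point $\mathbf{p}(\mathbf{m};\mathbf{y};W)$; hence it is a $0$-face, i.e., a vertex.

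I do not expect a serious obstacle here; the only delicate point is the sign bookkeeping governed by the factors $(-1)^{1-a}$ and $(-1)^a$ in the definitions of $\mathbf{u}(f,a)$ and $v(f,a;\mathbf{m};\mathbf{y})$, which must be unraveled carefully when checking the automatic half-space memberships for $g\in B_0\cap L'$.
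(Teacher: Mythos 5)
Your proof is correct and follows essentially the same route as the paper's: both reduce to showing that the $\sharp L_0$ pairs of half-space conditions indexed by $\Lambda\setminus B$ are automatically satisfied at $\mathbf{p}(\mathbf{m};\mathbf{y};W)$, and then combine Proposition \ref{prop:k_face_1} with Lemma \ref{lm:cond_inter_uniq} to identify the point as a $0$-face. You are merely more explicit than the paper about the sign bookkeeping in the automatic memberships and about why the resulting face is zero-dimensional; both of those checks are correct.
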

\begin{proof}
By \eqref{eq:P_ineq}, we see that $\mathcal{P}(\mathbf{m};\mathbf{y})$ is defined 
by $(\sharp\Lambda)$ pairs of inequalities.
By Proposition \ref{prop:k_face_1}, 
the point $\mathbf{p}(\mathbf{m};\mathbf{y};W)$ is a vertex of $\mathcal{P}(\mathbf{m};\mathbf{y})$
if and only if all of these inequalities hold.    
We see that $(\sharp L_0)$ pairs among them are automatically satisfied,
because
\begin{equation}
\label{8-14}
\{\mathbf{p}(\mathbf{m};\mathbf{y};W)\}=
  \bigcap_{g\in \Lambda\setminus B}\mathcal{H}(g,a_g;\mathbf{m};\mathbf{y})
\subset
  \bigcap_{\substack{g\in \Lambda\setminus B\\a\in\{0,1\}}}\mathcal{H}^+(g,a;\mathbf{m};\mathbf{y}).
\end{equation}
Therefore 
$\mathbf{p}(\mathbf{m};\mathbf{y};W)$
is a vertex of $\mathcal{P}(\mathbf{m};\mathbf{y})$ 
if and only if the remaining $r$ pairs of
inequalities are satisfied, which implies
\eqref{eq:def_v}.
\end{proof}

\begin{lemma}
\label{lm:simplicity}
If $\mathbf{y}\in V\setminus\mathfrak{H}_{\mathscr{R}}$ and $\mathcal{P}(\mathbf{m};\mathbf{y})$ is not empty, then 
$\mathcal{P}(\mathbf{m};\mathbf{y})$ is a simple polytope.
\end{lemma}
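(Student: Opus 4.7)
Since $\mathcal{P}(\mathbf{m};\mathbf{y})\subset\mathbb{R}^{\sharp L_0}$ is cut out by the half-spaces $\mathcal{H}^+(f,a;\mathbf{m};\mathbf{y})$ for $(f,a)\in\Lambda\times\{0,1\}$, the polytope is simple iff no vertex lies on more than $\sharp L_0$ of the hyperplanes $\mathcal{H}(f,a;\mathbf{m};\mathbf{y})$. Fix a vertex $\mathbf{p}$; by Lemmas \ref{lm:vert_form} and \ref{lm:cond_inter_uniq} there exists $W=(B,A)\in\mathscr{W}$ with $\mathbf{p}=\mathbf{p}(\mathbf{m};\mathbf{y};W)$, and the $\sharp L_0$ hyperplanes indexed by $L':=\Lambda\setminus B$ pass through $\mathbf{p}$. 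Suppose an additional hyperplane $\mathcal{H}(f^*,a^*;\mathbf{m};\mathbf{y})$ passes through $\mathbf{p}$; since the pair of hyperplanes for a fixed $f$ is parallel and disjoint, the case $f^*\in L'$ is excluded, so $f^*\in B$. The plan is to show that this extra containment forces $\mathbf{y}\in\mathfrak{H}_R+\mathbb{Z}^r\subset\mathfrak{H}_{\mathscr{R}}$ for $R:=B\setminus\{f^*\}\in\mathscr{R}$, contradicting the hypothesis.

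To translate the extra constraint, write $\mathbf{z}=\mathbf{y}+\mathbf{m}-\sum_{g\in L_0}x_g\vec{g}$ at $\mathbf{p}$ and set $\tilde{\mathbf{y}}=\mathbf{y}+\mathbf{m}-\sum_{g\in L_0\setminus B}a_g\vec{g}$, so that $\tilde{\mathbf{y}}\equiv\mathbf{y}\pmod{\mathbb{Z}^r}$ and $\mathbf{z}=\tilde{\mathbf{y}}-\sum_{g\in B\setminus B_0}x_g\vec{g}$. The defining system then reads $\langle\mathbf{z},\vec{f}^{B_0}\rangle=a_f$ for $f\in B_0\setminus B$. The extra constraint is $\langle\mathbf{z},\vec{f^*}^{B_0}\rangle=a^*$ when $f^*\in B\cap B_0$, and $x_{f^*}=a^*$ when $f^*\in B\setminus B_0$; in the latter case I absorb $a^*\vec{f^*}$ into $\tilde{\mathbf{y}}$, replacing it by $\tilde{\mathbf{y}}-a^*\vec{f^*}$ (still congruent to $\mathbf{y}$ mod $\mathbb{Z}^r$) and dropping the $x_{f^*}$-summand.

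In both subcases the admissible $\mathbf{z}\in V$ must lie in the intersection of an affine subspace $A_1$ cut out by the linear conditions and the affine subspace through $\tilde{\mathbf{y}}$ swept out by the remaining $\vec{g}$'s. The direction of $A_1$ is $\mathrm{span}\{\vec f:f\in B_0\cap B\setminus\{f^*\}\}$ in the first subcase and $\mathrm{span}\{\vec f:f\in B_0\cap B\}$ in the second, while the translating direction is $\mathrm{span}\{\vec g:g\in B\setminus B_0\}$ and $\mathrm{span}\{\vec g:g\in B\setminus B_0\setminus\{f^*\}\}$, respectively. Since $\vec B$ is a basis of $V$, in either subcase these two subspaces intersect trivially and their direct sum equals $\mathrm{span}(\vec B\setminus\{\vec{f^*}\})=\mathfrak{H}_R$, of dimension $r-1$. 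Choosing the integer basepoint $\mathbf{z}_0=\sum_{f\in B_0\setminus B}a_f\vec{f}$ of $A_1$ (plus $a^*\vec{f^*}$ in the first subcase), nonemptiness of the intersection amounts to $\tilde{\mathbf{y}}\in\mathbf{z}_0+\mathfrak{H}_R$, giving $\mathbf{y}\in\mathfrak{H}_R+\mathbb{Z}^r\subset\mathfrak{H}_{\mathscr{R}}$, contradicting the hypothesis. The main technical obstacle is the case-by-case bookkeeping to see that in each subcase the two direction subspaces split as a direct sum whose total is exactly $\mathfrak{H}_R$; this is what identifies the hyperplane of $\mathfrak{H}_{\mathscr{R}}$ catching $\mathbf{y}$.
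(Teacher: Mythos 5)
Your proof is correct and follows the same strategy as the paper's: reduce simplicity to showing that an extra incidence $\mathbf{p}(\mathbf{m};\mathbf{y};W)\in\mathcal{H}(f^*,a^*;\mathbf{m};\mathbf{y})$ with $f^*\in B$ forces $\mathbf{y}\in\sum_{g\in B\setminus\{f^*\}}\mathbb{R}\vec{g}+\mathbb{Z}^r\subset\mathfrak{H}_{\mathscr{R}}$. The only difference is in execution: the paper obtains this from the vanishing of the determinant of an augmented $(p+1)\times(p+1)$ matrix expressing consistency of the overdetermined system, whereas you read it off as the nonemptiness condition for the intersection of two affine subspaces whose direction spaces are complementary spans of subsets of the basis $\vec{B}$ summing to $\mathfrak{H}_{B\setminus\{f^*\}}$ --- the same linear algebra in a cleaner geometric package (and note that only the direction ``at most $\sharp L_0$ active hyperplanes at each vertex implies simple'' of your opening ``iff'' is actually needed or used).
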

\begin{proof}
By Lemmas \ref{lm:vert_form} and \ref{lm:cond_inter_uniq},
it is sufficient to check the following claim:
{\it If for $W=(B,A)\in\mathscr{W}$, the point $\mathbf{p}(\mathbf{m};\mathbf{y};W)$
lies on some other hyperplanes of the form \eqref{8-3} than the defining hyperplanes
$\{\mathcal{H}(g,a_g;\mathbf{m};\mathbf{y})\}_{g\in \Lambda\setminus B}$,
then $\mathbf{y}\in\mathfrak{H}_{\mathscr{R}}$.}
Because this claim implies that if $\mathbf{y}\in V\setminus\mathfrak{H}_{\mathscr{R}}$,
we can uniquely determine the $(\sharp L_0)$ hyperplanes
on which the point
$\mathbf{p}(\mathbf{m};\mathbf{y};W)$ lies, and hence
it implies the simplicity of the polytope $\mathcal{P}(\mathbf{m};\mathbf{y})$.

Since
\begin{equation}
  \mathbf{p}(\mathbf{m};\mathbf{y};W)\not\in
  \bigcup_{g\in \Lambda\setminus B}\mathcal{H}(g,1-a_g;\mathbf{m};\mathbf{y})
\end{equation}
always holds,
it is sufficient to check that
\begin{equation}
\label{eq:H_y_W}
 \mathbf{p}(\mathbf{m};\mathbf{y};W)\in
  \bigcup_{\substack{f\in B\\a\in\{0,1\}}}\mathcal{H}(f,a;\mathbf{m};\mathbf{y})
\end{equation}
implies  $\mathbf{y}\in\mathfrak{H}_{\mathscr{R}}$.

First we show the claim when
\begin{equation}
\label{eq:v_on_H} 
\mathbf{p}(\mathbf{m};\mathbf{y};W)\in\mathcal{H}(h,a_h;\mathbf{m};\mathbf{y})
\end{equation}
holds for some $h\in B\cap B_0$ and $a_h\in\{0,1\}$.
For $\mathbf{x}=\mathbf{p}(\mathbf{m};\mathbf{y};W)$,
condition \eqref{eq:v_on_H} is equivalent to 
\begin{equation}
  \label{eq:v_on_H_equiv}
  \sum_{g\in L_0}x_g\langle\vec{g},\vec{h}^{B_0}\rangle
  =
\langle \mathbf{y}+\mathbf{m}-a_h\vec{h},\vec{h}^{B_0}\rangle.
\end{equation}
Let $p=\sharp(B_0\setminus B)=\sharp(B\setminus B_0)$.
Divide the left-hand side of (the first formula of) \eqref{eq:eq_vert} and \eqref{eq:v_on_H_equiv}
into two parts according to the conditions $g\in B\setminus B_0$ and $g\in L_0\setminus B$
(with noting $L_0=(B\setminus B_0)\cup(L_0\setminus B)$).
Then we obtain an overdetermined system with the $p$ variables $x_g$ for $g\in B\setminus B_0$
and the $(p+1)$ equations
\begin{equation}
  \sum_{g\in B\setminus B_0}x_{g}\langle \vec{g},\vec{f}^{B_0}\rangle
  =
\langle \mathbf{y}+\mathbf{m}-a_f\vec{f},\vec{f}^{B_0}\rangle+c_f
\end{equation}
for $f\in (B_0\setminus B)\cup \{h\}$,
where
\begin{equation}
  c_f=
  -\sum_{g\in L_0\setminus B}a_g\langle\vec{g},\vec{f}^{B_0}\rangle.
\end{equation}
Hence we have
\begin{equation}
  \begin{pmatrix}
  (x_{g})_{g\in B\setminus B_0} & -1
  \end{pmatrix}
  M(\mathbf{y})  =
  \begin{pmatrix}
    0 &\cdots& 0
  \end{pmatrix},
\end{equation}
where $(x_{g})_{g\in B\setminus B_0}$ is a row vector and $M(\mathbf{y})$ is a
$(p+1)\times(p+1)$ matrix defined by
\begin{equation}
  M(\mathbf{y})  =
  \begin{pmatrix}
  (\langle\vec{g},\vec{f}^{B_0}\rangle)^{g\in B\setminus B_0}_{f\in (B_0\setminus B)\cup \{h\}}
  \\
  (
\langle \mathbf{y}+\mathbf{m}-a_f\vec{f},\vec{f}^{B_0}\rangle+c_f
)_{f\in (B_0\setminus B)\cup \{h\}}
  \end{pmatrix}.
\end{equation}
As the consistency for these equations, we get $\det M(\mathbf{y})=0$.
We may rewrite 
\begin{equation}
\langle \mathbf{y}+\mathbf{m}-a_f\vec{f},\vec{f}^{B_0}\rangle+c_f
=
  \Bigl\langle \mathbf{y}+
\mathbf{m}
  -\sum_{g\in (\Lambda\setminus B)\cup\{h\}}a_g\vec{g},\vec{f}^{B_0}\Bigr\rangle,
\end{equation}
because
\begin{equation*}
\sum_{g\in (\Lambda\setminus B)\cup\{h\}}a_g\vec{g}-
\left(\sum_{g\in L_0\setminus B}a_g\vec{g}+a_f\vec{f}\right)
=\sum_{\substack{g\in (B_0\setminus B)\cup\{h\}\\ g\neq f}}a_g\vec{g}
\end{equation*}
and $\langle\vec{g},\vec{f}^{B_0}\rangle=0$ for all $\vec{g}$ on the right-hand side.

Since the row vectors 
$(\langle\vec{g},\vec{f}^{B_0}\rangle)_{f\in (B_0\setminus B)\cup \{h\}}$
for $g\in B\setminus B_0$ are linearly independent, 
$\det M(\mathbf{y})=0$ implies that the last row vector is written as a linear combination of the other row vectors.   That is,
for $f\in (B_0\setminus B)\cup \{h\}$ we have
\begin{equation}
    \Bigl\langle \mathbf{y}+
\mathbf{m}
  -\sum_{g\in (\Lambda\setminus B)\cup\{h\}}a_g\vec{g},\vec{f}^{B_0}\Bigr\rangle=\sum_{g\in B\setminus B_0} q_g \langle\vec{g},\vec{f}^{B_0}\rangle,
\end{equation}
with some $q_g\in\mathbb{R}$, and so
\begin{equation}
    \Bigl\langle \mathbf{y}+
\mathbf{m}
  -\sum_{g\in (\Lambda\setminus B)\cup\{h\}}a_g\vec{g}-\sum_{g\in B\setminus B_0} q_g \vec{g},\vec{f}^{B_0}\Bigr\rangle=0.
\end{equation}
Vectors which are orthogonal to all $\vec{f}^{B_0}$ 
($f\in (B_0\setminus B)\cup\{h\}$) are spanned by $\vec{g}$
($g\in (B\cap B_0)\setminus\{h\}$).    Therefore, 
since $\mathbf{m}
  -\sum_{g\in (\Lambda\setminus B)\cup\{h\}}a_g\vec{g}\in\mathbb{Z}^r$,
we have
\begin{equation}
  \begin{split}
    \mathbf{y}
    \in
    \sum_{g\in B\setminus B_0}\mathbb{R}\vec{g}
    +
    \mathbb{Z}^r
    +
    \sum_{g\in (B\cap B_0)\setminus \{h\}}\mathbb{R}\vec{g}
    =
    \sum_{g\in B\setminus\{h\}}\mathbb{R}\vec{g}
    +
    \mathbb{Z}^r
        \subset\mathfrak{H}_{\mathscr{R}},
  \end{split}
\end{equation}
which implies the desired claim.

Next we consider 
the condition 
$\mathbf{p}(\mathbf{m};\mathbf{y};W)\in\mathcal{H}(h,a_h;\mathbf{m};\mathbf{y})$
for some $h\in B\setminus B_0$ and $a_h\in\{0,1\}$.
Then similarly as above, we see that $\mathbf{y}$ lies in $\mathfrak{H}_{\mathscr{R}}$ because
\begin{equation}
\det
  \begin{pmatrix}
  (\langle\vec{g},\vec{f}^{B_0}\rangle)^{g\in B\setminus (B_0\cup\{h\})}_{f\in B_0\setminus B}
  \\
  (
\langle \mathbf{y}+\mathbf{m}-a_f\vec{f},\vec{f}^{B_0}\rangle+d_f
)_{f\in B_0\setminus B}
  \end{pmatrix}
  =0,
\end{equation}
where
\begin{equation}
  d_f=-\sum_{g\in (L_0\setminus B)\cup\{h\}}a_g\langle\vec{g},\vec{f}^{B_0}\rangle.
\end{equation}
This completes the proof of the lemma.
\end{proof}

\begin{remark}
\label{rem:fig2}
We draw the picture of a vertex of
$\mathcal{P}(\mathbf{m};\mathbf{y})$ in the 
same setting as in Figure \ref{fig:pmy1}.
For example,
for $W=(B,A)\in\mathscr{B}$ with
$B=\{f_2,h\}\in\mathscr{B}$ and
$A=(a_{f_1},a_g)=(1,0)$ ($\Lambda\setminus B=\{f_1,g\}$), the associated vertex is 
$\mathbf{p}(\mathbf{m};\mathbf{y};W)=\mathbf{p}(\mathbf{m};\mathbf{y};(\{f_2,h\},(1,0)))$, which is uniquely determined by the hyperplanes
$\mathcal{H}(f_1,1;\mathbf{m};\mathbf{y})$
and
$\mathcal{H}(g,0;\mathbf{m};\mathbf{y})$. See Figure \ref{fig:pmy3}.
  \begin{figure}[h]
    \centering
      \includegraphics[scale=0.8]{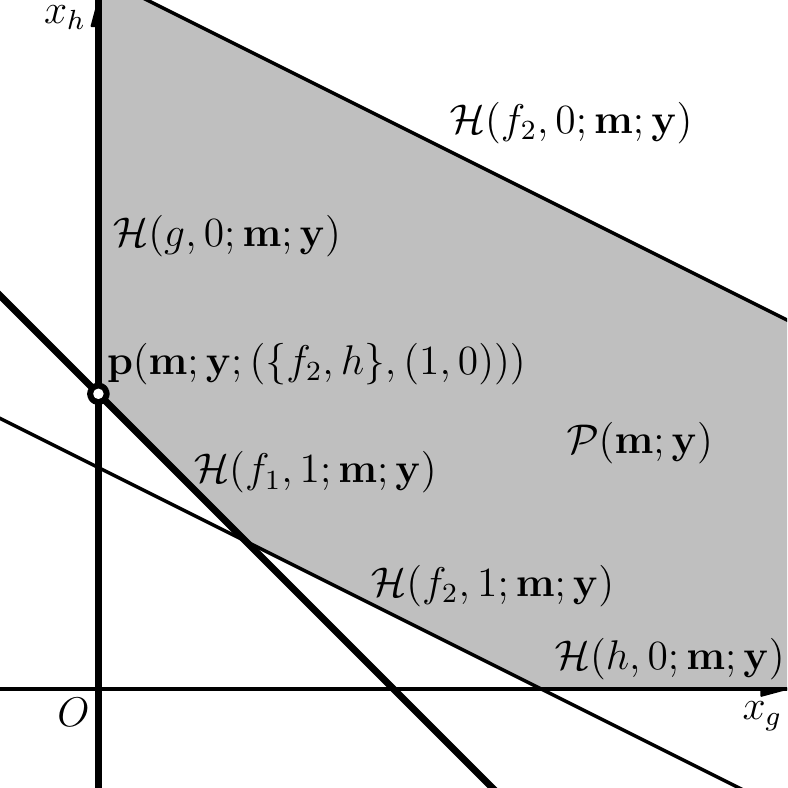}
      \caption{vertices and hyperplanes}
      \label{fig:pmy3}
    \end{figure}
\end{remark}

\begin{lemma}
\label{lm:exponential}
Let $\mathbf{y}\in V\setminus\mathfrak{H}_{\mathscr{R}}$ and $W\in\mathscr{W}$.
Then we have
\begin{multline}
  \label{eq:exp_vert}
  \sum_{f\in B_0}
  (t_f-2\pi\sqrt{-1} \const{f})
  \langle \mathbf{y}+\mathbf{m},\vec{f}^{B_0}\rangle+
  \mathbf{t}^*\cdot\mathbf{p}(\mathbf{m};\mathbf{y};W)
  \\
  =
  \sum_{g\in \Lambda\setminus B}
  (t_g-2\pi\sqrt{-1}\const{g}) a_g
  +\sum_{f\in B}
  (t_f-2\pi\sqrt{-1}\const{f})
  \Bigl\langle 
  \mathbf{y}+\mathbf{m}
  -\sum_{g\in \Lambda\setminus B}a_g\vec{g},\vec{f}^{B}
  \Bigr\rangle,
\end{multline}
where $\mathbf{t}^*=(t_g^*)_{g\in L_0}$ with $t_g^*$ defined by \eqref{def_t_g^*}.
\end{lemma}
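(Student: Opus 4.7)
My strategy is to simplify LHS step by step using (i) the defining equations of the vertex $\mathbf{p}(\mathbf{m};\mathbf{y};W)=(x_g)_{g\in L_0}$ and (ii) the dual-basis identities $\sum_{f\in B_0}s_f\langle\vec{g},\vec{f}^{B_0}\rangle = s_g$ for $g\in B_0$ and $s_g-t_g^*$ for $g\in L_0$, where for brevity I write $s_f:=t_f-2\pi\sqrt{-1}\const{f}$; the second case of (ii) is immediate from \eqref{def_t_g^*}. The common reference point will be $\mathbf{w}:=\mathbf{y}+\mathbf{m}-\sum_{g\in\Lambda\setminus B}a_g\vec{g}$.

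First I would expand $\mathbf{t}^*\cdot\mathbf{p}$ via \eqref{def_t_g^*} and use $x_g=a_g$ for $g\in L_0\setminus B$ (one of the defining conditions of the vertex) to rewrite the LHS as
\[
\sum_{f\in B_0}s_f\langle\mathbf{z},\vec{f}^{B_0}\rangle + \sum_{g\in L_0}s_g x_g,\quad\text{where }\mathbf{z}=\mathbf{w}+\sum_{g\in B_0\setminus B}a_g\vec{g}-\sum_{g\in B\setminus B_0}x_g\vec{g}.
\]
Applying (ii) term by term, and splitting $\sum_{g\in L_0}s_g x_g=\sum_{g\in L_0\setminus B}s_ga_g+\sum_{g\in B\setminus B_0}s_gx_g$, the bare $s_gx_g$-contributions cancel and one obtains
\[
\text{LHS}=\sum_{g\in\Lambda\setminus B}s_g a_g + \sum_{f\in B_0}s_f\langle\mathbf{w},\vec{f}^{B_0}\rangle + \sum_{g\in B\setminus B_0}t_g^*x_g.
\]

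The key step, and the main obstacle, is the identification $x_g=\langle\mathbf{w},\vec{g}^B\rangle$ for $g\in B\setminus B_0$. The remaining vertex equations read $\sum_{g\in B\setminus B_0}x_g\langle\vec{g},\vec{f}^{B_0}\rangle=\langle\mathbf{w},\vec{f}^{B_0}\rangle$ for $f\in B_0\setminus B$; on the other hand, expanding $\mathbf{w}=\sum_{h\in B}\langle\mathbf{w},\vec{h}^B\rangle\vec{h}$ and pairing with $\vec{f}^{B_0}$ yields the same expression for $\langle\mathbf{w},\vec{f}^{B_0}\rangle$, since the contributions from $h\in B_0\cap B$ drop out ($h\in B_0$ with $h\neq f$). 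The non-singularity of the matrix $(\langle\vec{g},\vec{f}^{B_0}\rangle)_{g\in B\setminus B_0,\,f\in B_0\setminus B}$, which holds precisely because $B\in\mathscr{B}$ (cf.\ \eqref{eq:det_nz} and \eqref{eq:detgf}), then forces the claimed identification. This is the only place where the hypothesis $B\in\mathscr{B}$ is used.

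Finally, substituting $x_g=\langle\mathbf{w},\vec{g}^B\rangle$ and applying the same dual-basis expansion once more to rewrite
\[
\sum_{f\in B_0}s_f\langle\mathbf{w},\vec{f}^{B_0}\rangle = \sum_{f\in B_0\cap B}s_f\langle\mathbf{w},\vec{f}^B\rangle + \sum_{h\in B\setminus B_0}(s_h-t_h^*)\langle\mathbf{w},\vec{h}^B\rangle,
\]
the $t_h^*$-terms exactly cancel $\sum_{g\in B\setminus B_0}t_g^*\langle\mathbf{w},\vec{g}^B\rangle$, and what remains is $\sum_{g\in\Lambda\setminus B}s_g a_g + \sum_{f\in B}s_f\langle\mathbf{w},\vec{f}^B\rangle$, which is the RHS.
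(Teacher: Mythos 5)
Your proposal is correct and follows essentially the same route as the paper: the decisive step in both is the identification $x_h=\langle\mathbf{y}+\mathbf{m}-\sum_{g\in\Lambda\setminus B}a_g\vec{g},\vec{h}^{B}\rangle$ for $h\in B\setminus B_0$ (the paper's \eqref{eq:exp_p_sol}), obtained exactly as you do by comparing the reduced vertex equations with the dual-basis expansion and invoking the non-singularity guaranteed by $B\in\mathscr{B}$. The remaining substitutions, including your use of $\sum_{f\in B_0}s_f\langle\vec{h},\vec{f}^{B_0}\rangle=s_h-t_h^*$, are just a tidier bookkeeping of the paper's computation \eqref{eq:tp} via \eqref{eq:mu_i}.
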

\begin{proof}
By \eqref{eq:def_p},
the point $\mathbf{p}(\mathbf{m};\mathbf{y};W)=(x_g)_{g\in L_0}$ satisfies
\begin{equation}
\label{8-30}
  \begin{cases}
    \sum_{g\in L_0}x_g\langle\vec{g},\vec{f}^{B_0}\rangle
    =
\langle \mathbf{y}+\mathbf{m}-a_f\vec{f},\vec{f}^{B_0}\rangle
\qquad&\text{for }f\in B_0\setminus B,\\
    x_f=a_f\qquad&\text{for }f\in L_0\setminus B.
  \end{cases}
\end{equation}
By Lemma \ref{lm:cond_inter_uniq},
the system of these equations has a unique solution.

In the case $f\in B_0\setminus B$, we have
\begin{equation}
\label{eq:x_b}
  \begin{split}
  \sum_{h\in B\setminus B_0}x_{h}\langle\vec{h},\vec{f}^{B_0}\rangle
  &
  =
\langle \mathbf{y}+\mathbf{m}-a_f\vec{f},\vec{f}^{B_0}\rangle
  -\sum_{g\in L_0\setminus B}a_g\langle\vec{g},\vec{f}^{B_0}\rangle
  \\
  &=
  \Bigl\langle 
  \mathbf{y}+\mathbf{m}
  -\sum_{g\in \Lambda\setminus B}a_g\vec{g},
  \vec{f}^{B_0}\Bigr\rangle.
\end{split}
\end{equation}
On the other hand, in the case $f\in B_0\cap B$ we have
\begin{equation}
\label{eq:mu_i}
  \vec{f}^B=\vec{f}^{B_0}-\sum_{h\in B\setminus B_0}\vec{h}^B\langle\vec{h},\vec{f}^{B_0}\rangle.
\end{equation}
In fact, since
\begin{equation}
\label{eq:id}
\mathbf{z}=\sum_{h\in B}\vec{h}^B\langle\vec{h},\mathbf{z}\rangle
\end{equation}
holds for any $\mathbf{z}\in V$, we have
\begin{equation}
  \vec{f}^{B_0}=
\sum_{h\in B\cap B_0}\vec{h}^B\langle\vec{h},\vec{f}^{B_0}\rangle+\sum_{h\in B\setminus B_0}\vec{h}^B\langle\vec{h},\vec{f}^{B_0}\rangle
=
\vec{f}^B+\sum_{h\in B\setminus B_0}\vec{h}^B\langle\vec{h},\vec{f}^{B_0}\rangle.
\end{equation}

Here we note that for $h\in B\setminus B_0$,
\begin{equation}
\label{eq:exp_p_sol}
  x_h=
  \Bigl\langle 
  \mathbf{y}+\mathbf{m}
  -\sum_{g\in \Lambda\setminus B}a_g\vec{g},
  \vec{h}^{B}\Bigr\rangle
\end{equation}
holds.
Because, using \eqref{eq:id}, 
for $f\in B_0\setminus B$ we obtain
\begin{equation}
\label{8-34}
  \begin{split}
    \sum_{h\in B\setminus B_0}
    \Bigl\langle 
    \mathbf{y}+\mathbf{m}
    -\sum_{g\in \Lambda\setminus B}a_g\vec{g},
    \vec{h}^B\Bigr\rangle
    \langle\vec{h},\vec{f}^{B_0}\rangle
    &=
    \sum_{h\in B}
    \Bigl\langle 
    \mathbf{y}+\mathbf{m}
    -\sum_{g\in \Lambda\setminus B}a_g\vec{g},
    \vec{h}^B\Bigr\rangle
    \langle\vec{h},\vec{f}^{B_0}\rangle
    \\
    &=
    \Bigl\langle 
    \mathbf{y}+\mathbf{m}
    -\sum_{g\in \Lambda\setminus B}a_g\vec{g},
    \vec{f}^{B_0}\Bigr\rangle.
  \end{split}
\end{equation}
Comparing \eqref{eq:x_b} with \eqref{8-34}, we obtain \eqref{eq:exp_p_sol} due to the uniqueness of the solution.

Noting $L_0=(L_0\setminus B)\cup(B\setminus B_0)$, we have
\begin{multline}
  \label{eq:tp}
  \sum_{f\in B_0}
  (t_f-2\pi\sqrt{-1} \const{f})
  \langle \mathbf{y}+\mathbf{m},\vec{f}^{B_0}\rangle+
  \mathbf{t}^*\cdot\mathbf{p}(\mathbf{m};\mathbf{y};W)
  \\
  \begin{aligned}
    &=
    \sum_{f\in B_0}
    (t_f-2\pi\sqrt{-1}\const{f})
    \langle \mathbf{y}+\mathbf{m},\vec{f}^{B_0}\rangle
    +
    \sum_{g\in L_0} 
    \Bigl((t_g-2\pi\sqrt{-1}\const{g})
    -\sum_{f\in B_0}(t_f-2\pi\sqrt{-1}\const{f})\langle\vec{g},\vec{f}^{B_0}\rangle
    \Bigr)x_g
    \\
    &=
    \sum_{g\in L_0}(t_g-2\pi\sqrt{-1}\const{g}) x_g
    +\sum_{f\in B_0}
    (t_f-2\pi\sqrt{-1}\const{f})
    \Bigl(\langle\mathbf{y}+\mathbf{m},\vec{f}^{B_0}\rangle
    -\sum_{g\in L_0}x_g\langle\vec{g},\vec{f}^{B_0}\rangle\Bigr)
    \\
    &=
    \sum_{g\in L_0\setminus B}
    (t_g-2\pi\sqrt{-1}\const{g}) a_g
    +
    \sum_{h\in B\setminus B_0}
    (t_h-2\pi\sqrt{-1}\const{h})
    x_h\\
    &\qquad
    +\sum_{f\in B_0\cap B}
    (t_f-2\pi\sqrt{-1}\const{f})
    \Bigl(\langle\mathbf{y}+\mathbf{m},\vec{f}^{B_0}\rangle
    -\sum_{g\in L_0\setminus B}a_g\langle\vec{g},\vec{f}^{B_0}\rangle
    -\sum_{h\in B\setminus B_0}x_h\langle\vec{h},\vec{f}^{B_0}\rangle 
    \Bigr)
    \\
    &\qquad
    +\sum_{f\in B_0\setminus B} 
    (t_f-2\pi\sqrt{-1}\const{f})
    \Bigl(\langle\mathbf{y}+\mathbf{m},\vec{f}^{B_0}\rangle
    -\sum_{g\in L_0\setminus B}a_g\langle\vec{g},\vec{f}^{B_0}\rangle
    -\sum_{h\in B\setminus B_0}x_h\langle\vec{h},\vec{f}^{B_0}\rangle 
    \Bigr).
  \end{aligned}
\end{multline}
By the first equality of \eqref{eq:x_b}, the last term on the last member of
\eqref{eq:tp} is equal to
\begin{multline}
  \sum_{f\in B_0\setminus B} 
  (t_f-2\pi\sqrt{-1}\const{f})
  \Bigl(\langle\mathbf{y}+\mathbf{m},\vec{f}^{B_0}\rangle
  -\sum_{g\in L_0\setminus B}a_g\langle\vec{g},\vec{f}^{B_0}\rangle
  -\sum_{h\in B\setminus B_0}x_h\langle\vec{h},\vec{f}^{B_0}\rangle 
  \Bigr)
  \\
  =
  \sum_{f\in B_0\setminus B} 
  (t_f-2\pi\sqrt{-1}\const{f})
  \langle a_f\vec{f},\vec{f}^{B_0}\rangle
  =
  \sum_{f\in B_0\setminus B} 
  (t_f-2\pi\sqrt{-1}\const{f})
  a_f.
\end{multline}
On the other hand, for $f\in B_0\cap B$, we have
\begin{multline}
  \langle\mathbf{y}+\mathbf{m},\vec{f}^{B_0}\rangle
  -\sum_{g\in L_0\setminus B}a_g\langle\vec{g},\vec{f}^{B_0}\rangle
  -\sum_{h\in B\setminus B_0}x_h\langle\vec{h},\vec{f}^{B_0}\rangle 
  \\
  \begin{aligned}
    &=
    \Bigl\langle 
    \mathbf{y}+\mathbf{m}
    -\sum_{g\in L_0\setminus B}a_g\vec{g},\vec{f}^{B_0}
    \Bigr\rangle
    -\sum_{h\in B\setminus B_0}
    \Bigl\langle 
    \mathbf{y}+\mathbf{m}
    -\sum_{g\in \Lambda\setminus B}a_g\vec{g},
  \vec{h}^{B}\Bigr\rangle
  \langle\vec{h},\vec{f}^{B_0}\rangle 
  \\
    &=
    \Bigl\langle 
    \mathbf{y}+\mathbf{m}
    -\sum_{g\in \Lambda\setminus B}a_g\vec{g},\vec{f}^{B_0}
    -\sum_{h\in B\setminus B_0}
    \vec{h}^{B}
    \langle\vec{h},\vec{f}^{B_0}\rangle 
    \Bigr\rangle
    \\
    &=
    \Bigl\langle 
    \mathbf{y}+\mathbf{m}
    -\sum_{g\in \Lambda\setminus B}a_g\vec{g},\vec{f}^B
    \Bigr\rangle,
  \end{aligned}
\end{multline}
by \eqref{eq:exp_p_sol} and \eqref{eq:mu_i}.
Therefore
we finally obtain \eqref{eq:exp_vert}.
\end{proof}

\begin{lemma}
\label{lm:char_vert}
Let $\mathbf{y}\in V\setminus\mathfrak{H}_{\mathscr{R}}$ and $W\in\mathscr{W}$.
Then the point $\mathbf{p}(\mathbf{m};\mathbf{y};W)$ is a vertex of 
$\mathcal{P}(\mathbf{m};\mathbf{y})$
if and only if
\begin{equation}
\label{0leqleq1}
  0\leq
  \Bigl\langle 
  \mathbf{y}+\mathbf{m}
  -\sum_{g\in \Lambda\setminus B}a_g\vec{g},\vec{f}^{B}
  \Bigr\rangle
  \leq1
\end{equation}
for all $f\in B$.
\end{lemma}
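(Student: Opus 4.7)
The plan is to combine Lemma \ref{lm:char_vert2} with the algebraic identities collected in the proof of Lemma \ref{lm:exponential}. Lemma \ref{lm:char_vert2} reduces the vertex condition to checking the $r$ pairs of inequalities $\mathbf{u}(f,a)\cdot\mathbf{p}(\mathbf{m};\mathbf{y};W)\geq v(f,a;\mathbf{m};\mathbf{y})$ for $f\in B$ and $a\in\{0,1\}$. The strategy is therefore to translate each of these pairs, case-by-case according to whether $f\in B\setminus B_0$ or $f\in B\cap B_0$, into the single double inequality $0\le\langle\mathbf{y}+\mathbf{m}-\sum_{g\in\Lambda\setminus B}a_g\vec{g},\vec{f}^B\rangle\le 1$.

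For $f\in B\setminus B_0$, the definitions of $\mathbf{u}(f,a)$ and $v(f,a;\mathbf{m};\mathbf{y})$ reduce the half-space condition for $a=0$ and $a=1$ simply to $0\le x_f\le 1$, where $x_f$ is the $f$-coordinate of $\mathbf{p}(\mathbf{m};\mathbf{y};W)$. Formula \eqref{eq:exp_p_sol} from the proof of Lemma \ref{lm:exponential} then expresses $x_f=\langle\mathbf{y}+\mathbf{m}-\sum_{g\in\Lambda\setminus B}a_g\vec{g},\vec{f}^B\rangle$, giving the claimed double inequality immediately.

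For $f\in B\cap B_0$, the pair of half-space conditions unpacks to
\begin{equation*}
\langle\mathbf{y}+\mathbf{m},\vec{f}^{B_0}\rangle-1\leq\sum_{g\in L_0}\langle\vec{g},\vec{f}^{B_0}\rangle\,x_g\leq\langle\mathbf{y}+\mathbf{m},\vec{f}^{B_0}\rangle,
\end{equation*}
where $(x_g)_{g\in L_0}$ are the coordinates of $\mathbf{p}(\mathbf{m};\mathbf{y};W)$. The key identity derived in the proof of Lemma \ref{lm:exponential} (combining \eqref{eq:exp_p_sol} with the expansion \eqref{eq:mu_i} of $\vec{f}^B$) is precisely
\begin{equation*}
\langle\mathbf{y}+\mathbf{m},\vec{f}^{B_0}\rangle-\sum_{g\in L_0}\langle\vec{g},\vec{f}^{B_0}\rangle\,x_g=\Bigl\langle\mathbf{y}+\mathbf{m}-\sum_{g\in\Lambda\setminus B}a_g\vec{g},\vec{f}^B\Bigr\rangle,
\end{equation*}
which converts the displayed inequalities into $0\le\langle\mathbf{y}+\mathbf{m}-\sum_{g\in\Lambda\setminus B}a_g\vec{g},\vec{f}^B\rangle\le 1$ as desired.

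Putting the two cases together with Lemma \ref{lm:char_vert2} yields the claim. There is no real obstacle: once the formulas for the coordinates of $\mathbf{p}(\mathbf{m};\mathbf{y};W)$ established in Lemma \ref{lm:exponential} are in hand, the present lemma is a direct bookkeeping translation of the defining inequalities of $\mathcal{P}(\mathbf{m};\mathbf{y})$ from the $\vec{f}^{B_0}$-basis to the $\vec{f}^B$-basis. The only mild subtlety is to check that the ``$\le 1$'' side of the case $f\in B\cap B_0$ corresponds to the half-space with $a=1$ (and likewise ``$\ge 0$'' with $a=0$), which follows from tracking the signs $(-1)^{1-a}$ in the definition of $\mathbf{u}(f,a)$ and $v(f,a;\mathbf{m};\mathbf{y})$.
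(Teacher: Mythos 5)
Your proposal is correct and follows essentially the same route as the paper: invoke Lemma \ref{lm:char_vert2} to reduce to the $r$ pairs of half-space conditions indexed by $f\in B$, then convert them via the coordinate formulas \eqref{8-30} and \eqref{eq:exp_p_sol} together with the dual-basis identity \eqref{eq:mu_i} into the single double inequality \eqref{0leqleq1}. The case split $f\in B\setminus B_0$ versus $f\in B\cap B_0$ and the key identity you isolate are exactly those used in the paper's proof.
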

\begin{proof}
By Lemma \ref{lm:char_vert2},
the point $\mathbf{p}(\mathbf{m};\mathbf{y};W)=(x_g)_{g\in L_0}$ is indeed a vertex
if and only if
\begin{equation}
\label{eq:ineq_vert}
  \begin{cases}
    \langle \mathbf{y}+\mathbf{m}-\vec{f},\vec{f}^{B_0}\rangle\leq
    \sum_{g\in L_0}x_g\langle\vec{g},\vec{f}^{B_0}\rangle
    \leq\langle \mathbf{y}+\mathbf{m},\vec{f}^{B_0}\rangle,\qquad&\text{for }f\in B\cap B_0,\\
    0\leq x_f\leq1,\qquad&\text{for }f\in B\setminus B_0.
  \end{cases}
\end{equation}
For $f\in B\cap B_0$, applying \eqref{8-30} (the second equality) and \eqref{eq:exp_p_sol},
we have
\begin{equation}
  \begin{split}
    \sum_{g\in L_0}x_g\langle\vec{g},\vec{f}^{B_0}\rangle
    &=
    \sum_{g\in L_0\setminus B}x_g\langle\vec{g},\vec{f}^{B_0}\rangle
    +\sum_{h\in B\setminus B_0}x_h\langle\vec{h},\vec{f}^{B_0}\rangle
    \\
    &=
    \Bigl\langle
    \sum_{g\in \Lambda\setminus B}a_g\vec{g},
    \vec{f}^{B_0}\Bigr\rangle
    +
    \Bigl\langle 
    \mathbf{y}+\mathbf{m}
    -\sum_{g\in \Lambda\setminus B}a_g\vec{g},
    \sum_{h\in B\setminus B_0}
    \vec{h}^B\langle\vec{h},\vec{f}^{B_0}\rangle
    \Bigr\rangle.
  \end{split}
\end{equation}
Therefore, noting \eqref{eq:mu_i}, we see that the first pair of inequalities of 
\eqref{eq:ineq_vert} is 
\begin{equation*}
\Bigl\langle \mathbf{y}+\mathbf{m}-\vec{f}-\sum_{g\in\Lambda\setminus B}a_g\vec{g},\vec{f}^{B_0}\Bigr\rangle\leq
\Bigl\langle \mathbf{y}+\mathbf{m}-\sum_{g\in\Lambda\setminus B}a_g\vec{g},\vec{f}^{B_0}-\vec{f}^B\Bigr\rangle\leq
\Bigl\langle \mathbf{y}+\mathbf{m}-\sum_{g\in\Lambda\setminus B}a_g\vec{g},\vec{f}^{B_0}\Bigr\rangle,
\end{equation*}
which is equivalent to
\begin{equation}
\label{8-43}
0\leq\Bigl\langle 
  \mathbf{y}+\mathbf{m}
    -\sum_{g\in \Lambda\setminus B}a_g\vec{g},
  \vec{f}^B\Bigr\rangle\leq1.
\end{equation}
For $f\in B\setminus B_0$, noting \eqref{eq:exp_p_sol} we see that the 
second pair of inequalities of \eqref{eq:ineq_vert} is again of the same form as \eqref{8-43}.
Therefore the desired assertion follows.
\end{proof}

Fix $W=(B,A)\in\mathscr{W}$.
Let $U$ be the $(\sharp L_0)\times(\sharp L_0)$ matrix
whose $f$-th column consists of 
$\mathbf{u}(f,a_f)$ for $f\in \Lambda\setminus B$
and $U(h,\mathbf{v})$ be the matrix $U$
with only the $h$-th column replaced by $\mathbf{v}$.
Then we have the following two lemmas.
\begin{lemma}
\label{lm:det_U}
\begin{equation}
  |\det U|=
  \frac
  {\sharp(\mathbb{Z}^r/\langle \vec{B}\rangle)}{\sharp(\mathbb{Z}^r/\langle \vec{B}_0\rangle)}.
\end{equation}
\end{lemma}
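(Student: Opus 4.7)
The plan is to compute $\det U$ by exploiting a block-triangular decomposition of $U$ induced by the disjoint decompositions
\begin{equation}
\Lambda\setminus B=(B_0\setminus B)\cup(L_0\setminus B),\qquad
L_0=(B\setminus B_0)\cup(L_0\setminus B),
\end{equation}
which index the columns and rows of $U$ respectively. Using the definition of $\mathbf{u}(f,a)$, the entries with $f\in L_0\setminus B$ are $U_{gf}=(-1)^{a_f}\delta_{gf}$; since $g\in B\setminus B_0$ forces $g\in B$ while $f\notin B$, such entries vanish for $g\in B\setminus B_0$, so the upper-right block is zero, and the lower-right block is the $(L_0\setminus B)\times(L_0\setminus B)$ diagonal matrix with entries $(-1)^{a_f}=\pm 1$.

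Consequently I would read off
\begin{equation}
|\det U|=\Bigl|\det\bigl((-1)^{1-a_f}\langle\vec{g},\vec{f}^{B_0}\rangle\bigr)_{f\in B_0\setminus B}^{g\in B\setminus B_0}\Bigr|
=\Bigl|\det(\langle\vec{g},\vec{f}^{B_0}\rangle)_{f\in B_0\setminus B}^{g\in B\setminus B_0}\Bigr|,
\end{equation}
where the signs $(-1)^{1-a_f}$ one per column contribute only an overall $\pm 1$. Next, I would apply the block identity already established in \eqref{eq:detgf} during the proof of Lemma \ref{lm:cond_inter_uniq} to recognize this $p\times p$ minor (with $p=\sharp(B_0\setminus B)=\sharp(B\setminus B_0)$) as
\begin{equation}
|\det U|=\bigl|\det(\langle\vec{g},\vec{f}^{B_0}\rangle)_{f\in B_0}^{g\in B}\bigr|.
\end{equation}

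Finally, I would interpret the full $r\times r$ matrix $C=(\langle\vec{g},\vec{f}^{B_0}\rangle)_{f\in B_0}^{g\in B}$ as the change-of-basis matrix expressing the vectors $\vec{g}$ ($g\in B$) in terms of the basis $\vec{f}$ ($f\in B_0$); namely, $\vec{g}=\sum_{f\in B_0}\langle\vec{g},\vec{f}^{B_0}\rangle\vec{f}$ gives $M_B=M_{B_0}C^{\mathrm{T}}$, where $M_B$ and $M_{B_0}$ are the integer matrices whose columns are the vectors of $\vec{B}$ and $\vec{B}_0$ respectively. Since both $\vec{B}$ and $\vec{B}_0$ are bases of $V$ contained in $\mathbb{Z}^r$, the lattice indices are
\begin{equation}
\sharp(\mathbb{Z}^r/\langle\vec{B}\rangle)=|\det M_B|,\qquad
\sharp(\mathbb{Z}^r/\langle\vec{B}_0\rangle)=|\det M_{B_0}|,
\end{equation}
and therefore $|\det C|=|\det M_B|/|\det M_{B_0}|$ yields the claimed formula.

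The only non-routine step is the block decomposition, and it becomes transparent once one reorders rows and columns along the decompositions above; the rest is bookkeeping and the appeal to the already proved identity \eqref{eq:detgf}. I do not expect a substantive obstacle, only the need to track signs and row/column orderings carefully.
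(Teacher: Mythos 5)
Your proposal is correct and follows essentially the same route as the paper's proof: the same block-triangular reduction of $U$ to the minor $(\langle\vec{g},\vec{f}^{B_0}\rangle)_{f\in B_0\setminus B}^{g\in B\setminus B_0}$, the same appeal to \eqref{eq:detgf} to pass to the full $r\times r$ matrix, and the same identification of lattice indices with absolute determinants. The only (cosmetic) difference is that in the last step the paper factors $(\langle\vec{g},\vec{f}^{B_0}\rangle)_{f\in B_0}^{g\in B}$ directly as $\det(\vec{g})_{g\in B}\det(\vec{f}^{B_0})_{f\in B_0}$, whereas you phrase the same computation as the change-of-basis relation $M_B=M_{B_0}C^{\mathrm{T}}$.
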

\begin{proof}
By rearranging rows and columns 
we see that
\begin{equation}
  \begin{split}
    |\det U|
    &=
    \Bigl\lvert\det
    (\mathbf{u}(f,a_f)_g)^{g\in L_0}_{f\in \Lambda\setminus B}
    \Bigr\rvert
    \\
    &=
    \Biggl\lvert\det
    \begin{pmatrix}
      (\mathbf{u}(f,a_f)_g)^{g\in B\setminus B_0}_{f\in B_0\setminus B}&
      (\mathbf{u}(f,a_f)_g)^{g\in B\setminus B_0}_{f\in L_0\setminus B}\\
      (\mathbf{u}(f,a_f)_g)^{g\in L_0\setminus B}_{f\in B_0\setminus B}&
      (\mathbf{u}(f,a_f)_g)^{g\in L_0\setminus B}_{f\in L_0\setminus B}
    \end{pmatrix}
    \Biggr\rvert
    \\
    &=
    \Biggl\lvert\det
    \begin{pmatrix}
      (\langle \vec{g},\vec{f}^{B_0}\rangle)^{g\in B\setminus B_0}_{f\in B_0\setminus B}&
      0\\
      *&
      E_{\sharp(L_0\setminus B)}
    \end{pmatrix}
    \Biggr\rvert
    \\
    &=
    \Bigl\lvert\det
    (\langle \vec{g},\vec{f}^{B_0}\rangle)^{g\in B\setminus B_0}_{f\in B_0\setminus B}
    \Bigr\rvert
    \\
    &=
    \Bigl\lvert\det
    (\langle \vec{g},\vec{f}^{B_0}\rangle)^{g\in B}_{f\in B_0}
    \Bigr\rvert
  \end{split}
\end{equation}
by \eqref{eq:detgf}. Further
\begin{equation}
  \begin{split}
    \Bigl\lvert\det
    (\langle \vec{g},\vec{f}^{B_0}\rangle)^{g\in B}_{f\in B_0}
    \Bigr\rvert
    =
    \Bigl\lvert
    \det(\vec{g})_{g\in B}
    \det(\vec{f}^{B_0})_{f\in B_0}
    \Bigr\rvert
    =
    \frac{\Bigl\lvert
    \det(\vec{g})_{g\in B}
    \Bigr\rvert}
  {\Bigl\lvert
    \det(\vec{f})_{f\in B_0}
    \Bigr\rvert}
    =
    \frac
    {\sharp(\mathbb{Z}^r/\langle \vec{B}\rangle)}{\sharp(\mathbb{Z}^r/\langle \vec{B}_0\rangle)}.
  \end{split}
\end{equation}
\end{proof}

\begin{lemma}
  \label{lm:det_UU}  
  For $f\in\Lambda\setminus B$, we have
  \begin{equation}
    \label{eq:UUVV}
    \frac{\det U(f,\mathbf{t}^*)}{\det U}
    =(-1)^{a_f}
    \Bigl(
    (t_f-2\pi\sqrt{-1}\const{f})
    -\sum_{g\in B}
    (t_g-2\pi\sqrt{-1}\const{g})
    \langle \vec{f},\vec{g}^{B}\rangle
    \Bigr).
  \end{equation}
\end{lemma}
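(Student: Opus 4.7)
\bigskip

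\noindent\textbf{Proof plan for Lemma \ref{lm:det_UU}.}
The plan is to interpret the ratio $\det U(f,\mathbf{t}^*)/\det U$ via Cramer's rule and then verify that the proposed closed-form solves the associated linear system. Since $\det U\neq 0$ by Lemma \ref{lm:det_U} (as $B\in\mathscr{B}$ by Lemma \ref{lm:cond_inter_uniq}), Cramer's rule identifies $\det U(f,\mathbf{t}^*)/\det U$ with the $f$-component $x_f$ of the unique solution $\mathbf{x}=(x_f)_{f\in \Lambda\setminus B}$ of $U\mathbf{x}=\mathbf{t}^*$, namely
\begin{equation*}
  \sum_{f\in\Lambda\setminus B}x_f\,\mathbf{u}(f,a_f)=\mathbf{t}^*.
\end{equation*}
So the task reduces to exhibiting the solution explicitly. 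Writing $\tau_f=t_f-2\pi\sqrt{-1}\const{f}$ and using the definition of $\mathbf{u}(f,a)$, the $g$-th coordinate ($g\in L_0$) of this equation splits according to whether $g\in L_0\setminus B$ or $g\in B\setminus B_0$ (recall $L_0=(L_0\setminus B)\cup(B\setminus B_0)$), and becomes
\begin{equation*}
  \sum_{f\in B_0\setminus B}(-1)^{1-a_f}x_f\langle\vec{g},\vec{f}^{B_0}\rangle
  +(-1)^{a_g}x_g\,[g\in L_0\setminus B]=t_g^*,
\end{equation*}
where the bracket denotes the condition indicator.

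Next I would propose the candidate
\begin{equation*}
  x_f=(-1)^{a_f}\Bigl(\tau_f-\sum_{h\in B}\tau_h\langle\vec{f},\vec{h}^B\rangle\Bigr)\qquad(f\in\Lambda\setminus B),
\end{equation*}
which is exactly the right-hand side of \eqref{eq:UUVV}, and substitute it into the two families of equations above. The crucial identity in the verification is the change-of-basis relation
\begin{equation*}
  \langle\vec{g},\vec{h}^B\rangle
  =\sum_{f\in B_0}\langle\vec{f},\vec{h}^B\rangle\langle\vec{g},\vec{f}^{B_0}\rangle,
\end{equation*}
obtained by expanding $\vec{h}^B=\sum_{f\in B_0}\langle\vec{f},\vec{h}^B\rangle\vec{f}^{B_0}$ in the dual basis $\vec{B}_0^*$. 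Using this identity together with $\langle\vec{f},\vec{h}^B\rangle=\delta_{fh}$ for $f,h\in B$, both systems of equations collapse to $t_g^*=\tau_g-\sum_{f\in B_0}\tau_f\langle\vec{g},\vec{f}^{B_0}\rangle$, which is precisely \eqref{def_t_g^*}.

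Once this algebraic verification is done, uniqueness of the solution of $U\mathbf{x}=\mathbf{t}^*$ forces $x_f$ to equal the proposed expression, and Cramer's rule yields \eqref{eq:UUVV}. I do not foresee a genuine obstacle here; the only real pitfall is bookkeeping, specifically keeping track of the partition $\Lambda\setminus B=(B_0\setminus B)\cup(L_0\setminus B)$ together with $L_0=(L_0\setminus B)\cup(B\setminus B_0)$, and the sign factors $(-1)^{a_f}$ and $(-1)^{1-a_f}$ arising from the two cases in the definition of $\mathbf{u}(f,a)$. An alternative, more conceptual route would be to regard $U$ as the matrix of the linear map $\mathbf{x}\mapsto\sum_{f\in\Lambda\setminus B}(-1)^{a_f}x_f\vec{f}$ expressed in the basis $\vec{B}_0^*$ via the dual of $B_0$, but the direct verification above is the shortest path to the stated formula.
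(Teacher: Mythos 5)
Your proposal is correct and follows essentially the same route as the paper: both identify $\det U(f,\mathbf{t}^*)/\det U$ with the $f$-component of the unique solution of $U\mathbf{x}=\mathbf{t}^*$ via Cramer's rule, propose the same explicit candidate, and verify it by splitting the coordinates of $U\mathbf{x}$ according to $g\in B\setminus B_0$ versus $g\in L_0\setminus B$ using the same change-of-basis identity (the paper's \eqref{take_inner_product}, derived from \eqref{eq:id}). The only difference is that you leave the final substitution as a plan, but the key identity and bookkeeping you identify are exactly what the paper carries out.
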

\begin{proof}
We show that $\mathbf{x}=(x_f)_{f\in\Lambda\setminus B}$ defined by
\begin{equation}
\label{def_x_f}
  x_f=(-1)^{a_f}
    \Bigl(
    (t_f-2\pi\sqrt{-1}\const{f})
    -\sum_{g\in B}
    (t_g-2\pi\sqrt{-1}\const{g})
    \langle \vec{f},\vec{g}^{B}\rangle
    \Bigr)
\end{equation}
is a unique solution of the linear equation
\begin{equation}
  \label{eq:Uxt}
  U\mathbf{x}=\mathbf{t}^*.
\end{equation}
Then the statement follows from Cramer's rule.

Now we show that \eqref{def_x_f} satisfies \eqref{eq:Uxt}.   First observe
\begin{equation}
  \begin{split}
    (U\mathbf{x})_h
    &=
    \sum_{f\in B_0\setminus B}
    (-1)^{1-a_f}\langle\vec{h},\vec{f}^{B_0}\rangle
    (-1)^{a_f}
    \Bigl(
    (t_f-2\pi\sqrt{-1}\const{f})
    -\sum_{g\in B}
    (t_g-2\pi\sqrt{-1}\const{g})
    \langle \vec{f},\vec{g}^{B}\rangle
    \Bigr)
    \\
    &\qquad
    +
    \sum_{f\in L_0\setminus B}(-1)^{a_f}\delta_{hf}
    (-1)^{a_f}
    \Bigl(
    (t_f-2\pi\sqrt{-1}\const{f})
    -\sum_{g\in B}
    (t_g-2\pi\sqrt{-1}\const{g})
    \langle \vec{f},\vec{g}^{B}\rangle
    \Bigr)
    \\
    &=-
    \sum_{f\in B_0\setminus B}
    \langle\vec{h},\vec{f}^{B_0}\rangle
    \Bigl(
    (t_f-2\pi\sqrt{-1}\const{f})
    -\sum_{g\in B}
    (t_g-2\pi\sqrt{-1}\const{g})
    \langle \vec{f},\vec{g}^{B}\rangle
    \Bigr)
    \\
    &\qquad
    +
    \sum_{f\in L_0\setminus B}\delta_{hf}
    \Bigl(
    (t_h-2\pi\sqrt{-1}\const{h})
    -\sum_{g\in B}
    (t_g-2\pi\sqrt{-1}\const{g})
    \langle \vec{h},\vec{g}^{B}\rangle
    \Bigr).
  \end{split}
\end{equation}
Assume $h\in B\setminus B_0$. Then $\delta_{hf}=0$ for all $f\in L_0\setminus B$, and hence
\begin{equation}
\label{8-52}
  \begin{split}
    (U\mathbf{x})_h
    &=-
    \sum_{f\in B_0\setminus B}
    \langle\vec{h},\vec{f}^{B_0}\rangle
    \Bigl(
    (t_f-2\pi\sqrt{-1}\const{f})
    -\sum_{g\in B}
    (t_g-2\pi\sqrt{-1}\const{g})
    \langle \vec{f},\vec{g}^{B}\rangle
    \Bigr)
    \\
    &=-
    \sum_{f\in B_0\setminus B}
    \langle\vec{h},\vec{f}^{B_0}\rangle
    (t_f-2\pi\sqrt{-1}\const{f})
    +
    \sum_{f\in B_0\setminus B}
    \sum_{g\in B}
    (t_g-2\pi\sqrt{-1}\const{g})
    \langle\vec{h},\vec{f}^{B_0}\rangle
    \langle \vec{f},\vec{g}^{B}\rangle.
  \end{split}
\end{equation}
From \eqref{eq:id} we have
$$
\mathbf{z}=\sum_{f\in B_0\setminus B}\vec{f}^{B_0}\langle\vec{f},\mathbf{z}\rangle
+\sum_{f\in B_0\cap B}\vec{f}^{B_0}\langle\vec{f},\mathbf{z}\rangle,
$$
and hence
\begin{equation}
\label{take_inner_product}
\langle\vec{h},\mathbf{z}\rangle=
\sum_{f\in B_0\setminus B}\langle\vec{h},\vec{f}^{B_0}\rangle\langle\vec{f},\mathbf{z}\rangle
+\sum_{f\in B_0\cap B}\langle\vec{h},\vec{f}^{B_0}\rangle\langle\vec{f},\mathbf{z}\rangle.
\end{equation}
Putting $\mathbf{z}=\sum_{g\in B}(t_g-2\pi\sqrt{-1}\const{g})\vec{g}^B$
in \eqref{take_inner_product}, we obtain
\begin{equation}
\label{yuruyuru}
 \begin{split}
 &\sum_{f\in B_0\setminus B}\sum_{g\in B}(t_g-2\pi\sqrt{-1}\const{g})
  \langle\vec{h},\vec{f}^{B_0}\rangle\langle\vec{f},\vec{g}^B\rangle
  \\
 &=\sum_{g\in B}(t_g-2\pi\sqrt{-1}\const{g})\delta_{hg}
  -\sum_{f\in B_0\cap B}\sum_{g\in B}(t_g-2\pi\sqrt{-1}\const{g})
  \langle\vec{h},\vec{f}^{B_0}\rangle\langle\vec{f},\vec{g}^B\rangle.
  \end{split}
\end{equation}
Substituting this into the right-hand side of \eqref{8-52}, we obtain
\begin{equation}
  \begin{split}
    (U\mathbf{x})_h=&
    -\sum_{f\in B_0\setminus B}
    \langle\vec{h},\vec{f}^{B_0}\rangle
    (t_f-2\pi\sqrt{-1}\const{f})+    (t_h-2\pi\sqrt{-1}\const{h})
    -
    \sum_{f\in B_0\cap B}
    \sum_{g\in B}
    (t_g-2\pi\sqrt{-1}\const{g})
    \langle\vec{h},\vec{f}^{B_0}\rangle
    \langle \vec{f},\vec{g}^{B}\rangle
    \\
    &=
  (t_h-2\pi\sqrt{-1}\const{h})
  -\sum_{f\in B_0\setminus B}
  \langle\vec{h},\vec{f}^{B_0}\rangle
  (t_f-2\pi\sqrt{-1}\const{f})
  -
  \sum_{f\in B_0\cap B}
  (t_f-2\pi\sqrt{-1}\const{f})
  \langle\vec{h},\vec{f}^{B_0}\rangle
  =t_h^*.
\end{split}
\end{equation}
Assume $h\in L_0\setminus B$. Then again using \eqref{yuruyuru}, we have
\begin{equation}
  \begin{split}
    (U\mathbf{x})_h
    &=-
    \sum_{f\in B_0\setminus B}
    \langle\vec{h},\vec{f}^{B_0}\rangle
    (t_f-2\pi\sqrt{-1}\const{f})
    +
    \sum_{f\in B_0\setminus B}\sum_{g\in B}
    (t_g-2\pi\sqrt{-1}\const{g})
    \langle\vec{h},\vec{f}^{B_0}\rangle
    \langle \vec{f},\vec{g}^{B}\rangle
    \\
    &\qquad
    +
    (t_h-2\pi\sqrt{-1}\const{h})
    -\sum_{g\in B}
    (t_g-2\pi\sqrt{-1}\const{g})
    \langle \vec{h},\vec{g}^{B}\rangle
    \\
    &=-
    \sum_{f\in B_0\setminus B}
    \langle\vec{h},\vec{f}^{B_0}\rangle
    (t_f-2\pi\sqrt{-1}\const{f})
    -
    \sum_{f\in B_0\cap B}\sum_{g\in B}
    (t_g-2\pi\sqrt{-1}\const{g})
    \langle\vec{h},\vec{f}^{B_0}\rangle
    \langle \vec{f},\vec{g}^{B}\rangle
    \\
    &\qquad
    +
    (t_h-2\pi\sqrt{-1}\const{h})
    \\
    &=t_h^*.
  \end{split}
\end{equation}
\end{proof}

\section{Completion of the proof of Theorem \ref{thm:main1} and Theorem \ref{thm:main1b}}\label{sec-9}

In this section we prove \eqref{tilde_F=F} to complete the proof of
Theorems \ref{thm:main1} and \ref{thm:main1b}.    First we show an
elementary lemma.

Let $n\in\mathbb{N}$ and let $\{\mathbf{e}_1,\ldots,\mathbf{e}_n\}$ be a standard basis of $\mathbb{R}^n$.
Let $M=(m_{ij})_{1\leq j\leq n}^{1\leq i\leq n}$
be an $n\times n$ matrix.
For $1\leq j\leq n$, 
and $\mathbf{v}\in\mathbb{R}^n$,
let $M(j,\mathbf{v})$ be the matrix $M$ with only the $j$-th column replaced by $\mathbf{v}$.
Let
\begin{equation}
\label{eq:uj}
\mathbf{u}(j)=
\det M(j,(\mathbf{e}_i)_{1\leq i\leq n})=
\det 
\begin{pmatrix}
  m_{11}&\ldots&m_{1\,j-1}&\mathbf{e}_1&m_{1\,j+1}&\ldots&m_{1n}\\
  \vdots&\ddots&\vdots&\vdots&\vdots&\ddots&\vdots\\
  m_{n1}&\ldots&m_{n\,j-1}&\mathbf{e}_n&m_{n\,j+1}&\ldots&m_{nn}
\end{pmatrix}
=\sum_{i=1}^n b_{ij}\mathbf{e}_i\in\mathbb{R}^n,
\end{equation}
where $b_{ij}$ is the cofactor of $(i,j)$-entry of $M$.
The following lemma (properties of cofactor matrices) is shown by elementary linear algebra, 
and we omit the proof.
\begin{lemma}
\label{lm:elementary}
  \begin{enumerate}
  \item 
    For $\mathbf{v}\in\mathbb{R}^n$, we have
\begin{equation}
\mathbf{u}(j)\cdot\mathbf{v}=\det M(j,\mathbf{v}).
\end{equation}
\item Let $U=(\mathbf{u}(j))_{1\leq j\leq n}$ be the $n\times n$ matrix
whose $j$-th column consists of $\mathbf{u}(j)$.
Then
\begin{equation}
  \det U=(\det M)^{n-1}.
\end{equation}
\end{enumerate}
\end{lemma}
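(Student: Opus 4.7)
The plan is to treat the two assertions independently, both being direct consequences of standard cofactor identities.

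For assertion (1), I would start from the definition \eqref{eq:uj}, which expresses $\mathbf{u}(j)=\sum_{i=1}^n b_{ij}\mathbf{e}_i$ where $b_{ij}$ is the $(i,j)$-cofactor of $M$. Given $\mathbf{v}=(v_1,\ldots,v_n)$, one has $\mathbf{u}(j)\cdot\mathbf{v}=\sum_{i=1}^n b_{ij}v_i$. On the other hand, expanding $\det M(j,\mathbf{v})$ by cofactors along the $j$-th column yields the same sum $\sum_{i=1}^n v_i b_{ij}$, since the cofactors of the $j$-th column of $M(j,\mathbf{v})$ coincide with those of the $j$-th column of $M$ (they do not involve the $j$-th column at all). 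Hence the two expressions agree.

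For assertion (2), I would observe that the $(i,j)$-entry of $U$ is exactly $b_{ij}$, so $U$ is the cofactor matrix of $M$, and its transpose $U^{T}$ is the classical adjugate $\operatorname{adj}(M)$. The standard identity $M\cdot\operatorname{adj}(M)=(\det M)E_n$ gives, upon taking determinants, $(\det M)(\det U)=(\det M)^{n}$. When $\det M\neq 0$ this immediately yields $\det U=(\det M)^{n-1}$. To cover the degenerate case $\det M=0$, I would invoke continuity: the identity $\det U=(\det M)^{n-1}$ is a polynomial identity in the entries $m_{ij}$, so it holding on the Zariski-dense open set $\{\det M\neq 0\}$ forces it to hold identically.

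The only minor subtlety is bookkeeping of row versus column: one must be careful that the $j$-th \emph{column} of $U$ is built from the $(i,j)$-cofactors indexed by $i$, so that $U$ is the cofactor matrix rather than its transpose; this is what makes $U^{T}=\operatorname{adj}(M)$ hold and not the other way around. Since both claims reduce to one line of cofactor expansion plus the adjugate identity, there is no genuine obstacle, which is presumably why the authors chose to omit the proof.
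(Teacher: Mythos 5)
Your proof is correct. The paper explicitly omits the proof of this lemma as "elementary linear algebra," and your argument — cofactor expansion along the $j$-th column for (1), and the adjugate identity $M\cdot\operatorname{adj}(M)=(\det M)E_n$ together with a polynomial-identity/density argument for the singular case in (2) — is precisely the standard reasoning the authors had in mind, with the row/column bookkeeping handled correctly.
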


Now we start the proof of \eqref{tilde_F=F}, that is
$F(\mathbf{t},\mathbf{y};\Lambda)=\widetilde{F}(\mathbf{t},\mathbf{y};\Lambda)$
for $\mathbf{y}\in V\setminus\mathfrak{H}_{\mathscr{R}}$.

Since all the polytopes $\mathcal{P}(\mathbf{m};\mathbf{y})$ for 
$\mathbf{y}\in V\setminus\mathfrak{H}_{\mathscr{R}}$ are empty or simple
by Lemma \ref{lm:simplicity}, we may apply
Lemma \ref{lm:simple_exp} to the right-hand side of Proposition \ref{prop:gen_func}.
In the present case the vertices are of the form $\mathbf{p}(\mathbf{m};\mathbf{y};W)$,
satisfying \eqref{0leqleq1} by Lemma \ref{lm:char_vert}.
Therefore
we have
\begin{equation}
\label{9-4}
  \begin{split}
    \widetilde{F}(\mathbf{t},\mathbf{y};\Lambda)
    &=
    \Bigl(
    \prod_{f\in \Lambda}\frac{t_f}{\exp(t_f-2\pi\sqrt{-1}\const{f})-1}
    \Bigr)
    \frac{1}{\sharp(\mathbb{Z}^r/\langle \vec{B}_0\rangle)}
    \sum_{\mathbf{m}\in\mathbb{Z}^r}
    \sum_{W}
    \\
    &\qquad\times
    \exp\Bigl(
    \sum_{f\in B_0}
    (t_f-2\pi\sqrt{-1}\const{f}) 
    \langle\mathbf{y}+\mathbf{m},\vec{f}^{B_0}\rangle
    +
    \mathbf{t}^*\cdot\mathbf{p}(\mathbf{m};\mathbf{y};W)
    \Bigr)
    \\&\qquad\qquad
    \times
    \frac{|\det
        \bigl(\mathbf{p}(\mathbf{m};\mathbf{y};W)-\mathbf{p}(\mathbf{m};\mathbf{y};W')
        \bigr)_{W'\in E(\mathbf{m};W)}|}
    {\prod_{W'\in E(\mathbf{m};W)}\mathbf{t}^*\cdot 
      \bigl(\mathbf{p}(\mathbf{m};\mathbf{y};W)-\mathbf{p}(\mathbf{m};\mathbf{y};W')
      \bigr)},
  \end{split}
\end{equation}
where 
$E(\mathbf{m};W)$ is the set of all indices $W'$ such that
$\Conv(\{\mathbf{p}(\mathbf{m};\mathbf{y};W),\mathbf{p}(\mathbf{m};\mathbf{y};W')\})$ is an edge of $\mathcal{P}(\mathbf{m};\mathbf{y})$,
and
for each 
$\mathbf{m}=(m_1,\ldots,m_r)\in\mathbb{Z}^r$,
$W=(B,A)\in\mathscr{W}$
 runs over those satisfying
\begin{equation}
\label{eq:m_ineq}
  0\leq
  \Bigl\langle 
  \mathbf{y}+\mathbf{m}
  -\sum_{g\in \Lambda\setminus B}a_g\vec{g},\vec{f}^{B}
  \Bigr\rangle
  \leq1.
\end{equation}

Recall that a vertex $\mathbf{p}(\mathbf{m};\mathbf{y};W)$ 
satisfies $(\sharp\Lambda-r)$ equations of the form
\begin{equation}
\mathbf{u}(f,a_f)\cdot
\mathbf{p}(\mathbf{m};\mathbf{y};W)=v(f,a_f;\mathbf{m};\mathbf{y})
\end{equation}
for $f\in \Lambda\setminus B$ with $W=(B,A)$ (see \eqref{8-14}).
For $W'=(B',A')\in E(\mathbf{m};W)$, we see that the two distinct vertices
$\mathbf{p}(\mathbf{m};\mathbf{y};W)$ 
and 
$\mathbf{p}(\mathbf{m};\mathbf{y};W')$ 
 share common $(\sharp\Lambda-r-1)$ hyperplanes, that is,
there exists $h\in \Lambda\setminus B$ 
such that $\Lambda\setminus(B\cup\{h\})\subset \Lambda\setminus B'$ and
$a_f=a'_f$ for $f\in \Lambda\setminus(B\cup\{h\})$, which implies
\begin{equation}
\label{eq:common_eqs}
  \begin{split}
    \mathbf{u}(f,a_f)\cdot
    \mathbf{p}(\mathbf{m};\mathbf{y};W)
    &=v(f,a_f;\mathbf{m};\mathbf{y}),
    \\
    \mathbf{u}(f,a_f)\cdot
    \mathbf{p}(\mathbf{m};\mathbf{y};W')
    &=v(f,a_f;\mathbf{m};\mathbf{y})
  \end{split}
\end{equation}
for $f\in \Lambda\setminus(B\cup\{h\})$
and
\begin{equation}
  \begin{split}
    \mathbf{u}(h,a_h)\cdot
    \mathbf{p}(\mathbf{m};\mathbf{y};W)
    &=v(h,a_h;\mathbf{m};\mathbf{y}),
    \\
    \mathbf{u}(h,a_h)\cdot
    \mathbf{p}(\mathbf{m};\mathbf{y};W')
    &> v(h,a_h;\mathbf{m};\mathbf{y}).
  \end{split}
\end{equation}
This $h$ is unique because otherwise we have
$\mathbf{p}(\mathbf{m};\mathbf{y};W)=\mathbf{p}(\mathbf{m};\mathbf{y};W')$.
Since $\sharp E(\mathbf{m};W)=\sharp(\Lambda\setminus B)=\sharp\Lambda-r$
(because $\mathcal{P}(\mathbf{m};\mathbf{y})$ is simple), we find that there is 
a one-to-one corresponding between $E(\mathbf{m};W)$ and $\Lambda\setminus B$.
By \eqref{eq:common_eqs}, we see that
the set of the equations and the inequality with respect to $\mathbf{v}$
\begin{equation}
\label{eq:const_v}
  \begin{cases}
  \mathbf{u}(f,a_f)\cdot\mathbf{v}=0\qquad
  &(f\in \Lambda\setminus(B\cup\{h\})),\\
  \mathbf{u}(h,a_h)\cdot\mathbf{v}<0&
\end{cases}
\end{equation}
has a solution
$\mathbf{v}=\mathbf{p}(\mathbf{m};\mathbf{y};W)
-\mathbf{p}(\mathbf{m};\mathbf{y};W')$.

We construct another vector $\mathbf{e}(\mathbf{m};W,W')$
satisfying \eqref{eq:const_v}
 so that
\begin{equation}
\mathbf{p}(\mathbf{m};\mathbf{y};W)
-\mathbf{p}(\mathbf{m};\mathbf{y};W')
=c(\mathbf{m};\mathbf{y};W,W')\mathbf{e}(\mathbf{m};W,W'),
\end{equation}
where $c(\mathbf{m};\mathbf{y};W,W')>0$.
Then it follows that
\begin{multline}
  \frac{|\det(\mathbf{p}(\mathbf{m};\mathbf{y};W)-\mathbf{p}(\mathbf{m};\mathbf{y};W'))_{W'\in E(\mathbf{m};W)}|}
  {\prod_{W'\in E(\mathbf{m};W)}\mathbf{t}^*\cdot\bigl(\mathbf{p}(\mathbf{m};\mathbf{y};W)-\mathbf{p}(\mathbf{m};\mathbf{y};W')\bigr)}
  \\
  \begin{aligned}
    &=
    \frac{\bigl\lvert\prod_{W'\in E(\mathbf{m};W)}c(W,W';\mathbf{m};\mathbf{y})\bigr\rvert
      |\det(\mathbf{e}(\mathbf{m};W,W'))_{W'\in E(\mathbf{m};W)}|}
    {\prod_{W'\in E(\mathbf{m};W)}c(\mathbf{m};\mathbf{y};W,W')\mathbf{t}^*\cdot \mathbf{e}(\mathbf{m};W,W')}
    \\
    &=
    \frac{|\det(\mathbf{e}(\mathbf{m};W,W'))_{W'\in E(\mathbf{m};W)}|}
    {\prod_{W'\in E(\mathbf{m};W)}\mathbf{t}^*\cdot \mathbf{e}(\mathbf{m};W,W')}.
  \end{aligned}
\end{multline}

The construction of $\mathbf{e}(\mathbf{m};W,W')$ is as follows.
Let $\mathbf{e}_g$ for $g\in L_0$ be the standard 
orthonormal basis of $\mathbb{R}^{\sharp L_0}$.
Let $U$ be the $(\sharp\Lambda-r)\times(\sharp\Lambda-r)$ matrix
whose $f$-th column consists of 
$\mathbf{u}(f,a_f)$ for $f\in \Lambda\setminus B$ with $W=(B,A)$.
For $h\in \Lambda\setminus B$, let
 $U(h,\mathbf{v})$ be the matrix $U$
with only the $h$-th column replaced by $\mathbf{v}$.
Note that $\det U\neq 0$ by Lemma \ref{lm:det_U}.
Define
\begin{equation}
\label{8-12}
\mathbf{e}(\mathbf{m};W,W')=
-(\sgn\det U)\det U(h,(\mathbf{e}_g)_{g\in L_0})
=
-(\sgn\det U)
\sum_{g\in L_0}b_{gh} \mathbf{e}_g
\end{equation}
(the second equality is due to \eqref{eq:uj}),
where $W'=(B',A')$
such that $\Lambda\setminus(B\cup\{h\})\subset \Lambda\setminus B'$
and
$b_{gh}$ is the cofactor of $(g,h)$-entry of $U$.
Then by Lemma \ref{lm:elementary}(1),
we have
\begin{equation}
  \begin{split}
    \mathbf{e}(\mathbf{m};W,W')\cdot\mathbf{u}(f,a_{f})&=
    -(\sgn\det U) 
    \det U\bigl(h,\mathbf{u}(f,a_f)\bigr)=0,
    \\
    \mathbf{e}(\mathbf{m};W,W')\cdot\mathbf{u}(h,a_{h})&=
    -(\sgn\det U) 
    \det U\bigl(h,\mathbf{u}(h,a_{h})\bigr)
    =-(\sgn\det U)\det U
    =-|\det U|<0
  \end{split}
\end{equation}
for $f\in \Lambda\setminus(B\cup\{h\})$ as required.

We observed that $h$ runs over $\Lambda\setminus B$ 
when $W'$ runs over $E(\mathbf{m};W)$.
Therefore 
by Lemma \ref{lm:elementary}(2) we see that \eqref{8-12} implies
$|\det(\mathbf{e}(\mathbf{m};W,W'))_{W'\in E(\mathbf{m};W)}|
=|\det U|^{\sharp\Lambda-r-1}$.

Also, from \eqref{8-12} we have
$$
\mathbf{t}^*\cdot \mathbf{e}(\mathbf{m};W,W')=-(\sgn\det U)\sum_{g\in L_0}b_{gh}t_g^*
=-(\sgn\det U)\det U(h,\mathbf{t}^*).
$$
Therefore
\begin{equation}
  \begin{split}
  \frac{|\det(\mathbf{e}(\mathbf{m};W,W'))_{W'\in E(\mathbf{m};W)}|}
  {\prod_{W'\in E(\mathbf{m};W)}\mathbf{t}^*\cdot \mathbf{e}(\mathbf{m};W,W')}
  &=
  (-1)^{\sharp\Lambda-r}(\sgn\det U)^{\sharp\Lambda-r} \frac{|\det U|^{\sharp\Lambda-r-1}}
  {\prod_{h\in \Lambda\setminus B}
    \det U(h,\mathbf{t}^*)}
  \\
  &=
  (-1)^{\sharp\Lambda-r}\frac{1}{|\det U|} \frac{(\det U)^{\sharp\Lambda-r}}
  {\prod_{h\in \Lambda\setminus B}
    \det U(h,\mathbf{t}^*)}.
\end{split}
\end{equation}
Substituting this into the right-hand side of \eqref{9-4} and using
Lemmas \ref{lm:exponential}, \ref{lm:det_U} and \ref{lm:det_UU},
we have
\begin{equation}
  \begin{split}
    \widetilde{F}(\mathbf{t},\mathbf{y};\Lambda)
    &=
    (-1)^{\sharp \Lambda-r}
    \Bigl(
    \prod_{f\in \Lambda}\frac{t_f}{\exp(t_f-2\pi\sqrt{-1}\const{f})-1}
    \Bigr)
    \sum_{\mathbf{m}\in\mathbb{Z}^r}
    \sum_{W}
    \frac{1}
    {\sharp(\mathbb{Z}^r/\langle \vec{B}\rangle)}
    \\
    &
    \qquad
    \times
    \exp
    \Bigl(
    \sum_{g\in \Lambda\setminus B}
    (t_g-2\pi\sqrt{-1}\const{g}) a_g
    +\sum_{f\in B}
    (t_f-2\pi\sqrt{-1}\const{f})
    \Bigl\langle 
    \mathbf{y}+\mathbf{m}
    -\sum_{g\in \Lambda\setminus B}a_g\vec{g},\vec{f}^{B}
    \Bigr\rangle
    \Bigr)
    \\
    &
    \qquad
    \times
    \prod_{h\in \Lambda\setminus B}
    \frac{(-1)^{a_h}}
    {
      (t_h-2\pi\sqrt{-1}\const{h})
      -\sum_{g\in B}
      (t_g-2\pi\sqrt{-1}\const{g})
      \langle \vec{h},\vec{g}^{B}\rangle
    }.
  \end{split}
\end{equation}

We rewrite the double sum on the first line of the above so as to
exchange the order of the sums with respect to $W\in\mathscr{W}$ and $\mathbf{m}\in\mathbb{Z}^r$. 
For each $W\in\mathscr{W}$, we see that
\begin{equation}
  \mathbf{m}
  -
  \sum_{g\in \Lambda\setminus B}a_g\vec{g}
\end{equation}
runs over $\mathbb{Z}^r$
when
$\mathbf{m}$ runs over $\mathbb{Z}^r$.
Thus 
\eqref{eq:m_ineq} can be rewritten in terms of $\mathbf{v}\in\mathbb{Z}^r$, that is, $\mathbf{v}$ runs over those
satisfying
\begin{equation}
\label{eq:q_ineq}
  0\leq\langle 
  \mathbf{y}+\mathbf{v},
  \vec{f}^B\rangle\leq1
\end{equation}
for all $f\in B$.
If there exist $f\in B$ and $\mathbf{v}\in\mathbb{Z}^r$ such that
$c=\langle\mathbf{y}+\mathbf{v},\vec{f}^{B}\rangle\in\mathbb{Z}$,
then we can write 
$\mathbf{y}+\mathbf{v}=\sum_{g\in R}c_g\vec{g}+c\vec{f}$, where 
$R=B\setminus\{f\}\in\mathscr{R}$ and $c_g\in \mathbb{R}$.
Therefore we have
$\mathbf{y}+\mathbf{v}-c\vec{f}\in\mathfrak{H}_{R}$, hence
$\mathbf{y}\in\mathfrak{H}_{\mathscr{R}}$,
which contradicts with the assumption.
Thus condition \eqref{eq:q_ineq} can be replaced by
\begin{equation}
  0<\langle\mathbf{y}+\mathbf{v},\vec{f}^B\rangle<1.
\end{equation}
Let
$G=\{\mathbf{v}\in\mathbb{Z}^r~|~0<\langle\mathbf{y}+\mathbf{v},\vec{f}^B\rangle<1\text{ for all }f\in B\}$.
We show that the natural projection $g:G\to \mathbb{Z}^r/\langle\vec{B}\rangle$ is bijective.
If $g(\mathbf{v})=g(\mathbf{v}')$ for $\mathbf{v},\mathbf{v}'\in G$, then $\mathbf{v}=\mathbf{v}'+\mathbf{x}$ for some $\mathbf{x}\in\langle\vec{B}\rangle$.
Since $\langle \mathbf{x},\vec{f}^B\rangle=\langle\mathbf{y}+\mathbf{v},\vec{f}^B\rangle-
\langle\mathbf{y}+\mathbf{v}',\vec{f}^B\rangle$,
we have
$-1<\langle\mathbf{x},\vec{f}^B\rangle<1$ for all $f\in B$, which implies $\mathbf{x}=0$.
Conversely,
for any $\mathbf{v}\in\mathbb{Z}^r$, 
putting $\mathbf{x}=\sum_{g\in B}c_g \vec{g}
\in\langle\vec{B}\rangle$ 
with $c_g=-[\langle\mathbf{y}+\mathbf{v},\vec{g}^B\rangle]\in\mathbb{Z}$, 
we have
$$
\langle\mathbf{y}+\mathbf{v}+\mathbf{x},\vec{f}^B\rangle=\langle\mathbf{y}+\mathbf{v},\vec{f}^B\rangle
-c_f=\{\langle\mathbf{y}+\mathbf{v},\vec{f}^B\rangle\}
$$
and so 
$0<\langle\mathbf{y}+\mathbf{v}+\mathbf{x},\vec{f}^B\rangle<1$
because $\mathbf{y}\notin\mathfrak{H}_{\mathscr{R}}$.
This implies the assertion.
Hence replacing
$\langle\mathbf{y}+\mathbf{v},\vec{f}^B\rangle$ by $\{\langle\mathbf{y}+\mathbf{v},\vec{f}^B\rangle\}$,
we see that $\mathbf{v}$ runs over all representatives of $\mathbb{Z}^r/\langle\vec{B}\rangle$.

Therefore 
by exchanging the order of the sums with respect to $W=(B,A)\in\mathscr{W}=\mathscr{B}\times\mathscr{A}$ and $\mathbf{m}\in\mathbb{Z}^r$, and summing with respect to $\mathbf{v}$, we have
\begin{equation}
  \label{eq:exp_F_H}
  \begin{split}
    \widetilde{F}(\mathbf{t},\mathbf{y};\Lambda)
    &=
    (-1)^{\sharp \Lambda-r}
    \Bigl(
    \prod_{f\in \Lambda}\frac{t_f}{\exp(t_f-2\pi\sqrt{-1}\const{f})-1}
    \Bigr)
    \sum_{W\in\mathscr{W}}
    \frac{1}
    {\sharp(\mathbb{Z}^r/\langle \vec{B}\rangle)}
    \\
    &
    \qquad
    \times
    \sum_{\mathbf{v}\in \mathbb{Z}^r/\langle \vec{B}\rangle}
    \exp
    \Bigl(
    \sum_{g\in \Lambda\setminus B}
    (t_g-2\pi\sqrt{-1}\const{g}) a_g
    +\sum_{f\in B}
    (t_f-2\pi\sqrt{-1}\const{f})
    \{\langle 
    \mathbf{y}+\mathbf{v},\vec{f}^{B}
    \rangle\}
    \Bigr)
    \\
    &
    \qquad
    \times
    \prod_{h\in \Lambda\setminus B}
    \frac{(-1)^{a_h}}
    {
      (t_h-2\pi\sqrt{-1}\const{h})
      -\sum_{g\in B}
      (t_g-2\pi\sqrt{-1}\const{g})
      \langle \vec{h},\vec{g}^{B}\rangle
    }
    \\
    &=
    (-1)^{\sharp \Lambda-r}
    \Bigl(
    \prod_{f\in \Lambda}\frac{t_f}{\exp(t_f-2\pi\sqrt{-1}\const{f})-1}
    \Bigr)
    \sum_{B\in\mathscr{B}}
    \sum_{A\in\mathscr{A}}
    \Bigl(\prod_{h\in \Lambda\setminus B}(-1)^{a_h}
    \exp((t_h-2\pi\sqrt{-1}\const{h}) a_h)\Bigr)
    \\
    &
    \qquad
    \times
    \frac{1}
    {\sharp(\mathbb{Z}^r/\langle \vec{B}\rangle)}
    \sum_{\mathbf{v}\in \mathbb{Z}^r/\langle \vec{B}\rangle}
    \exp
    \Bigl(
    \sum_{f\in B}
    (t_f-2\pi\sqrt{-1}\const{f})
    \{\langle 
    \mathbf{y}+\mathbf{v},\vec{f}^{B}
    \rangle\}
    \Bigr)
    \\
    &
    \qquad
    \times
    \prod_{h\in \Lambda\setminus B}
    \frac{1}
    {
      (t_h-2\pi\sqrt{-1}\const{h})
      -\sum_{g\in B}
      (t_g-2\pi\sqrt{-1}\const{g})
      \langle \vec{h},\vec{g}^{B}\rangle
    }.
  \end{split}
\end{equation}
Since
\begin{equation}
(-1)^{a_h}
    \exp((t_h-2\pi\sqrt{-1}\const{h}) a_h)=
    \begin{cases}
      -\exp(t_h-2\pi\sqrt{-1}\const{h}) & \text{if } a_h=1, \\
      1 & \text{if } a_h=0,
    \end{cases}
  \end{equation}
we have
\begin{equation}
\begin{split}
\sum_{A\in\mathscr{A}}
    \Bigl(\prod_{h\in \Lambda\setminus B}(-1)^{a_h}
    \exp((t_h-2\pi\sqrt{-1}\const{h}) a_h)\Bigr)&=
    \prod_{h\in \Lambda\setminus B}(1-\exp(t_h-2\pi\sqrt{-1}\const{h}))\\
    &=(-1)^{\sharp\Lambda-r} \prod_{h\in \Lambda\setminus B}(\exp(t_h-2\pi\sqrt{-1}\const{h})-1).
 \end{split}
\end{equation}
Therefore the rightmost side of
\eqref{eq:exp_F_H} is finally equal to
\begin{equation}
  \begin{split}
        &
    \Bigl(
    \prod_{f\in \Lambda}\frac{t_f}{\exp(t_f-2\pi\sqrt{-1}\const{f})-1}
    \Bigr)
    \sum_{B\in\mathscr{B}}
    \Bigl(
    \prod_{h\in \Lambda\setminus B}
    (\exp(t_h-2\pi\sqrt{-1}\const{h})-1)
    \Bigr)
    \\
    &
    \qquad
    \times
    \frac{1}
    {\sharp(\mathbb{Z}^r/\langle \vec{B}\rangle)}
    \sum_{\mathbf{v}\in \mathbb{Z}^r/\langle \vec{B}\rangle}
    \exp
    \Bigl(
    \sum_{f\in B}
    (t_f-2\pi\sqrt{-1}\const{f})
    \{\langle 
    \mathbf{y}+\mathbf{v},\vec{f}^{B}
    \rangle\}
    \Bigr)
    \\
    &
    \qquad
    \times
    \prod_{h\in \Lambda\setminus B}
    \frac{1}
    {
      (t_h-2\pi\sqrt{-1}\const{h})
      -\sum_{g\in B}
      (t_g-2\pi\sqrt{-1}\const{g})
      \langle \vec{h},\vec{g}^{B}\rangle
    },
  \end{split}
\end{equation}
and coincides with \eqref{eq:exp_F}
for $\mathbf{y}\in V\setminus\mathfrak{H}_{\mathscr{R}}$.
This completes the proof of \eqref{tilde_F=F}, and hence the proof of
Theorems \ref{thm:main1} and \ref{thm:main1b}.
\section{A Hierarchy and Differential Equations}\label{sec-10}

We conclude this paper with a theorem which asserts that the family of our generating functions has a hierarchy.
Let 
\begin{equation}
  \mathbf{\Lambda}_r=\{\Lambda\subset(\mathbb{Z}^{r}\setminus\{\vec{0}\})\times\mathbb{C}~|~ \sharp \Lambda<\infty,\rank\langle\vec{\Lambda}\rangle=r\}.
\end{equation}
For $\mathbf{v}=(v_1,\ldots,v_r)\in V$, let
\begin{equation}
  \partial_{\mathbf{v}}=v_1\partial_{y_1}+\cdots+v_r\partial_{y_r},
\end{equation}
where $\partial_{y_j}$ is the $j$-th partial differential operator acting on
$\mathbf{y}=(y_j)_{1\leq j\leq r}\in V$.
For $g=(\vec{g},\const{g})\in(\mathbb{Z}^{r}\setminus\{\vec{0}\})\times\mathbb{C}$, 
define 
\begin{equation}
  D_g=\frac{t_g-2\pi\sqrt{-1}\const{g}}{t_g}-\frac{1}{t_g}\partial_{\vec{g}}.
\end{equation}

\begin{theorem}
Let $\Lambda, \Lambda'\in\mathbf{\Lambda}_r$ with
$\Lambda'\subset\Lambda$, and $\mathbf{t}=(t_g)_{g\in \Lambda}$, $\mathbf{t}'=(t_g)_{g\in \Lambda'}$.
We have
\begin{equation}
\Bigl(\prod_{g\in\Lambda\setminus\Lambda'}
  D_g\Bigr)
F(\mathbf{t},\mathbf{y};\Lambda)=
F(\mathbf{t}',\mathbf{y};\Lambda'),
\end{equation}  
where on the left-hand side $D_g$ is understood to act at $\mathbf{y}\in V\setminus\mathfrak{H}_{\mathscr{R}(\Lambda)}$ and the resulting function, to be continuously extended by the one-sided limit along $\phi$.
\end{theorem}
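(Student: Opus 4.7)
My plan is to prove the statement by direct computation on the explicit formula \eqref{eq:exp_F}, first for the single-element reduction $\Lambda\setminus\Lambda'=\{g\}$ and then by iteration for general $\Lambda'$, working inside the smooth region $V\setminus\mathfrak{H}_{\mathscr{R}(\Lambda)}$ and extending afterwards by the one-sided continuity guaranteed by Theorem \ref{thm:main1}(i). On this region each $\{\mathbf{y}+\mathbf{w}\}_{B,f}$ is locally affine in $\mathbf{y}$ (Lemma \ref{lm:setH} identifies the bad set as exactly where $\langle\mathbf{y}+\mathbf{w},\vec{f}^B\rangle$ hits an integer), and the sum in \eqref{eq:exp_F} is finite, so term-by-term differentiation is legitimate.

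The single-element case reduces to a calculation on each $B$-summand. Using $\partial_{\vec{g}}\{\mathbf{y}+\mathbf{w}\}_{B,f}=\langle\vec{g},\vec{f}^B\rangle$ (valid on either branch of the definition of $\{\cdot\}_{B,f}$ inside a chamber), the action of $\partial_{\vec{g}}$ on the exponential factor in the $B$-summand is multiplication by
$S_B(g):=\sum_{f\in B}(t_f-2\pi\sqrt{-1}\const{f})\langle\vec{g},\vec{f}^B\rangle$.
When $g\in B$, the identity $\langle\vec{g},\vec{f}^B\rangle=\delta_{gf}$ gives $S_B(g)=t_g-2\pi\sqrt{-1}\const{g}$, and the operator coefficient becomes
\begin{equation*}
1-\frac{2\pi\sqrt{-1}\const{g}+(t_g-2\pi\sqrt{-1}\const{g})}{t_g}=0,
\end{equation*}
so the summand is annihilated. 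When $g\notin B$, writing the denominator in \eqref{eq:exp_F} as $t_g^{*B}:=(t_g-2\pi\sqrt{-1}\const{g})-S_B(g)$, one finds $D_g[\text{$B$-summand}]=(t_g^{*B}/t_g)\cdot[\text{$B$-summand}]$, and the factor $t_g^{*B}/t_g$ cancels exactly the factor $t_g/t_g^{*B}$ that appears in the product $\prod_{g'\in\Lambda\setminus B}$ of the same summand.

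After this cancellation, what survives from each $g\notin B$ summand is precisely the $B$-summand of $F(\mathbf{t}_{-g},\mathbf{y};\Lambda\setminus\{g\})$ (writing $\mathbf{t}_{-g}=(t_h)_{h\neq g}$): the factor depending only on $\mathbf{w}$ and $B$ is identical, the quantity $t_{g'}^{*B}$ depends only on $B$ and not on the ambient family, and $\mathscr{B}(\Lambda\setminus\{g\})=\{B\in\mathscr{B}(\Lambda):g\notin B\}$. Summing over such $B$ therefore gives $D_g F(\mathbf{t},\mathbf{y};\Lambda)=F(\mathbf{t}_{-g},\mathbf{y};\Lambda\setminus\{g\})$ on $V\setminus\mathfrak{H}_{\mathscr{R}(\Lambda)}$.

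For the general case I iterate: each $\Lambda_i=\Lambda_{i-1}\setminus\{g_i\}$ still contains $\Lambda'\in\mathbf{\Lambda}_r$ and therefore lies in $\mathbf{\Lambda}_r$, while $\mathfrak{H}_{\mathscr{R}(\Lambda_i)}\subseteq\mathfrak{H}_{\mathscr{R}(\Lambda)}$, so the single-element argument remains valid inside the ambient smooth region at every stage. The operators $D_g$ commute for distinct $g$ (each touches only its own $t_g$ and $\partial_{\vec{g}}$), so $\prod_{g\in\Lambda\setminus\Lambda'}D_g$ is unambiguous and, by induction, produces $F(\mathbf{t}',\mathbf{y};\Lambda')$ on $V\setminus\mathfrak{H}_{\mathscr{R}(\Lambda)}$; the one-sided continuity in the direction $\phi$ then extends the equality to the whole of $V$. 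The main obstacle is combinatorial rather than analytic: correctly separating the two cases $g\in B$ and $g\notin B$, tracking the cancellation of $t_g/t_g^{*B}$ against $t_g^{*B}/t_g$, and matching the surviving family of bases with $\mathscr{B}(\Lambda\setminus\{g\})$. The analytic side is light because a finite sum of real-analytic summands on each chamber poses no convergence or interchange issue.
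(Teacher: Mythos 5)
Your proposal is correct and follows essentially the same route as the paper's own proof: decompose $F$ into its $(B,\mathbf{w})$-summands, observe that $D_g$ acts on each summand as multiplication by $\bigl((t_g-2\pi\sqrt{-1}\const{g})-\sum_{f\in B}(t_f-2\pi\sqrt{-1}\const{f})\langle\vec{g},\vec{f}^B\rangle\bigr)/t_g$, which vanishes for $g\in B$ and cancels the corresponding denominator for $g\notin B$, and then identify the surviving bases with $\mathscr{B}(\Lambda\setminus\{g\})$. The only (harmless) difference is that you spell out the iteration and the commutativity of the $D_g$'s, which the paper leaves implicit after reducing to the single-element case.
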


\begin{proof}
It is sufficient to show the assertion in the case $\Lambda\setminus\Lambda'=\{g\}$ and $\mathbf{y}\in V\setminus\mathfrak{H}_{\mathscr{R}(\Lambda)}$.
  For $B\in\mathscr{B}(\Lambda)$ and
$\mathbf{w}\in \mathbb{Z}^r/\langle \vec{B}\rangle$,
we put
\begin{equation}
\label{eq:Fbw}
  F_{B,\mathbf{w}}(\mathbf{t},\mathbf{y};\Lambda)
:=
  \Bigl(
    \prod_{h\in \Lambda\setminus B}K(\mathbf{t},h)^{-1}
    \Bigr)
    \Bigl(
    \prod_{f\in B}\frac{t_f\exp
      ((t_f-2\pi\sqrt{-1}\const{f})\{\mathbf{y}+\mathbf{w}\}_{B,f})}{\exp(t_f-2\pi\sqrt{-1}\const{f})-1}
    \Bigr),  
  \end{equation}
where for $h\in \Lambda$,
\begin{equation}
K(\mathbf{t},h)=\frac
    {t_h-2\pi\sqrt{-1}\const{h}-\sum_{f\in B}(t_f-2\pi\sqrt{-1}\const{f})\langle \vec{h},\vec{f}^B\rangle}{t_h},
\end{equation}
so that
\begin{equation}
  F(\mathbf{t},\mathbf{y};\Lambda)=
    \sum_{B\in\mathscr{B}(\Lambda)}
    \frac{1}{\sharp(\mathbb{Z}^r/\langle \vec{B}\rangle)}
    \sum_{\mathbf{w}\in \mathbb{Z}^r/\langle \vec{B}\rangle}
    F_{B,\mathbf{w}}(\mathbf{t},\mathbf{y};\Lambda).
\end{equation}

By simple computations we obtain
\begin{equation}
  D_gF_{B,\mathbf{w}}(\mathbf{t},\mathbf{y};\Lambda)  =
  K(\mathbf{t},g)
  F_{B,\mathbf{w}}(\mathbf{t},\mathbf{y};\Lambda).
\end{equation}
If $g\in \Lambda\setminus B$, then
the factor $K(\mathbf{t},g)$ cancels with the factor $K(\mathbf{t},g)^{-1}$ appearing on the
right-hand side of \eqref{eq:Fbw}, and so the variable $t_g$
disappears. Thus we have
\begin{equation}
  D_gF_{B,\mathbf{w}}(\mathbf{t},\mathbf{y};\Lambda)=
F_{B,\mathbf{w}}(\mathbf{t}',\mathbf{y};\Lambda\setminus\{g\}).
\end{equation}
In this case $B\in\mathscr{B}(\Lambda')$.

If $g\in B$, then 
\begin{equation}
  \sum_{f\in B}(t_f-2\pi\sqrt{-1}\const{f})\langle \vec{g},\vec{f}^B\rangle
=t_g-2\pi\sqrt{-1}\const{g}
\end{equation}
and hence $K(\mathbf{t},g)=0$ and
\begin{equation}
  D_gF_{B,\mathbf{w}}(\mathbf{t},\mathbf{y};\Lambda)=0.
\end{equation}
Thus the sum runs over all $\mathscr{B}(\Lambda')$ and
\begin{equation}
  D_gF(\mathbf{t},\mathbf{y};\Lambda)=
F(\mathbf{t}',\mathbf{y};\Lambda\setminus\{g\}).
\end{equation}
\end{proof}


\ 

\begin{flushleft}
\begin{small}
Y. Komori\\
Department of Mathematics \\
Rikkyo University \\
Nishi-Ikebukuro, Toshima-ku\\
Tokyo 171-8501, Japan\\
komori@rikkyo.ac.jp

\ 

K. Matsumoto\\
Graduate School of Mathematics \\
Nagoya University\\
Furo-cho, Chikusa-ku \\
Nagoya 464-8602, Japan\\
kohjimat@math.nagoya-u.ac.jp

\ 

H. Tsumura\\
Department of Mathematics and Information Sciences\\
Tokyo Metropolitan University\\
1-1, Minami-Ohsawa, Hachioji \\
Tokyo 192-0397, Japan\\
tsumura@tmu.ac.jp

\end{small}
\end{flushleft}

\end{document}